\documentclass[11pt]{article}
\usepackage{bbm}
\usepackage[top=1in, bottom=1.5in]{geometry}
\usepackage{color}
\usepackage{comment}
\usepackage{amsfonts}
\usepackage{latexsym, amssymb, amsmath, amscd, amsthm, amsxtra}
\usepackage{mathtools}
\usepackage{enumerate}
\usepackage[all]{xy}
\usepackage{mathrsfs}
\usepackage{fancyhdr}
\usepackage{listings}
\usepackage{hyperref}
\usepackage{float}
\usepackage{algorithm}
\usepackage{algpseudocode}
\usepackage{framed}
\usepackage{lipsum}




\hypersetup{
	colorlinks   = true, 
	urlcolor     = blue, 
	linkcolor    = blue, 
	citecolor   = red 
}

\newtheorem{theorem}{Theorem}[section]
\newtheorem{lemma}[theorem]{Lemma}

\newtheorem{proposition}[theorem]{Proposition}

\theoremstyle{definition}
\newtheorem{definition}[theorem]{Definition}

\usepackage{tocloft}
\usepackage{times}

\newcommand{\Pb}{\mathbb P}

\newcommand{\ER}{Erd\H{o}s-R\'enyi\ }
\newcommand{\bx}[2]{\mathbf{x}_#1^{(#2)}}

\newcommand{\HD}[1]{\textcolor{cyan}{HD:\ #1}}

\theoremstyle{definition}
\numberwithin{equation}{section}

\title{Sharp threshold for network recovery from \\voter model dynamics}

\author{Hang Du\thanks{Department of Mathematics, Massachusetts Institute of Technology} \and Seokmin Ha\footnotemark[1] \and Oriol Solé-Pi\footnotemark[1]}

\date{\today}

\begin{document}
	
\maketitle

\begin{abstract}
We investigate the problem of recovering a latent directed \ER graph $G^*\sim \mathcal G(n,p)$ from observations of discrete voter model trajectories on $G^*$, where $np$ grows polynomially in $n$. Given access to $M$ independent voter model trajectories evolving up to time $T$, we establish that $G^*$ can be recovered \emph{exactly} with probability at least $0.9$ by an \emph{efficient} algorithm, provided that
\[
M \cdot \min\{T, n\} \geq C n^2 p^2 \log n
\]
holds for a sufficiently large constant $C$. Here, $M\cdot \min\{T,n\}$ can be interpreted as the approximate number of effective update rounds being observed, since the voter model on $G^*$ typically reaches consensus after $\Theta(n)$ rounds, and no further information can be gained after this point. Furthermore, we prove an \emph{information-theoretic} lower bound showing that the above condition is tight up to a constant factor. Our results indicate that the recovery problem does not exhibit a statistical-computational gap.
\end{abstract}

\vskip1mm
\setcounter{tocdepth}{2} 
\tableofcontents

\section{Introduction and main results}

The \emph{voter model} is a classical stochastic process that describes the spread of opinions in a network through local interactions. Originally introduced in the study of interacting particle systems \cite{CS73, HL75} and statistical physics \cite{Kra92, FK96}, it has since found applications in social dynamics \cite{CS73, CFL09}, biological evolution \cite{VES08}, distributed computing \cite{HP01, BCNPP14}, and various other applied fields. In its discrete-time version, each vertex in a given graph holds one of two (or more) possible opinions, and at each time step, every vertex independently updates its opinion by adopting that of a randomly selected neighbor. Due to its simplicity and utility, the voter model is among the best-studied graph-related stochastic dynamical processes.


Since the behavior of the voter model is deeply intertwined with the underlying graph structure, a natural question arises: Can the latent graph be inferred by observing the voter model trajectories? In this paper, we explore this problem in the simplified setting where the underlying graph $G^*$ is directed and each vertex can only hold opinions in $\{\pm 1\}$. Let $G^*$ have the vertex set $[n]=\{1,\cdots,n\}$ and the edge set $\vec{E}(G^*)$. The voter model on $G^*$ evolves as follows: First, generate a random initialization $\mathbf{x}^{(0)}=(x^{(0)}_i)_{i\in [n]}$ drawn uniformly from $\{\pm 1\}^n$. Then, for any $t\ge 1$, generate $\mathbf{x}^{(t)}\in \{\pm 1\}^n$ by, independently for each $i \in [n]$, picking $j$ from the out-neighborhood of $i$ in $G^*$ (i.e., the set $\{j\in[n]\,:\, (i\to j) \in \vec{E}(G^*)\}$) uniformly at random and setting $x_i^{(t)}=x_j^{(t-1)}$. 

Given two integers $M,T\ge 1$, let $(\mathbf{x}_m^{(0)},\cdots,\mathbf{x}_m^{(T)}),1\le m\le M$ be $M$ independent samples of the voter model trajectories evolving up to time $T$. Our goal is to exactly recover the underlying graph $G^*$ from these observations. Naturally, this task becomes easier as $M$ and $T$ increase since larger values mean we have access to more information. This raises the following question: How large should $M$ and $T$ be to ensure that $G^*$ can be recovered with probability close to $1$?

In this paper, we aim to answer this question in the setting where the underlying graph $G^*$ is a directed \ER graph drawn from $\mathcal G(n,p)$, meaning that each directed edge exists independently with probability $p$. We further assume that $p$ is neither too large nor too small, ensuring that the average degree of $G^*$ remains bounded by a constant time $n$ while still growing polynomially. More precisely, we assume that $p$ satisfies
\begin{equation}\label{eq-p-assumption}
p\le c_0,\quad np\ge n^\delta\,,
\end{equation}
where $c_0>0$ is a universal constant and $\delta>0$ is an arbitrary but fixed constant.

Our first result provides a condition on $M$ and $T$ under which $G^*$ can be recovered exactly with a probability close to $1$. 

\begin{theorem}\label{thm-main}
Let $(\bx{m}{0},\cdots,\bx{m}{T}), 1\le m\le M$ be $M$ independent voter model trajectories with evolution time $T$. Then, assuming \eqref{eq-p-assumption}, there exists $C>0$ depending only on $\delta$ such that if $M$ and $T$ satisfy
	\begin{equation}\label{eq-main-condition}
		M\cdot \min\{T,n\}\ge Cn^2p^2\log n\,,
	\end{equation}
then there is an estimator $\widehat{G}=\widehat{G}\big((\bx{m}{0},\cdots,\bx{m}{T}), 1\le m\le M\big)$ such that $\mathbb{P}[\widehat{G}=G^*]\ge 0.9$. (Here, the randomness is over $G^*\sim \mathcal G(n,p)$ and $(\bx{m}{0},\cdots,\bx{m}{T}), 1\le m\le M$.) Moreover, $\widehat{G}$ can be computed in $\widetilde{O}(n^4p^2)$ time. 
\end{theorem}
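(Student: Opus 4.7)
The plan is to decouple the recovery of $G^*$ across the $n$ rows of the adjacency matrix: conditional on $\mathbf{x}_m^{(t-1)}$, the law of $x_{i,m}^{(t)}$ depends on $G^*$ only through the out-neighborhood $N^{\mathrm{out}}(i):=\{j:(i\to j)\in\vec E(G^*)\}$. For each pair $(i,j)\in[n]^2$ I would test whether $(i\to j)\in\vec E(G^*)$ using the empirical correlation
\[
\widehat c_{ij}\;:=\;\frac{1}{MT^\star}\sum_{m=1}^M\sum_{t=1}^{T^\star}x_{i,m}^{(t)}x_{j,m}^{(t-1)},\qquad T^\star:=\min\{T,n\},
\]
with the truncation at $t\le n$ essentially lossless because the voter model reaches consensus in $\Theta(n)$ rounds. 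Using $\mathbb{E}[x_{i,m}^{(t)}\mid\mathbf{x}_m^{(t-1)}]=d_i^{-1}\sum_{k\in N^{\mathrm{out}}(i)}x_{k,m}^{(t-1)}$ and writing $\rho_{jk}^{(s)}:=\mathbb{E}[x_j^{(s)}x_k^{(s)}]$, the population mean splits as
\[
\mathbb{E}[\widehat c_{ij}]\;=\;\frac{\mathbb{1}_{j\in N^{\mathrm{out}}(i)}}{d_i}\;+\;\frac{1}{d_iT^\star}\sum_{t=1}^{T^\star}\sum_{k\in N^{\mathrm{out}}(i)\setminus\{j\}}\rho_{jk}^{(t-1)}.
\]
The second ``baseline'' term is nearly $j$-independent (changing by at most one summand as $j$ switches hypotheses), so after subtracting a data-driven estimate of it (for instance, the median of $\{\widehat c_{ij'}\}_{j'\ne j}$ or a similar robust statistic), the separating signal between neighbors and non-neighbors is of order $1/d_i\asymp 1/(np)$, provided the time-averaged correlations stay bounded away from $1$ on $t\le T^\star$.

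For concentration I would decompose $\widehat c_{ij}=A_{ij}+B_{ij}$, where $A_{ij}=\frac{1}{MT^\star}\sum_{m,t}x_{j,m}^{(t-1)}(x_{i,m}^{(t)}-\mathbb{E}[x_{i,m}^{(t)}\mid\mathbf{x}_m^{(t-1)}])$ is a sum of conditionally independent bounded martingale differences (along $t$ within each trajectory; independent across $m$) and $B_{ij}=\widehat c_{ij}-A_{ij}$ is a quadratic functional in $\{\mathbf{x}_m^{(t-1)}\}$. Azuma--Hoeffding on $A_{ij}$ combined with a union bound yields $|A_{ij}|=O(\sqrt{\log n/(MT^\star)})$ uniformly in $(i,j)$. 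The quadratic term $B_{ij}$ is handled by integrating out the transition randomness through the coalescing random-walk representation and then applying McDiarmid to the initial bits $\{\mathbf{x}_m^{(0)}\}$ and to the edges of $G^*$, after conditioning on a high-probability typical event for $G^*$ (degree concentration, expansion, absence of atypical short cycles). Requiring the total noise to be smaller than the signal gap gives exactly \eqref{eq-main-condition}, and thresholding the debiased statistic at half the gap followed by a union bound over $n^2$ pairs produces $\widehat G=G^*$ with probability $\ge 0.9$. Computing all $n^2$ correlations takes $O(n^2\cdot MT^\star)=\widetilde O(n^4p^2)$ operations.

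The main technical difficulty is the joint control of the bias and of the temporal correlations in $B_{ij}$. By the voter-model coalescing random-walk duality, $\rho_{jk}^{(t-1)}$ is the meeting probability at time $t-1$ of two independent reverse walks on $G^*$ starting at $j,k$. I would need to show that (i) the time average of $\rho_{jk}^{(t-1)}$ on $t\le T^\star$ is bounded away from $1$, (ii) the baseline is nearly $j$-independent to sub-$1/(np)$ precision so subtraction preserves the signal, and (iii) the quadratic statistics $\widehat Q_{jk}:=\frac{1}{MT^\star}\sum_{m,t}x_{j,m}^{(t-1)}x_{k,m}^{(t-1)}$ decorrelate enough across $t$ to attain an $O(1/\sqrt{MT^\star})$ fluctuation rather than the worst-case $O(1/\sqrt{M})$. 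All of these reduce to meeting-time estimates for random walks on a sparse random directed graph, and this is where the polynomial-density assumption $np\ge n^\delta$ enters most essentially, via expansion and short-loop-freeness of $G^*$; I expect the coalescence analysis to be the bulk of the technical work.
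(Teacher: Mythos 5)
Your high-level plan---split $\widehat c_{ij}$ into a martingale part $A_{ij}$ and a predictable (drift) part $B_{ij}$, control $A_{ij}$ by Azuma, then recognize that the drift is ``nearly $j$-independent'' so that a robust centering (median or clustering) extracts the signal of size $1/(np)$---matches the structure of the paper's argument. But two of the steps you sketch for controlling $B_{ij}$ would not go through as stated, and these are precisely the parts the paper has to work hardest on.

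First, your item (iii) is false: the quadratic statistics $\widehat Q_{jk} = \frac{1}{MT^\star}\sum_{m,t} x_{j,m}^{(t-1)} x_{k,m}^{(t-1)}$ do \emph{not} concentrate at rate $O(1/\sqrt{MT^\star})$ (nor even at $O(1/\sqrt{M})$). Take $M=1$ and $T^\star\asymp n$: writing $W^{(t)}=\mathbb{E}_{\pi}[x^{(t)}]$, the term $\widehat Q_{jk}$ is up to small corrections $\frac{1}{T^\star}\sum_t (W^{(t)})^2$, and $W^{(t)}$ is a martingale moving $\Theta(n^{-1/2})$ per step and started near $0$, so $\frac{1}{n}\sum_{t<n}(W^{(t)})^2$ has variance of constant order (think of $\int_0^1 B_s^2\,ds$ for Brownian motion $B$). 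So $B_{ij}$ has fluctuations of order $1$, which completely swamp the $1/(np)$ gap. Relatedly, McDiarmid over the initial bits does not apply, because the bounded-difference condition fails: the backward coalescing walks of all vertices merge within $\Theta(n)$ steps, so flipping a single $x_{j',m}^{(0)}$ can flip $\Theta(T^\star)$ of the subsequent opinions in that trajectory, changing $B_{ij}$ by $\Theta(1/M)$, not $O(1/(Mn))$. The ``conditioning on a typical $G^*$'' you invoke cannot fix this; it is a property of the dynamics, not of the graph.

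Second, and this is what the paper's argument is really about, the resolution is not that the drift concentrates, but that its large fluctuation is \emph{common} to all candidate endpoints $j$, and therefore is removed by the clustering/median-subtraction step. To make this precise one needs the drift to be writable, uniformly over $(i,j)$, as a deterministic $p_{i\to j}$ times a \emph{single} random prefactor plus a \emph{single} random offset plus $o(1/(np))$ error. This is exactly Lemma~\ref{lem-TV-distance}: after $t_*=O(1)$ steps the conditional law of the (non-coalesced) walk endpoints is within TV distance $O(1/(np))$ of $\pi^{\otimes 2}$, whence $\mathcal{A}_{i\to j}\approx(MT_*-\mathcal{J})p_{i\to j}+\mathcal{J}$ with $\mathcal{J}=\sum (W_m^{(t)})^2$ independent of $(i,j)$. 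This requires a time gap $t_*$ between the summands you keep: your $\widehat c_{ij}$ uses consecutive times, and the one-step conditional law of the walk endpoints is uniform on $\operatorname{N}_u\times\operatorname{N}_j$, which is nowhere near $\pi^{\otimes 2}$ in TV. (The paper's $\mathcal{S}^\ddagger$ does use all times, but only because it is a sum of $t_*$ gapped classifiers, each analyzed separately.) Finally, notice that the residual signal gap is $\frac{1}{np}\cdot\frac{1}{MT_*}\sum_{m,t}(1-\mathcal J_m^{(t)})$, i.e.\ a random multiple of $1/(np)$, and the lower bound on this prefactor (Lemma~\ref{lem-effective-observations-lower-bound}) holds only with probability $0.95$; this, rather than a union bound over pairs, is where the $0.9$ in the theorem comes from.
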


{We remark that by making some minor modifications to the arguments in the proof of Theorem~\ref{thm-main}, we can also deduce that $G^*$ can be almost-exactly recovered (resp. partially recovered)\footnote{Here, by almost-exact recovery (resp. partial recovery), we mean that there exists an estimator $\hat{G}$ containing $(1 + o(1))n^2p$ directed edges, of which a $1 - o(1)$ fraction (resp. a positive fraction) also belong to $G^*$.} efficiently with probability close to $1$, provided that $M\cdot \min\{T,n\}=\omega(n^2p^2)$ (resp. $M\cdot \min\{T,n\}=\Omega(n^2p^2)$). Furthermore, all the arguments used during the proof of the positive result extend almost verbatim to the undirected \ER case with discrete time updates.

Our next result provides an information-theoretic lower bound for the exact recovery task, indicating that condition \eqref{eq-main-condition} is optimal up to a multiplicative constant. 

\begin{theorem}\label{thm-negative}
With the same assumptions as above, there exists $c>0$ depending only on $\delta$ such that if $M$ and $T$ satisfy 
\begin{equation}\label{eq-condition-negative}
M\cdot \min\{T,n\}\le cn^2p^2\log n\,,
\end{equation}
then for any estimator $\widetilde{G}=\widetilde{G}\big((\mathbf{x}_m^{(0)},\cdots,\mathbf{x}_{m}^{(T)}),1\le m\le M\big)$ we have that $\mathbb{P}[\widetilde{G}=G^*]\le 0.1$.
\end{theorem}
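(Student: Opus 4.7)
The plan is to prove the lower bound via a ``swap'' hypothesis-testing argument: I will show that if \eqref{eq-condition-negative} holds with $c$ small enough, then with probability $\geq 0.9$ there exist a vertex $i$ and distinct $j_1, j_2 \in [n]\setminus\{i\}$ with $(i \to j_1) \in \vec{E}(G^*)$ and $(i \to j_2) \notin \vec{E}(G^*)$ such that the graph $G^\dagger$ obtained from $G^*$ by deleting $(i \to j_1)$ and inserting $(i \to j_2)$ has posterior likelihood at least as large as that of $G^*$. Since $G^\dagger$ has the same prior probability as $G^*$ under $\mathcal{G}(n,p)$ (directed edges are i.i.d.\ $\mathrm{Bernoulli}(p)$ and $|\vec{E}(G^*)| = |\vec{E}(G^\dagger)|$), a Bayes-optimal estimator cannot prefer $G^*$ over $G^\dagger$ with probability larger than $1/2$ on this event, which combined with the near-uniformity of the posterior will give $\Pr[\widetilde{G} = G^*] \leq 0.1$.

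Concretely, for a fixed triple $(i; j_1, j_2)$ as above, define the log-likelihood ratio
\[
L(i; j_1, j_2) := \sum_{m=1}^M \sum_{t=1}^T \log \frac{P_{G^*}(x_{i,m}^{(t)} \mid \mathbf{x}_m^{(t-1)})}{P_{G^\dagger}(x_{i,m}^{(t)} \mid \mathbf{x}_m^{(t-1)})}.
\]
Since $G^*$ and $G^\dagger$ differ only in the out-neighborhood of $i$ (with a common out-degree $d \asymp np$ on a high-probability event), each summand vanishes unless $x_{j_1,m}^{(t-1)} \neq x_{j_2,m}^{(t-1)}$, and otherwise has magnitude $\Theta(1/(np))$. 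Combining standard voter-model disagreement estimates (a fixed pair of coordinates disagrees with probability bounded away from $0$ for $t \lesssim n$, and the process has essentially reached consensus for $t \gg n$) with the fact that each nonzero summand has conditional mean and variance of order $1/(np)^2$ under $P_{G^*}$, I would obtain
\[
\mathbb{E}[L(i;j_1,j_2)] \;\asymp\; \mathrm{Var}[L(i;j_1,j_2)] \;\asymp\; \frac{M\cdot \min\{T,n\}}{(np)^2}.
\]
A Berry--Esseen / martingale CLT bound then yields $\Pr[L(i;j_1,j_2) \leq 0] \geq \exp\!\bigl(-C_1 M\min\{T,n\}/(np)^2\bigr)$, which under \eqref{eq-condition-negative} is at least $n^{-C_1 c}$. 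Since there are $\Theta(n^3 p)$ candidate triples across the graph, choosing $c$ small enough makes the expected number of failing swaps grow polynomially in $n$.

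To upgrade this first-moment estimate into a high-probability existence statement, I would carry out a Paley--Zygmund / second-moment argument on the count $N$ of failing swaps. The main source of dependence between $L(i;j_1,j_2)$ and $L(i';j_1',j_2')$ is the shared voter-model trajectories. For $i \neq i'$, after conditioning on all out-neighbor choices of vertices other than $i, i'$, the two LLRs depend on essentially independent random choices made by $i$ and $i'$; for $i = i'$ but $\{j_1, j_2\} \cap \{j_1', j_2'\} = \emptyset$, the per-round martingale increments have signs driven by the pair differences $x_{j_1}^{(t-1)} - x_{j_2}^{(t-1)}$ and $x_{j_1'}^{(t-1)} - x_{j_2'}^{(t-1)}$, whose covariance can be bounded by standard voter-model pair-correlation estimates. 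Restricting, if needed, to a subfamily of triples with vertex-disjoint supports (still of size $\Theta(n^3 p)$), one should obtain $\mathbb{E}[N^2] \lesssim \mathbb{E}[N]^2$ and hence $\Pr[N \geq 1] \geq 0.9$, closing the argument.

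The main obstacle will be the second-moment analysis: the voter-model trajectories couple all the log-likelihood ratios through the common state vector, so bounding cross-variances requires delicate conditional computations that exploit the $\Theta(n)$-scale pair-correlation decay of the voter model. An additional subtlety is that the Gaussian / Berry--Esseen approximation must be valid in the moderate-deviation regime corresponding to $\exp(-\Theta(\log n))$ probabilities, which requires some care given the $\Theta(1/(np))$ per-summand scale; the polynomial lower bound $np \geq n^\delta$ from \eqref{eq-p-assumption} should provide enough slack to absorb the associated approximation errors.
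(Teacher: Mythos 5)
Your high-level plan—reduce to MLE, perturb $G^*$ by a single-edge swap, estimate the log-likelihood ratio of the swapped graph, and upgrade via a second-moment argument—is essentially the strategy the paper follows. However, several of your steps, as stated, have genuine gaps.

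First, the direct Paley--Zygmund argument on the count $N$ of failing swaps does not close. The log-likelihood ratios $L(i;j_1,j_2)$ for different triples are coupled through the shared state vectors $\mathbf{x}_m^{(t)}$, and this coupling makes the fluctuation of $N$ too large for $\mathbb{E}[N^2]\lesssim \mathbb{E}[N]^2$ to hold unconditionally. The paper addresses exactly this issue by first fixing all coordinates outside a small index set $\mathcal{I}$ (via the filtration $\mathcal{F}_{-\mathcal{I}}$) and only then running a conditional second-moment argument; your remark about ``conditioning on out-neighbor choices'' gestures in this direction but leaves out the actual structure needed to make the cross-terms decouple (one must argue that, given a good realization of $\mathcal{F}_{-\mathcal{I}}$, each $L(i;\cdot)$ becomes, up to negligible corrections, a sum of conditionally independent mean-zero variables, and that distinct $L(i;\cdot)$'s are nearly conditionally independent). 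Relatedly, your count of $\Theta(n^3p)$ vertex-disjoint triples is wrong: vertex-disjoint triples number at most $n/3$. Fortunately one needs far fewer triples (the paper uses only $n^\gamma$ with distinct leading vertices $a_i$), but restricting to distinct $a_i$'s is what lets one bypass the $i=i'$ cross-variance computations you sketch.

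Second, the Berry--Esseen step cannot be applied to the full sum $\sum_{m,t}X_{m,a,b,c}^{(t)}$. Near consensus the denominators $d_a + x_{m,a}^{(t+1)}S_{m,a}^{(t)}$ can be $O(1)$ rather than $\Theta(np)$, so individual summands can be of constant size, breaking the third-moment condition in the moderate-deviation regime you need. The paper handles this by splitting the sum into ``bulk'' terms (where $n - |S_m^{(t)}|$ is polynomially large, controlled via a quantitative CLT) and ``edge'' terms (handled by crude martingale tail bounds after showing they contribute negligibly). This split, in turn, relies on non-trivial structural lemmas about the voter model on admissible graphs—moderate-increment control of $S_m^{(t)}$, the ``from global to local'' transfer from $S_m^{(t)}$ to all $S_{m,i}^{(t)}$, and a random-walk-type occupation bound on the number of near-consensus time steps. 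These are not the ``standard voter-model disagreement estimates'' you invoke; they require a dedicated multi-scale martingale analysis and are the technical heart of the proof. Likewise, the Taylor expansion $\log(1+X)\ge X - 10X^2$ used to pass from the signed linear sum to a likelihood inequality requires $X \ge -1/2$ uniformly, so you must also rule out ``bad'' triples where some $X_{m,a,b,c}^{(t)}=-1$; that too uses the near-consensus occupation bound.

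In short, your mean/variance heuristics and the $n^{-C_1 c}$ tail estimate are on the right track, but the proposal leaves out the conditioning scheme that makes the second moment go through, the bulk/edge decomposition that makes the CLT applicable, and the quantitative control of the voter model near consensus that underpins both.
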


Theorems~\ref{thm-main} and \ref{thm-negative} imply that the informational-theoretical threshold for the exact recovery task in our setting is given by the relation
\begin{equation}\label{eq-IT-threshold}
 M\cdot\min\{T,n\}\asymp n^2p^2\log n\,. 
\end{equation}
Furthermore, the last statement in Theorem~\ref{thm-main} indicates that, above this threshold, there exists an efficient recovery algorithm. Consequently, this problem does not exhibit a statistical-computational gap.

The fact that we are working in the setting of directed graphs is relevant to the proof of Theorem~\ref{thm-negative}; this is further discussed in Section~\ref{sec-negative}. Nonetheless, we expect the information-theoretic threshold for exact recovery to be given by \eqref{eq-IT-threshold} in the undirected case as well. 

We emphasize that the probabilities $0.9$ and $0.1$ in the statements of our main theorems \emph{cannot} be replaced with $1-o(1)$ and $o(1)$ in the most general setting. Specifically, consider the special case where $M=1$ and $n^2 p^2 \log n = \Theta(n)$. Here, the exact recoverability of $G^*$ strongly depends on the \emph{consensus time} (the first time at which all vertices adopt the same opinion), which exhibits fluctuations of order $n$ \cite[Theorem 3]{Oli13}. In this case, stronger probabilistic guarantees do not hold. Nevertheless, we believe that outside of this specific scenario (and its variants), the stronger claims with probabilities $1-o(1)$ and $o(1)$ should be valid.

Simulations, using both synthetic and real-world data, suggest that our algorithm from Theorem~\ref{thm-main} performs well even in situations where the underlying graph does not fit into our theoretical framework. The results of these simulations are presented in Section~\ref{subsec-algo-exp}.
\\
\noindent\textbf{Acknowledgement}. We warmly thank Kiril Bangachev and Guy Bresler for suggesting the problem studied in this paper. We are also grateful to Kiril Bangachev, Guy Bresler, and Nike Sun for providing many insightful comments on an earlier version of this manuscript.

\subsection{Backgrounds and related work}

\textbf{Voter model on finite graphs}. The voter model was first introduced on lattices $\mathbb{Z}^d$ by \cite{CS73,HL75} to capture particle interactions in Euclidean spaces, with the primary goal of understanding its stationary distributions-i.e., the long-time behavior of the system. Later, the need to model the spread of opinions in social networks motivated the study of the voter model on finite graphs. In the case of finite graphs, however, the only stationary distributions are usually trivial, so research has primarily focused on the consensus time $T_{\operatorname{cons}}$.

The study of the voter model on finite graphs was initiated in \cite{Cox89} for the $d$-dimensional torus. Since then, the model has been analyzed on various types of graphs, particularly random graphs (see, e.g., \cite[Section 6.9]{Dur07}). A seminal work \cite{Oli13} shows that the consensus time $T_{\operatorname{cons}}$ scales with the expected meeting time of two independent random walks starting from uniformly chosen locations\footnote{In particular, for \ER graph $\mathcal G(n,p)$ with $np$ growing polynomially, the expected meeting time is of order $n$, implying that that the corresponding voter model consensus time is also typically of order $n$.}, and it exhibits certain mean-field properties, provided that the underlying random walk mixes rapidly (see \cite[Theorem 3]{Oli13} for a precise statement). This result applies to a broad class of random graph models, including classical \ER graphs with average degree at least $(\log n)^{1+\varepsilon}$, the giant component of sparse \ER graphs, and many small-world graphs. Under similar mixing conditions, \cite{CCC16} describes the scaling limit of the evolution of the voter model density (i.e., the fraction of vertices holding opinion $1$). Beyond the mean-field setting, \cite{FO22} investigates the voter model on subcritical scale-free random graphs, revealing an intriguing power-law scaling of $T_{\operatorname{cons}}$ in the number of vertices. Despite these advancements, the behavior of the voter model on graphs outside the mean-field regime remains largely unexplored.

On the other hand, the study of the voter model beyond the consensus time has gained more attention recently. For instance, \cite{HLYZ22} establishes certain mean-field properties of the voter model's evolution long before consensus (usually referred to as the Big Bang regime) for graphs satisfying specific ``transience-like'' conditions. Additionally, \cite{Capa24} examines the evolution of discordant edges—edges whose endpoints hold different opinions—in the voter model on sparse random directed graphs. We hope that the ideas and technical results presented in this paper will provide further tools for studying these emerging aspects of voter model analysis.
\\
\noindent\textbf{Learning from dynamics}. Our work also contributes to the growing body of research on \emph{learning from dynamics} \cite[\dots]{NS12,ACKP13,BGS14,HC19,BLMT24,MS24,GMM24}, particularly in the context of learning network structure from dynamical processes \cite{NS12,ACKP13,HC19,MS24}. While classical machine learning theory typically focuses on i.i.d. data samples, the problem of learning from data with inherent dynamical structure has recently gained significant attention. A key motivation for this research direction is that, in many scenarios, a continuous stream of structured data points is more accessible—either easier to generate or more readily available—than i.i.d. data samples. Prior work has largely focused on learning from dynamics governed by either simple update rules (e.g., Glauber dynamics \cite{BGS14,BLMT24,GMM24}) or easily accessible observations (e.g., epidemic models \cite{NS12,ACKP13,HC19}). Since the voter model exhibits both of these advantages, it is a natural setting for studying structure learning problems. The parameter learning task using voter model dynamics has been explored in \cite{YSOKM11}, where the update rule follows unknown probability weights but the network structure is assumed to be known. However, to the best of our knowledge, the present work is the first to address the problem of learning network structure from voter model trajectories.

Another striking phenomenon that shows up when learning from dynamics is that dynamically structured data can sometimes allow one to overcome well-known computational lower bounds that apply to i.i.d. samples (see, e.g., \cite{GMM24}). In our setting, however, we demonstrate that the full trajectory of the voter model dynamics conveys roughly the same amount of information as a bunch of independently drawn samples (where each of the observed dynamics has evolution time $T=1$). We will elaborate further on this point in Section~\ref{subsec-intuition}.

\subsection{Heuristics and challenges}\label{subsec-intuition}
In this subsection, we explain the heuristic behind the threshold~\eqref{eq-IT-threshold} and the key challenges that must be overcome to make it rigorous.

We first introduce some notations. Given a graph $G^*$ on $[n]$, we use $\operatorname{N}_i=\operatorname{N}_i(G^*)$ to denote the out-neighborhood of vertex $i$ in $G^*$, and we write $d_i=|\operatorname{N}_i|$. For $1\le m\le M$, $0\le t\le T_m$, and $i\in [n]$, we let $x_{m,i}^{(t)}\in \{\pm1\}$ be the opinion of vertex $i$ at time $t$ in the $m$-th trajectory. Hereafter we use w.h.p. as a shorthand of ``with high probability'', meaning with probability $1-o(1)$ as $n\to \infty$. \\
\noindent\textbf{Interpretation of threshold~\eqref{eq-IT-threshold}}. We now interpret the threshold \eqref{eq-IT-threshold}. As a starting point, consider the case $T=1$, where we observe $M$ independent vector pairs $(\mathbf{x}_m^{(0)},\mathbf{x}_m^{(1)}),1\le m\le M$, with $\mathbf{x}_m^{(0)}$ drawn uniformly from $\{\pm 1\}^m$ and $\mathbf{x}_m^{(1)}$ generated according to the voter model update rule. For each $i\in [n]$, our goal is to infer the set  $\operatorname{N}_i=\big\{j\in [n]:(i\to j)\in \vec{E}(G^*)\big\}$. We make the following simple observation: For any $1\le m\le M$ and $j\in [n]$,
\begin{equation}
    \mathbb{E}[x_{m,i}^{(1)}x_{m,j}^{(0)}]=\frac{1}{d_i}\sum_{u\in \operatorname{N}_i}\mathbb{E}[x_{m,u}^{(0)}x_{m,j}^{(0)}]=\begin{cases}
        \frac{1}{d_i}\,,\quad&\text{if }j\in \operatorname{N}_i\,,\\
        0\,,\quad&\text{if }j\notin \operatorname{N}_i\,.
    \end{cases}
\end{equation}
Inspired by this, a natural approach to recovering $\operatorname{N}_i$ is to consider the classifiers
\begin{equation}\label{eq-classifiers}
\mathcal S^\dagger_{i\to j}:=\sum_{m=1}^M x_{m,i}^{(1)}x_{m,j}^{(0)}\,, \quad j\in [n]\,,
\end{equation}
and then apply an appropriate thresholding procedure.

We analyze this approach more carefully. It is straightforward to check that for $j\in \operatorname{N}_i$ the classifier $\mathcal S^\dagger_{i\to j}$ has mean $\frac{M}{d_i}$ and variance of order $M$, whereas for $j\in \operatorname{N}_i^c$ it has mean $0$ and variance $M$. Furthermore, one can show that $\mathcal S^\dagger_{i\to j}$ exhibits Gaussian-type fluctuations. Thus, provided that
\begin{equation}\label{eq-IT-threshold-T=1}
\frac{M}{d_i}\ge \sqrt{CM\log n}\ \iff \ M\ge C\log n\cdot d_i^2
\end{equation}
holds for some large enough constant $C$, the union bound yields that w.h.p.,
\begin{equation}\label{eq-thresholding}
\min_{j\in \operatorname{N}_i}\{\mathcal S^\dagger_{i\to j}\}\ge \frac{M}{d_i}-\frac{1}{2}\sqrt{CM\log n}>\frac{1}{2}\sqrt{CM\log n}\ge \max_{j\in \operatorname{N}_i^c}\{\mathcal S^\dagger_{i\to j}\}\,.
\end{equation}
Thus, whenever \eqref{eq-IT-threshold-T=1} holds, the approach of thresholding over $\mathcal S_{i\to j},j\in [n]$ succeeds in recovering $\operatorname{N}_i$ w.h.p.. Moreover, for $G^*\sim \mathcal G(n,p)$ with $p$ satisfying \eqref{eq-p-assumption}, we have that $d_i=(1+o(1))np$ holds w.h.p. and thus \eqref{eq-IT-threshold-T=1} corresponds to \eqref{eq-IT-threshold} with $T=1$. 

For the general case, we have $M\cdot T$ observable pairs $(\mathbf{x}_m^{(t)},\mathbf{x}_m^{(t+1)})$, and once again one can use these pairs to construct classifiers as in \eqref{eq-classifiers}. However, the voter dynamics might lead to consensus, rendering some pairs uninformative. Indeed, as suggested by \cite[Theorem~3]{Oli13}, the consensus time is typically of order $n$ for $G^*\sim \mathcal G(n,p)$, so one should expect the number of \emph{effective} observable pairs (those with $\mathbf{x}_m^{(t)}\neq \pm \mathbf{1}$) to typically be $\Theta(M\cdot \min\{T,n\})$. Thinking idealistically, if all the effective pairs behave like independent samples distributed as in the case $t=0$, 
then threshold \eqref{eq-IT-threshold} emerges naturally. \\ 
\noindent\textbf{Key challenges in the analysis}.
Here we discuss the main challenges we face in analyzing the above heuristics. 

The previous discussion suggests a natural approach to proving the positive result (Theorem~\ref{thm-main}). To recover the outer neighborhood $\operatorname{N}_i$ of $i$, we consider the classifiers  
\begin{equation}\label{eq-S^dagger}
\mathcal{S}^\ddagger_{i\to j}:=\sum_{m=1}^M\sum_{t=0}^{\min\{T,n\}-1}x_{m,i}^{(t+1)}x_{m,j}^{(t)}\,,\quad j\in [n]\,,
\end{equation}
and then use some threshold to distinguish between those vertices $j$ that belong to $\operatorname{N}_i$ and those that do not. Experimental results suggest that these classifiers are highly effective, even for graphs beyond our assumptions\footnote{E.g., the random configuration model and some real-world networks; see Subsection~\ref{subsec-algo-exp} for details.}, yet its theoretical analysis remains challenging. 

A key conceptual issue is that, while the intuition behind why threshold~\eqref{eq-IT-threshold} is correct relies on the effective pairs $(x_{m}^{(t+1)},x_{m}^{(t)})$ behaving approximately as independent samples with $x_m^{(t)}$ being a uniform random vector in $\{\pm 1\}^n$, these pairs are in fact strongly correlated, and $x_m^{(t)}$ becomes increasingly biased as $t$ grows. Another technical obstacle lies in analyzing the thresholding process: Unlike the classifiers in \eqref{eq-classifiers}, $\mathcal{S}^\ddagger_{i\to j}$ is not well-concentrated enough, making it impossible to establish a deterministic threshold for classification, as was done previously in \eqref{eq-thresholding}.

The proof of the negative result (Theorem~\ref{thm-negative}) presents even greater challenges. Conceptually, while the voter model may resemble an i.i.d. process during the early stages of its evolution, its dynamics become highly skewed as it approaches consensus. This naturally raises the question of whether this final phase provides significantly stronger structural information. Theorem~\ref{thm-negative} suggests an essentially negative answer. Establishing this result requires a deep understanding of the entire evolution of the voter model dynamics, making the analysis particularly challenging.

More details on how we tackle these key difficulties are discussed in the next subsection.

\subsection{Technical overview}\label{subsec-technical-overview}
We now turn to a more in-depth discussion of the strategies followed in our proof, highlighting the main novel ingredients.

We begin with the proof of Theorem~\ref{thm-main}. For ease of presentation, we analyze another (closely related) set of classifiers $\{\mathcal S_{i\to j},i,j\in [n]\}$, but the same argument extends straightforwardly via the union bound to the classifiers $\mathcal S^{\ddagger}_{i\to j},i,j\in [n]$. Each $\mathcal S_{i\to j}$ consists of a sub-sum of \eqref{eq-S^dagger} in which any two terms sharing the same index $m$ differ in their time indices by at least $t_*$, a large constant depending on $\delta$ (see Section~\ref{sec-positive} for the precise definitions). Intuitively, the $t_*$ time gap produces a decoupling effect, as we explain below. Using a standard duality between coalescing random walks and the voter model (which we recall in Section~\ref{sec:backtracking}), one can relate the voter model dynamics to coalescing random walks on $G^*$. Meanwhile, for $G^*\sim \mathcal G(n,p)$, it holds w.h.p. that a random walk on $G^*$ mixes well within $O(1)$ steps (see Section~\ref{subsec-admissible-graph}). Hence, at a heuristic level, any two terms $x_{i,m}^{(t_k+1)}x_{j,m}^{(t_k)},k=1,2$ with $|t_1-t_2|$ exceeding a large constant are effectively independent. 

The above heuristic is formalized through our approach to resolving the aforementioned thresholding issue. Instead of proposing a deterministic threshold, we show that for each $i\in [n]$, the set $\{\mathcal{S}_{i\to j}, j\in [n]\}$ naturally clusters into two groups depending on whether $j$ belongs to $\operatorname{N}_i$ (see \eqref{eq-clustering}). This clustering is proved by decomposing each $\mathcal{S}_{i\to j}$ into two components: A martingale part and a drift part. The martingale part is well-concentrated around zero, while the drift part is analyzed using duality to coalescing random walks.
Remarkably, by choosing $t_*$ sufficiently large, the drift part can be uniformly approximated for all pairs $(i,j) \in [n]^2$ in a way that distinguishes whether $j \in \operatorname{N}_i$. Ultimately, the problem reduces to showing that there exist sufficiently many vectors $x_m^{(t)} \in \{\pm 1\}^n$ that are not “extremely biased” toward $1$ or $-1$ (see Lemma~\ref{lem-effective-observations-lower-bound} below). This condition is much weaker than requiring $x_m^{(t)}$ to behave like a uniform random vector, and we prove the final statement using tools from martingale theory. To the best of our knowledge, this provides a novel framework for analyzing voter model dynamics on mean-field networks, and we believe it has broader applicability to other settings. The detailed proof is presented in Section~\ref{sec-positive}.

The proof of Theorem~\ref{thm-negative} is considerably more intricate on a technical level.
We begin with a standard argument that reduces our task to showing that the maximum likelihood estimator (MLE)
fails to recover $G^*$ with probability close to $1$.
To that end, we first derive the log-likelihood $\mathcal{L}(G)$ for a general directed graph $G$ based on the observations
$(\mathbf{x}_m^{(0)}, \cdots, \mathbf{x}_m^{(T)})$ for $1 \le m \le M$.
We then argue that, with probability close to $1$, there exists a ``flipping graph'' $G$ of $G^*$---obtained by removing an edge
$(i \to j)$ in $\vec{E}(G^*)$ and adding an edge $(i \to k)$ not in $\vec{E}(G^*)$---such that $\mathcal{L}(G) > \mathcal{L}(G^*)$.

The main challenge in our analysis stems from the intricate form of $\mathcal{L}(G)$, which necessitates tracking the evolution of $\mathcal{S}_{m,i}^{(t)} = \sum_{j \in \operatorname{N}_i} x_{m,j}^{(t)}$, $0 \le t \le T_m$, for all $1 \le m \le M$ and $i \in [n]$ \emph{simultaneously}. To address this, we first establish a novel ``from global to local'' lemma (Lemma~\ref{lem-FGTL}), which demonstrates that each $\mathcal{S}_{m,i}^{(t)}$ is essentially governed by the single aggregate variable $S_m^{(t)} = \sum_{i \in [n]} x_{m,i}^{(t)}$. Next, we show that for each $m$, the sequence $S_m^{(t)}$, $0 \le t \le T$, behaves analogously to a certain random walk on $\{-n, \cdots, n\}$ (Lemma~\ref{lem-RW}). This correspondence enables us to leverage techniques from random walk analysis---including martingale theory and multi-scale analysis---to gain a deeper understanding of the behavior of $S_m^{(t)}$ and, consequently, $\mathcal{S}_{m,i}^{(t)}$. Notably, while our random walk formulation bears resemblance to the convergence results to Wright--Fisher diffusion obtained in \cite{CCC16}, their results are not directly applicable here, as our primary objective is to analyze the behavior of $S_m^{(t)}$ ``near the edge''. Essentially, all the aforementioned lemmas rely only on certain regularity and mean-field properties of $G^*$, and so we expect them to have further applications in other voter-model related studies. The details are provided in Section~\ref{sec-negative}.

\subsection{Extensions and further questions}
Although we work in a simplified setting where (i) the graph is directed, (ii) update times are discrete, and (iii) vertices are not allowed to select themselves when updating their labels, we believe that each of these assumptions can be relaxed and that our proof strategy should carry over to several other variants of our setting. For example, one could consider another standard version of the voter model in which the underlying graph is undirected, and each vertex adopts the label of a randomly chosen neighbor after waiting for a random time governed by an exponential clock. We expect analogues of Theorems~\ref{thm-main} and \ref{thm-negative} to hold in this setting as well. Moreover, as suggested by the discussion following Theorem~\ref{thm-main}, we believe that the threshold for almost-exact recovery (resp. partial recovery) is given by $M\cdot \min\{T,n\}=\omega(n^2p^2)$ (resp. $M\cdot\min\{T,n\}=\Omega(n^2p^2)$).

On the other hand, we crucially rely on the fact that $G^*$ is a random graph in several key aspects of our analysis. For instance, in the proof of Theorem~\ref{thm-main} we use the fact that the random walk on $G^*$ mixes in $O(1)$-time, while in the proof of Theorem~\ref{thm-negative} we additionally employ several regularity properties of random graphs. We believe these ingredients are, in some sense, necessary for the information-theoretical threshold to be given by \eqref{eq-IT-threshold}. Thus, an important question remains: Under what conditions can a general graph be recovered by observing the voter model dynamics? This problem is likely to be quite challenging. 

For example, we conclude from Theorem~\ref{thm-main} that if $p$ satisfies assumption~\eqref{eq-p-assumption} and $np^2\log n\ll 1$, then it is possible to recover $G^*\sim \mathcal G(n,p)$ with probability close to $1$ from a single trajectory of the voter model, provided that the evolution time is sufficiently large. Additionally, from the discussion in Section~\ref{subsec-intuition}, we know that for any graph with maximal degree $\Delta$, there is an efficient algorithm that can recover $G^*$ with probability close to $1$ after observing $O(\Delta^2\log n)$ many voter trajectories with evolution time $T=1$. A more specific but still intriguing question is whether for a connected sparse graph $G^*$ (e.g., one with maximal degree $O(1)$) it is possible to recover $G^*$ from $O(1)$ independent voter model trajectories, even allowing infinite evolution time\footnote{Simulations suggest that this could be true, see Table~\ref{table1} and Table~\ref{table3} in Section~\ref{subsec-algo-exp}.}.

Yet another seemingly interesting problem consists of understanding if there is any advantage to be gained from selecting the initialization vectors $\mathbf{x}_m^{(0)}$ ($1\leq m\leq M$) either deterministically or according to a non-uniform probability distribution. One could even consider an adaptive setting in which $\mathbf{x}_m^{(0)}$ is chosen based on the first $m-1$ observed trajectories of voter model dynamics, each evolving up to time $T$. A natural question is whether the information-theoretic threshold remains governed by \eqref{eq-main-condition} in these more flexible initialization schemes. We leave these questions as future directions to study.

\section{Preliminaries}\label{sec-preliminary}

In this section, we provide some important preliminaries for the proof of our main results. In Section~\ref{sec:backtracking}, we recall a classic observation that the voter model is dual to coalescing random walks on $G^*$, which will play an essential role in later proof. Section~\ref{subsec-admissible-graph} collects several desirable properties of random graphs $G^*\sim \mathcal G(n,p)$ that will become useful later.

\subsection{Duality to coalescing random walks}\label{sec:backtracking}
We recall an alternative (and very useful) description of the voter model dynamics, referred to as the \textit{duality to coalescing random walks}. 

For each vertex $i\in [n]$ and each $t\ge 1$, we select an arrow $a_i^t$ pointing from $i$ to an element of $\operatorname{N}_i$ chosen uniformly at random, so that all these arrows $a_i^t$ are chosen independently. Define the \textit{backward path} starting at $i\in [n]$ and time $1\le t\le T$ as the path $\mathcal P_i^t=(i_0,\cdots,i_t)$ on $G$ that starts at $i_0=i$ and then traverses the arrows $a_{i_0}^t,\cdots,a_{i_{t-1}}^1$ in sequence (the endpoint of arrow $a_{i_k}^{t-k}$ is $i_{k-1}$). Let $\mathbf{x}^{(0)}=(x_{i}^{(0)})_{i\in [n]}$ be chosen uniformly from $\{\pm 1\}^n$ and, for all $i\in [n]$ and $t\ge 1$, define $x_i^{(t)}=x_{i_t}^{(0)}$ (where $i_t$ is the end point of the path $\mathcal P_i^t$). Note that the tuple $(\mathbf{x}^{(0)},\cdots,\mathbf{x}^{(T)})$ defined in this way has the same distribution as the voter model trajectories evolving up to time $T$. Thus, we may think of the dynamics as being encoded by the random arrows $a_i^t$, $i\in [n]$, $1\le t\le T$ and the random initialization $\mathbf{x}^{(0)}\in \{\pm 1\}^n$. See Figure~\ref{fig:backtracking-RW} for an illustration.
\begin{figure}
    \centering
    \includegraphics[width=0.7\linewidth]{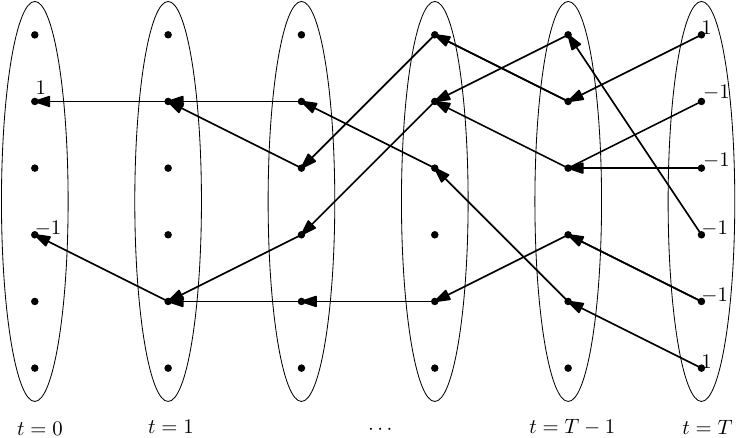}
    \caption{An illustration of the backtracing random walk description}
    \label{fig:backtracking-RW}
\end{figure}

We provide a direct consequence of the duality of coalescing random walks that will become useful later. First, note that for any $i,j\in [n]$, if the two paths $\mathcal P_i^t=(i_0,\cdots,i_t)$ and $\mathcal P_j^t=(j_0,\cdots,j_t)$ meet at some time $t'\le t$, then they coalesce with each other. More precisely, if there exists $t'\le t$ such that $i_{t'}=j_{t'}$, then the paths remain identical all the way up to time $t$ as they traverse the same arrows on and after time $t'$. As a result, if this is the case, then $x_i^{(t)}=x_j^{(t)}$. Otherwise, $x_i^{(t)}=x_{i_t}^{(0)}$ and $x_{j}^{(t)}=x_{j_t}^{(0)}$ for $i_t\neq j_t$. Since $\mathbf{x}^{(0)}$ is a uniform vector in $\{\pm 1\}^n$, we deduce that \begin{equation}\label{eq-p_ij^t}
\mathbb{E}[x_i^{(t)}x_j^{(t)}]=\mathbb P[\mathcal P_i^t\text{ coalesces with }\mathcal P_j^t]\triangleq p_{i,j}^{(t)}\,,\ \forall i,j\in [n],t\ge 1\,.
\end{equation}
Since, before coalescing, the paths $\mathcal P_i^t$ and $\mathcal P_j^t$ are independent random walks on $G^*$, $p_{i,j}^{(t)}$ defined as above is simply the probability that two independent random walks starting at $i$ and $j$ meet within $t$ steps. Identity \eqref{eq-p_ij^t} will play a crucial role in our proof.

\subsection{Properties of random graphs}\label{subsec-admissible-graph}
We now collect some desirable properties that a random graph $G^*\sim \mathcal G(n,p)$ satisfies w.h.p., including approximate regularity, fast mixing, and an estimate on the consensus time of voter model dynamics on $G^*$. We refer to graphs possessing these properties as \emph{admissible graphs}. 

\begin{definition}\label{def-admissible}
We say that a directed graph $G^*$ on $[n]$ is \emph{admissible} if the following hold:\\
 \noindent (i) If $d$ is either the in-degree or out-degree  of any vertex in $G^*$, then 
\begin{equation}\label{eq-degree}
|d-np|\le \sqrt{10np\log n}\,.
\end{equation}
Additionally, for any $i\neq j\in [n]$, if $np^2\ge (\log n)^4$, then 
\begin{equation}\label{eq-overlap-1}
\big||\operatorname{N}_i\cap \operatorname{N}_j|-np^2\big|\le \sqrt{10np^2\log n}\,,
\end{equation}
and if $np^2\le (\log n)^4$, then 
\begin{equation}\label{eq-overlap-2}
|\operatorname{N}_i\cap \operatorname{N}_j|\le 4(\log n)^4\,.
\end{equation}
 \noindent (ii) For any $i\in [n]$, let $\{X_{t,i}\}_{t\ge 0}$ 
 be a random walk on $G^*$ starting at $i$, and for $t\ge 1$, let $\pi_{t,i}$ denote the law of $X_{t,i}$ (which is a probability distribution on $[n]$). Then, as $t\to\infty$, $\pi_{t,i}$ converges to the (unique) stationary distribution $\pi$ on $[n]$, which satisfies $\pi(i)=1/n+o(1/n)$ for each $i\in [n]$. Moreover, for any integer $k\ge 1$, 
there exists a constant $c=c(k,\delta)$ such that 
\[
\operatorname{TV}(\pi_{t,i},\pi)\leq n^{-k}\,,\ \forall\ t\geq c_{k,\delta},i\in [n]\,.
\] 
 \noindent (iii) There exists $C^*>0$ such that the consensus time of the voter model on $G^*$,  defined as 
 \begin{equation}\label{eq-T-cons}
T_{\operatorname{cons}}:=\min\{t\ge 0:x_i^{(t)}=x_j^{(t)},\forall i,j\in [n]\}\,,
\end{equation}
 satisfies $\mathbb{E}[T_{\operatorname{cons}}]\le C^*n$ (where the randomness is over the voter model trajectories). 
 
 \noindent (iv) For all partitions $[n]=X\sqcup Y$ of the vertex set, the number of triples $i,j,j'$ of elements of $[n]$ such that $j\in X$, $j'\in Y$ and both $(i,j)$ and $(i,j')$ belong to $\vec{E}(G^*)$ is at least $np^2|X|\cdot|Y|/10^{12}$.
\end{definition}

If $G^*\sim \mathcal G(n,p)$, then $G^*$ is admissible w.h.p.. The first two items are standard properties of random graphs with an average degree growing polynomially in $n$. Moreover, given that the mixing time of a random walk on $G^*$ is $O(1)$ w.h.p., \cite[Theorem 3]{Oli13} implies that the third item holds w.h.p. as well. The last item follows from a standard computation for random graphs using the union bound, and can be thought of as a quantitative version of the statement that $G^*$ is a very good expander. The detailed arguments can be found in Appendix~\ref{appendix-admissible}. Hereafter, unless otherwise specified, we assume that $G^*$ is a deterministic admissible graph.

\section{Proof of Theorem~\ref{thm-main}}\label{sec-positive}
This section contains the proof of Theorem~\ref{thm-main}. We fix an admissible $G^*$ and propose an estimator
$\widehat{G}=\widehat{G}\big(\{(\mathbf{x}_m^{(0)},\cdots,\mathbf{x}_m^{(T)}),1\le m\le M\}\big)$ computed by an $\widetilde{O}(n^4 p^2)$-time algorithm. We show that for a sufficiently large
$C=C(\delta)$, if \eqref{eq-main-condition} holds, then $\widehat{G} = G^*$ with probability at
least $0.92$ (with respect to the randomness coming from the voter model dynamics). Since $G^* \sim \mathcal{G}(n,p)$ is admissible w.h.p., this implies
Theorem~\ref{thm-main}.

We first introduce some notation. Recall Property-(ii) in the definition of admissibility. Write $t_*=c(2,\delta)=O(1)$ and $t_s=st_*$ for $s=0,\cdots,T_*=\lfloor (\min\{T,n\}-1)/t_*\rfloor$. The case $T\le t_*$ can be addressed as in Section~\ref{subsec-intuition} (we simply use the information up to $T=1$ and lose only a constant factor $t_*$). In what follows, we assume that $T\ge t_*+1$ and hence $T_*\ge \frac{1}{2t^*}\min\{T,n\}$. For any two vertices $i,j\in [n]$, we define 
\begin{equation}\label{eq-testing-statistics}
\mathcal S_{i\rightarrow j}:=\sum_{m=1}^{M}\sum_{s=1}^{T_*}x_{m,i}^{(t_s+1)}x_{m,j}^{(t_s)}\,.
\end{equation}

A set $A \subseteq \mathbb{R}$ is called \emph{$2$-clustering} if it can be partitioned into two nonempty subsets whose largest internal gaps are smaller than the smallest gap between elements from different subsets. Given any 2-clustering set, there exists a unique partition with the above properties, and there is a $\widetilde{O}(|A|)$-time algorithm (which we refer to as \emph{$2$-CLUS}) that finds it. Moreover, we may assume that the first subset always contains the larger elements of $A$.

\subsection{The recovery algorithm}
We propose the following simple algorithm.

\begin{algorithm}[H]\label{alg-learning}
	\caption{Recovering $G^*$} 
	\begin{algorithmic}[1]
 \State \textbf{Input}: $(\bx{m}{0},\cdots,\bx{m}{T}),1\le m\le M$.
 \State \textbf{Output}: a graph $\widehat{G}$ on $[n]$.
		\For {$i,j\in [n]$}
	\State Compute $S_{i\rightarrow j}$ defined as in \eqref{eq-testing-statistics}.
		\EndFor
        \For {$i\in[n]$}
    \State Set $A_i=\{\mathcal S_{i\rightarrow j},j\in [n]\}$.
    \State Run $2$-CLUS on $A_i$ to get $A_i=A_{i,1}\sqcup A_{i,2}$.
        \EndFor
  \State Construct $\widehat{G}$ on $[n]$ such that $(i,j)\in \vec{E}(\widehat{G})$ if and only if $\mathcal S_{i,j}\in  A_{i,1}$.
  \State \textbf{Return}: $\widehat{G}$.
	\end{algorithmic} 
\end{algorithm}
\noindent It is clear that the running time of Algorithm~\ref{alg-learning} is $\widetilde{O}(n^4p^2)$ (computing each $\mathcal S_{i\to j}$ takes $\widetilde{O}((np)^2)$ time and there are $O(n^2)$ of them). We will show that when $C$ is large enough and $M\cdot \min\{T,n\}\ge Cn^2p^2\log n$, the output $\widehat{G}$ of Algorithm~\ref{alg-learning} satisfies $\widehat{G}=G^*$ with probability at least $0.92$. 

Before delving into the detailed analysis of Algorithm~\ref{alg-learning}, we briefly outline the proof ideas. Our goal is to show that with probability at least $0.92$, for each $i\in [n]$, 
\begin{equation}\label{eq-clustering}
\min_{j\in \operatorname{N}_i,j'\in \operatorname{N}_i^c}|\mathcal S_{i\to j}-\mathcal S_{i'\to j'}|>\max\left\{\max_{j,j'\in \operatorname{N}_i}|\mathcal S_{i\to j}-\mathcal S_{i\to j}|,\max_{j,j'\in \operatorname{N}_i^c}|\mathcal S_{i\to j}-\mathcal S_{i'\to j'}|\right\}\,.
\end{equation}
We decompose each $\mathcal S_{i\to j}$ as the sum of $\mathcal M_{i\to j}$ and $\mathcal A_{i\to j}$, where $\mathcal M_{i\to j}$ can be viewed as a martingale with respect to an appropriate filtration, and $\mathcal A_{i\to j}$ corresponds to the drift term. Using a standard martingale concentration inequality, we show that w.h.p. all $\mathcal M_{i\to j}$ concentrate tightly around $0$ (see Lemma~\ref{lem-martingale-concentration}), and then the problem reduces to showing that the set $\{\mathcal A_{i,j},j\in [n]\}$ exhibits a clustering property similar to \eqref{eq-clustering}. 

Using the duality to coalescing random walks, we can express each $\mathcal A_{i\to j}$ as
\[
\mathcal A_{i\to j}=MT_*p_{i\to j}+(1-p_{i\to j})\mathcal J_{i\to j}\,,
\]
where $p_{i\to j}:=\frac{1}{d_i}\sum_{u\in \operatorname{N}_i}p_{u,j}^{(t_*)},i,j\in [n]$ are deterministic quantities, and $\mathcal J_{i\to j},i,j\in [n]$ are random variables. We will then make the two following crucial observations:
\begin{itemize}
    \item For every $i\in [n]$, the set of deterministic quantities $\{p_{i\to j},j\in [n]\}$ exhibits the following desirable clustering property:
    \[
    \min_{j\in \operatorname{N}_i,j'\notin\operatorname{N}_i^c}|p_{i\to j}-p_{i\to j'}|>\max\left\{\max_{j,j'\in \operatorname{N}_i}|p_{i\to j}-p_{i\to j'}|,\max_{j,j\in \operatorname{N}_i^c}|p_{i\to j}-p_{i\to j'}|\right\}\,.
    \]
    \item There exists a random variable $\mathcal J$ independent of $i,j\in [n]$ such that $\mathcal J_{i\to j},i,j\in [n]$ are all very close to $\mathcal J$, and so
    \[
\mathcal A_{i\to j}\approx MT_*p_{i\to j}+(1-p_{i\to j})\mathcal J=(MT_*-\mathcal J)p_{i\to j}+\mathcal J\,.
\]
\end{itemize}
    Roughly speaking, the first of the two observations holds because, for $j\in \operatorname{N}_i$, the term $p_{j,j}^{(t_*)}=1$ appears in the sum $\sum_{u\in \operatorname{N}_i}p_{u,j}^{(t_*)}$, and this term is significantly larger than other summands $p_{u,j}^{(t_*)}$. The second item essentially arises from the fact that a random walk on $G^*$ mixes very well after $t_*$ steps (so the effect of the starting point gets diluted). 

Utilizing a quantitative version of the above observations (see \eqref{eq-R=J} and \eqref{eq-difference-is-large-in-differnet-group} below), the clustering property follows upon showing that $MT_*-\mathcal J$ satisfies an appropriate lower bound with probability at least $0.95$. Finally, the last task is carried out via a stochastic comparison enabled by the martingale-like properties of $\mathcal J$ (see Lemma~\ref{lem-effective-observations-lower-bound}). The detailed arguments are provided in the next subsection, with the proof of several technical lemmas deferred to the appendix.

\subsection{Analysis of the recovery algorithm}\label{subsec-algo-analysis}

Define a filtration $\{\mathcal F_{k}\}_{k=1}^{M\cdot T_*}$ as follows: for $k=rT_*+s$, $0\le r\le M-1,1\le s\le T_*$, let
\[
\mathcal F_{k}:=\sigma\left(\mathbf{x}_{m}^{(t)},1\le m\le r,0\le t\le T_m; \mathbf{x}_{r+1}^{(t)},0\le t \le t_{s-1}\right)\,.
\]
Define $\mathcal A_{m,i\rightarrow j}^{(t_s)}:=\mathbb{E}[x_{m,i}^{(t_s+1)}x_{m,j}^{(t_s)}\ \mid \mathcal F_{(m-1)T_*+s}]$ and $\mathcal M_{m,i\rightarrow j}^{(t_s)}:=x_{m,i}^{(t_s+1)}x_{m,j}^{(t_s)}-\mathcal A_{m,i\rightarrow j}^{(t_s)}$. Moreover, let $\prec$ denote the lexicographical order on the set $\mathcal I=\{(r,s):1\le r\le M,1\le s\le T_*\}$. For each pair $(r,s)\in \mathcal I$, let
\[
\mathcal{M}_{i\rightarrow j}^{(rT_*+s)}:=\sum_{(r',s')\preceq (r,s)}\mathcal M_{r',i\rightarrow j}^{(t_{s'})}\,,\quad
\mathcal A_{i\rightarrow j}^{(rT_*+s)}:=\sum_{(r',s')\preceq (r,s)}\mathcal A_{r',i\rightarrow j}^{(t_{s'})}\,.
\] For simplicity, we abbreviate $\mathcal M_{i\to j}^{(M\cdot T_*)}\triangleq \mathcal M_{i\to j}$ and $\mathcal A_{i\to j}^{(M\cdot T_*)}\triangleq \mathcal A_{i\to j}$.
This way, $S_{i\rightarrow j}=\mathcal{M}_{i\rightarrow j}+ \mathcal{A}_{i\rightarrow j}$ and the process $\{\mathcal M_{i\rightarrow j}^{(k)}\}_{k=1}^{M\cdot T_*}$ is a martingale with respect to $\{\mathcal F_k\}_{k=1}^{M\cdot T_*}$. 

We first handle the terms $\mathcal M_{i\to j}$ and show that, w.h.p., all of them are well-bounded. Recall that $c_0>0$ is the universal constant in assumption~\eqref{eq-p-assumption} (the specific choice of $c_0$ will become clear near the end of this section).
\begin{lemma}\label{lem-martingale-concentration}
	\label{prop-martingale-concentration}
For $C=C(\delta)$ large enough, it holds w.h.p. that 
	\begin{equation}\label{eq-martingale-concetnration}
		|\mathcal M_{i\to j}|<\frac{50c_0MT_*}{np}\,,\forall i,j\in [n]\,.
	\end{equation}
\end{lemma}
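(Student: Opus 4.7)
The bound~\eqref{eq-martingale-concetnration} concerns the final value of the martingale $\{\mathcal{M}_{i \to j}^{(k)}\}_{k=1}^{M T_*}$, and my plan is a direct application of Azuma--Hoeffding followed by a union bound over the $n^2$ pairs $(i, j)$. The key point is that the martingale increments are uniformly bounded: each one has the form
\begin{equation*}
\mathcal{M}_{m, i \to j}^{(t_s)} \;=\; x_{m,i}^{(t_s+1)} x_{m,j}^{(t_s)} \;-\; \mathcal{A}_{m, i \to j}^{(t_s)},
\end{equation*}
and since $|x_{m,i}^{(t_s+1)} x_{m,j}^{(t_s)}| = 1$ almost surely, the conditional expectation $\mathcal{A}_{m, i \to j}^{(t_s)}$ also lies in $[-1, 1]$, giving $|\mathcal{M}_{m, i \to j}^{(t_s)}| \le 2$.

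With $M T_*$ increments, each bounded by $2$, Azuma--Hoeffding yields
\begin{equation*}
\mathbb{P}\bigl(|\mathcal{M}_{i \to j}| \ge \lambda\bigr) \;\le\; 2 \exp\!\left(-\frac{\lambda^2}{8 M T_*}\right)
\end{equation*}
for every $\lambda > 0$. Substituting $\lambda = \tfrac{50 c_0 M T_*}{np}$ and invoking $M T_* \ge \tfrac{C}{2 t_*}\, n^2 p^2 \log n$ (which follows from~\eqref{eq-main-condition} combined with the standing bound $T_* \ge \tfrac{1}{2 t_*} \min\{T, n\}$), the exponent simplifies to a constant multiple of $\tfrac{C \log n}{t_*}$. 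Since $t_* = c(2, \delta) = O(1)$ depends only on $\delta$, taking $C = C(\delta)$ sufficiently large makes this probability at most $n^{-3}$. A union bound over the $n^2$ pairs $(i, j) \in [n]^2$ then yields~\eqref{eq-martingale-concetnration} with probability $1 - o(1)$.

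I do not anticipate any real obstacle here --- the whole argument reduces to one application of a standard martingale concentration inequality, and the only bookkeeping is to ensure the tail exponent beats the $\log n$ coming from the union bound. The reason the target threshold is $\tfrac{50 c_0 M T_*}{np}$ rather than a sharper bound is tactical: this quantity is a small fraction of the ``signal'' $\tfrac{M T_*}{np}$ that will appear in the drift term $\mathcal{A}_{i \to j}$ (since $c_0$ from~\eqref{eq-p-assumption} is small), and such a gap is exactly what the subsequent clustering argument based on~\eqref{eq-clustering} requires.
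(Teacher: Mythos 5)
Your proposal is correct and matches the paper's proof essentially line for line: Azuma--Hoeffding with increments bounded by $2$, the substitution $\lambda = 50c_0 M T_*/(np)$, the lower bound $M T_* \ge C n^2 p^2 \log n / (2t_*)$ to make the exponent a large multiple of $\log n$ for $C = C(\delta)$ large, and a union bound over the $n^2$ pairs. No gaps.
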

Lemma~\ref{lem-martingale-concentration} follows from a straightforward application of Azuma's inequality together with the union bound; the details can be found in Appendix~\ref{appendix-positive}. We now turn our attention to the drift term $\mathcal A_{i\to j}$, which poses the major difficulty. By the Markov property of the voter dynamics, we have for any $1\le m\le M,i,j\in [n]$ and $1\le s\le T_*$ that
\[
\mathcal A_{m,i\rightarrow j}^{(t_s)}=\mathbb{E}\left[x_{m,i}^{(t_s+1)}x_{m,j}^{(t_s)}\ \mid \mathbf{x}_{m}^{(t_{s-1})}\right]=\frac{1}{d_i}\sum_{u\in \operatorname{N}_i}\mathbb E\left[x_{m,u}^{(t_s)}x_{m,j}^{(t_s)}\ \mid \mathbf{x}_{m}^{(t_{s-1})}\right]\,.
\] 
Recall the definition of $p_{i,j}^{(t)}$ for $i,j\in [n]$ and $t\ge 0$ in \eqref{eq-p_ij^t}. Using the duality to coalescing random walks, the above can be rewritten as
\begin{equation}
	\begin{aligned}
		{\mathcal A}_{m,i\rightarrow j}^{(t_s)}= \frac{1}{d_i}\sum_{u\in\operatorname{N}_i}p_{u,j}^{(t_*)}+\mathcal R_{m,i\rightarrow j}^{(t_s)}\,,
	\end{aligned}
\end{equation}
where 
\[
{\mathcal R}_{m,i\rightarrow j}^{(t_s)}:=\frac{1}{d_i}\sum_{u\in\operatorname{N}_i}(1-p_{u,j}^{(t_0)})\mathbb{E}_{(p,q)\sim \mu_{u,j}}[x_{m,p}^{(t_{s-1})}x_{m,q}^{(t_{s-1})}]\,,
\]
and $\mu_{u,j}$ is the joint probability distribution of the time $t_*$ locations of two independent random walks starting at $u,j$ conditioned on the event that they do not meet before or on time $t_*$.

We use the next lemma to handle the complicated measure $\mu_{u,j}$. 


\begin{lemma}\label{lem-TV-distance}
    Under assumption \eqref{eq-p-assumption}, we have that for any $u\neq j\in [n]$, $\operatorname{TV}(\mu_{n,j},\pi^{\otimes 2})\le \frac{50c_0}{np}\,.
    $
\end{lemma}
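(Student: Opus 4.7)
My approach is to use the triangle inequality,
\[
\operatorname{TV}(\mu_{u,j},\pi^{\otimes 2}) \leq \operatorname{TV}(\mu_{u,j},\nu_{u,j}) + \operatorname{TV}(\nu_{u,j},\pi^{\otimes 2}),
\]
where $\nu_{u,j}$ is the \emph{unconditioned} joint law at time $t_*$ of two independent random walks $(X_s)$, $(Y_s)$ on $G^*$ started at $u$ and $j$. Under $\nu_{u,j}$ the walks are independent, so $\nu_{u,j} = \pi_{t_*,u} \otimes \pi_{t_*,j}$; the standard product-TV inequality combined with admissibility property~(ii) applied with $k=2$ (which is precisely why we take $t_* = c(2,\delta)$) bounds the second summand by $2n^{-2}$.

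For the first summand I would invoke the elementary bound $\operatorname{TV}(P(\cdot\mid A),P) \leq P(A^c)$, which follows from $P(B\mid A)-P(B) = P(A^c)(P(B\mid A)-P(B\mid A^c))$, taken with $A = \{X_s \neq Y_s \text{ for all } 0 \leq s \leq t_*\}$. A union bound then gives $P(A^c) \leq \sum_{s=1}^{t_*} \mathbb{P}[X_s = Y_s]$ (no $s=0$ contribution since $u \neq j$). The core task is to control each collision probability $Z_s := \mathbb{P}[X_s = Y_s]$ uniformly. Conditioning on $(X_{s-1},Y_{s-1})=(a,b)$, the transition rule gives $\mathbb{P}[X_s = Y_s \mid \cdot] = |\operatorname{N}_a \cap \operatorname{N}_b|/(d_a d_b)$ when $a \neq b$, and $1/d_a$ when $a=b$. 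Admissibility property~(i) produces a uniform estimate: in the regime $np^2 \geq (\log n)^4$, $|\operatorname{N}_a \cap \operatorname{N}_b| \leq (1+o(1))np^2$, so the ratio is at most $(1+o(1))p/(np) \leq (1+o(1))c_0/(np)$ (here is where $p \leq c_0$ enters); in the regime $np^2 < (\log n)^4$, the overlap is at most $4(\log n)^4$, and dividing by $d_a d_b \geq (1-o(1))(np)^2 \geq (1-o(1))n^{2\delta}$ yields the strictly smaller bound $o(1)/(np)$. Together with $1/d_a \leq (1+o(1))/(np)$, this gives the recursion
\[
Z_s \leq \frac{(1+o(1))\,c_0}{np} + \frac{1+o(1)}{np}\, Z_{s-1}.
\]

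Iterating from $Z_0 = 0$ (since $u \neq j$) and using $np \geq n^{\delta} \to \infty$, one obtains $Z_s \leq (1+o(1))c_0/(np)$ uniformly in $s \leq t_*$. Summing yields $P(A^c) \leq (1+o(1))\,t_*\, c_0/(np)$, and adding the $2n^{-2}$ from the mixing piece produces the claimed bound $\frac{50 c_0}{np}$ after absorbing $t_* = O(1)$ (depending only on $\delta$) into the constant. The main obstacle is to extract the factor $c_0$ (rather than a crude $O(1)$) in the overlap estimate: this forces the dense/sparse case split in admissibility~(i) and relies crucially on the assumption $p \leq c_0$. One must also be careful to preserve this factor through the induction, rather than letting it inflate upon composition with the diagonal collision term $1/d_a$.
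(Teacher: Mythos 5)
Your decomposition and the two ingredients (mixing after $t_*$ steps, and a collision bound via $p(G^*)=\max_{a\neq b}|\operatorname N_a\cap\operatorname N_b|/(d_a d_b)$) match the paper's proof in structure, and your elementary bound $\operatorname{TV}(P(\cdot\mid A),P)\le P(A^c)$ even saves the factor of $2$ the paper pays when passing from the conditional measure to $\pi_{t_*,u}\otimes\pi_{t_*,j}$. The recursion for $Z_s$ is a valid alternative to the paper's direct union bound over the first-collision time; both ultimately produce $P(A^c)\lesssim t_*\cdot p(G^*)$.

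The gap is in the final accounting, where you write that the bound $(1+o(1))\,t_*\,c_0/(np)$ ``produces the claimed bound $\frac{50c_0}{np}$ after absorbing $t_*=O(1)$ (depending only on $\delta$) into the constant.'' This step does not go through: the constant $50c_0$ in the lemma is a fixed universal constant (the paper later chooses $c_0\le\zeta/1500$ with $\zeta$ a universal constant, and the explicit multiples $100c_0$, $200c_0$, $400c_0$ must fit together in \eqref{eq-200c_0}--\eqref{eq-3.11}), whereas $t_*=c(2,\delta)$ grows without bound as $\delta\to 0$. So a $\delta$-dependent factor cannot be ``absorbed'' into $50$. In fact you already derive the intermediate estimate $p(G^*)\le(1+o(1))p/(np)$ before weakening it to $(1+o(1))c_0/(np)$ — if you keep it, you see that whenever $p=o(1)$ the quantity $p(G^*)$ is $o(1/(np))$, which swallows the $t_*$ factor for free. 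The remaining regime, $p=\Theta(1)$ with $p\le c_0$, is exactly where your argument breaks and where the paper makes the separate observation that one may then re-run the mixing estimate with $\delta=0.99$ (since $np\ge n^{0.99}$ when $p=\Omega(1)$), so that $t_*=c(2,0.99)$ is a fixed small absolute constant (the paper takes $t_*\le 10$), and $t_*p(G^*)\le 20/n=20p/(np)\le 20c_0/(np)$. Without this dichotomy the proof does not close.
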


Lemma~\ref{lem-TV-distance} is a consequence of admissibility, and its proof is deferred to Appendix~\ref{appendix-positive}.
As a consequence of Lemma~\ref{lem-TV-distance}, if we define $\mathcal J_{m}^{(t_{s-1})}:=\mathbb{E}_{(p,q)\sim \pi^{\otimes2}}[x_{m,p}^{(t_{s-1})}x_{m,q}^{(t_{s-1})}]$ then 
\begin{equation}\label{eq-R=J}
{\mathcal R}_{m,i\rightarrow j}^{(t_s)}=\frac{1}{d_i}\sum_{u\in\operatorname{N}_i}(1-p_{u,j}^{(t_*)})\mathcal J_{m}^{(t_{s-1})}+\operatorname{err}_{m,i\to j}^{(t_{s-1})}\,,
\end{equation}
where $\operatorname{err}_{m,i\to j}^{(t_{s-1})}$ has absolute value no more than $\frac{50c_0}{np}$. 
Hence, 
\[
\mathcal A_{m,i\to j}^{(t_s)}=\frac{1-\mathcal J_m^{(t_s-1)}}{d_i}\sum_{u\in \operatorname{N}_i}p_{u,j}^{(t_*)}+\mathcal J_{m}^{(t_{s-1})}+\operatorname{err}_{m,i\to j}^{(t_{s-1})}\,.
\]

The key intuition here is that for any $i,j,j'$, if $j,j'$ both belong to $\operatorname{N}_i$ or both belong to $\operatorname{N}_i^c$, then $A_{m,i\to j}^{(t_{s-1})}$ and $A_{m,i\to j'}^{(t_{s-1})}$ are approximately equal. However, if $j\in \operatorname{N}_i$ while $j'\in \operatorname{N}_i^c$, then $\mathcal A_{m,i\to j}^{(t_{s-1})}$ is considerably larger than $\mathcal A_{m,i\to j'}^{(t_{s-1})}$ as there is the term $p_{j,j}^{(t_*)}=1$ that is significantly larger than other terms $p_{u,j}^{(t_*)}$ with $u\neq j$. To make this precise, we introduce the following lemma.

\begin{lemma}\label{lem-concentration-p}
    For any pairs $(u,v)$ and $(u',v')$ with $u\neq v$ and $u'\neq v'$, $|p_{u,v}^{(t_*)}-p_{u',v'}^{(t_*)}|= o\left(\frac{1}{np}\right)$. 
\end{lemma}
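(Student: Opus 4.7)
The plan is to decompose the meeting probability $p_{u,v}^{(t_*)}$ into per-step meeting contributions and establish that each slice is essentially the same across all valid initial pairs.

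First I would apply Bonferroni to write $p_{u,v}^{(t_*)}=\sum_{s=1}^{t_*}\mathbb{P}[X_s=Y_s]-R_{u,v}$, where $(X_t),(Y_t)$ are the two independent random walks from $u,v$ and $0\le R_{u,v}\le\sum_{s<s'}\mathbb{P}[X_s=Y_s,X_{s'}=Y_{s'}]$. Admissibility, through the one-step bound $\|P_{\cdot,v}\|_2^2=O(1/(np))$ and a Cauchy–Schwarz iteration, gives $\|\pi_{s,w}\|_2^2=O(1/(np))$ for $s\ge 1$, hence $\mathbb{P}[X_s=Y_s]=O(1/(np))$. Conditioning on the state at time $s$ then yields $\mathbb{P}[X_s=Y_s,X_{s'}=Y_{s'}]=O(1/(np)^2)$, so $R_{u,v}=O(t_*^2/(np)^2)=o(1/(np))$, making its $(u,v)$-variation trivially $o(1/(np))$.

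Next, for each slice I decompose $\pi_{s,u}=\pi+\alpha_{s,u}$ and use $\pi^TP^s=\pi^T$ to write $\mathbb{P}[X_s=Y_s]=\pi(u)+\pi(v)-\|\pi\|_2^2+\langle\alpha_{s,u},\alpha_{s,v}\rangle$. The first three terms depend on $(u,v)$ only through $\pi(u)+\pi(v)$, which by admissibility varies by $o(1/n)=o(1/(np))$. The main work is controlling the inner product. Writing $\alpha_{s,u}=\alpha_{s-1,u}P$ gives $\langle\alpha_{s,u},\alpha_{s,v}\rangle=\alpha_{s-1,u}^T(PP^T)\alpha_{s-1,v}$, and the entries of $PP^T$ are exactly the one-step meeting probabilities $|\operatorname{N}_w\cap\operatorname{N}_{w'}|/(d_wd_{w'})$ (off-diagonal) and $1/d_w$ (diagonal). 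Admissibility yields the pointwise decomposition $PP^T=\tfrac{1}{n}J+\beta I+E$ with $\beta=(1-p)/(np)$ and $\|E\|_{\max}=o(1/(np))$. Because rows of $\alpha_{s-1,\cdot}$ sum to zero the $J$-part annihilates, and Hölder with $\|\alpha_{s-1,\cdot}\|_1\le 2$ gives the recurrence
\[
\langle\alpha_{s,u},\alpha_{s,v}\rangle=\beta\,\langle\alpha_{s-1,u},\alpha_{s-1,v}\rangle+\eta_s(u,v),\qquad |\eta_s(u,v)|\le 4\,o(1/(np)).
\]

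Unrolling yields $\langle\alpha_{s,u},\alpha_{s,v}\rangle=\beta^s\langle\alpha_{0,u},\alpha_{0,v}\rangle+\sum_{k=1}^s\beta^{s-k}\eta_k(u,v)$. Since $\beta\le 1/(np)$, $t_*=O(1)$, and the base case $\langle\alpha_{0,u},\alpha_{0,v}\rangle=-\pi(u)-\pi(v)+\|\pi\|_2^2$ varies in $(u,v)$ by $o(1/n)$, the inner-product variation is $o(1/(np))$ uniformly in $s\le t_*$. Summing over $s=1,\ldots,t_*$ and combining with the $R$-bound proves the claim. The hard part will be establishing the uniform estimate $\|E\|_{\max}=o(1/(np))$: this requires separately treating the two regimes in the definition of admissibility — in the dense case $np^2\ge(\log n)^4$ one invokes the concentration \eqref{eq-overlap-1} together with the degree bound \eqref{eq-degree}, while in the sparse case $np^2<(\log n)^4$ one uses \eqref{eq-overlap-2} and exploits that $p\le(\log n)^2/\sqrt n=o(1)$, so the entire deviation $|Q_{w,w'}-1/n|=O(1/n)$ is absorbed into $o(1/(np))$.
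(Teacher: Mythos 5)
Your proposal takes a genuinely different route from the paper's, and (modulo one misstated identity, discussed below) it works. The paper's argument is much shorter and more elementary: it writes the one-step recursion $p_{u,v}^{(t)}=\frac{1}{d_u d_v}\sum_{i\in\operatorname{N}_u,\,j\in\operatorname{N}_v}p_{i,j}^{(t-1)}$, notes that the only contribution distinguishing $(u,v)$ from $(u',v')$ at the top level is the diagonal part $p_{u,v}^{(1)}=|\operatorname{N}_u\cap\operatorname{N}_v|/(d_ud_v)$ (since $p_{i,i}^{(t-1)}=1$), which gives $\delta_t\leq t\,\delta_1$ for the across-pair variation $\delta_t$, and then bounds $\delta_1=o(1/(np))$ directly from Property (i) of admissibility. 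Your Bonferroni plus spectral decomposition $PP^T=\tfrac1n J+\beta I+E$ with $\|E\|_{\max}=o(1/(np))$, the $J$-part being annihilated since $\alpha$ has zero row sum, is considerably heavier machinery, but it does deliver a sharper per-slice expansion of $\mathbb P[X_s=Y_s]$ rather than just a variation bound. Both routes ultimately lean on exactly the same admissibility inputs: the degree and co-neighborhood concentration in \eqref{eq-degree}--\eqref{eq-overlap-2} and $\pi(w)=(1+o(1))/n$.

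One step is stated incorrectly, though it does not break the conclusion. The identity $\mathbb{P}[X_s=Y_s]=\pi(u)+\pi(v)-\|\pi\|_2^2+\langle\alpha_{s,u},\alpha_{s,v}\rangle$ holds only for reversible chains, and $G^*$ here is directed. Stationarity $\pi P^s=\pi$ gives $(\pi P^s)(v)=\pi(v)$, whereas the quantity appearing after expanding $\langle\pi_{s,u},\pi_{s,v}\rangle$ is $\langle\pi,\pi_{s,v}\rangle=(\pi (P^T)^s)(v)$, which is not $\pi(v)$ for asymmetric $P$. The correct expansion is
\[
\mathbb{P}[X_s=Y_s]=-\|\pi\|_2^2+\langle\pi,\pi_{s,v}\rangle+\langle\pi_{s,u},\pi\rangle+\langle\alpha_{s,u},\alpha_{s,v}\rangle\,.
\]
Fortunately $\langle\pi,\pi_{s,v}\rangle$ still depends only on $v$, and since $\pi(w)=(1+o(1))/n$ uniformly and $\sum_w\big(\pi_{s,v}(w)-\pi_{s,v'}(w)\big)=0$, its $v$-variation is $o(1/n)\cdot\|\pi_{s,v}-\pi_{s,v'}\|_1=o(1/n)$, so the $o(1/(np))$ bound on the cross-term variation survives. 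You should fix the stated identity, but the overall structure of the argument holds, and the rest of the steps — the Bonferroni bound on $R_{u,v}$, the $\|E\|_{\max}$ estimate in both degree regimes, the Hölder step with $\|\alpha\|_1\leq 2$, and the geometric unrolling with $\beta\leq 1/(np)$ and $t_*=O(1)$ — all check out.
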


Lemma~\ref{lem-concentration-p} is also a consequence of admissibility, and its proof can once again be found in Appendix~\ref{appendix-positive}.
By Lemma~\ref{lem-concentration-p}, for any $i,j,j'$ such that $j,j'\in \operatorname{N}_i$ or $j,j'\in \operatorname{N}_i^c$, we have
\begin{equation}\label{eq-difference-is-small-in-the-same-group}
    \left|\frac{1}{d_i}\sum_{u\in \operatorname{N}_i}p_{u,j}^{(t_*)}-\frac{1}{d_i}\sum_{u\in \operatorname{N}_i}p_{u,j'}^{(t_*)}\right|=o\left(\frac 1{np}\right)\,.
\end{equation}
Moreover, since $\max_{u\neq v}p_{u,v}^{(t_*)}=o(1)$ (this can be deduced upon inspecting the proof of Lemma~\ref{lem-TV-distance}, but can also be proven in a straightforward manner), for any $i,j,j'$ such that $j\in \operatorname{N}_i$ and $j'\in \operatorname{N}_i^c$,
\begin{equation}\label{eq-difference-is-large-in-differnet-group}
\frac{1}{d_i}\sum_{u\in \operatorname{N}_i}p_{u,j}^{(t_*)}\ge \frac{1}{2np}+\frac{1}{d_i}\sum_{u\in \operatorname{N}_i}p_{u,j'}^{(t_*)}\,.
\end{equation}

In light of \eqref{eq-difference-is-small-in-the-same-group} and \eqref{eq-difference-is-large-in-differnet-group}, for any $i,j,j'$ such that $j,j'\in \operatorname{N}_i$ or $j,j'\in \operatorname{N}_i^c$, and any $1\le m\le M$ and $0\le s\le T_*$, 
\[
|\mathcal A_{m,i\to j}^{{(t_s)}}-\mathcal A_{m,i\to j'}^{(t_s)}|\le o\left(\frac 1{np}\right)+\operatorname{err}_{m,i\to j}^{(t_{s-1})}+\operatorname{err}_{m,i\to j'}^{(t_{s-1})}\le \frac{200c_0}{np}\,.
\]
Summing over $m$ and $s$, we get
\begin{equation}\label{eq-200c_0}
|\mathcal A_{i,j}-\mathcal A_{i,j'}|\le \frac{200c_0MT_*}{np}\,.
\end{equation}
Similarly, for any $i$ and any $j\in \operatorname{N}_i,j'\in \operatorname{N}_i^c$, 
\begin{equation}\label{eq-gap}
    \mathcal A_{i,j}-\mathcal A_{i,j'}\ge \frac{1}{2np}\sum_{m=1}^M\sum_{s=1}^{T_*}(1-\mathcal J_{m}^{(t_{s-1})})-\frac{100c_0MT_*}{np}\,.
\end{equation}

We now proceed to lower-bound the sum in \eqref{eq-gap}. Recall that
\[
\mathcal J_m^{(t_{s-1})}=\mathbb{E}_{(p,q)\sim \pi^{\otimes 2}}[x_{m,p}^{(t_{s-1})}x_{m,q}^{(t_{s-1})}]=\left(\mathbb{E}_{p\sim \pi}[x_{m,p}^{(t_{s-1})}]\right)^2\,.
\]
We write $W_m^{(t)}=\mathbb{E}_{p\sim \pi}[x_{m,p}^{(t)}]=\sum_{i\in[n]}\pi(i)x_{m,p}^{(t)}$ and then define $\widetilde{T}_m$ as follows: If there exists $t\le \min\{T,n\}$ such that $|W_m^{(t)}|>\frac 12$, let $\widetilde{T}_m$ be the minimal such $t$; otherwise, set $\widetilde {T}_m=\min\{T,n\}$. The following lemma provides a lower bound on the sum of these times $\widetilde T_m$.
 
\begin{lemma}\label{lem-effective-observations-lower-bound}
    There exists a universal constant $\zeta>0$ such that, for any $M\ge 1$, it holds with probability at least $0.95$ that 
    \begin{equation}\label{eq-effective-observations-lower-bound}
    \sum_{m=1}^M \widetilde{T}_m\ge \zeta\cdot M\cdot \min\{T,n\}\,.
    \end{equation}
\end{lemma}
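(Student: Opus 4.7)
The plan is to exploit the martingale structure of $W_m^{(t)} = \sum_i \pi(i)\, x_{m,i}^{(t)}$, bound its second moment, and then combine Doob's $L^2$ maximal inequality with a single Markov aggregation step that treats all $M \ge 1$ simultaneously.

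First I would verify that, for each fixed $m$, the process $(W_m^{(t)})_{t \ge 0}$ is a martingale with respect to the filtration $\mathcal{F}_t^{(m)} = \sigma(\mathbf{x}_m^{(0)}, \dots, \mathbf{x}_m^{(t)})$. Indeed, the voter update rule gives $\mathbb{E}[x_{m,i}^{(t+1)} \mid \mathcal{F}_t^{(m)}] = d_i^{-1} \sum_{j \in \operatorname{N}_i} x_{m,j}^{(t)}$, and the stationarity of $\pi$ under the random walk on $G^*$ (admissibility (ii)) yields
\[
\mathbb{E}\bigl[W_m^{(t+1)} \bigm| \mathcal{F}_t^{(m)}\bigr] = \sum_j x_{m,j}^{(t)} \sum_{i:\, j \in \operatorname{N}_i} \frac{\pi(i)}{d_i} = \sum_j \pi(j)\, x_{m,j}^{(t)} = W_m^{(t)}.
\]
Moreover, conditional on $\mathcal{F}_t^{(m)}$ the coordinates $x_{m,i}^{(t+1)}$ are independent across $i$ and bounded by $1$, so
\[
\operatorname{Var}\bigl(W_m^{(t+1)} - W_m^{(t)} \bigm| \mathcal{F}_t^{(m)}\bigr) = \sum_i \pi(i)^2\, \operatorname{Var}\bigl(x_{m,i}^{(t+1)} \bigm| \mathcal{F}_t^{(m)}\bigr) \le \sum_i \pi(i)^2 \le \max_i \pi(i) = \frac{1 + o(1)}{n},
\]
again by admissibility (ii). Combined with $\mathbb{E}[(W_m^{(0)})^2] = \sum_i \pi(i)^2 \le (1+o(1))/n$ (since $\mathbf{x}_m^{(0)}$ is uniform on $\{\pm 1\}^n$), iterating gives $\mathbb{E}[(W_m^{(t)})^2] \le 2(t+1)/n$ for every $t \ge 0$ and all sufficiently large $n$.

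Next I would apply Doob's $L^2$ maximal inequality to the submartingale $\bigl((W_m^{(t)})^2\bigr)_t$, obtaining
\[
\mathbb{P}\bigl[\widetilde{T}_m \le t\bigr] = \mathbb{P}\bigl[\max_{s \le t} (W_m^{(s)})^2 > 1/4\bigr] \le 4\,\mathbb{E}[(W_m^{(t)})^2] \le \frac{8(t+1)}{n}.
\]
Fix a small universal constant $\zeta > 0$ (say $\zeta = 1/500$) and set $t_0 = \lfloor \zeta \min\{T,n\} \rfloor$; since $t_0 \le \zeta n$, we get $\mathbb{P}[\widetilde{T}_m \le t_0] \le 8\zeta + 8/n \le 1/50$ for all $n$ large enough. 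Since the $M$ trajectories are independent, the indicators $Z_m = \mathbbm{1}\{\widetilde{T}_m \le t_0\}$ are i.i.d.\ Bernoulli with mean at most $1/50$, so Markov's inequality gives $\mathbb{P}\bigl[\sum_{m=1}^M Z_m \ge M/2\bigr] \le 2\cdot(1/50) = 1/25$. On the complementary event (of probability at least $0.96 \ge 0.95$), at least $\lceil M/2 \rceil$ of the $\widetilde{T}_m$ satisfy $\widetilde{T}_m \ge t_0 + 1 > \zeta \min\{T,n\}$, and therefore
\[
\sum_{m=1}^M \widetilde{T}_m \ge \frac{M}{2}\cdot \zeta \min\{T,n\} = \frac{\zeta}{2}\, M \min\{T,n\},
\]
which establishes the lemma with $\zeta_{\mathrm{final}} = \zeta/2 = 1/1000$.

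The technical crux is the martingale identification and the step-variance bound in the first step, both of which rest essentially on admissibility (ii) giving $\pi(i) = (1+o(1))/n$; once these are in place, Doob's inequality combined with a one-line Markov argument handles all $M \ge 1$ uniformly, without requiring a separate analysis for small versus large $M$.
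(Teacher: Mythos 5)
Your proof is correct. The core tail estimate is the same as the paper's: you identify $W_m^{(t)}=\mathbb E_{\pi}[x_{m,\cdot}^{(t)}]$ as a martingale, bound its step variance by $\sum_i\pi(i)^2=(1+o(1))/n$, and use a Doob maximal inequality plus Markov to get $\mathbb P[\widetilde T_m\le t]\le O\!\left((t+1)/n\right)$. (Your constant $4$ comes from the weak-type submartingale maximal inequality applied to $(W_m^{(t)})^2$; the paper instead chains Doob's $L^2$ maximal inequality with Chebyshev and gets $16$, which is what you'd more conventionally call ``Doob's $L^2$ inequality.'' Either is fine; only the name is slightly off.)

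Where you genuinely diverge is the aggregation over $m$. The paper first upgrades the tail bound to a stochastic-domination statement — $\widetilde T_m$ dominates $\mathrm{Uni}\{0,\dots,L\}$ with $L=\min\{T,\,n/32-1\}$ — and then splits into cases: Chebyshev on $\sum_m U_m$ when $M\ge 10^6$ (the variance $\Theta(M\min\{T,n\}^2)$ is only controlled relative to the mean squared when $M$ is large), and a separate single-trajectory argument ($U_1\ge L/100$) when $M\le 10^6$. You instead threshold at $t_0=\lfloor\zeta\min\{T,n\}\rfloor$, let $Z_m=\mathbf 1\{\widetilde T_m\le t_0\}$, and apply a first-moment Markov bound to $\sum_m Z_m$: since $\mathbb E[\sum_m Z_m]\le M/50$, at least half the trajectories clear the threshold with probability $\ge 0.96$, uniformly in $M$. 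This sidesteps the variance computation and the case analysis on $M$ entirely, and independence of the $Z_m$ (which you invoke but never use) isn't even needed — only linearity of expectation. The trade-off is a smaller final constant $\zeta_{\mathrm{final}}$, which is immaterial here. Both routes are sound; yours is shorter.
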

Intuitively, Lemma~\ref{lem-effective-observations-lower-bound} follows from the convergence result of \cite{CCC16}, which suggests that for each $1\le m\le M$, $\widetilde{T}_m$ is typically of order $\Theta(\min\{T,n\})$. We provide a self-contained proof in Appendix~\ref{appendix-positive}.
In the following, we assume that Lemma~\ref{lem-effective-observations-lower-bound} holds and complete the analysis of Algorithm~\ref{alg-learning}. Given \eqref{eq-effective-observations-lower-bound}, we have
\begin{align*}
\sum_{m=1}^{M}\sum_{s=0}^{T_*}(1-\mathcal J_{m}^{(t_{s-1})})\ge&\  \frac34\#\left\{1\le m\le M,0\le s\le T_*-1:|W_m^{(t_s)}|\le \frac12\right\}\\
\ge&\ \frac{3}{4}\sum_{m=1}^M{\frac{\widetilde{T}_m}{t_*}}\ge \frac{3\zeta M\cdot\min\{T,n\}}{4t_*}\ge \frac{3\zeta MT_*}{4}\,.
\end{align*}

We pick the universal constant $c_0$ such that $c_0\le \zeta/1500$. This way, with probability at least $0.95$, for any $i\in [n]$, $j\in \operatorname{N}_i$, and $j'\in \operatorname{N}_i^c$, it will be the case that 
\begin{equation}\label{eq-3.11}
\mathcal A_{i\to j}-\mathcal A_{i\to j}\ge \frac{3\zeta MT_*}{8np}-\frac{100c_0MT_*}{np}>\frac{400c_0MT_*}{np}\,. 
\end{equation}
Recall that by Lemma~\ref{lem-martingale-concentration}, \eqref{eq-martingale-concetnration} holds w.h.p.. Combining \eqref{eq-martingale-concetnration} with \eqref{eq-200c_0} and \eqref{eq-3.11}, we conclude that, with probability at least $0.95-o(1)\ge 0.92$, for any $i,j,j'$, if $j\in \operatorname{N}_i$ and $j\in\operatorname{N}^c$, then
\[
\mathcal S_{i\to j}-\mathcal S_{i\to j'}>\frac{400c_0MT_*}{np}-\frac{50c_0MT_*}{np}-\frac{50c_0MT_*}{np}=\frac{300c_0MT_*}{np}\,,
\]
and if $j,j'\in \operatorname{N}_i$ or $j,j'\in \operatorname{N}_i^c$, then
\[
|\mathcal S_{i\to j}-\mathcal S_{i\to j'}|<\frac{200c_0MT_*}{np}+\frac{50c_0MT_*}{np}+\frac{50c_0MT_*}{np}=\frac{300c_0MT_*}{np}\,.
\]
This shows that, with probability at least $0.92$, the clustering property \eqref{eq-clustering} holds for every $i\in [n]$, thereby proving Theorem~\ref{thm-main}.

\subsection{Experimental results on synthetic and real data}\label{subsec-algo-exp}

To evaluate the effectiveness of the classifiers $\mathcal S_{i\to j}^\ddagger,i,j\in [n]$ as proposed in \eqref{eq-S^dagger}, we test the performance of Algorithm~\ref{alg-learning} with $\mathcal S_{i\to j}$ replaced by $\mathcal S^\ddagger_{i\to j}$ (i.e., $t_*=1$). 

We first evaluated the algorithm on randomly generated directed graphs on $[n]$ with $n$ ranging from $50$ to $3000$. Each vertex’s out-degree was sampled uniformly from $\{3,4,5\}$, and its out-neighborhoods were selected independently at random. We simulated the voter model trajectories up to the consensus time $T_{\text{cons}}$, beyond which no further information is gained. Using a single trajectory ($M=1$), we tested whether the algorithm could recover the graph structure. Given the graphs’ sparsity, accuracy may be misleading (e.g., an empty-graph prediction would result in a high accuracy). To address this, we use the $F_1$ score—the harmonic mean of precision and recall—as the metric for performance evaluation. Each experiment was repeated 100 times for statistical robustness. Results are summarized in Table~\ref{table1}.

\begin{table}[ht]
\centering
\resizebox{\textwidth}{!}{
\begin{tabular}{c|ccccccccc }
$n$                                                             & 50    & 100    & 200    & 300    & 400    & 500    & 1000    & 2000    & 3000    \\ \hline
$|\vec{E}|$                                                             & 194   & 392    & 798    & 1204   & 1598   & 1991   & 3998    & 7996    & 11928   \\
Consensus time    & 73.42 & 124.09 & 282.62 & 373.53 & 515.68 & 663.09 & 1378.95 & 2880.21 & 4318.84 \\
 Mean  $F_1$ score    & 0.2155 & 0.1599 & 0.2023 & 0.2255 & 0.2964 & 0.3468 & 0.5676 & 0.7806 & 0.8799 \\
Median $F_1$ score  & 0.1857 & 0.1077 & 0.1113 & 0.1155 & 0.1438 & 0.1924 & 0.6285 & 0.9955 & 0.9999 \\
Baseline $F_1$ score    & 0.1440 & 0.0754 & 0.0391 & 0.0264 & 0.0198 & 0.0158 & 0.0080 & 0.0040 & 0.0026
\end{tabular}
}
\caption{Performance of Algorithm~\ref{alg-learning} on randomly generated directed graphs. We simulated a single trajectory of the voter model ($M=1$) until consensus. The observed consensus time scales linearly with the number of vertices $n$. As $n$ increases, the algorithm’s performance improves, achieving near-perfect recovery for large graphs despite having access to only one realization. The baseline $F_1$ score corresponds to that of an empty-graph prediction. Consensus times are rounded to two decimal places, and $F_1$ scores to four.}
\label{table1}
\end{table}
In Table~\ref{table1}, we also report the baseline $F_1$ score, corresponding to the $F_1$ score of an empty-graph prediction, given by $\frac{2r}{r+1}$, where $r = \frac{|\vec{E}|}{n^2}$ represents the fraction of 1's in the ground-truth adjacency matrix. Our algorithm consistently outperforms this baseline for all $n$. While the baseline $F_1$ score decreases as $n$ grows due to increasing sparsity, our algorithm's $F_1$ score generally improves, with the median reaching nearly 1 for $n \geq 2000$, indicating near-perfect reconstruction. Additionally, the consensus time scales approximately linearly with $n$.

Next, we fixed a graph with $n=3000$ vertices that was generated by the same process as above and examined how the performance of Algorithm~\ref{alg-learning} varies for different values of $M$ and $T$. As in the previous experiment, the graph was constructed with each vertex having an out-degree between $3$ and $5$,  resulting in a total of $|\vec{E}|=12,005$ edges. We varied $M$ from $1$ to $300$ and $T$ from $10$ to $4000$, testing each $(M,T)$ pair on $100$ independent voter model trajectories using our algorithm.

\begin{table}[H]
\centering
\resizebox{\textwidth}{!}{
\begin{tabular}{c|ccccccccccc}
$(M,T)$ & 10    & 30      & 50      & 100     & 200     & 500     & 1000    & 2000 & 3000 & 4000    \\ \hline
1       & 0.0027 & 0.0027   & 0.0028   & 0.0034   & 0.0070   & 0.0657   & 0.5410   & 0.9966     & 0.9990 & 1 \\
3       & 0.0027 & 0.0032   & 0.0047   & 0.0163   & 0.1289   & 0.9932    & 1 & $-$  & $-$ & $-$\\
5       & 0.0028 & 0.0047   & 0.0105   & 0.0735   & 0.6390   & 0.9999     & 1 & $-$  & $-$ & $-$\\
10      & 0.0033 & 0.0158   & 0.0734   & 0.6946   & 0.9994     & 1 & $-$ & $-$  & $-$ & $-$\\
50      & 0.0569 & 0.9947     & 0.9999     & 1 & $-$ & $-$ & $-$ & $-$  & $-$ & $-$\\
100     & 0.5771 & 1    & $-$ & $-$ & $-$ & $-$ & $-$ & $-$  & $-$ & $-$\\
200     & 0.9991   & 1 & $-$ & $-$ & $-$ & $-$ & $-$ & $-$  & $-$ & $-$\\
300     & 1   & $-$ & $-$ & $-$ & $-$ & $-$ & $-$ & $-$ & $-$ & $-$ \\
\end{tabular}}
\caption{Median $F_1$ score of Algorithm~\ref{alg-learning} across 100 trials for different $(M, T)$ pairs on a randomly generated directed graph with $n=3000$ vertices and $|\vec{E}|=12,005$ edges. The baseline $F_1$ score for this graph is 0.00266, and our algorithm outperforms the naive empty-graph prediction even for $(M, T) = (1,10)$. For fixed $M$, once the median $F_1$ score reaches 1, further increasing $T$ adds only redundant information, so those values are omitted. All numerical values are rounded to four decimal places.
}
\label{table2}
\end{table}

Table~\ref{table2} presents the median $F_1$ score over 100 trials for various $(M,T)$ pairs. Across all cases, our algorithm outperformed the naive empty-graph prediction (baseline $F_1$ score: 0.00266). The results confirm that accuracy largely depends on the product $M \cdot T$, as predicted by Theorems~\ref{thm-main} and \ref{thm-negative}. For instance, the median $F_1$ score falls within $0.055$–$0.075$ for $(M,T) = (1,500), (5,100), (10,50)$, and $(50,10)$, indicating similar performance when $M \cdot T$ is fixed. When $M \cdot T$ exceeds 3000, the algorithm achieves perfect recovery in all tests. Intriguingly, the $F_1$ score does not increase gradually with $M \cdot T$; it remains below 0.1 for $M \cdot T \leq 500$, exceeds 0.5 for $M \cdot T \geq 1000$, and surpasses 0.99 for $M \cdot T \geq 1500$. 
Once the median $F_1$ score reaches 1 for a given $M$, further increasing $T$ provides no additional benefit, so these cases are omitted from the table for clarity. These results highlight the robustness and scalability of our approach, demonstrating its potential for real-world applications in causal inference and network reconstruction.

To further assess the practical effectiveness of our algorithm, we applied it to a real-world dataset: the Twitter Interaction Network for the 117th US Congress (\cite{fink2023centrality}). This dataset, collected via Twitter's API, represents directed interactions among members of the US House and Senate, forming a network with $n=475$ vertices and $|\vec{E}|=25,571$ edges. We varied $M$ from 1 to 1000 and simulated voter dynamics until consensus (since the graph is connected, consensus is always reached).

\begin{table}[h] 
\resizebox{\textwidth}{!}{
\begin{tabular}{c|cccccccccc}
$M$              & 1      & 5      & 10     & 50     & 100    & 200    & 300    & 400    & 500    & 1000   \\ \hline
Mean consensus time   & 391.55 & 360.89 & 366.81 & 371.43 & 364.68 & 366.33 & 366.39 & 369.19 & 367.90 & 367.21 \\
Mean $F_1$ score & 0.1142 & 0.1297 & 0.1442 & 0.2639 & 0.3905 & 0.5872 & 0.7146 & 0.7748 & 0.8185 & 0.8957
\end{tabular}}
\caption{Performance of Algorithm~\ref{alg-learning} on the Twitter Interaction Network of the 117th US Congress (\cite{fink2023centrality}), which consists of $n=475$ vertices and $|\vec{E}|=25,571$ directed edges. The table reports the mean consensus time and mean $F_1$ score as $M$ varies. The baseline $F_1$ score for this network is 0.1112, and our algorithm outperforms it even at $M=1$. As $M$ increases, performance improves consistently, with the $F_1$ score exceeding 0.7 at $M=300$. The consensus time remains around 370 across different $M$ values. Mean consensus times are rounded to two decimal places, and $F_1$ scores to four.
}
\label{table3}
\end{table}

As shown in Table~\ref{table3}, the mean $F_1$ score increases with $M$, mirroring the trend observed for synthetic data. Even at $M=1$, the algorithm surpasses the baseline $F_1$ score of 0.1112, indicating that it effectively captures meaningful network structure. For moderate values like $M=300$, the $F_1$ score exceeds 0.7, demonstrating strong alignment with the true network.

These results show that our method generalizes well beyond synthetic graphs, effectively recovering relationships in real-world networks and reinforcing its applicability to broader social network analysis tasks.

\section{Proof of Theorem~\ref{thm-negative}}\label{sec-negative}
We now turn to prove Theorem~\ref{thm-negative}. We first reduce the result to a claim regarding the maximal-likelihood estimator (MLE) via a standard argument. Write $\mathbf{X}=\big\{(\mathbf{x}^{(0)}_m,\cdots,\mathbf{{x}}_m^{(T)}),1\le m\le M\big\}$ for simplicity. For any estimator $\widetilde{G}=\widetilde{G}(\mathbf{X})$, 
our goal is to show that
\[
\mathbb{E}_{G^*\sim \mathcal G(n,p)}\mathbb{P}_{\mathbf{X}\mid G^*}[\widetilde{G}(\mathbf X)=G^*]\le 0.1\,.
\]
Denoting  by $\mu(\mathbf{X})$ the posterior measure on $G^*$ given $\mathbf{X}$, we can rewrite the above expression as
\[
\mathbb{E}_{\mathbf X}\mathbb{P}_{\operatorname{G\sim \mu(\mathbf{X})}}[G=\widetilde{G}(\mathbf{X})]\le \mathbb{E}_{\mathbf{X}}\mathbb{P}_{G\sim \mu(\mathbf X)}[G=\operatorname{MLE}(\mathbf{X})]=\mathbb{E}_{G^*\sim \mathcal G(n,p)}\mathbb{P}_{\mathbf{X}\mid G^*}[\operatorname{MLE}(\mathbf{X})=G^*]\,.
\]
Here, $\operatorname{MLE}(\mathbf{X})$ denotes the maximal-likelihood estimator, and the inequality holds by definition. Since $G^*\sim \mathcal G(n,p)$ is admissible w.h.p., it suffices to show that for any admissible graph $G^*$, $\mathbb{P}_{\mathbf{X}\mid G^*}[\operatorname{MLE}(\mathbf{X})=G^*]\le 0.08$, provided that condition \eqref{eq-condition-negative} holds with $c=c(\delta)$ small enough.

Recall that for a graph $G$ on $[n]$, we use $\operatorname{N}_i(G)$ to denote the out-neighborhood of vertex $i \in [n]$. We also write $\operatorname{N}_i=\operatorname{N}_i(G^*)$ and $d_i=|\operatorname{N}_i|$ for simplicity. 
Additionally, for $1\le m\le M$, $0\le t\le T$, and $i\in [n]$, we let $x_{m,i}^{(t)}\in \{\pm1\}$ be the opinion of vertex $i$ at time $t$ in the $m$-th trajectory. For any subset $\operatorname{N}$ of $[n]$, we denote
\[
S_{m}^{(t)}(\operatorname{N}):=\sum_{j\in \operatorname{N}}x_{m,j}^{(t)}\,.
\]
Given the observations $\mathbf{X}$, we define $T_m$ as follows: If there exists $0\le t\le T$ such that $x_{i,m}^{(t)}=x_{j,m}^{(t)}\ $ for  all $ i,j\in [n]$, then let $T_m$ be the minimal such $t$; otherwise, set $T_m=T$. For any $G$, the likelihood of $\mathbf X$ given that $G^*=G$, is
\begin{align*}
  &\ p^{\sum_{i\in [n]}d_i(G)}(1-p)^{\sum_{i\in [n]}(n-1-d_i(G))}\prod_{m=1}^M\prod_{t=0}^{T_m-1}\prod_{i\in [n]}\left(\frac{1}{2}+\frac{x_{m,i}^{(t+1)}\cdot S_m^{(t)}(\operatorname{N}_i(G))}{2d_i(G)}\right)\\
  \propto&\ \left(\frac{p}{1-p}\right)^{\sum_{i\in [n]}d_i(G)}\prod_{m=1}^M\prod_{t=0}^{T_m-1}\prod_{i\in [n]}\left(1+\frac{x_{m,i}^{(t+1)}\cdot S_m^{(t)}(\operatorname{N}_i(G))}{d_i(G)}\right)\,.
\end{align*}
Therefore, the MLE is given by the maximizer of the score function about $G$, defined as
\[
\mathcal L(G):=\log\left(\frac{p}{1-p}\right)\sum_{i\in[n]}d_i(G)+\sum_{m=1}^M\sum_{t=0}^{T_m-1}\sum_{i\in [n]}\log\left(1+\frac{x_{m,i}^{(t+1)}\cdot S_m^{(t)}(\operatorname{N}_i(G))}{d_i(G)}\right)\,.
\]

Now, consider a triple $(a,b,c)$ with three distinct vertices $a,b,c\in [n]$ such that $(a,b)\in \vec{E}(G^*)$ while $(a,c)\notin \vec{E}(G^*)$. Define the \emph{flipping graph} $G_{abc}$ as the graph obtained from $G^*$ by removing the directed edge $(a,b)$ and adding a directed edge $(a,c)$. We will argue that w.h.p. there exists a flipping graph $G_{abc}$ such that $\mathcal L(G_{abc})>\mathcal L(G^*)$, thus demonstrating the failure of the MLE. A key observation is that this modification only alters the out-neighborhood of $a$, leaving all other vertex neighborhoods unchanged. In contrast, performing a similar local modification to an undirected graph would affect at least two vertex neighborhoods, which could introduce additional complications to the analysis.

For any flipping graph $G_{abc}$, it holds $d_i(G_{abc})=d_i,\forall i\in [n]$. 
Moreover, we have $\operatorname{N}_i(G_{abc})=\operatorname{N}_i(G^*)$ for all $i\neq a$ and  $\operatorname{N}_a(G_{abc})=(\operatorname{N}_a\setminus\{b\})\cup \{c\}$. Thus, $\mathcal L(G_{abc})-\mathcal L(G^*)$ equals
\begin{align*}
    &\sum_{m=1}^M\sum_{t=0}^{T_m-1}\Bigg[\log\left(1+\frac{x_{m,a}^{(t+1)}\cdot S_m^{(t)}(\operatorname{N}_a(G^*))}{d_a}\right)-\log\left(1+\frac{x_{m,a}^{(t+1)}\cdot S_m^{(t)}(\operatorname{N}_a(G_{abc}))}{d_a}\right)\Bigg]\\
    =&
    \sum_{m=1}^M\sum_{t=0}^{T_m-1}\log\left(1+\frac{x_{m,a}^{(t+1)} (x_{m,b}^{(t)}-x_{m,c}^{(t)})}{d_a+x_{m,a}^{(t+1)}\cdot S_m^{(t)}(\operatorname{N}_a(G^*))}  \right)   \,.
\end{align*}

In what follows we further abbreviate $S_m^{(t)}(\operatorname{N}_a(G^*))$ as $S_{m,a}^{(t)}$, and we write
\begin{equation}\label{eq-def-X}
X_{m,a,b,c}^{(t)}:=\frac{x_{m,a}^{(t+1)} (x_{m,b}^{(t)}-x_{m,c}^{(t)})}{d_a+x_{m,a}^{(t+1)}\cdot S_{m,a}^{(t)}}\,.
\end{equation}
OS{We say that a triple $(a,b,c)$ is \emph{bad} if there exist $1\le m\le M$ and $1\le t\le T$ such that $X_{m,a,b,c}^{(t)}=-1$; otherwise, we say the tripe is }\emph{good}\footnote{A bad triple corresponds to a flipping graph that does not lie in the support of the posterior measure of $G^*$ given $\mathbf X$.}. Note that $X_{m,a,b,c}^{(t)}=-1$ occurs only when $x_{m,a}^{(t+1)}S_{m,a}^{(t)}=-d_a+2$ and $x_{m,a}^{(t+1)} (x_{m,b}^{(t)}-x_{m,c}^{(t)})=-2$. Thus, if a tripe $(a,b,c)$ is good, it holds that
\[
-\frac 12\le X_{m,a,b,c}^{(t)}\le 1\,,\forall 1\le m\le M,1\le t\le T_m-1\,.
\]

Let $\gamma:=\min\{\sqrt{\zeta\delta/5000},\zeta/1000,\delta/10\}$ (where $\zeta$  is the universal constant in Lemma~\ref{lem-effective-observations-lower-bound}). We randomly select $n^\gamma$ triples $\{(a_i,b_i,c_i),1\le i\le n^\gamma\}$ in the following way. For each $1\le i\le n^\gamma$, set $a_i=i$ and then, uniformly at random, choose $b_i$ and $c_i$ such that $(a_i,b_i)\in \vec{E}(G^*)$ and $(a_i,c_i)\notin \vec{E}(G^*)$. Here, the choices of $b_i,c_i,1\le i\le n^\gamma$ are independent.

Recall the constant $C^*=C^*(\delta)$ from Property-(iii) of admissibility. Define 
\begin{equation}\label{eq-C0}
C_0=C_0(\delta):=16\max\{100C^*,1\}+10^6\,.
\end{equation}
We assume $M\cdot \min\{T,n\}=cn^2p^2\log n$ for 
\begin{equation}\label{eq-choice-c}
c=c(\delta)<\gamma^2/(100C_0^2).
\end{equation}
We claim that, in order to prove Theorem~\ref{thm-negative}, it suffices to show the following proposition.
\begin{proposition}\label{eq-prop-three-properties}
With probability at least $0.92$, the following three items hold (here, the randomness comes from the voter model trajectories as well as the choices of the triples as above): 
\\
\noindent (i) For each $1\le i\le n^\gamma$, $a_i,b_i,c_i$ are distinct and each triple $(a_i,b_i,c_i)$ is good.\\
\noindent (ii) For each $1\le i\le n^\gamma$, we have that
\begin{equation}\label{eq-quadratic-term}
\sum_{m=1}^M\sum_{t=0}^{T_m-1} \big(X_{m,a_i,b_i,c_i}^{(t)}\big)^2\le C_0c\log n\,.
\end{equation}
\\
\noindent (iii) There exists $1\le i^*\le n^\gamma$ such that
\begin{equation}
\sum_{m=1}^M\sum_{t=0}^{T_m-1} X_{m,a_i,b_i,c_i}^{(t)} \ge \gamma\sqrt{c}\log n\,.
\end{equation}
\end{proposition}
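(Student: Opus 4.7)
Proposition~\ref{eq-prop-three-properties} is the crux of Theorem~\ref{thm-negative}: assuming the three items, pick $i^*$ as in (iii) and apply $\log(1+x)\ge x-x^2$ for $x\ge -1/2$, valid because the triple $(a_{i^*},b_{i^*},c_{i^*})$ is good. The log-likelihood gap becomes
\[
\mathcal L(G_{a_{i^*}b_{i^*}c_{i^*}})-\mathcal L(G^*)\ \ge\ \sum_{m,t}X^{(t)}_{m,a_{i^*},b_{i^*},c_{i^*}}-\sum_{m,t}\bigl(X^{(t)}_{m,a_{i^*},b_{i^*},c_{i^*}}\bigr)^2\ \ge\ \gamma\sqrt{c}\log n - C_0 c\log n,
\]
which is strictly positive by the choice $c<\gamma^2/(100 C_0^2)$; hence the MLE does not select $G^*$ on this event, and the allowed $0.08$ slack in items (i)--(iii) gives the failure probability bound needed for Theorem~\ref{thm-negative}.

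\textbf{Items (i) and (ii).} Distinctness of $(a_i,b_i,c_i)$ is immediate from admissibility, since $d_{a_i}\asymp np\ll n$ makes $c_i\notin \operatorname{N}_{a_i}\cup\{a_i,b_i\}$ overwhelmingly likely. Goodness excludes the event $X^{(t)}_{m,a,b,c}=-1$, which forces the extreme alignment $x_{m,a}^{(t+1)}S_{m,a}^{(t)}=-(d_a-2)$ of $a$'s out-neighborhood. Such alignments are rare: the forthcoming ``from global to local'' Lemma~\ref{lem-FGTL} guarantees $S_{m,a}^{(t)}/d_a\approx S_m^{(t)}/n$, while the random-walk description of $S_m^{(t)}$ in Lemma~\ref{lem-RW} shows $|S_m^{(t)}|/n$ approaches $1$ only within $O(1)$ of consensus; a union bound over the $n^\gamma$ triples and over $(m,t)$ then yields (i). For (ii), the naive estimate $|X^{(t)}_{m,a,b,c}|\le 4/d_a$ when $|S_{m,a}^{(t)}|\le d_a/2$ gives $\sum X^2\le 16M\min\{T,n\}/d_a^2=O(c\log n)$, and the exceptional contribution from $(m,t)$ with $|S_{m,a}^{(t)}|>d_a/2$ is controlled via the same two lemmas and absorbed into the $C_0 c\log n$ budget.

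\textbf{Item (iii) --- the main obstacle.} Condition on the trajectories $\mathbf X$ and rewrite
\[
Z_i:=\sum_{m,t}X^{(t)}_{m,a_i,b_i,c_i}=A_{b_i}-A_{c_i},\qquad A_j:=\sum_{m,t}\alpha_{m,i}^{(t)}x_{m,j}^{(t)},\quad \alpha_{m,i}^{(t)}:=\frac{x_{m,i}^{(t+1)}}{d_i+x_{m,i}^{(t+1)}S_{m,i}^{(t)}}.
\]
Since the pairs $(b_i,c_i)$ are chosen independently across $i$, the variables $(Z_i)_{i=1}^{n^\gamma}$ are \emph{conditionally independent} given $\mathbf X$. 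The plan is to establish, on a high-probability event for $\mathbf X$, the anti-concentration lower bound
\[
\mathbb P\bigl[Z_i\ge \gamma\sqrt{c}\log n\bigm|\mathbf X\bigr]\ \ge\ n^{-\gamma^2/(2C_0)-o(1)}\quad\text{for most } i\in[n^\gamma].
\]
Given this, the probability that no $Z_i$ exceeds $\gamma\sqrt{c}\log n$ is at most $\bigl(1-n^{-\gamma^2/(2C_0)-o(1)}\bigr)^{n^\gamma}=o(1)$, using that $C_0\gg\gamma$ forces $\gamma^2/(2C_0)<\gamma$.

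The core difficulty is the anti-concentration estimate itself. A direct computation reduces $\mathrm{Var}(Z_i\mid\mathbf X)$ to averages of $A_j^2$ over $\operatorname{N}_i$ and its complement; expanding $A_j^2$, these in turn depend on the same-trajectory overlaps $\sum_j x_{m,j}^{(t)}x_{m,j}^{(t')}$, which the random-walk framework of Lemma~\ref{lem-RW} controls through the evolution of $S_m^{(t)}$, and the from-global-to-local Lemma~\ref{lem-FGTL} transfers pointwise to each $A_j$. A Berry--Esseen-style argument then converts the variance estimate into the required Gaussian-type lower tail. Executing this uniformly over the $n^\gamma$ triples while simultaneously handling trajectories that drift toward consensus is where the bulk of the technical work lies.
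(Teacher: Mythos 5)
Items (i) and (ii) are sketched along lines broadly consistent with the paper (union bounds using $\mathcal G_{\operatorname{FGTL}}$ and $\mathcal G_{\operatorname{RW}}$ for goodness; a naive deterministic bound plus control of the exceptional set for the quadratic term), though you gloss over the bookkeeping that makes the constant $C_0$ come out right — in particular, when $T>n$ the sum runs over $T_m$ rather than $\min\{T,n\}$, and the paper introduces the event $\mathcal G$ ($\sum_m T_m\le\max\{100C^*,1\}\cdot cn^2p^2\log n$, from Markov applied to $\mathbb{E}[T_m]\le C^*n$) precisely to make the deterministic $16/(np)^2$-per-term contribution land inside the $C_0c\log n$ budget.

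The real problem is Item (iii), where your conditioning choice is genuinely different from the paper's and creates a gap I don't think can be closed along the stated lines. You condition on the full trajectory data $\mathbf X$ and leave only the random choice of the pairs $(b_i,c_i)$; then $Z_i=A_{b_i}(a_i)-A_{c_i}(a_i)$ is a \emph{difference of two values sampled uniformly from two finite deterministic lists}. Berry--Esseen controls sums of many independent increments, not a single random pick from a fixed list, so it gives you nothing here. The anti-concentration you need is really a statement that the empirical distribution of the deterministic field $\{A_j(a_i):j\in\operatorname{N}_{a_i}\}$ (and of its complement) is spread out and approximately Gaussian on a typical $\mathbf X$-event — a local-CLT / delocalization statement about the $n$-dimensional vector $(A_j)_j$. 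Lemmas~\ref{lem-FGTL} and \ref{lem-RW} control the scalar processes $S_m^{(t)}$ and $S_{m,i}^{(t)}$; they do not control cross-time/cross-trajectory overlaps $\frac{1}{|\operatorname{N}_{a_i}|}\sum_{j\in\operatorname{N}_{a_i}}x_{m,j}^{(t)}x_{m',j}^{(t')}$, nor the higher moments of $A_j$ over $j$ that an empirical CLT would require, so the reduction you describe is not available. The paper avoids all of this by conditioning on the \emph{partial} $\sigma$-field $\mathcal F_{-\mathcal I}$ (which leaves the voter labels at the indices in $\mathcal I$ random). Under that conditioning, $S(i)$ is a sum of \emph{conditionally independent} terms with controllable third moments, so Berry--Esseen genuinely applies (Lemma~\ref{lem-quantitative-estimate}); the second moment method is then run on the conditional law, with Property-(c) of good realizations providing the decoupling $\overline{\mathbb P}[S(i)\ge\chi,S(j)\ge\chi]=(1+o(1))\overline{\mathbb P}[S(i)\ge\chi]\overline{\mathbb P}[S(j)\ge\chi]$ via the intermediate $\mathcal F_{-\{a_i,a_j\}}$ conditioning. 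Your version keeps the conditional independence of $(Z_i)_i$ trivially but sacrifices the one source of randomness (the labels $x_{m,j}^{(t)}$ for $j$ among the triple coordinates) that the paper turns into a tractable Gaussian tail.

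A secondary issue: your exponent $n^{-\gamma^2/(2C_0)-o(1)}$ implicitly treats $C_0c\log n$ as a lower bound for the relevant variance, but Item (ii) gives it only as an \emph{upper} bound on the quadratic sum. The paper instead proves a lower bound $\sigma_i^2\gtrsim\zeta c\log n$ on the relevant conditional variance (via Property-(b) of good realizations, using Lemma~\ref{lem-effective-observations-lower-bound}), which is the quantity the tail exponent actually needs to reference.
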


Assuming this proposition, we can prove Theorem~\ref{thm-negative} as follows.
\begin{proof}[Proof of Theorem~\ref{thm-negative} assuming Proposition~\ref{eq-prop-three-properties}]
    It suffices to show that with probability at least $0.92$, there exists $i$ such that $\mathcal L(G_{a_ib_ic_i})>\mathcal L(G^*)$. By Proposition~\ref{eq-prop-three-properties}, we have with probability at least $0.92$ that Items (i)-(iii) all hold. Assume this is the case and fix $1\le i^*\le n^\gamma$ as in Item (iii). We claim that $\mathcal L(G_{a_{i^*}b_{i^*}c_{i^*}})>\mathcal L(G^*)$. Since $(a_{i^*},b_{i^*},c_{i^*})$ is good, we have
    \[
    -\frac 12\le X_{m,a_{i^*},b_{i^*},c_{i^*}}^{(t)} \le 1\,,\forall 1\le m\le M,0\le t\le T_m-1\,.
    \]
    Therefore, using the fact that $\log(1+x)\ge x-10x^2,\forall -1/2\le x\le 1$, we conclude that
    \begin{align*}
    \mathcal L(G_{a_{i^*}b_{i^*}c_{i^*}})-\mathcal L(G^*)=&\ \sum_{m=1}^M\sum_{t=0}^{T_m-1}\log\left(1+X_{m,a_{i^*},b_{i^*},c_{i^*}}^{(t)}\right)\\
    \ge&\ \sum_{m=1}^M\sum_{t=0 }^{T_m-1}X_{m,a_{i^*},b_{i^*},c_{i^*}}^{(t)}-10\sum_{m=1}^M\sum_{t=0}^{T_m-1}\big(X_{m,a_{i^*},b_{i^*},c_{i^*}}^{(t)}\big)^2\\
    \ge&\ \gamma \sqrt c\log n-10C_0c\log n\,,
    \end{align*}
    which is positive due to our choice of $c$. This finishes the proof.
\end{proof}

The remainder of this section is devoted to proving Proposition~\ref{eq-prop-three-properties}. In Section~\ref{subsec-technical-imputs}, we first establish several important properties of the voter model dynamics. Then, building on these results, Section~\ref{subsec-proof-of-three-properties} addresses all three items in Proposition~\ref{eq-prop-three-properties} using tools varying from stochastic domination to the conditional first- and second-moment methods. To maintain a smooth flow of presentation, we defer some technical proofs to the appendix.

\subsection{Key lemmas regarding voter model dynamics}\label{subsec-technical-imputs}

In what follows, we fix an admissible graph $G^*$. 
We use $\mathbb{P}$ to denote the probability measure governing the randomness over the voter model dynamics. We say that an event $\mathcal G$ occurs ``with overwhelming probability'' (w.o.p.) if $\mathbb P[\mathcal G]\ge 1-n^{-C}$ for any constant $C$ provided that $n$ is large enough. 

Below, we present three key results that contribute to the understanding of the voter model dynamics on admissible graphs. Since the proofs of these statements are mostly disconnected from the main arguments in the rest of the paper, we defer them to Appendix~\ref{appendix-proof-of-key-lemmas} and include only a brief sketch of each here. We begin with a lemma which tells us that $S_m^{(t)}=\sum_{i\in [n]}x_{i,m}^{(t)}$ does not vary too drastically over $t$. 

\begin{definition}[Moderate increment]
    Define $\mathcal G_{\operatorname{MI}}$ as the event that for any $1\le m\le M$, $|S_m^{(0)}|\le n/\log n$,
    and for any $0\le t\le T_m-1$, 
    \begin{equation}
        \label{eq-moderate-increment}
            |S_m^{(t+1)}-S_m^{(t)}|\le 2(\log n)^2(n-|S_m^{(t)}|)^{1/2}\,.
    \end{equation}
\end{definition}

\begin{lemma}\label{lem-MI}
    $\mathcal G_{\operatorname{MI}}$ happens w.o.p..
\end{lemma}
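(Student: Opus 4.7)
The plan is to decompose $S_m^{(t+1)}-S_m^{(t)}$ into a conditionally mean-zero part and a drift part, bound each by a Bernstein-type concentration and a careful deterministic estimate respectively, and then union bound over the polynomially many pairs $(m,t)$. The initial-condition bound $|S_m^{(0)}|\le n/\log n$ is immediate: each $S_m^{(0)}$ is a sum of $n$ i.i.d.\ uniform $\pm 1$ variables, so Hoeffding gives a superpolynomially small failure probability that survives the union bound over $m\le M$ (with $M$ polynomial in $n$ under \eqref{eq-condition-negative}).

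For the per-step increment, I would fix $m$ and $t<T_m$ and condition on $\mathcal F_t:=\sigma(\mathbf x_m^{(0)},\ldots,\mathbf x_m^{(t)})$. The updates $x_{m,i}^{(t+1)}$ are independent across $i$ under this conditioning, with $\mathbb E[x_{m,i}^{(t+1)}\mid\mathcal F_t]=S_m^{(t)}(\operatorname N_i)/d_i$, so I would write $S_m^{(t+1)}-S_m^{(t)} = Z+D$, where $Z := S_m^{(t+1)}-\mathbb E[S_m^{(t+1)}\mid\mathcal F_t]$ and $D := \mathbb E[S_m^{(t+1)}\mid\mathcal F_t]-S_m^{(t)}$. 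A short calculation using degree regularity (Property (i) of admissibility) gives $\operatorname{Var}(Z\mid\mathcal F_t)=\sum_i\!\bigl(1-(S_m^{(t)}(\operatorname N_i)/d_i)^2\bigr) = \sum_i 4d_i^+d_i^-/d_i^2 \le 3(n-|S_m^{(t)}|)$, so Bernstein's inequality (with increment bound $2$) yields $|Z|\le (\log n)^{3/2}\sqrt{n-|S_m^{(t)}|}$ with probability $\ge 1-n^{-\omega(1)}$, uniformly in $\mathcal F_t$.

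For the drift, rewrite $D=\sum_k \epsilon_k x_{m,k}^{(t)}$ with $\epsilon_k := \bigl(\sum_{i:k\in\operatorname N_i}1/d_i\bigr)-1$, which depends only on $G^*$. Admissibility gives the key identity $\sum_k\epsilon_k = 0$ (from $\sum_k\sum_{i:k\in\operatorname N_i}1/d_i = \sum_i 1 = n$) and the pointwise bound $\max_k|\epsilon_k|=O(\sqrt{\log n/(np)})$. Using the identity to write $D=2\sum_{k\in\text{minority}}\epsilon_k$ up to sign, the obvious estimate yields $|D|\le (n-|S_m^{(t)}|)\cdot O(\sqrt{\log n/(np)})$. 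This already suffices when $n-|S_m^{(t)}|\lesssim (\log n)^3\, np$, since then it is dominated by $\tfrac12(\log n)^2\sqrt{n-|S_m^{(t)}|}$. In the complementary regime, where $n-|S_m^{(t)}|$ can be much larger, this pointwise bound is too weak and a sharper estimate is needed: I would exploit that $D$ depends on $\mathbf x_m^{(t)}$ only through the discordant-edge counts $|\vec E(A,B)|-|\vec E(B,A)|$ (where $A,B$ are the $\pm 1$ classes of $\mathbf x_m^{(t)}$), and use a pseudorandom property of $G^*$---either already extractable from admissibility or easily added to the definition and verified w.h.p.\ for $G^*\sim\mathcal G(n,p)$---to show that this difference is of order $O(\sqrt{p|A||B|\log n})$, which then propagates to the desired bound on $|D|$.

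The main obstacle is this drift bound in the far-from-consensus regime: the pointwise estimate for $|\epsilon_k|$ is essentially tight, so one genuinely needs to exploit the pseudorandom structure of the underlying graph beyond degree regularity. Once it is handled, the triangle inequality gives $|S_m^{(t+1)}-S_m^{(t)}|\le 2(\log n)^2\sqrt{n-|S_m^{(t)}|}$ with probability $\ge 1-n^{-\omega(1)}$ at each $(m,t)$, and a final union bound over the polynomially many pairs $(m,t)$ (using $MT=\text{poly}(n)$ and $T_m\le T$) completes the proof of $\mathcal G_{\operatorname{MI}}$.
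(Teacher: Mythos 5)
Your proposal follows essentially the same route as the paper's appendix argument: fix $(m,t)$, condition on $\mathcal F_t$, note that $S_m^{(t+1)}$ is a sum of conditionally independent $\pm1$ variables, and combine a concentration bound with a union bound over the polynomially many pairs $(m,t)$ (the paper additionally truncates $T_m$ at $n(\log n)^2$ to justify the union bound, a step you elide but which is straightforward from coalescence). Your explicit split $S_m^{(t+1)}-S_m^{(t)}=Z+D$ with $Z$ the conditionally centered part and $D$ the drift is the cleaner way to organize what the paper does; the paper applies Lemma~\ref{lem-weighted-Bernoulli-concentration} to $|\!\operatorname{-1}^{(t+1)}\!|$ around its conditional mean $(1/2+o(1))\Delta$, which is the same thing in different notation, and your $\operatorname{Var}(Z\mid\mathcal F_t)=\sum_i 4d_i^+d_i^-/d_i^2=O(\Delta)$ computation is correct.

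The substantive content of your note is the drift analysis, and here you have in fact put your finger on a real soft spot. Your pointwise estimate $|D|\le (n-|S_m^{(t)}|)\cdot O(\sqrt{\log n/(np)})$ (using $D=-2\sum_{k\in\operatorname{-1}^{(t)}}\epsilon_k$, $\sum_k\epsilon_k=0$, $\max_k|\epsilon_k|=O(\sqrt{\log n/(np)})$ — though note that $\sum_k\epsilon_k=0$ is a graph-theoretic identity, not an admissibility property) only gives $|D|\le\tfrac12(\log n)^2\sqrt{\Delta}$ when $\Delta\lesssim(\log n)^3\,np$, which fails to cover the far-from-consensus regime when $p$ is small. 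The paper's own written argument does not do better: after the concentration step it concludes only ``$|S^{(t+1)}-S^{(t)}|=o(\Delta)$,'' which for $\Delta\gg(\log n)^4$ is strictly weaker than the $\le 2(\log n)^2\sqrt{\Delta}$ required by the definition of $\mathcal G_{\operatorname{MI}}$ (and the latter is what is actually invoked later, e.g.\ in the proof of Lemma~\ref{lem-RW} where one needs $|S^{(t+1)}-S^{(t)}|\le 2(\log n)^2\sqrt{n}$ whenever $|S^{(t)}|\le n/2$). So the gap you flag is not an artifact of your reconstruction; the paper glosses over it.

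Where your sketch remains incomplete is exactly where you say it does. The discordant-edge reformulation $D\approx (|\vec E(A,B)|-|\vec E(B,A)|)/\bar d$ is only exact for a regular out-degree sequence; with the $\sqrt{np\log n}$ degree fluctuations permitted by admissibility there is a residual term of the same order as the one you are trying to beat, so the reduction needs a second-order expansion of $1/d_i$, not just the leading term. More importantly, the quantity $\sum_{k\in B}\epsilon_k$ must be controlled for the \emph{random} sets $B=\operatorname{-1}^{(t)}$ produced by the dynamics, and a worst-case (uniform-over-$B$) concentration estimate for $G^*$ does not suffice: even Cauchy–Schwarz with $\sum_k\epsilon_k^2=O(\log n/p)$ gives $|\sum_{k\in B}\epsilon_k|\lesssim\sqrt{\Delta\log n/p}$, which exceeds $(\log n)^2\sqrt{\Delta}$ once $p\lesssim(\log n)^{-3}$. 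Closing this requires either adding a targeted pseudorandom property to Definition~\ref{def-admissible} \emph{and} showing that the voter configuration at each step decorrelates from $\{\epsilon_k\}_k$ (a statement about the process, not just the graph), or replacing the per-step union bound with an argument that tracks the drift along the trajectory. Your honest flagging of this as the main obstacle is the right assessment; neither your sketch nor the paper's appendix proof, as written, actually resolves it.
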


Roughly speaking, Lemma~\ref{lem-MI} is proved via the union bound: For each fixed pair $(m,t)$, we show that \eqref{eq-moderate-increment} happens w.o.p.. To this end, we note that conditioned on any realization of $x_{i,m}^{(t)}$ for $i\in [n]$, $S_{m}^{(t+1)}$ is a sum of conditionally independent random $\pm 1$ variables with $\mathbb{E}[S_{m}^{(t+1)}]$ close to $S_{m}^{(t)}$. In light of this, it follows from a standard fact about the concentration of the sum of independent random variables that \eqref{eq-moderate-increment} holds w.o.p.. The detailed proof can be found in Appendix~\ref{appendix-lem-MI}.

Next, we provide a lemma that, at each time step, translates global information about the aggregate number of opinions in the graph into local information for every neighborhood. 

\begin{definition} [From global to local]
    Define $\mathcal G_{\operatorname{FGTL}}$ as the following event: For any $1\le m\le M$ and $t\ge 0$, it holds that
    \begin{equation}\label{eq-from-local-to-global-event}
    \begin{aligned}
\frac{p}{10}(n-|S_m^{(t)}|)\mathbf{1}\{n-|S_m^{(t)}|\ge p^{-1}(\log n)^2\}\le &\ \min_{i\in [n]}\{d_i-|S_{i,m}^{(t)}|\}\\
\le \max_{i\in [n]}\{d_i-|S_{i,m}^{(t)}|\}\le &\ \max\{10(\log n)^2,10p(n-|S_m^{(t)}|)\}\,.
\end{aligned}
    \end{equation}
\end{definition}

\begin{lemma}\label{lem-FGTL}
     $\mathcal G_{\operatorname{FGTL}}$ happens w.o.p.. 
\end{lemma}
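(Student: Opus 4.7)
The plan is to proceed by induction on $t$ for each fixed $m$, showing that \eqref{eq-from-local-to-global-event} at time $t+1$ holds with overwhelming probability given that it holds at time $t$; a union bound over $m \in [M]$ and $0 \le t \le T_m$ then gives $\mathcal{G}_{\operatorname{FGTL}}$, since there are only polynomially many such pairs and the per-step failure probability is $n^{-\omega(1)}$. It is convenient to write $S^-_m(t) := \{j : x_{m,j}^{(t)} = -1\}$; then $n - |S_m^{(t)}| = 2\min(|S^-_m(t)|, n - |S^-_m(t)|)$ and $d_i - |S_{i,m}^{(t)}| = 2\min(|\operatorname{N}_i \cap S^-_m(t)|, d_i - |\operatorname{N}_i \cap S^-_m(t)|)$. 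Without loss of generality, I assume the global majority at time $t$ is $+1$, so $|S^-_m(t)| \le n/2$ and both minima are attained at their first arguments.

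For the base case $t=0$, $\mathbf{x}_m^{(0)}$ is uniform on $\{\pm 1\}^n$, whence $|S^-_m(0)| \sim \operatorname{Bin}(n,1/2)$ and each $|\operatorname{N}_i \cap S^-_m(0)| \sim \operatorname{Bin}(d_i,1/2)$; Chernoff together with the degree regularity \eqref{eq-degree} and a union bound over $i \in [n]$ yields \eqref{eq-from-local-to-global-event} w.o.p. For the inductive step, condition on $\mathbf{x}_m^{(t)}$: by the voter update rule, $\{x_{m,i}^{(t+1)}\}_{i \in [n]}$ are conditionally independent $\pm 1$ variables with $\mathbb{P}[x_{m,i}^{(t+1)} = -1 \mid \mathbf{x}_m^{(t)}] = |\operatorname{N}_i \cap S^-_m(t)|/d_i$. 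In the ``bulk'' regime $|S^-_m(t)| \ge p^{-1}(\log n)^2/2$, the inductive hypothesis combined with degree regularity gives $|\operatorname{N}_j \cap S^-_m(t)|/d_j \asymp |S^-_m(t)|/n$ uniformly in $j$, so the conditional mean $\mathbb{E}[|\operatorname{N}_i \cap S^-_m(t+1)| \mid \mathbf{x}_m^{(t)}] = \sum_{j \in \operatorname{N}_i} |\operatorname{N}_j \cap S^-_m(t)|/d_j$ equals a constant-factor multiple of $p|S^-_m(t)|$. Since $|\operatorname{N}_i \cap S^-_m(t+1)|$ is then a sum of $d_i$ conditionally independent Bernoullis, Chernoff's inequality gives deviation of order $\sqrt{p|S^-_m(t)|\log n}$, which is an $O(1/\sqrt{\log n})$ fraction of the main term $p|S^-_m(t)|$ by the range hypothesis. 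A parallel Chernoff for $|S^-_m(t+1)|$ together with a union bound over $i \in [n]$ propagates \eqref{eq-from-local-to-global-event} to time $t+1$.

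In the ``boundary'' regime $|S^-_m(t)| < p^{-1}(\log n)^2/2$, the lower bound in \eqref{eq-from-local-to-global-event} is vacuous and only the upper bound $\max_i (d_i - |S_{i,m}^{(t+1)}|) \le 10(\log n)^2$ must be checked. Here each conditional probability $|\operatorname{N}_i \cap S^-_m(t)|/d_i$ is at most $|S^-_m(t)|/d_i = O((\log n)^2/(np^2))$, so $|\operatorname{N}_i \cap S^-_m(t+1)|$ is a sum of independent Bernoullis with conditional mean well below $(\log n)^2$; a Chernoff bound together with a union bound over $i \in [n]$ handles this case. The admissibility overlap bounds \eqref{eq-overlap-1}--\eqref{eq-overlap-2} enter here to rule out pathological configurations in which minority opinions cluster in a single neighborhood across consecutive steps. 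The main obstacle I foresee is the transition between the two regimes: when $|S^-_m(t)|$ hovers near $p^{-1}(\log n)^2$, the bulk deviation is only logarithmically smaller than the main term, so one must carefully chain the Bernstein-type estimates with the admissibility overlap estimates to prevent the absolute constants in \eqref{eq-from-local-to-global-event} from degrading across successive steps.
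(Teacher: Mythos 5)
Your proposal takes a genuinely different route from the paper's proof, and unfortunately the central inductive step does not close. Here is the problem. You induct over a single time step, trying to deduce \eqref{eq-from-local-to-global-event} at time $t+1$ from the same statement at time $t$. But \eqref{eq-from-local-to-global-event} is only a factor-$100$ two-sided bound on the local discrepancies $d_i - |S_{i,m}^{(t)}|$ relative to $p(n-|S_m^{(t)}|)$. The inductive hypothesis thus constrains the conditional means
\[
\mu_i^{(t+1)} \;=\; \sum_{j\in\operatorname{N}_i}\frac{|\operatorname{N}_j\cap S^-_m(t)|}{d_j}
\]
only to lie in essentially the \emph{same} factor-$100$ window (one cannot conclude that $\mu_i^{(t+1)}$ is near the ``fair share'' $p\Delta_t/2$, since the inductive hypothesis permits the minority opinions at time $t$ to be distributed so that they pile up in the neighborhood of a particular $i$). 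Chernoff then places $|\operatorname{N}_i\cap S^-_m(t+1)|$ within a multiplicative $1\pm O(1/\sqrt{\log n})$ band around $\mu_i^{(t+1)}$. Thus the best you can propagate is that the factor-$100$ window widens by $1 + O(1/\sqrt{\log n})$ per step. The dynamics runs for $T_m$ up to order $n(\log n)^2$ steps; the cumulative widening $(1+O(1/\sqrt{\log n}))^{T_m}$ blows up long before then, and the induction collapses. You hint at this danger yourself (``prevent the absolute constants... from degrading across successive steps'') but ascribe it only to the regime transition near $p\Delta\approx (\log n)^2$; in fact it is the entire inductive mechanism that cannot hold the constants fixed.

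The paper sidesteps this by not inducting in $t$ at all. For each fixed pair $(m,t)$ it conditions on $\mathbf{x}_m^{(t-L)}$ for a \emph{constant} $L=c(2,\delta)$ equal to the $n^{-2}$-mixing time of the random walk on $G^*$, and then runs a short secondary induction over $l = L{+}1, L, \dots, 1$ on the quantity $\mathbb{E}_{j\sim\pi_{i,l}}[\mathbf{1}\{x_j^{(t+1-l)}=-1\}]$. Crucially, the base case $l=L{+}1$ is \emph{tight} with \emph{no} inductive input: since $\operatorname{TV}(\pi_{i,L},\pi)\le n^{-2}$ and $\pi$ is nearly uniform, this quantity is within a constant fraction $\frac{1}{4(L+2)}$ of $\Delta/(2n)$ regardless of the configuration at time $t-L$. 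Each subsequent step down in $l$ widens the window by at most the fixed fraction $\frac{1}{4(L+2)}$ (with super-polynomially small failure probability, by the weighted-Bernoulli concentration Lemma~\ref{lem-weighted-Bernoulli-concentration}), and since only $L+1$ steps are taken, the total degradation is bounded by the constant $\frac{L+1}{4(L+2)}<\frac14$. Taking $l=1$ gives $\pi_{i,1}=\operatorname{Unif}(\operatorname{N}_i)$ and hence the desired bound on $|\operatorname{N}_i\cap S^-_m(t)|/d_i$, and $\mathcal G_{\operatorname{MI}}$ translates $\Delta = n - |S_m^{(t-L)}|$ into $n - |S_m^{(t)}|$ up to a factor of $2$. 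This is the mechanism your induction is missing: the mixing of the underlying random walk is what enforces the ``self-correction'' of neighborhood counts back toward their fair share, and it must be used as a one-shot $L$-step contraction rather than hoped for in a one-step propagation. To repair your argument you would have to strengthen the inductive hypothesis to a window of width $1 \pm o(1)$ and then show it tightens, which again ultimately requires the mixing input.
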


The central idea in the proof is to condition on the realization of $x_{m,i}^{(t-L)}$, where $L$ is such that the random walk on $G^*$ started at any vertex mixes well within $L$ steps. Intuitively, after this conditioning, one should expect the signs at time $t$ of any two vertices to behave almost like independent random variables. Thus, the distribution of signs within each neighborhood should resemble the distribution on the entire vertex set. Making this precise requires some work, but the proof can be carried out via an inductive argument that relies once again on a standard concentration result for a weighted sum of independent random variables. The proof is provided in Appendix~\ref{appendix-lem-FGTL}.

In light of Lemma~\ref{lem-FGTL}, the trajectory of $\{S_{m}^{(t)}\}_{t\ge 0}$ captures the behavior of all trajectories of $\{S_{m,i}^{(t)}\}_{t\ge 0},i\in [n]$. Lastly, the lemma below addresses some important properties of the evolution of $\{S_m^{(t)}\}_{t\ge 0}$. 

\begin{definition}[$\{S_{m}^{(t)}\}_{t\geq 0}$ behaves like a random walk]
    Define $\mathcal G_{\operatorname{RW}}$ as the event that the following two properties hold:\\
    \noindent (i) For any $1\le m\le M$ and $0\le t\le \min\{T,n/(\log n)^{10}\}$, it holds that $|S_{m}^{(t)}|\le \frac n2$. \\
    \noindent (ii) For any $1\le \Delta\le n/(\log n)^{10}$, $\#\{1\le m\le M,0\le t\le T:n-\Delta\le |S_m^{(t)}|<n\}$ is bounded from above by $M\cdot \max\{\Delta (\log n)^5,(\log n)^{10}\}$. 
\end{definition}

\begin{lemma}\label{lem-RW}
    $\mathcal G_{\operatorname{RW}}$ happens w.o.p..
\end{lemma}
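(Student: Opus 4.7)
The plan is to prove parts (i) and (ii) of $\mathcal G_{\operatorname{RW}}$ separately, working throughout on the w.o.p.\ event $\mathcal G_{\operatorname{MI}} \cap \mathcal G_{\operatorname{FGTL}}$ supplied by Lemmas~\ref{lem-MI}--\ref{lem-FGTL}, and in both cases exploiting the martingale
\[
W_m^{(t)} := \sum_{i \in [n]} \pi(i)\, x_{m,i}^{(t)},
\]
where $\pi$ is the stationary distribution of the random walk on $G^*$ furnished by admissibility. That $\{W_m^{(t)}\}$ is an honest martingale follows at once from $\mathbb{E}[x_{m,i}^{(t+1)} \mid \mathcal F_t] = S_{m,i}^{(t)}/d_i$ and the stationarity identity $\pi(j) = \sum_{i:\, j \in \operatorname{N}_i} \pi(i)/d_i$. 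Its per-step conditional variance is at most $\sum_i \pi(i)^2 \le \max_i \pi(i) = O(1/n)$; moreover, by Hoeffding applied to the conditionally independent contributions $\pi(i)(x_{m,i}^{(t+1)} - x_{m,i}^{(t)})$ (each of magnitude $O(1/n)$), each increment satisfies $|W_m^{(t+1)}-W_m^{(t)}| \le O(\sqrt{\log n/n})$ w.o.p., uniformly over $1 \le m \le M$ and $0 \le t \le T$.

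For part (i), on the w.o.p.\ event that every increment up to $t = n/(\log n)^{10}$ is so bounded, Freedman's inequality with quadratic variation at most $O(1/(\log n)^{10})$ and step bound $O(\sqrt{\log n/n})$ yields $|W_m^{(t)} - W_m^{(0)}| \le 1/4$ throughout the window, w.o.p. Since $|W_m^{(0)}| \le |S_m^{(0)}|/n + o(1) \le 1/\log n + o(1)$ on $\mathcal G_{\operatorname{MI}}$ and $|S_m^{(t)}/n - W_m^{(t)}| \le \max_i |1 - n\pi(i)| = o(1)$ by admissibility, this gives $|S_m^{(t)}| \le n/2$ throughout; a union bound over $m \in [M]$ finishes (i).

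For part (ii), the plan is a dyadic multi-scale argument. Set $R_k := \{s : |s| \in [n-2^{k+1}, n-2^k)\}$; it suffices to show that for each $k$ with $2^{k+1}\le n/(\log n)^{10}$ one has $\#\{(m,t): |S_m^{(t)}| \in R_k\} \le M\cdot 2^k(\log n)^4$ (plus an additive $M(\log n)^{10}$ at the smallest scales) w.o.p., and then sum over scales $k$ with $2^{k+1}\le \Delta$. For fixed $k$, decompose each trajectory into excursions into $R_k$. Each excursion's length is bounded via Freedman applied to $S_m^{(t)}$ itself, using the FGTL-derived per-step variance $\operatorname{Var}[S_m^{(t+1)} - S_m^{(t)} \mid \mathcal F_t] \asymp n - |S_m^{(t)}| \asymp 2^k$ together with the increment bound $O((\log n)^2\sqrt{2^k})$ from $\mathcal G_{\operatorname{MI}}$, giving length $O(2^k(\log n)^{c_1})$ w.o.p. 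The number of excursions is then bounded by an optional-stopping / hitting-probability estimate for $W_m^{(t)}$: once $|W_m^{(t)}|$ drops by a constant multiple of $2^k/n$ after exiting $R_k$, the conditional probability of returning to $R_k$ before absorption at $\pm 1$ is bounded away from $1$, so after $O(\log n)$ excursions the trajectory is w.o.p.\ already absorbed. Combining via the union bound and using independence across the $M$ trajectories yields the desired count.

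The main obstacle lies in part (ii): one must simultaneously run variance estimates through $S_m^{(t)}$ (the object appearing in the defining inequality, whose variance is clean via FGTL) and exploit $W_m^{(t)}$ as an honest martingale for hitting probabilities, even though the admissibility approximation $|S_m^{(t)} - nW_m^{(t)}| = o(n)$ may be too crude at the smallest dyadic scales where $2^k$ is only polylogarithmic in $n$. Overcoming this will likely require either sharpening $|1 - n\pi(i)|$ beyond what admissibility alone provides, or performing the hitting-probability argument directly on $|S_m^{(t)}|$ while treating its small non-martingale drift $\mathbb{E}[S_m^{(t+1)} - S_m^{(t)} \mid \mathcal F_t]$ as a perturbation controlled via the FGTL regularity of the neighborhood sums.
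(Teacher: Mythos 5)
Your part~(i) argument is essentially the paper's: both exploit the martingale $W_m^{(t)}=\sum_i\pi(i)x_{m,i}^{(t)}$, bound the increments via $\mathcal G_{\operatorname{MI}}$, and apply a martingale concentration inequality over $n/(\log n)^{10}$ steps (the paper uses a stopped copy $Z^{(t)}$ of $W^{(t)}$ plus the Chung--Lu inequality; your Freedman-style argument is equivalent modulo being a bit cavalier about the stopping step). Your plan for part~(ii) is also in the right general shape---a dyadic decomposition of distances to consensus together with an excursion count driven by a biased-random-walk / hitting-probability estimate---and in fact that is exactly what the paper does, with levels $k_s = 1-2^s(\log n)^5/n$ defined in terms of $W$ and an argument showing that the index $s(\tau_i)$ at novel crossing times evolves as a $(2/3,1/3)$-biased nearest-neighbor walk.

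The genuine gap is the variance lower bound you feed into the excursion-length estimate. You write ``$\operatorname{Var}[S_m^{(t+1)}-S_m^{(t)}\mid\mathcal F_t]\asymp n-|S_m^{(t)}|\asymp 2^k$, FGTL-derived.'' This is wrong at precisely the scales where part~(ii) has bite. The lower bound in $\mathcal G_{\operatorname{FGTL}}$ carries the indicator $\mathbf{1}\{n-|S_m^{(t)}|\ge p^{-1}(\log n)^2\}$ and is vacuous below that threshold, while the dyadic scales of interest in part~(ii) go all the way down to $n-|S_m^{(t)}|\approx(\log n)^5\ll p^{-1}(\log n)^2$. Without a per-step variance lower bound $\mathbb E[(W^{(t+1)}-W^{(t)})^2\mid\mathcal F_t]\gtrsim (1-|W^{(t)}|)/n$ valid down to polylogarithmic discrepancy, your excursion-length bound (via Freedman or via the paper's second-moment/optional-stopping identity) has no teeth, and an excursion could stall near the boundary for arbitrarily long. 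The paper closes this gap with Property-(iv) of admissibility (the expander-type triple count), which lower-bounds $\sum_i|P_i^{(t)}|$---the number of discordant pairs inside out-neighborhoods---by $\Omega(np^2\cdot|\mathbf 1^{(t)}|\cdot|\mathbf{-1}^{(t)}|)$ uniformly in the discrepancy, yielding $\Omega((1-|W^{(t)}|)/n)$ for the per-step conditional variance at every scale. You never invoke Property-(iv), and your self-diagnosed obstacle (the $|S_m^{(t)}-nW_m^{(t)}|=o(n)$ approximation being too coarse at small scales) is not the real issue: the paper simply defines the dyadic levels in terms of $W$ itself and never needs to translate back to $S$ at those scales. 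The missing ingredient is the expander-based variance lower bound, not a sharpening of the stationary measure.
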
 
The intuition behind this lemma is that for each $1\le m\le M$, $S_m^{(t)}$ behaves like a random walk on $\{-n,\cdots,n\}$ such that, given $S_{m}^{(t)}$, the next step increment $S_{m}^{(t+1)}-S_m^{(t)}$ has expectation close to $0$ and variance of order $n-|S_m^{(t)}|$.
The proof relies on analyzing the martingales $\{W_m^{(t)}\}_{t\geq0}$, $1\le m\le M$, where $W_m^{(t)}=\mathbb E_{p\sim\pi}[x_{m,p}^{(t)}]$. We have already used this martingale during the proof of Theorem~\ref{thm-main} (see Lemma~\ref{lem-effective-observations-lower-bound}). Once again, we exploit the fact that, up to a small error, $W_m^{(t)}$ acts as a normalized version $S_m^{(t)}$. That Property-(i) in the definition of $\mathcal G_{\operatorname{RW}}$ holds w.h.p. follows from a conditional version of Azuma's inequality applied to this martingale and the fact that, at each time step, $W_m^{(t)}$ is unlikely to change by a large amount by Lemma~\ref{lem-MI}. Showing that Property-(ii) holds w.o.p. requires more delicate arguments. At a high level, the key is to exploit the fact that if, at some time $t$, we have that $|W_m^{(t)}|$ is close to $1$, then there is a good chance that $|W_m^{(t+1)}|,|W_m^{(t+2)}|,\cdots$ reaches $1$ without ever going too far below $|W_m^{(t)}|$. To make this precise, we consider a dyadic partition of the interval $[-1,1]$ into smaller subintervals and show that the index of the subinterval in which the martingale finds itself at time $t$ evolves as a biased random walk. At some point, it is also important to argue that the martingale is unlikely to remain within a single subinterval for a long time. This can be achieved by providing a lower bound on $\mathbb E[(W_m^{(t+1)}-W_m^{(t)})^2]$ using Property-(iv) of admissibility. The detailed proof can be found in Appendix~\ref{appendix-lem-RW}.



\subsection{Proof of Proposition~\ref{eq-prop-three-properties}}\label{subsec-proof-of-three-properties}
Here, we provide the proof of Proposition~\ref{eq-prop-three-properties}. We will show that Item-(i) holds w.h.p., Item-(ii) holds with probability at least $0.99-o(1)$, and Item-(iii) holds with probability $0.95-o(1)$. Then, by the union bound, all three items hold simultaneously with probability at least $0.94-o(1)>0.92$, as desired.

\subsubsection{Item-(i)}
We now show that Item-(i) in Proposition~\ref{eq-prop-three-properties} happens w.h.p.. First, since $\gamma\le \delta/10$, it follows from the union bound that w.h.p. $a_i,b_i,c_i$ are distinct for each $1\le i\le n^\gamma$. Next, we show that all the triples $(a_i,b_i,c_i)$ are good w.h.p..

We fix a triple $(a_i,b_i,c_i)\triangleq (a,b,c)$. From Lemmas~\ref{lem-FGTL} and \ref{lem-RW}, by losing a super-polynomially small probability, we may assume that $\mathcal G_{\operatorname{FGTL}}\cap \mathcal G_{\operatorname{RW}}$ holds. If $T\le n/(\log n)^{10}$, then Property-(i) in $\mathcal G_{\operatorname{RW}}$ implies that $|S_m^{(t)}| \leq\frac{n}{2}$ for all values of $1\le m\le M$ and $0\le t\le T$. By the first inequality in \eqref{eq-from-local-to-global-event}, we conclude that $d_a-|S_{m,a}^{(t)}|\ge np/20$ for any $1\le m\le M$ and $0\le t\le T$. This implies that all triples are good. In what follows, we assume that $T> n/(\log n)^{10}$ and hence $M=\widetilde{O}(np^2)$ by condition~\eqref{eq-main-condition}.

By Property-(ii) of $\mathcal G_{\operatorname{RW}}$, $\#\{1\le m\le M,0\le t\le T:0<n-|S_m^{(t)}|\le 10p^{-1}(\log n)^2\}$ is bounded by
$$M\times\widetilde{O}(p^{-1})=\widetilde{O}(np)\,.$$ 
Thus, from $\mathcal G_{\operatorname{FGTL}}$ we conclude that $\#\{m,t:d_a-|S_{m,a}^{(t)}|=2\}$ is also upper-bounded by $ \widetilde{O}(np)\le n^{-\delta/4}\cdot cn^2p^2\log n$. Additionally, for any $1\le i\le n^\gamma$, $1\le m\le M$, and $0\le t\le T$ such that $d_a-|S_{m,a}^{(t)}|=2$, in order for $(m,t)$ to act as a witness that $(a,b,c)$ is a bad triple, it must also be the case that $x_{m,a}^{(t+1)}S_{m,a}^{(t)}<0$ and $x_{m,b}^{(t)}\neq x_{m,c}^{(t)}$. We claim that these two additional constraints are satisfied with conditional probability at most $O\left(\frac{(\log n)^2}{(np)^2}\right)$. 

To show this, we first observe that \[\mathbb{P} \left[x_{m,a}^{(t+1)}S_{m,a}^{(t)}<0\ \mid\ d_a-|S_{m,a}^{(t)}|=2\right]= \frac{1}{d_a}=O\left(\frac{1}{np}\right)\, . \] 
Moreover, given that $\mathcal G_{\operatorname{FGTL}}$ occurs, the assumption that $d_a-|S_{m,a}^{(t)}|=2$ also implies $n-S_n^{(t)}\leq p^{-1}(\log n)^2$. Since $b$ and $c$ were chosen uniformly at random from $\operatorname{N}_a(G^*)$ and $[n]\backslash(\{a\}\cup\operatorname{N}_a(G^*))$, independently of the voter model dynamics, we have \[\mathbb{P} \left[x_{m,b}^{(t)}\neq x_{m,c}^{(t)}\ \mid\ d_a-|S_{m,a}^{(t)}|=2,\ x_{m,a}^{(t+1)}S_{m,a}^{(t)}<0\right]\leq O\left(\frac{(\log n)^2}{np}\right)\,,\]
verifying the claim.

Combining these observations and using the union bound, we get that the expected number of bad triples conditioned on $\mathcal G_{\operatorname{FGTL}}\cap\mathcal G_{\operatorname{RW}}$ is upper-bounded by 
\[n^{\gamma}\cdot  n^{-\delta/4}\cdot cn^2p^2\log n\cdot O\left(\frac{(\log n)^2}{(np)^2}\right)\leq \widetilde{O}(n^{-\delta/8})\,.\] 
A direct application of Markov's inequality completes the proof.


\subsubsection{Item-(ii)}

We now show that Item-(ii) happens with probability at least $0.99-o(1)$. For $1\le m\le M$, recall that $T_m=\min\{T,T_{\operatorname{coal}}(m)\}$ is the number of effective observations in the $m$-th voter model trajectory. First, we consider the event $\mathcal {G}$ that $T_1,\cdots,T_m$ satisfy
\begin{equation}\label{eq-sum-Tm}
\sum_{m=1}^M T_m\le \max\{100C^*,1\}\cdot cn^2p^2\log n\,,
\end{equation}
where $C^*$ is the constant in Property-(iii) of admissibility. 
We claim that $\mathcal G$ happens with a probability of at least $0.99$. Indeed, if $T\le n$, then using the trivial fact that $T_m\le T$ for all $m$ and condition \eqref{eq-condition-negative} (which states that $M\cdot T\le cn^2p^2\log n$), we see that \eqref{eq-sum-Tm} holds deterministically. If $T\ge n$, then condition \eqref{eq-condition-negative} becomes $M\le cnp^2\log n$. Since $T_m$ is bounded by the coalescence time, it follows from Property-(iii) of admissibility that
\[
\mathbb{E}\left[\sum_{m=1}^M T_m\right]\le M\cdot C^*n\le C^*\cdot cn^2p^2\log n\,.
\]
Markov's inequality then yields that \eqref{eq-sum-Tm} fails with probability at most $0.01$, verifying the claim. 


For any triple $(a,b,c)$, we define
\[
\mathcal T_{a,b,c}:= \sum_{m=1}^M\sum_{t=0}^{T_m-1} \big(X_{m,a,b,c}^{(t)}\big)^2\,.
\]
Our goal is to upper-bound the maximum of $\mathcal T_{a_i,b_i,c_i}$ over $1\le i\le n^\gamma$. We have that for any $x\ge 0$
\begin{align*}
\mathbb{P}\Big[\max_{1\le i\le n^\gamma}\mathcal T_{a_i,b_i,c_i}\ge x\Big]
\le &\ \mathbb{P}\big[(\mathcal G\cap \mathcal G_{\operatorname{MI}}\cap \mathcal G_{\operatorname{FGTL}}\cap \mathcal G_{\operatorname{RW}})^c\big]\\+&\ \mathbb{P}\Big[\max_{1\le i\le n^\gamma}{\mathcal T}_{a_i,b_i,c_i}\ge x,\mathcal G\cap \mathcal G_{\operatorname{MI}}\cap  \mathcal G_{\operatorname{FGTL}}\cap  \mathcal G_{\operatorname{RW}}\Big]\\
\le&\ 0.01+o(1)+\sum_{1\le i\le n^\gamma}\mathbb{P}\big[{\mathcal T}_{a_i,b_i,c_i}\ge x,\mathcal G\cap \mathcal G_{\operatorname{MI}}\cap \mathcal G_{\operatorname{FGTL}}\cap \mathcal G_{\operatorname{RW}}\big]\,,
\end{align*}
where the last inequality follows from the union bound. In light of this, to show that with probability at least $0.96$ the maximum over $\mathcal T_{a_i,b_i,c_i}$ is no more than $C_0c\log n$, it suffices to show that for each triple $(a,b,c)$ the probability
$\mathbb{P}\big[{\mathcal T}_{a,b,c}\ge C_0c\log n]$
is super-polynomially small. 

Toward this end, we fix a triple $(a,b,c)$ and define
\[
\widetilde{\mathcal T}_{a,b,c}:= \sum_{m=1}^M\sum_{t=0}^{T_m-1} \left(\frac{16}{(np)^2}+\frac{4Y_{m,t}}{\big(d_{a}-|S_{m,a}^{(t)}|\big)^2}\right)\,,
\]
where $Y_{m,t},1\le m\le M,0\le t\le T_m-1$ are independent Bernoulli variables with parameters $p_{m,0}=1$ and $$
p_{m,t}:= \min\Big\{\frac{10^5\max\{p(n-|S_m^{(t-1)}|),(\log n)^2\}^2}{(np)^2},1\Big\}\,,\forall 1\le t\le T\,.
$$
We have the following stochastic domination relation between $\mathcal T_{a,b,c}$ and $\widetilde{\mathcal T}_{a,b,c}$.
\begin{lemma}\label{lem-stochastic-domination}
   There is a coupling between $\mathcal T_{a,b,c}$ and $\widetilde{\mathcal T}_{a,b,c}$ such that almost surely
   $$\mathcal T_{a,b,c}\le \widetilde{\mathcal T}_{a,b,c}+MT\cdot\mathbf{1}\{(\mathcal G_{\operatorname{MI}}\cap \mathcal G_{\operatorname{FGTL}})^c\}\,.$$
\end{lemma}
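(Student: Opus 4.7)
The plan is to bound the two pieces separately: $\mathcal T_{a,b,c}$ on the bad event $(\mathcal G_{\operatorname{MI}}\cap\mathcal G_{\operatorname{FGTL}})^c$ trivially, and on the good event $\mathcal G_{\operatorname{MI}}\cap\mathcal G_{\operatorname{FGTL}}$ construct a pointwise coupling summand by summand. For the trivial piece, I would verify the deterministic bound $|X_{m,a,b,c}^{(t)}|\le 1$: when $|S_{m,a}^{(t)}|=d_a$ the update rule forces $x_{m,a}^{(t+1)}$ to agree with the common sign of $N_a$ and the denominator becomes $2d_a$, while when $|S_{m,a}^{(t)}|<d_a$ the parity of the sum $S_{m,a}^{(t)}$ of $d_a$ signs gives $d_a-|S_{m,a}^{(t)}|\ge 2$. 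Summing over $m,t$ yields $\mathcal T_{a,b,c}\le MT$ always, which absorbs the $MT\cdot \mathbf{1}\{(\cdot)^c\}$ term.

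On the good event I would decompose pointwise
\[
(X_{m,a,b,c}^{(t)})^2\le \frac{16}{(np)^2}+\frac{4Z_{m,t}}{(d_a-|S_{m,a}^{(t)}|)^2},\qquad Z_{m,t}:=\mathbf{1}\bigl\{x_{m,a}^{(t+1)}S_{m,a}^{(t)}<0,\ x_{m,b}^{(t)}\neq x_{m,c}^{(t)}\bigr\},
\]
where the baseline $16/(np)^2$ absorbs the case in which the denominator $d_a+x_{m,a}^{(t+1)}S_{m,a}^{(t)}$ equals the larger value $d_a+|S_{m,a}^{(t)}|\ge d_a\ge np/2$ (by admissibility), so only the ``minority choice'' case contributes a non-baseline summand. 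It then suffices to couple each $Z_{m,t}$ with a Bernoulli variable $Y_{m,t}$ of parameter $p_{m,t}$ such that $Z_{m,t}\le Y_{m,t}$ almost surely on the good event, with the $Y_{m,t}$'s (conditionally) independent across $(m,t)$.

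Let $\mathcal F_{m,t-1}$ denote the filtration generated by all earlier trajectories and the first $t-1$ steps of trajectory $m$. I would estimate $\mathbb{P}[Z_{m,t}=1\mid \mathcal F_{m,t-1}]$ in two stages. First, conditioning further on $\mathbf{x}_m^{(t)}$, the minority-choice event has conditional probability $(d_a-|S_{m,a}^{(t)}|)/(2d_a)$; combining $\mathcal G_{\operatorname{FGTL}}$ at time $t$ with $\mathcal G_{\operatorname{MI}}$ (used to bound $n-|S_m^{(t)}|$ by a constant multiple of $n-|S_m^{(t-1)}|$, up to a polylog additive error) yields an upper bound of $O(\max\{p(n-|S_m^{(t-1)}|),(\log n)^2\}/(np))$. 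Second, I would bound $\mathbb{P}[x_{m,b}^{(t)}\neq x_{m,c}^{(t)}\mid \mathcal F_{m,t-1}]$ by the same kind of expression: in the regime $n-|S_m^{(t-1)}|\ll np$, a double-counting argument forces every neighborhood's majority sign to agree with the global majority, reducing the disagreement probability to the sum of the minority fractions of $N_b$ and $N_c$ at time $t-1$, each controlled via $\mathcal G_{\operatorname{FGTL}}$; in the complementary regime one has $p_{m,t}=1$ and the bound is trivial. Multiplying the two estimates gives $\mathbb{P}[Z_{m,t}=1\mid \mathcal F_{m,t-1}]\le p_{m,t}$, with the constant $10^5$ in $p_{m,t}$ absorbing the slack.

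Finally, I would introduce auxiliary i.i.d.\ $U_{m,t}\sim \mathrm{Unif}[0,1]$, independent of the voter dynamics, set $Y_{m,t}:=\mathbf{1}\{U_{m,t}\le p_{m,t}\}$, and apply the standard monotone coupling between Bernoullis to obtain $Z_{m,t}\le Y_{m,t}$ on the good event. The $Y_{m,t}$'s are then conditionally independent Bernoullis with the correct parameters, which delivers the claimed coupling. The main obstacle is the two-stage conditional probability estimate, and specifically the counting argument showing that when the global majority is dominant every neighborhood's majority sign is aligned with it---this sign information is not encoded directly in $\mathcal G_{\operatorname{FGTL}}$ and must be extracted from the fact that the global minority count is too small to populate any single neighborhood with an opposite majority.
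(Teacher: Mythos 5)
Your proposal follows essentially the same route as the paper: a trivial deterministic bound $\mathcal T_{a,b,c}\le MT$ via $|X_{m,a,b,c}^{(t)}|\le 1$ to absorb the bad event, the pointwise decomposition of $(X_{m,a,b,c}^{(t)})^2$ into the baseline $16/(np)^2$ plus the minority-choice-and-disagreement indicator $Z_{m,t}$, a two-stage conditional estimate $\mathbb{P}[Z_{m,t}=1\mid\mathcal F_{m,t-1}]\le p_{m,t}$ under $\mathcal G_{\operatorname{MI}}\cap\mathcal G_{\operatorname{FGTL}}$, and the sequential monotone coupling with independent uniforms. You correctly flag that bounding $\mathbb{P}[x_{m,b}^{(t)}\ne x_{m,c}^{(t)}]$ by the sum of the two neighborhood minority fractions requires the majority signs of $\operatorname{N}_b$ and $\operatorname{N}_c$ to align (with each other and with the global majority), a point the paper's write-up uses implicitly via $\mathcal G_{\operatorname{FGTL}}$ but does not spell out; your counting argument is the right way to supply it.
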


To develop an intuition for Lemma~\ref{lem-stochastic-domination}, recall the definition
\[
X_{m,a,b,c}^{(t)}=\frac{x_{m,a}^{(t+1)} (x_{m,b}^{(t)}-x_{m,c}^{(t)})}{d_{a}+x_{m,a}^{(t+1)}\cdot S_{m,a}^{(t)}}\,.
\]
This leads to
\begin{equation}\label{eq-3-cases}
\big(X_{m,a,b,c}^{(t)}\big)^2=\begin{cases}
    \frac{4}{(d_a+|S_{m,a}^{(t)}|)^2}\,,\quad&\text{ if }x_{m,a}^{(t+1)}S_{m,a}^{(t)}\ge 0\text{ and }x_{m,b}^{(t)}\neq x_{m,c}^{(t)}\,,\\
    \frac{4\cdot \mathbf{1}\{|S_{m,a}^{(t)}|<d_a\}}{(d_a-|S_{m,a}^{(t)}|)^2}\,,\quad&\text{ if $x_{m,a}^{(t+1)}S_{m,a}^{(t)}<0$ and $x_{m,b}^{(t)}\neq x_{m,c}^{(t)}$}\,,\\
    0\,,\quad&\text{ otherwise}\,.
\end{cases}
\end{equation}
(Here, we use the convention that $\frac00=0$.) For the first and the third cases, since $d_a\ge np/2$, we obtain
\[
\big(X_{m,a,b,c}^{(t)}\big)^2\le \frac{16}{(np)^2}\,.
\]
To handle the second case, we fix $m$ and $t$ and condition on any realization of $\{x_{m,i}^{(t')}:i\in [n],t'<t\}\cup \{x_{m,a}^{(t)}\}$ that does not violate $\mathcal G_{\operatorname{MI}}\cap \mathcal G_{\operatorname{FGTL}}$. Now, it suffices to prove that both $\mathcal G_{\operatorname{MI}}\cap \mathcal G_{\operatorname{FGTL}}$ and the second case in \eqref{eq-3-cases} happen simultaneously with conditional probability at most $p_{m,t}$. The details can be found in Appendix~\ref{appendix-lem-stochastic-domination}. 

With this domination relation in hand, since $\mathcal G_{\operatorname{MI}}\cap \mathcal G_{\operatorname{FGTL}}$ happens w.o.p., it remains to show that the probability
\[
\mathbb{P}\big[\widetilde{\mathcal T}_{a,b,c}\ge C_0c\log n,\mathcal G\cap \mathcal G_{\operatorname{MI}}\cap \mathcal G_{\operatorname{FGTL}}\cap \mathcal G_{\operatorname{RW}}\big]
\]
is super-polynomially small. Note that if we fix the realization of all random variables $x_{m,i}^{(t)}$ for $1\le m\le M,0\le t\le T_m$, and $i\in [n]$, then $\widetilde{\mathcal T}_{a,b,c}$ becomes a weighted sum of independent Bernoulli variables plus the deterministic constant 
\[
\frac{16}{(np)^2}\sum_{m=1}^{M}T_m\,.
\]
We show that, provided that the realization of $\{x_{m, i}^{(t)}\}$ satisfies $\mathcal G\cap \mathcal G_{\operatorname{MI}}\cap \mathcal G_{\operatorname{FGTL}}\cap \mathcal G_{\operatorname{RW}}$, then $\widetilde{\mathcal T}_{a,b,c}$ does not exceed $C_0c\log n$ w.o.p. (where the randomness is now only over the Bernoulli variables). 

First, $\mathcal G$ implies that the deterministic constant is at most $16\max\{100C^*,1\}\cdot c\log n$. Recalling our choice of $C_0$ in \eqref{eq-C0}, it suffices to prove that given $\mathcal G_{\operatorname{MI}}\cap \mathcal G_{\operatorname{FGTL}}\cap \mathcal G_{\operatorname{RW}}$, w.o.p. the weighted sum satisfies
\begin{equation}\label{eq-900c}
\sum_{m=1}^M\sum_{t=0}^{T_m-1}\frac{Y_{m,t}}{\big(d_{a}-|S_{m,a}^{(t)}|\big)^2}\le 10^6c\log n\,.
\end{equation}
The idea is to first argue that under these three typical events, the expected value of the weighted sum is far below $10^6c\log n$. Then, we apply concentration inequalities for weighted sums of independent Bernoulli variables to conclude the result. For technical reasons, we decompose the sum in \eqref{eq-900c} into two parts and analyze them separately. 

Fixing the triple $(a,b,c)$, define $\mathcal P_{\operatorname{good}}$ as the set of pairs $(m,t)$ with $1\le m\le M$ and $0\le t\le T_m-1$ such that $d_a-|S_{a,m}^{(t)}|\ge (\log n)^2$. Let $\mathcal P_{\operatorname{bad}}$ be the remaining pairs. Let $\mathcal S_{\operatorname{good}}$ and $\mathcal S_{\operatorname{bad}}$ be the sums of good and bad terms in \eqref{eq-900c}, respectively. 

For a good pair $(m,t)$, since $d_a-|S_{m,a}^{(t)}|\ge (\log n)^2$, by $\mathcal G_{\operatorname{MI}}\cap \mathcal G_{\operatorname{FGTL}}$ we have $p_{m,t}\le \frac{10^{5}(d_a-|S_{m,a}^{(t)}|)^2}{(np)^2}$. Hence,
\[
\mathbb{E}\left[\frac{Y_{m,t}}{\big(d_a-|S_{m,a}^{(t)}|\big)^2}\right]\le \frac{10^5}{(np)^2}\,,
\]
and thus the expected value of $\mathcal S_{\operatorname{good}}$ is upper-bounded by $10^5c\log n$. 

For any bad pair $(m,t)$, $\mathcal G_{\operatorname{FGTL}}$ ensures that $n-|S_{m}^{(t)}|\le 10p^{-1}(\log n)^2\le \widetilde{O}(n^{1-\delta})$. Consequently, from $\mathcal G_{\operatorname{RW}}$, we get that $\mathcal P_{\operatorname{bad}}=\varnothing$ unless $T\ge n/(\log n)^{10}$. Now, assuming $T\ge n/(\log n)^{10}$, as argued in the proof of Item-(i), $\mathcal G_{\operatorname{RW}}$ guarantees that the size of $\mathcal P_{\operatorname{bad}}$ is at most $\widetilde{O}(np)$. Additionally, for bad pairs, we have $p_{m,t}\le \frac{10^5(\log n)^2}{(np)^2}$. Therefore, the expected value of $\mathcal S_{\operatorname{bad}}$ is upper-bounded by 
\[
\widetilde{O}(np)\times \frac{100(\log n)^2}{(np)^2}\le \widetilde{O}((np)^{-1})\le n^{-\delta/2}\,.
\]

The next lemma shows that w.o.p., $\mathcal S_{\operatorname{good}}$ and $\mathcal S_{\operatorname{bad}}$ do not exceed their expectations by more than $10^5c\log n$.
\begin{lemma}\label{lem-Sgood-Sbad}
    Given $\{x_{m,i}^{(t)}\}$ satisfying $\mathcal G\cap \mathcal G_{\operatorname{RW}}$, it holds that both 
    \[
    \mathbb{P}[\mathcal S_{\operatorname{good}}-\mathbb{E}[\mathcal S_{\operatorname{good}}]\ge 10^5c\log n] \quad \text{and} \quad  \mathbb{P}[\mathcal S_{\operatorname{bad}}-\mathbb{E}[\mathcal S_{\operatorname{bad}}]\ge 10^5c\log n]
    \]
    are super-polynomially small (here, the randomness is over $Y_{m,t},1\le m\le M,0\le t\le T_m-1$). 
\end{lemma}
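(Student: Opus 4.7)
My plan is to treat $\mathcal S_{\operatorname{good}}$ and $\mathcal S_{\operatorname{bad}}$ via different concentration tools, since the weights of the two sums live on very different scales. In both cases I condition on a realization of $\{x_{m,i}^{(t)}\}$ satisfying $\mathcal G_{\operatorname{MI}}\cap\mathcal G_{\operatorname{FGTL}}\cap\mathcal G_{\operatorname{RW}}\cap\mathcal G$, which turns the denominators $(d_a-|S_{m,a}^{(t)}|)^2$ and the Bernoulli parameters $p_{m,t}$ into deterministic quantities, while the variables $Y_{m,t}$ remain independent Bernoullis. The lemma then follows from a union bound over the two tail events.

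For $\mathcal S_{\operatorname{good}}$, every weight $1/(d_a-|S_{m,a}^{(t)}|)^2$ is at most $R:=(\log n)^{-4}$ by the definition of a good pair. The preceding discussion in the paper already yields that the conditional mean is at most $10^5 c\log n$. Since each summand is a nonnegative Bernoulli-weighted variable bounded by $R$, the variance is controlled by $\sigma^2\le R\cdot\mathbb E[\mathcal S_{\operatorname{good}}]\le 10^5 c(\log n)^{-3}$. Plugging these into Bernstein's inequality with deviation $10^5c\log n$ produces an exponent of order $-(\log n)^2/\sigma^2=-\Omega((\log n)^5)$, giving super-polynomial decay as required.

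For $\mathcal S_{\operatorname{bad}}$, Bernstein yields only polynomial decay because a single weight can be as large as $1/4$ (the smallest positive value of $d_a-|S_{m,a}^{(t)}|$ is $2$). Instead I plan to use the uniform weight bound to write $\mathcal S_{\operatorname{bad}}\le Z/4$, where $Z:=\sum_{(m,t)\in\mathcal P_{\operatorname{bad}}}Y_{m,t}$, reducing the task to a binomial-type upper tail estimate. By $\mathcal G_{\operatorname{RW}}$ and $\mathcal G_{\operatorname{FGTL}}$, one has $|\mathcal P_{\operatorname{bad}}|\le\widetilde O(np)$ and $p_{m,t}\le 10^5(\log n)^4/(np)^2$ uniformly on bad pairs, so $\mathbb E[Z]\le\widetilde O((np)^{-1})\le n^{-\delta/2}$. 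Consequently, $\{\mathcal S_{\operatorname{bad}}\ge 10^5c\log n\}\subseteq\{Z\ge 4\cdot 10^5c\log n\}$, and the classical Chernoff bound $\mathbb P[Z\ge k]\le(e\mathbb E[Z]/k)^k$ with $k=4\cdot 10^5c\log n$ yields a bound of the form $n^{-\Omega(\log n)}$, again super-polynomially small.

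The only conceptual obstacle is recognizing that the two sums demand distinct tools: the good sum has many terms but tiny uniform weight, which is exactly the regime where Bernstein's variance-based bound is sharp, whereas the bad sum has few terms but a non-negligible uniform weight, for which the cleanest argument is a crude weight-removal reduction to an unweighted binomial followed by Chernoff. Once this split is in place, everything else is a routine application of standard concentration inequalities.
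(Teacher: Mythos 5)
Your proposal is correct and follows essentially the same route as the paper: for $\mathcal S_{\operatorname{good}}$ you apply the standard Bernstein inequality where the paper applies its bespoke weighted-Bernoulli bound (Lemma~\ref{lem-weighted-Bernoulli-concentration}), both of which exploit the variance estimate $\sigma^2\le R\,\mathbb E[\mathcal S_{\operatorname{good}}]$ with $R=(\log n)^{-4}$ to get an exponent of order $(\log n)^5$; for $\mathcal S_{\operatorname{bad}}$ both you and the paper discard the weights to reduce to an unweighted Bernoulli sum with tiny mean $\widetilde O((np)^{-1})$ and apply a multiplicative Chernoff bound. Your remark that naive Bernstein fails for $\mathcal S_{\operatorname{bad}}$ (because the weight scale $1/4$ caps the exponent at $O(\log n)$) is a correct and useful observation that the paper leaves implicit, and your explicit conditioning on $\mathcal G_{\operatorname{MI}}\cap\mathcal G_{\operatorname{FGTL}}$ (needed for the $p_{m,t}$ bound on bad pairs) is cleaner than the lemma's stated hypothesis $\mathcal G\cap\mathcal G_{\operatorname{RW}}$.
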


This follows from standard concentration inequalities, and the details can be found in Appendix~\ref{appendix-lem-Sgood-Sbad}. We have shown that under $\mathcal G\cap \mathcal G_{\operatorname{MI}}\cap \mathcal G_{\operatorname{FGTL}}\cap \mathcal G_{\operatorname{RW}}$, $\widetilde{\mathcal T}_{a,b,c}\le C_0c\log n$ happens w.o.p.. Building on previous arguments, this proves that Item-(ii) holds with probability at least $0.99-o(1)$, as desired.

\subsubsection{Item-(iii)}

We now show that Item-(iii) holds with probability $0.95-o(1)$. For simplicity, define
\[
\mathcal S_{a_i,b_i,c_i}:=\sum_{m=1}^M\sum_{t=0}^{T_m-1}X_{m,a_i,b_i,c_i}^{(t)}\,,\quad \forall 1\le i\le n^\gamma\,.
\]
Our goal is to establish that, w.h.p., 
\[
\max_{1\le i\le n^\gamma}\mathcal S_{a_{i},b_{i},c_{i}}\ge \gamma\sqrt c\log n\,.
\]

Before delving into the proof, let us outline the strategy that we will follow. Our starting point is the second-moment method: Writing
\[
X:=\sum_{i=1}^{n^\gamma}\mathbf{1}\{\mathcal S_{a_i,b_i,c_i}\ge \gamma\sqrt c\log n\}\,,
\]
we have from the Paley-Zygmund inequality,
\begin{align*}
\mathbb{P}\Big[\max_{1\le i\le n^\gamma}\mathcal S_{a_i,b_i,c_i}
\ge\gamma\sqrt c\log n\Big]=&\ \mathbb{P}[X>0]\ge \frac{\big(\mathbb{E}[X]\big)^2}{\mathbb{E}[X^2]}\\=&\ \frac{\sum_{i,j=1}^{n^\gamma}\mathbb{P}[\mathcal S_{a_i,b_i,c_i}\ge \gamma\sqrt c\log n]\mathbb{P}[\mathcal S_{a_j,b_j,c_j}\ge \gamma\sqrt c\log n]}{\sum_{i,j=1}^{n^\gamma}\mathbb{P}[\mathcal S_{a_i,b_i,c_i}\ge \gamma\sqrt c\log n,\mathcal S_{a_j,b_j,c_j}\ge \gamma\sqrt c\log n]}\,.
\end{align*}
However, a direct application of the above inequality only yields an $o(1)$ lower bound because the fluctuation of $X$ is too large, making the inequality too loose. To address this, we introduce a suitable filtration $\mathcal{F}_{-\mathcal{I}}$ (see \eqref{eq-F_{-I}} below) and apply a conditional second-moment method. The key intuition is that conditioning on $\mathcal{F}_{-\mathcal{I}}$ significantly reduces the fluctuation of $X$, allowing the second-moment method to yield an effective lower bound.

Nevertheless, several technical steps are required to facilitate the analysis. First, we define good realizations of $\mathcal{F}_{-\mathcal{I}}$ and show that a realization is good with probability close to 1, allowing us to restrict our attention to these cases. We then decompose each sum $\mathcal{S}_{a_i,b_i,c_i}$ into two components: $S(i)$, which contains the ``bulk'' terms, and $\overline{S}(i)$, the ``edge'' terms (see \eqref{eq-S(i)} below), and analyze them separately. Intuitively, $\overline{S}(i)$ is small since it consists of only a few terms. We establish that w.h.p., $\max_{1\le i\le n^\gamma}|\overline{S}(i)| \leq \gamma\sqrt{c} \log n$ (Proposition~\ref{prop-ovelrine-S(i)}). Next, applying the second-moment method, we show that w.h.p., $\max_{1\le i\le n^\gamma}S(i) \geq 2\gamma\sqrt{c} \log n$ (Proposition~\ref{prop-S(i)}), completing the proof. We remark that the key reason for separating bulk and edge terms is to obtain a strong quantitative central limit theorem for $S(i)$ (Lemma~\ref{lem-quantitative-estimate}), which is crucial for the second-moment method analysis.

We now provide the proof details. To prepare for the presentation, we fix the set of indices $\mathcal I=\{a_i,b_i,c_i,1\le i\le n^\gamma\}$. For any subset $I\subseteq \mathcal I$, we define
\begin{equation}\label{eq-F_{-I}}
    \mathcal F_{-I}:=\sigma\left\{x_{m,j}^{(t)}:0\le m\le M,0\le t\le T_m,j\in [n]\setminus I\right\}\,.
\end{equation}
Moreover, for $1\le m\le M$ and $0\le t\le T_m$, we write
\[
S_{m,-\mathcal I}^{(t)}:= \sum_{j\in [n]\setminus \mathcal I}x_{m,j}^{(t)}\,,\quad S_{m,a_i,-\mathcal I}^{(t)}:= \sum_{j\in \operatorname{N}_i\setminus \mathcal I}x_{m,j}^{(t)}\,,i\in [n]\,.
\]
We also define the indicators
\[
I_{m}^{(t)}:=\mathbf{1}\{n-|S_{m,-\mathcal I}^{(t)}|\ge n^{1-\delta/20}\},\,\,\quad
\overline{I}_m^{(t)}:=1-I_m^{(t)}\,.
\]
Clearly, $S_{m,-\mathcal I}^{(t)},S_{m,a_i,-\mathcal I}^{(t)}$, and $I_{m}^{(t)}$ are all measurable with respect to $\mathcal F_{-\mathcal I}$. 

For $1\le i\le n^\gamma$, recall that we denote $X_{m,a_i,b_i,c_i}^{(t)}= \frac{x_{m,a_i}^{(t+1)}(x_{m,b_i}^{(t)}-x_{m,c_i}^{(t)})}{d_{a_i}+x_{m,a_i}^{(t+1)}\cdot S_{m,a_i}^{(t)}}$. Let
\begin{equation}\label{eq-S(i)}
S(i):=\sum_{m=1}^M\sum_{t=0}^{T_m-1}I_{m}^{(t)}X_{m,a_i,b_i,c_i}^{(t)},\quad\overline{S}(i):=\sum_{m=1}^M\sum_{t=0}^{T_m-1}\overline{I}_{m}^{(t)}X_{m,a_i,b_i,c_i}^{(t)}\,.
\end{equation}
Additionally, for each $1\le i\le n^\gamma$, we write 
$$U_m^{(t)}(i):=\begin{cases}\frac{(x_{m,b_i}^{(t)}-x_{m,c_i}^{(t)})^2}{d_{a_i}^2-|S_{m,a_i}^{(t)}|^2}\,,\quad&\text{if }|S_{m,a_i}^{(t)}|\neq d_{a_i}\,,\\
\frac{(x_{m,b_i}^{(t)}-x_{m,c_i}^{(t)})^2}{2d_{a_i}}\,,\quad&\text{if $|S_{m,a_i}^{(t)}|=d_{a_i}$}\,.\end{cases}$$ 
Note that by taking the expectation of $x_{m,a_i}^{(t+1)}$, we have $$\mathbb{E}\Big[\big(X_{m,a_i,b_i,c_i}^{(t)}\big)^2\mid \mathcal F_{-\mathcal I}\Big]=\mathbb{E}[U_{m}^{(t)}(i)\mid \mathcal F_{-\mathcal I}]\,,\ \forall 1\le m\le M,0\le t\le T_m\,.$$ 

We proceed to define good realizations of $\mathcal F_{-\mathcal I}$. 
\begin{definition}
    We call a realization of $\mathcal F_{-\mathcal I}$ good, if the following properties hold: 
        \\
    \noindent (a) 
    For any $1\le i\le n^\gamma$,
    \begin{equation}\label{eq-good-realization-1}
        \mathbb{P}\left[\sum_{m=1}^M\sum_{t=0}^{T_m-1}\overline{I}_m^{(t)}U_{m}^{(t)}(i)\ge \gamma^2c\log n\mid \mathcal F_{-\mathcal I}\right]\le n^{-\delta/10}\,.
    \end{equation}
    \\
    \noindent (b) For at least half of indices $i\in [n^\gamma]$, we have that \begin{equation}\label{eq-good-realization-2}
\mathbb{E}\left[\sum_{m=1}^M\sum_{t=0}^{T_m-1}I_m^{(t)}U_m^{(t)}(i)\mid \mathcal F_{-\mathcal I}\right]\ge 10^{-3}\zeta c\log n\,.
    \end{equation}\\
(c) It holds that
\begin{equation}\label{eq-good-realization-3}
    \mathbb{E}\left[\sum_{1\le i\neq j\le n^\gamma}\Bigg(\sum_{m,t}I_m^{(t)}\frac{x_{m,a_j}^{(t)}(x_{m,b_i}^{(t)}-x_{m,c_i}^{(t)})}{(d_{a_i}+x_{m,a_i}^{(t+1)}S_{m,a_r,-\mathcal I}^{(t)})^2}\Bigg)^2\mid \mathcal F_{-\mathcal I}\right]\le n^{-\delta/5}\,.
\end{equation}
\end{definition}


The next proposition states that good realizations are indeed typical; its proof is deferred to Appendix~\ref{appendix-prop-good-realization-typical}. 
\begin{proposition}\label{prop-good-realization-typical}
    A realization of $\mathcal F_{-\mathcal I}$ is good with probability at least $0.95-o(1)$.
\end{proposition}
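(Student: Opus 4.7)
The plan is to verify each of conditions (a), (b), (c) separately and then combine them via the union bound. For (a) and (c) the strategy is to bound an appropriate \emph{unconditional} expectation and then apply Markov's inequality to upgrade to the desired conditional statement. For (b) the plan is to lower-bound the unconditional expectation of $Y_i := \mathbb E[\sum_{m,t}I_m^{(t)}U_m^{(t)}(i)\mid \mathcal F_{-\mathcal I}]$ using the abundance of ``balanced'' time slices guaranteed by Lemma~\ref{lem-effective-observations-lower-bound}, to transfer this to a pointwise lower bound on $Y_i$ via concentration, and then to argue that many indices $i$ satisfy the resulting bound. All the relevant moment estimates will exploit the typical events $\mathcal G_{\operatorname{MI}}\cap \mathcal G_{\operatorname{FGTL}}\cap \mathcal G_{\operatorname{RW}}$ established in Section~\ref{subsec-technical-imputs}.

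For (a), I would fix $i\in[n^\gamma]$ and apply Markov's inequality iteratively: the conditional probability is controlled by the conditional expectation $\mathbb E[\sum_{m,t}\overline I_m^{(t)}U_m^{(t)}(i)\mid \mathcal F_{-\mathcal I}]$, and the tail of this random variable is controlled by its unconditional mean. The indicator $\overline I_m^{(t)}$ restricts the sum to near-consensus times; Property-(ii) of $\mathcal G_{\operatorname{RW}}$ (applied dyadically to the scales $\Delta\le n^{1-\delta/20}$) bounds the number of such $(m,t)$ by $\widetilde{O}(Mn^{1-\delta/20})$, while $\mathcal G_{\operatorname{FGTL}}$ forces $d_{a_i}-|S_{m,a_i}^{(t)}|\le \widetilde{O}(pn^{1-\delta/20})$ at these pairs. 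Consequently, the probability that $x_{m,b_i}^{(t)}\ne x_{m,c_i}^{(t)}$ is $O(n^{-\delta/20})$, and the denominator $d_{a_i}^2-|S_{m,a_i}^{(t)}|^2$ is at least $\Omega(np)$. Combining these estimates yields $\mathbb E[\sum_{m,t}\overline I_m^{(t)}U_m^{(t)}(i)]\le n^{-\delta/3}$. Markov applied to the conditional expectation, then to its unconditional mean, followed by a union bound over $n^\gamma\le n^{\delta/20}$ indices, then gives (a).

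For (b), Lemma~\ref{lem-effective-observations-lower-bound} guarantees (with probability at least $0.95$) that the number of $(m,t)$ with $|W_m^{(t)}|\le 1/2$ is at least $\zeta M\min\{T,n\}/t_*$. At each such time $I_m^{(t)}=1$, $\mathcal G_{\operatorname{FGTL}}$ gives a denominator of size $O((np)^2)$, and the ``from global to local'' estimate shows that the conditional probability that $x_{m,b_i}^{(t)}\ne x_{m,c_i}^{(t)}$ is bounded below by a positive constant (since the global imbalance is at most $1/2$ and the neighborhood imbalance tracks it). Summing over all such pairs gives $\mathbb E[Y_i]\ge 10^{-2}\zeta c\log n$; a variance/second-moment estimate analogous to that of (c) then transfers this to the pointwise bound $Y_i\ge 10^{-3}\zeta c\log n$ with probability $1-o(1)$, and Markov applied to the count of failing indices completes (b). For (c), I would bound $\mathbb E[\sum_{i\ne j}Z(i,j)^2]$ directly: cross-trajectory contributions ($m\ne m'$) factor by independence of distinct trajectories, and each factor has mean $o(1/(np))$ by the mixing estimates underlying Lemma~\ref{lem-concentration-p}, so their total contribution is negligible; same-trajectory contributions are controlled using the uniform lower bound $D_{m,i}^{(t)}\ge \Omega(p^2n^{2-\delta/10})$ valid on $I_m^{(t)}=1$ (coming from $\mathcal G_{\operatorname{FGTL}}$) together with the trivial bound $|x_{m,a_j}^{(t)}(x_{m,b_i}^{(t)}-x_{m,c_i}^{(t)})|\le 2$. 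Summing over $(m,t),(m',t')$ and over the $O(n^{2\gamma})$ pairs $i\ne j$ yields $\mathbb E[\sum_{i\ne j}Z(i,j)^2]\le n^{-\delta/4}$, and Markov's inequality delivers (c).

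The main technical obstacle will be controlling the same-trajectory contributions in (c) and the associated variance step in (b). While cross-trajectory terms vanish cleanly by the independence of distinct voter trajectories, the within-trajectory correlations require a careful coupling of the random-walk mixing estimates from Section~\ref{subsec-admissible-graph} with the denominator lower bound available on $I_m^{(t)}=1$, and in particular demand that the cancellations between $p_{a_j,b_i}^{(t)}$ and $p_{a_j,c_i}^{(t)}$ be preserved uniformly over $t\ge t_*$ rather than only at $t=t_*$. A secondary subtlety is that passing from ``$\mathbb E[Y_i]$ is large'' to ``$Y_i$ is large for at least half of $i$'' in (b) cannot be handled by Markov's inequality alone; the variance estimate used there is effectively a weaker form of the quadratic computation that carries (c).
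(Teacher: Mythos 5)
Your treatment of Property-(a) matches the paper: the paper also bounds $\mathbb E\bigl[\sum_{m,t}\overline I_m^{(t)}U_m^{(t)}(i)\bigr]\le\widetilde O(n^{-\delta/4})$ using $\mathcal G_{\operatorname{RW}}$ and $\mathcal G_{\operatorname{FGTL}}$ exactly as you describe, and then applies Markov twice and a union bound over $n^\gamma$ indices. The differences appear in (b) and (c), and each contains a gap.

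For Property-(b), the concentration step ``transfer this to the pointwise bound $Y_i\ge 10^{-3}\zeta c\log n$ with probability $1-o(1)$'' does not hold. The lower bound on $\mathbb E[Y_i]$ comes from the abundance of balanced time slices, but Lemma~\ref{lem-effective-observations-lower-bound} only guarantees $\sum_m\widetilde T_m\ge\zeta M\min\{T,n\}$ on an event $\widetilde{\mathcal G}$ of probability $\ge 0.95$, and this is a genuine constant-order event: the fluctuation of $\sum_m\widetilde T_m$ is of the same order as its mean (this is precisely why the main theorems have probability $0.9$ and not $1-o(1)$), so $\operatorname{Var}(Y_i)=\Theta(\mathbb E[Y_i]^2)$ and no Chebyshev-type argument can give a $1-o(1)$ pointwise bound. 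The paper sidesteps this entirely: it fixes the event $\widetilde{\mathcal G}\cap\mathcal G_{\operatorname{FGTL}}$, conditions on $\mathcal F_{-\{b,c\}}$ (so that the indicators $\mathbf 1\{x_{m,b}^{(t)}\neq x_{m,c}^{(t)}\}$ become genuinely independent Bernoulli with mean $\ge 1/50$), stochastically dominates the sum by a scaled binomial, and applies Chernoff, obtaining an $o(1)$ failure probability \emph{on that event} for each $i$; the $0.95$ in the final statement is then inherited directly from $\widetilde{\mathcal G}$ rather than produced by a variance estimate. Your plan as written would need to redo Lemma~\ref{lem-effective-observations-lower-bound} with a $1-o(1)$ guarantee, which is false.

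For Property-(c), the bounds you propose do not close. Each summand in the inner sum has magnitude $\widetilde O\bigl((np)^{-2}n^{\delta/10}\bigr)$ under $I_m^{(t)}=1$, and there are $\widetilde O\bigl((np)^2\bigr)$ summands; the crude product bound you describe for same-trajectory terms therefore yields $\widetilde O(n^{\delta/5})$ per index pair, which is the wrong side of $1$. Your cross-trajectory estimate of $o(1/(np))$ per factor is also insufficient: it gives $(MT\cdot o(1/(np)))^2=o((np)^2)$ per pair $(i,j)$, again far from small. What the paper actually proves (eq-coupling-goal) is the much stronger $\widetilde O\bigl((np)^{-2}n^{-9\delta/20}\bigr)$ per conditional expectation, via a coupling of two backward random-walk realizations, combined with a split of same-trajectory pairs $(m,t_1),(m,t_2)$ into ``close'' ($|t_1-t_2|\le\log n$, handled by counting) and ``far'' ($|t_1-t_2|>\log n$, handled by the coupling). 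You correctly flag the within-trajectory cancellations $p_{a_j,b_i}^{(t)}-p_{a_j,c_i}^{(t)}$ as the main obstacle, but the proposal does not contain the close/far split or the coupling that resolves it, and Lemma~\ref{lem-concentration-p} on its own (which fixes $t=t_*$) does not supply the uniform-in-$t$ cancellation needed. In short, (a) is fine; (b) and (c) each require the specific devices the paper uses (stochastic domination under $\widetilde{\mathcal G}$ for (b), the close/far split plus backward-walk coupling for (c)) and cannot be replaced by the expectation-plus-Markov plan as stated.
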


In what follows, we fix a good realization $\omega$ of $\mathcal F_{-\mathcal I}$ and condition on it. To understand why the properties in Proposition~\ref{prop-good-realization-typical} are important, we make the following observations. Since $I_m^{(t)}$ and $\overline{I}_m^{(t)}$ are now deterministic quantities, we can write
\[
S(i)=\sum_{I_m^{(t)}=1}X_{m,a_i,b_i,c_i}^{(t)}\,,\quad \overline{S}(i)=\sum_{\overline{I}_m^{(t)}=1}X_{m,a_i,b_i,c_i}^{(t)}\,.
\]
We further condition on a realization $\widetilde{\omega}$ of $\mathcal F_{-\{a_i\}}$ (recall \eqref{eq-F_{-I}}) that is compatible with $\omega$, and we denote the conditional probability measure given $\widetilde{\omega}$ as $\widetilde{\mathbb P}$. Then, all the random variables $X_{m,a_i,b_i,c_i}^{(t)}$ are independent under $\widetilde{\mathbb P}$. Additionally, by a straightforward calculation, we have
\[
\widetilde{\mathbb{E}}[X_{m,a_i,b_i,c_i}^{(t)}]=0\,,\quad \operatorname{Var}_{\widetilde{\mathbb P}}[X_{m,a_i,b_i,c_i}^{(t)}]=U_m^{(t)}(i)\,.
\]
Therefore, $S(i)$ (resp. $\overline{S}(i)$) is a sum of independent random variables with mean $0$ and variance $\sum_{I_{m}^{(t)}=1}U_m^{(t)}(i)$ (resp. $\sum_{\overline{I}_m^{(t)}=1}U_m^{(t)}(i)$). 
Given that properties-(a) and (b) of good realizations hold, we expect that for a typical $\widetilde{\omega}$, all the $\overline{S}(i)$'s will have variance no more than $\gamma^2c\log n$, while at least half of the $S(i)$'s will have variance at least $10^{-3}\zeta c\log n$. By the central limit theorem, it is natural to think of $S(i)$ and $\overline{S}(i)$ as Gaussian variables. In this way, we expect that w.h.p. under $\widetilde{\mathbb P}$,
\[
\max_{1\le i\le n^\gamma}|\overline{S}(i)|\le \sqrt{3\gamma^2c\log n\cdot \log n^\gamma}<\gamma\sqrt c\log n\,.\quad
\]
Additionally, if we assume nice independence over $S(i),1\le i\le n^\gamma$, then w.h.p. under $\widetilde{\mathbb P}$,
\[
\max_{1\le i\le n^\gamma}|S(i)|\ge \sqrt{10^{-3}\zeta c\log n\cdot \log n^\gamma}>2\gamma\sqrt{c}\log n\,.
\]
The next two propositions make the above heuristic rigorous. 

\begin{proposition}\label{prop-ovelrine-S(i)}
    Conditioned on any good realization of $\mathcal F_{-\mathcal I}$, it holds w.h.p. that for any $1\le i\le n^\gamma$, $|\overline{S}(i)|\le \gamma\sqrt c\log n$.
\end{proposition}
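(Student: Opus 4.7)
The plan is to condition sequentially on realizations of $\mathcal{F}_{-\mathcal{I}}$ and $\mathcal{F}_{-\{a_i\}}$, and then apply a concentration bound to $\overline S(i) = \sum_{\overline{I}_m^{(t)}=1} X_{m,a_i,b_i,c_i}^{(t)}$ as a sum of independent mean-zero bounded random variables under $\widetilde{\mathbb{P}}$. First, fix a good realization $\omega$ of $\mathcal{F}_{-\mathcal{I}}$. Property~(a), together with a union bound over $i \in [n^\gamma]$ (exploiting that $\gamma$ is chosen strictly smaller than $\delta/10$ in the relevant parameter regime, by the definition $\gamma = \min\{\sqrt{\zeta\delta/5000},\zeta/1000,\delta/10\}$), yields that with conditional probability $1-o(1)$ the variance $\sigma_i^2 := \sum_{\overline{I}_m^{(t)}=1} U_m^{(t)}(i)$ is at most $\gamma^2 c \log n$ for all $i$ simultaneously. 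Under a further conditioning on a realization $\widetilde\omega$ of $\mathcal{F}_{-\{a_i\}}$ extending $\omega$, the summands $X_{m,a_i,b_i,c_i}^{(t)}$ are independent, mean-zero, and take values in $[-\tfrac{1}{2},1]$ by goodness of the triples (Item~(i), which holds w.h.p.\ and on which we condition).

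A direct Bernstein bound is problematic: at very-edge pairs where $d_{a_i} - |S_{m,a_i}^{(t)}|$ can be as small as $2$, the max increment is of order $1$, placing Bernstein in the sub-exponential regime at the target deviation $\gamma\sqrt c \log n$ and giving only $n^{-O(\gamma\sqrt c)}$-type tail bounds — too weak to survive the union bound over $n^\gamma$ indices. The fix is to split the edge events into \emph{mild} events (those with $n - |S_m^{(t)}| \ge p^{-1}(\log n)^{100}$) and \emph{very-edge} events (the remainder), and treat each piece separately.

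On mild events, Lemma~\ref{lem-FGTL} gives $d_{a_i} - |S_{m,a_i}^{(t)}| \gtrsim (\log n)^{100}$, so each summand has magnitude $O((\log n)^{-100})$. With variance bounded by $\gamma^2 c \log n$, Bernstein yields
\[
\widetilde{\mathbb{P}}\bigl(|\overline S(i)_{\mathrm{mild}}| > \tfrac{1}{2}\gamma\sqrt c \log n\bigr) \le \exp(-\Omega(\log n)),
\]
which comfortably survives the union bound. For very-edge events, Lemma~\ref{lem-RW}(ii) bounds their number by $M \cdot \widetilde{O}(p^{-1}) = \widetilde{O}(np)$ (using $M\cdot \min\{T,n\} = cn^2p^2\log n$). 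Using $|X|\le 1$, total variance $\le \gamma^2 c \log n$, and Markov's inequality on the $(2k)$-th moment combined with Rosenthal's inequality at $k \sim \log n$, we obtain
\[
\widetilde{\mathbb{P}}\bigl(|\overline S(i)_{\mathrm{v.e.}}| > \tfrac{1}{2}\gamma\sqrt c \log n\bigr) \le n^{-\Omega(1)},
\]
where the exponent depends only on the absolute Rosenthal constant. A final union bound over $i \in [n^\gamma]$ (for $\gamma$ small enough, guaranteed by its definition) concludes the proof.

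The main obstacle is the potentially large maximum increment on very-edge events, which pushes standard Bernstein into the sub-exponential regime; we bypass this through the mild/very-edge split combined with a high-moment (Rosenthal + Markov) argument on the very-edge contribution.
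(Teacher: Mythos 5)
Your overall plan mirrors the paper's: fix a good realization of $\mathcal F_{-\mathcal I}$, further condition on $\mathcal F_{-\{a_i\}}$ so the summands of $\overline S(i)$ become independent and centered, split the edge indices into a ``large-margin'' part (your mild pairs, the paper's $\mathcal Q_{\operatorname{good}}$ with $d_{a_i}-|S_{m,a_i}^{(t)}|\ge\log n$) and a ``tiny-margin'' part (your very-edge pairs, the paper's $\mathcal Q_{\operatorname{bad}}$), and concentrate each separately. Your Bernstein computation on the mild part, using increments $O((\log n)^{-100})$ and variance $\le\gamma^2 c\log n$ from Property~(a), correctly gives a $n^{-1/8+o(1)}$-type tail; this matches the paper's $\mathcal Q_{\operatorname{good}}$ analysis.

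The very-edge part is where your proof has a genuine gap. You propose to apply Rosenthal's inequality together with Markov at $p=2k\sim\log n$, using only $|X|\le 1$ and total variance $\le\gamma^2 c\log n$, and claim a tail $n^{-\Omega(1)}$ with a universal exponent. This cannot be right. Any estimate that uses only (i) max increment $\le 1$ and (ii) total variance $\le\gamma^2 c\log n$ is subject to a Poisson-type lower bound: if the sum were, e.g., a recentered $\mathrm{Bin}(N,q)$ with $Nq\approx\gamma^2 c\log n$, then at deviation $t=\tfrac12\gamma\sqrt c\log n$ the best exponent one can hope for is $\approx\tfrac12\gamma\sqrt c\,\log\!\big(1/(\gamma\sqrt c)\big)$, and indeed Rosenthal with its sharp constant $C_p\asymp(p/\log p)^p$ gives nothing better. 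With the paper's choices $\gamma\le\zeta/1000$ and $c<\gamma^2/(100C_0^2)$, the product $\gamma\sqrt c$ is astronomically small, so $\tfrac12\gamma\sqrt c\log(1/(\gamma\sqrt c))\ll\gamma$ and the resulting tail does \emph{not} beat $n^{-\gamma}$ — the union bound over $n^\gamma$ indices fails. Concretely, trying $p=\Theta(\sqrt{\log n})$ gives $\exp(-\Theta(\sqrt{\log n}\cdot\log\log\log n))=n^{-o(1)}$, and trying $p=\Theta(\log n)$ makes the Rosenthal constant $(Cp/\log p)^p$ overwhelm the deviation. The paper avoids this by \emph{not} treating the $\mathcal Q_{\operatorname{bad}}$ terms as black-box bounded variables: it shows, using $\mathcal G_{\operatorname{FGTL}}\cap\mathcal G_{\operatorname{RW}}$ and conditional-probability estimates in the spirit of Lemma~\ref{lem-stochastic-domination}, that the ``large-increment'' event (case~2 in \eqref{eq-3-cases}) is a subset of an event of probability $\widetilde O(n^{-\delta})$, after which each bad increment is deterministically $\le 4/np$ and the bad sum is dominated by $\tfrac{4}{np}\mathbf{B}(\widetilde O(np),\widetilde O((np)^{-2}))$, to which Chernoff gives super-polynomial decay. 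This finer-grained analysis — killing the large increments outright rather than averaging over them — is exactly what is missing from your argument.

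One smaller point worth flagging: you claim the summands lie in $[-\tfrac12,1]$ ``by goodness of the triples (Item~(i) \ldots on which we condition).'' Conditioning on Item~(i) constrains the joint law of $(x_{m,a_i}^{(t+1)})_{m,t}$ and is incompatible with the conditional independence you want from conditioning on $\mathcal F_{-\{a_i\}}$. Luckily this is unnecessary: $|X_{m,a,b,c}^{(t)}|\le 1$ holds deterministically, because the denominator $d_a+x_{m,a}^{(t+1)}S_{m,a}^{(t)}$ is always an even integer $\ge 2$ (it is twice the number of out-neighbors of $a$ with the opinion $x_{m,a}^{(t+1)}$), while the numerator is at most $2$ in absolute value. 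So you should drop the appeal to Item~(i) here; but even with the corrected $|X|\le1$, the Rosenthal gap above remains.
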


\begin{proposition}\label{prop-S(i)}
    Conditioned on any good realization of $\mathcal F_{-\mathcal I}$, it holds w.h.p. that there exists $1\le i\le n^\gamma$ such that $S(i)\ge 2\gamma\sqrt c\log n$. 
\end{proposition}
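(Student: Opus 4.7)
I would fix a good realization $\omega$ of $\mathcal F_{-\mathcal I}$ and apply a conditional second-moment method. Let $J \subseteq [n^\gamma]$ denote the set of indices $i$ for which \eqref{eq-good-realization-2} holds, so that $|J| \ge n^\gamma/2$ by property (b); for each $i \in J$ set $A_i := \{S(i) \ge 2\gamma\sqrt{c}\log n\}$ and define the counting variable $X := \sum_{i \in J} \mathbf{1}_{A_i}$. By the Paley--Zygmund inequality, it then suffices to show that $\mathbb{E}[X \mid \mathcal F_{-\mathcal I}] \to \infty$ and $\operatorname{Var}(X \mid \mathcal F_{-\mathcal I}) = o(\mathbb{E}[X \mid \mathcal F_{-\mathcal I}]^2)$.

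For the first moment, I would further condition on a realization of $\mathcal F_{-\{a_i\}}$ compatible with $\omega$. Under this finer conditioning $S(i)$ becomes a sum of independent, mean-zero random variables (independence coming from the fact that the updates $x_{m,a_i}^{(t+1)}$ are independent across different $(m,t)$), and each summand has absolute value $O(1/(np))$ thanks to the goodness of $(a_i,b_i,c_i)$ and the indicator $I_m^{(t)}$ keeping us away from near-consensus. A Markov-type estimate converts property (b) into a high-probability lower bound $\sigma_i^2 := \sum_{I_m^{(t)} = 1} U_m^{(t)}(i) = \Omega(c\log n)$ on the conditional variance. Invoking the quantitative CLT (Lemma~\ref{lem-quantitative-estimate}) together with a standard Gaussian-tail estimate then yields $\mathbb{P}[A_i \mid \mathcal F_{-\mathcal I}] \ge (\log n)^{-O(1)} n^{-\alpha}$ for some $\alpha = O(\gamma^2/\zeta)$; the three bounds comprising the definition of $\gamma$ are precisely calibrated to guarantee $\alpha < \gamma$, so that $\mathbb{E}[X \mid \mathcal F_{-\mathcal I}] \ge (n^\gamma/2)(\log n)^{-O(1)} n^{-\alpha} \to \infty$.

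For the second moment, the task is to control the covariances $\operatorname{Cov}(\mathbf{1}_{A_i}, \mathbf{1}_{A_j} \mid \mathcal F_{-\mathcal I})$ for $i \neq j$. The only source of dependence between $S(i)$ and $S(j)$ is that (a) the update at $a_i$ may select a neighbor inside $\mathcal I$, and (b) if $a_j \in \operatorname{N}_{a_i}$, then $x_{m,a_j}^{(t)}$ enters the denominator of $X_{m,a_i,b_i,c_i}^{(t)}$ via $S_{m,a_i}^{(t)}$. Writing $S_{m,a_i}^{(t)} = S_{m,a_i,-\mathcal I}^{(t)} + \sum_{a_r \in \operatorname{N}_{a_i} \cap \mathcal I} x_{m,a_r}^{(t)}$ and Taylor-expanding the denominator of $X_{m,a_i,b_i,c_i}^{(t)}$ around its $\mathcal F_{-\mathcal I}$-measurable leading term $d_{a_i} + x_{m,a_i}^{(t+1)} S_{m,a_i,-\mathcal I}^{(t)}$ reduces the analysis of the cross-dependence to bounding exactly the quantity inside the expectation in \eqref{eq-good-realization-3}. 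This bound directly forces $\sum_{i \neq j} \operatorname{Cov}(\mathbf{1}_{A_i}, \mathbf{1}_{A_j} \mid \mathcal F_{-\mathcal I}) = o(\mathbb{E}[X \mid \mathcal F_{-\mathcal I}]^2)$, and Paley--Zygmund then completes the argument.

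\textbf{Main obstacle.} The hardest step is the quantitative moderate-deviations estimate on the tail of $S(i)$. Since we need polynomial-in-$n$ accuracy at a scale of $\Theta(\sqrt{\log n})$ standard deviations above the mean, Berry--Esseen is insufficient and a Cram\'er-type bound is required, whose validity rests on the summands being uniformly bounded by a small multiple of $\sigma_i/\sqrt{\log n}$. This is exactly the reason for separating $S(i)$ from the edge contribution $\overline{S}(i)$: the indicator $I_m^{(t)}$ guarantees that each bulk summand has magnitude $O(1/(np))$ uniformly, placing us squarely inside the Cram\'er regime. A secondary difficulty is converting property (c)--- a quadratic-functional bound on a cross-term ---into a genuine probability estimate on the joint event $A_i \cap A_j$, which requires carefully matching the structure of \eqref{eq-good-realization-3} to the leading terms of the Taylor expansion of the denominator and verifying that all higher-order corrections contribute only a lower-order amount to the conditional covariance.
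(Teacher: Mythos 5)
Your proposal mirrors the paper's argument step for step: a conditional Paley--Zygmund/second-moment bound after fixing a good realization of $\mathcal F_{-\mathcal I}$, the quantitative CLT of Lemma~\ref{lem-quantitative-estimate} for the marginal tail of each $S(i)$, and decorrelation of $S(i)$, $S(j)$ via Property-(c); the paper carries out your ``Taylor expansion'' exactly rather than approximately, by introducing $\widehat S(i)$ with $S^{(t)}_{m,a_i,-a_j}$ replacing $S^{(t)}_{m,a_i}$ in the denominator and controlling $\Delta(i)=S(i)-\widehat S(i)$ through \eqref{eq-good-realization-3}.

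One correction to your ``Main obstacle'' paragraph: you claim Berry--Esseen is insufficient and that a Cram\'er-type bound is required, but Lemma~\ref{lem-quantitative-estimate} is proved \emph{directly} from the Berry--Esseen bound (Lemma~\ref{lemma-quantitative-CLT}). This works because the deviation is only $\Theta(\sqrt{\log n})$ standard deviations, so the target tail $\Phi(\chi/\sigma_i)\ge n^{-500\gamma^2/\zeta}\ge n^{-\delta/10}$ is merely polynomially small, whereas the Berry--Esseen additive error is $\widetilde O\big((np)^{-1}n^{3\delta/20}\big)\le n^{-\delta/2}$; the calibration $\gamma^2\le\zeta\delta/5000$ makes that error sub-dominant, which is exactly what upgrades the additive Berry--Esseen bound to the multiplicative $(1+o(1))$-estimate you invoke. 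A Cram\'er bound would indeed be needed at $\Theta(\log n)$ standard deviations (superpolynomially small tails), but that is not the regime here. Your intuition about why $\overline S(i)$ is split off---uniform boundedness of the bulk summands---is still the right intuition; it is used to make the Berry--Esseen ratio $\sum\mathbb E|X|^3/\sigma_i^3$ polynomially small, not to enter a Cram\'er regime.
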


As hinted above, Proposition~\ref{prop-ovelrine-S(i)} can be proved by applying martingale concentration inequalities and then taking the union bound; the details can be found in Appendix~\ref{appendix-overline-S(i)-small}. The proof of Proposition~\ref{prop-S(i)} is less straightforward, so we dedicate the remainder of this section to it. The challenging part here is that, beyond the marginal distributions of each $S(i)$, Proposition~\ref{prop-S(i)} further requires certain weak correlation properties within $S(i)$. On a more technical level, we use the second-moment method to prove Proposition~\ref{prop-S(i)}, and we will see that \eqref{eq-good-realization-3} in Property-(c) of good realizations plays a key role in this decorrelation phenomenon. 

Fix a good realization $\omega$ of $\mathcal F_{-\mathcal I}$. Let $I$ be the subset of $\{1,\cdots,n^\gamma\}$ such that \eqref{eq-good-realization-2} holds. In what follows, we denote by $\overline{\mathbb{P}}$ the conditional probability measure given the realization $\omega$ and abbreviate $\chi:=2\gamma\sqrt c\log n$. 
From the second-moment method, we conclude
\begin{align}\label{eq-second-moment-method}
\overline{\mathbb{P}}\Big[\max_{1\le i\le n^\gamma}S(i)\ge \chi\Big]\ge \overline{\mathbb{P}}\Big[\max_{ i\in I}S(i)\ge \chi\Big]\ge \frac{\sum_{i,j\in I}\overline{\mathbb{P}}[S(i)\ge \chi,S(j)\ge \chi]}{\sum_{i,j\in I}\overline{\mathbb{P}}[S(i)\ge \chi]\overline{\mathbb{P}}[S(j)\ge \chi]}\,.
\end{align}
We show that the final expression is $1-o(1)$. 

For $i\in I$, define 
\begin{equation}\label{eq-sigma_i^2}
\sigma_i^2:=\sum_{I_m^{(t)}=1}\frac{(x_{m,b_i}^{(t)}-x_{m,c_i}^{(t)})^2}{d_{a_i}^2-|S_{m,a_i,-\mathcal I}|^2}\,,
\end{equation}
which is measurable w.r.t. $\mathcal F_{-\mathcal I}$. It is straightforward to check that, deterministically,
\begin{equation}\label{eq-deterministic-relation}
\begin{aligned}
&   \ \left|\sum_{I_m^{(t)}=1}U_m^{(t)}(i)-\sum_{I_m^{(t)}=1}\frac{(x_{m,b_i}^{(t)}-x_{m,c_i}^{(t)})^2}{d_{a_i}^2-|S_{m,a_i,-\mathcal I}|^2}\right|\\
=&\ \left|\sum_{I_m^{(t)}=1}(x_{m,b_i}^{(t)}-x_{m,c_i}^{(t)})^2\cdot \frac{(S_{m,a_i}^{(t)}+S_{m,a_i,-\mathcal I}^{(t)})\cdot\sum_{j\in \operatorname{N}_i\cap \mathcal I}x_{m,j}^{(t)}}{(d_{a_i}^2-|S_{m,a_i}^{(t)}|^2)(d_{a_i}^2-|S_{m,a_i,-\mathcal I}|^2)}\right|\\
\le&\ \widetilde{O}(n^2p^2)\times \widetilde{O}\left(\frac{np\cdot n^\gamma}{(np)^4\cdot n^{-\delta/10}}\right)= \widetilde{O}\Big(\frac{n^{\gamma+\delta/10}}{np}\Big)=o(1)\,.
\end{aligned}
\end{equation}
We conclude from \eqref{eq-good-realization-2} that $\sigma_i^2$ satisfies $\sigma_i^2\ge 10^{-3}\zeta c\log n-o(1)$ for all $i\in I$. Moreover, \eqref{eq-deterministic-relation} indicates that, given any realization $\widetilde{\omega}$ of $\mathcal F_{-\{a_i\}}$ that is compatible with $\omega$, the variance of $S(i)$ is roughly $\sigma_i^2$. Hence, $S(i)$ should behave like a centered Gaussian with variance $\sigma_i^2$. More precisely, we have the following lemma.


\begin{lemma}\label{lem-quantitative-estimate}
Given a good realization $\omega$ of $\mathcal F_{-\mathcal I}$, let $I$ and $\sigma_i^2$ be defined as above. Then, for any $i\in I$ and any compatible realization $\widetilde{\omega}$ of $\mathcal F_{-\{a_i\}}$, it holds for any $x\in [\chi-1,\chi+1]$ that
\[
\widetilde{\mathbb{P}}[S(i)\ge x]=(1+o(1))\Phi(x/\sigma_i)\,,
\]
where $\widetilde{\mathbb P}=\mathbb{P}[\cdot\mid \widetilde{\omega}]$ and $\Phi$ is the Gaussian tail function. Thus, $\overline{\mathbb P}[S(i)\ge x]=(1+o(1))\Phi(x/\sigma_i)$.  
\end{lemma}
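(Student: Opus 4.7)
I would apply a quantitative central limit theorem, specifically the Berry--Esseen inequality, to $S(i)$ conditional on $\widetilde{\omega}$. As recorded in the paragraph preceding the lemma, under $\widetilde{\mathbb P}$ the summands $\{X_{m,a_i,b_i,c_i}^{(t)}\}_{I_m^{(t)}=1}$ are independent, mean zero, with individual variances $U_m^{(t)}(i)$. Let $\widetilde\sigma_i^2 := \sum_{I_m^{(t)}=1} U_m^{(t)}(i)$ denote the total variance of $S(i)$ under $\widetilde{\mathbb P}$. Restricting the computation in \eqref{eq-deterministic-relation} to the index set $\{I_m^{(t)}=1\}$ yields $|\widetilde\sigma_i^2 - \sigma_i^2| = o(1)$ deterministically; combined with property (b) of good realizations \eqref{eq-good-realization-2}, this gives $\sigma_i^2 \ge 10^{-3}\zeta c\log n - o(1)$, hence $\widetilde\sigma_i = \Omega(\sqrt{\log n})$, for every $i\in I$ and every compatible $\widetilde\omega$.

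The next step is to bound each summand uniformly. The formula \eqref{eq-def-X} gives $|X_{m,a_i,b_i,c_i}^{(t)}|\le 2/(d_{a_i}-|S_{m,a_i}^{(t)}|)$. The indicator $I_m^{(t)}=1$ forces $n-|S_m^{(t)}| \ge n^{1-\delta/20}-3n^{\gamma} \ge n^{1-\delta/10}$ (using $\gamma<\delta/10$), and since $\mathcal G_{\operatorname{FGTL}}$ holds with overwhelming probability---its failure contributes only a negligible term when averaging---property \eqref{eq-from-local-to-global-event} yields $d_{a_i}-|S_{m,a_i}^{(t)}| \ge (p/10)n^{1-\delta/10} \ge n^{9\delta/10}/10$. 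Consequently every relevant summand obeys $|X_{m,a_i,b_i,c_i}^{(t)}|\le K:=20n^{-9\delta/10}$, and in particular $\sum_{I_m^{(t)}=1}\widetilde{\mathbb E}|X_{m,a_i,b_i,c_i}^{(t)}|^3 \le K\widetilde\sigma_i^2$.

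The Berry--Esseen inequality for independent, not-necessarily-identically-distributed summands then gives $|\widetilde{\mathbb P}[S(i)\ge x]-\Phi(x/\widetilde\sigma_i)|\le O(K/\widetilde\sigma_i) = O(n^{-9\delta/10}/\sqrt{\log n})$. Using $\gamma\le\sqrt{\zeta\delta/5000}$ together with $x\le 2\gamma\sqrt c\log n +1$, one computes $(x/\widetilde\sigma_i)^2 \le (4\delta/5)\log n\cdot(1+o(1))$, whence $\Phi(x/\widetilde\sigma_i)\ge n^{-2\delta/5}/O(\sqrt{\log n})$. The Berry--Esseen additive error is therefore a relative factor $n^{-\delta/2+o(1)}$, so $\widetilde{\mathbb P}[S(i)\ge x]=(1+o(1))\Phi(x/\widetilde\sigma_i)$. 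Finally, $|\widetilde\sigma_i^2-\sigma_i^2|=o(1)$ implies $|x/\widetilde\sigma_i-x/\sigma_i|=o(1)/\sqrt{\log n}$; combined with $x/\sigma_i=O(\sqrt{\log n})$ and the standard Gaussian tail asymptotic $\Phi(y+\varepsilon)=\Phi(y)(1+O(|\varepsilon|y))$ for $y\ge 1$, we obtain $\Phi(x/\widetilde\sigma_i)=(1+o(1))\Phi(x/\sigma_i)$. This proves the conditional claim; since the $o(1)$ rates above are uniform in $\widetilde\omega$ and $\sigma_i$ is $\mathcal F_{-\mathcal I}$-measurable, averaging over $\widetilde\omega$ gives the $\overline{\mathbb P}$ statement.

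The main obstacle is calibrating the scales so that the Berry--Esseen error is genuinely negligible at the moderate-deviation threshold $x\approx \chi = 2\gamma\sqrt c\log n$: each summand must be bounded by a negative power of $n$ (secured by $I_m^{(t)}=1$ and FGTL at $a_i$), while $\Phi(x/\sigma_i)$ must stay polynomially large in $n$ (secured by the smallness of $\gamma$). The precise choice $\gamma\le\sqrt{\zeta\delta/5000}$ is exactly what makes these two competing polynomial rates compatible and hence what powers the second-moment computation in Proposition~\ref{prop-S(i)}.
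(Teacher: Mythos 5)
Your proof is correct and follows essentially the same route as the paper: Berry--Esseen applied to the conditionally independent sum under $\widetilde{\mathbb P}$, with the summand size controlled by $I_m^{(t)}=1$ together with $\mathcal G_{\operatorname{FGTL}}$, and the smallness of $\gamma$ guaranteeing that $\Phi(x/\sigma_i)$ stays polynomially large so the additive Berry--Esseen error is a vanishing relative error. Your bookkeeping of the third-moment sum via $\sum\widetilde{\mathbb E}|X|^3\le K\widetilde\sigma_i^2$ is a bit cleaner than the paper's per-term bound times term-count, and you are a little more careful in translating the indicator condition on $S_{m,-\mathcal I}^{(t)}$ into a bound on $S_m^{(t)}$ (via the $\pm 3n^\gamma$ correction), but the substance is identical.
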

The proof of this lemma relies on a quantitative central limit theorem for sums of independent random variables; see Appendix~\ref{appendix-lem-quantitative-estimate} for details. 

Given Lemma~\ref{lem-quantitative-estimate}, we obtain that for $\sigma_i^2\ge 10^{-3}\zeta c\log n-o(1)$,
\[
\Phi(\chi/\sigma_i)\ge \Phi\big(\chi/\sqrt{10^{-3}\zeta c\log n-o(1)}\big)\ge n^{-500\gamma^2/\zeta-o(1)}\ge \max\{n^{-\gamma/2},n^{-\delta/10}\}\,,
\]
where the second inequality follows from a standard Gaussian tail estimate, and the last inequality is due to our choice of $\gamma$. 
Thus, we have 
$$\overline{\mathbb P}[S(i)\ge \chi]=\overline{\mathbb E}\Big[\mathbb{P}\big[S(i)\ge \chi\mid \mathcal F_{-\{a_i\}}\big]\Big]=(1+o(1))\Phi(\chi/\sigma_i)\gg n^{-\gamma}\,.$$
Therefore, the denominator in \eqref{eq-second-moment-method} goes to $\infty$ as $n\to \infty$ (as $|I|\ge n^\gamma/2$ by Property-(b) of good realizations). Consequently, to show that the right hand side of \eqref{eq-second-moment-method} is $1-o(1)$, it suffices to show that for any $i\neq j\in I$, 
\begin{equation}\label{eq-two-point-estimate}
\overline{\mathbb P}[S(i)\ge \chi,S(j)\ge \chi]=(1+o(1))\overline{\mathbb P}[S(i)\ge \chi]\overline{\mathbb P}S(j)\ge \chi]. 
\end{equation}

To this end, we fix $i\neq j\in I$ and seek a decoupling of $S(i)$ and $S(j)$. We define
\[
\widehat{S}(i):=\sum_{I_{m}^{(t)}=1}\frac{x_{m,a_i}^{(t+1)}(x_{m,b_i}^{(t)}-x_{m,c_i}^{(t)})}{d_{a_i}+x_{m,a_i}^{(t+1)}\cdot S_{m,a_i,-a_j}}\,,\quad\widehat{S}(j):=\sum_{I_{m}^{(t)}=1}\frac{x_{m,a_j}^{(t+1)}(x_{m,b_j}^{(t)}-x_{m,c_j}^{(t)})}{d_{a_j}+x_{m,a_j}^{(t+1)}\cdot S_{m,a_j,-a_i}}\,
\]
and write $\Delta(i):=S(i)-\widehat{S}(i)$ and $\Delta(j):=S(j)-\widehat{S}(j)$. Intuitively, since $\Delta(i)$ and $\Delta(j)$ are small by Property-(c) of good realizations,  $\widehat{S}(i)$ and $\widehat{S}(j)$ are close to $S(i)$ and $S(j)$, respectively. Moreover, under $\mathcal F_{-\{a_i,a_j\}}$, $\widehat{S}(i)$ and $\widehat{S}(j)$ are conditionally independent (recall \eqref{eq-F_{-I}}), which establishes \eqref{eq-two-point-estimate}.
 
More precisely, let $\operatorname{e}_n=(\log n)^{-1}$ ($\operatorname{e}_n$ can be any sub-polynomial $o(1)$ term). We have that
\begin{align*}
&\overline{\mathbb P}[S(i)\ge \chi,S(j)\ge \chi]\ge \overline{\mathbb P}[\widehat{S}(i)\ge \chi+\operatorname{e}_n,\widehat{S}(j)\ge \chi+\operatorname{e}_n]-\overline{\mathbb P}[\max\{|\Delta(i)|,|\Delta(j)|\}\ge \operatorname{e}_n]\,, \\
&\overline{\mathbb P}[S(i)\ge \chi,S(j)\ge \chi]\le \overline{\mathbb P}[\widehat{S}(i)\ge \chi-\operatorname{e}_n,\widehat{S}(j)\ge \chi-\operatorname{e}_n]+\overline{\mathbb P}[\max\{|\Delta(i)|,|\Delta(j)|\}\ge \operatorname{e}_n]\,.
\end{align*}
Note that
\begin{align*}
\Delta(i)=&\ \sum_{I_{m}^{(t)}=1}\frac{x_{m,a_j}^{(t)}(x_{m,b_i}^{(t)}-x_{m,c_i}^{(t)})}{(d_{a_i}+x_{m,a_i}^{(t+1)}\cdot S_{m,a_i}^{(t)})(d_{a_i}+x_{m,a_i}^{(t+1)}\cdot S_{m,a_i,-a_j}^{(t)})}\\
=&\ \sum_{m,t}I_m^t\frac{x_{m,a_j}^{(t)}(x_{m,b_i}^{(t)}-x_{m,c_i}^{(t)})}{(d_{a_i}+x_{m,a_i}^{(t+1)}\cdot S_{m,a_i,-\mathcal I}^{(t)})^2}+\widetilde{O}\left((np)^2\times \frac{n^{\gamma+3\delta/20}}{(np)^3}\right)\,.
\end{align*}
Since the last error term is $\widetilde{O}(n^{-\delta/2})$,
it follows from \eqref{eq-good-realization-3} that 
\begin{equation}\label{eq-error}
\overline{\mathbb P}[\max\{|\Delta(i)|,|\Delta(j)|\}\ge \operatorname{e}_n]\le \widetilde{O}(n^{-\delta/5})\,.
\end{equation}

Additionally, we have
\begin{align}
\nonumber&\ \overline{\mathbb{P}}[\widehat{S}(i)\ge \chi\pm \operatorname{e}_n,\widehat{S}(j)\ge \chi\pm \operatorname{e}_n]\\
\nonumber=&\ \overline{\mathbb{E}}\left[\mathbb{P}\big[\widehat{S}(i)\ge \chi\pm \operatorname{e}_n,\widehat{S}(j)\ge \chi\pm \operatorname{e}_n\mid \mathcal F_{-\{a_i,a_j\}}\big]\right]\\
\nonumber=&\ \overline{\mathbb E}\left[\mathbb{P}\big[\widehat{S}(i)\ge \chi\pm \operatorname{e}_n\mid \mathcal F_{-\{a_i,a_j\}}\big]\mathbb{P}[\widehat{S}(j)\ge \chi\pm \operatorname{e}_n\mid \mathcal F_{-\{a_i,a_j\}}]\right]\\
=&\ \overline{\mathbb E}\left[\mathbb{P}\big[S(i)\ge \chi\pm 2\operatorname{e}_n\mid \mathcal F_{-\{a_i,a_j\}}\big]\mathbb{P}[S(j)\ge \chi\pm 2\operatorname{e}_n\mid \mathcal F_{-\{a_i,a_j\}}]\right]+\widetilde{O}(n^{-\delta/10})\,,\label{eq-decoupling}
\end{align}
where the second equality follows from conditional independence, and the last from \eqref{eq-error}. 

By Lemma~\ref{lem-quantitative-estimate}, we have by applying Lemma~\ref{lem-quantitative-estimate}, 
\begin{align*}
 \mathbb{P}[S(i)\ge \chi\pm 2\operatorname{e}_n\mid \mathcal F_{-\{a_i,a_j\}}]=&\ (1+o(1))\Phi\big((\chi\pm2\operatorname{e}_n)/\sigma_i\big)\\(\text{using }\sqrt{2\pi}\Phi(x)\sim x^{-1}e^{-x^2/2} )=&\ (1+o(1))\Phi(\chi/\sigma_i)\\
=&\ (1+o(1))\overline{P}[S(i)\ge \chi]\,,
\end{align*}
and a similar estimate holds for $i$ replaced with $j$.
Combining this with \eqref{eq-decoupling} (recall that $\Phi(\chi/\sigma_i)\gg n^{-\delta/20}$ due to our choice of $\gamma$), we conclude \eqref{eq-two-point-estimate}. This shows that conditioned on any good realization, it holds w.h.p. that $\max_{1\le i\le n^\gamma}\mathcal S_{a_i,b_i,c_i}\ge 2\gamma\sqrt c\log n$. Combining with Proposition~\ref{prop-good-realization-typical} and Proposition~\ref{prop-ovelrine-S(i)}, we conclude that Item-(iii) holds with probability at least $0.95-o(1)$, completing the proof.

\section*{Simulation}
All simulations in Section~\ref{subsec-algo-exp} were performed using MATLAB 2024b (Mathworks, Natick, MA).

\appendix
\section{On admissibility}\label{appendix-admissible}

In this section, we show that a graph $G$ drawn from $\mathcal G(n,p)$ is admissible w.h.p.. For the reader's convenience, we now recall the definition of admissibility.

\begin{definition}\label{def-admissible*}
We say a directed graph $G^*$ on $[n]$ is admissible if the following hold:\\
 \noindent (i) If $d$ is either the in-degree or out-degree of any vertex in $G^*$, then
\begin{equation}\label{eq-degree*}
|d-np|\le \sqrt{10np\log n}\,.
\end{equation}
Additionally, for any $i\neq j\in [n]$, if $np^2\ge (\log n)^4$, then 
\begin{equation}\label{eq-overlap-1*}
\big||\operatorname{N}_i\cap \operatorname{N}_j|-np^2\big|\le \sqrt{10np^2\log n}\,,
\end{equation}
and if $np^2\le (\log n)^4$, then 
\begin{equation}\label{eq-overlap-2*}
|\operatorname{N}_i\cap \operatorname{N}_j|\le 4(\log n)^4\,.
\end{equation}
 \noindent (ii) For any $i\in [n]$, let $\{X_{t,i}\}_{t\ge 0}$ be a random walk on $G^*$ starting at $i$, and for $t\ge 1$, let $\pi_{t,i}$ denote the law of $X_{t,i}$ (which is a probability distribution on $[n]$). Then, as $t\to\infty$, $\pi_{t,i}$ converges to the (unique) stationary distribution $\pi$ on $[n]$, which satisfies $\pi(i)=1/n+o(1/n)$ for each $i\in [n]$. Moreover, for any integer $k\ge 1$, 
there exists a constant $c=c(k,\delta)$ such that 
\[
\operatorname{TV}(\pi_{t,i},\pi)\leq n^{-k}\,,\ \forall\ t\geq c_{k,\delta},i\in [n]\,.
\] 
 \noindent (iii) There exists $C^*>0$ such that the consensus time of the voter model on $G^*$,  defined as 
 \begin{equation}\label{eq-T-coal*}
T_{\operatorname{cons}}:=\min\{t\ge 0:x_i^{(t)}=x_j^{(t)},\forall i,j\in [n]\}\,,
\end{equation}
 satisfies $\mathbb{E}[T_{\operatorname{cons}}]\le C^*n$ (where the randomness is over the voter model dynamics).
 
 \noindent (iv) For all partitions $[n]=X\sqcup Y$ of the vertex set, the number of triples $i,j,j'$ of elements of $[n]$ such that $j\in X$, $j'\in Y$, and both $(i,j)$ and $(i,j')$ belong to $\vec{E}(G^*)$ is at least $np^2|X|\cdot|Y|/10^{12}$.
\end{definition}

Item-(i) is essentially a standard lemma regarding the regularity of random graphs.

\begin{lemma}\label{lem:edges-between-sets}
For $G^*\sim\mathcal G(n,p)$, w.h.p. we have that any vertex in $G^*$ has in-degree and out-degrees $d$ satisfying
\begin{equation}\label{eq-degree**}
|d-np|\le \sqrt{10np\log n}\,.
\end{equation}
Additionally, for any $i\neq j\in [n]$, if $np^2\ge (\log n)^4$, then 
\begin{equation}\label{eq-overlap-1**}
\big||\operatorname{N}_i\cap \operatorname{N}_j|-np^2\big|\le \sqrt{10np^2\log n}\,,
\end{equation}
and if $np^2\le (\log n)^4$, then 
\begin{equation}\label{eq-overlap-2**}
|\operatorname{N}_i\cap \operatorname{N}_j|\le 4(\log n)^4\,.
\end{equation}
\end{lemma}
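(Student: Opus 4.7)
The plan is to verify each of the three inequalities separately by a standard Chernoff/Bernstein-type concentration bound followed by a union bound; everything in sight is a sum of independent Bernoullis.

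For the degree statement, the in-degree and out-degree of any fixed vertex $v$ in $G^* \sim \mathcal G(n,p)$ are each distributed as $\operatorname{Bin}(n-1,p)$, since the directed edges of $G^*$ are mutually independent. Bernstein's inequality yields
\[
\mathbb{P}\big[|d_v - (n-1)p| \ge t\big] \;\le\; 2\exp\!\left(-\frac{t^2}{2np + 2t/3}\right).
\]
Choosing $t = \sqrt{(10-\varepsilon)np\log n}$ for some small $\varepsilon > 0$ and using $np \ge n^\delta$, the exponent is at least $(5 - \varepsilon/2 - o(1))\log n$, so the probability is at most $n^{-3}$ for $n$ large. Since $|np - (n-1)p| = p \le 1$ is dominated by $\sqrt{10np\log n} - \sqrt{(10-\varepsilon)np\log n}$ once $n$ is large, the centering can be shifted from $(n-1)p$ to $np$ while remaining inside the allotted slack $\sqrt{10np\log n}$. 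A union bound over the $2n$ in- and out-degrees then completes the proof of \eqref{eq-degree**}.

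For the intersection statements, fix $i \neq j$ and write
\[
|\operatorname{N}_i \cap \operatorname{N}_j| \;=\; \sum_{k \neq i,j} \mathbf{1}\{(i,k) \in \vec{E}(G^*)\}\cdot \mathbf{1}\{(j,k) \in \vec{E}(G^*)\}.
\]
Because the edges of $G^*$ are mutually independent, the summands are independent $\operatorname{Bernoulli}(p^2)$ variables, so $|\operatorname{N}_i \cap \operatorname{N}_j| \sim \operatorname{Bin}(n-2, p^2)$. In the regime $np^2 \ge (\log n)^4$, Bernstein applied with $t = \sqrt{(10-\varepsilon)np^2 \log n}$ gives failure probability at most $n^{-3}$ per pair, and a union bound over the $n(n-1)$ ordered pairs yields \eqref{eq-overlap-1**} w.h.p. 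In the complementary regime $np^2 \le (\log n)^4$, the expectation is at most $(\log n)^4$, and the standard upper-tail bound $\mathbb{P}[\operatorname{Bin}(m,q) \ge k] \le (emq/k)^k$ applied with $k = 4(\log n)^4 \ge 4\,\mathbb{E}|\operatorname{N}_i\cap\operatorname{N}_j|$ gives $(e/4)^{4(\log n)^4}$, which decays faster than any polynomial in $n$; a union bound over pairs yields \eqref{eq-overlap-2**}.

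There is essentially no substantive obstacle: all three statements reduce to routine binomial tail bounds combined with union bounds. The only minor care needed is in the degree bound, where the natural centering $(n-1)p$ must be shifted to the desired centering $np$; the discrepancy of at most $1$ is lower order and is easily absorbed into the slack provided by the constant $10$ in the statement. Switching between a two-sided Bernstein bound (in the dense overlap regime) and a one-sided Chernoff bound of the form $(emq/k)^k$ (in the sparse overlap regime) is the other small bookkeeping item, but both are completely standard.
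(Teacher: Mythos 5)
The proposal is correct and takes essentially the same approach as the paper: both identify the degree as $\mathbf{B}(n-1,p)$ and the overlap $|\operatorname{N}_i\cap\operatorname{N}_j|$ as $\mathbf{B}(n-2,p^2)$, apply a standard binomial tail bound to get per-event failure probability $\le n^{-3}$ (or smaller), and finish with a union bound. The paper invokes one multiplicative Chernoff bound (its Lemma on Chernoff) for all three cases whereas you use Bernstein in the dense regimes and the $(emq/k)^k$ bound in the sparse overlap regime, but these are interchangeable here, and your explicit handling of the $(n-1)p$ versus $np$ centering is the same minor bookkeeping the paper implicitly absorbs.
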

\begin{proof}
Note that for $G^*\sim \mathcal G(n,p)$, any out-degree or in-degree $d$ satisfies $d\sim \mathbf{B}(n-1,p)$, and for any distinct $i, j\in [n]$, $|\operatorname{N}_i\cap\operatorname{N}_j|\sim \mathbf{B}(n-2,p^2)$. From Chernoff's bound (see Lemma~\ref{lem-Chernoff-bound})
we have each of \eqref{eq-degree**}, \eqref{eq-overlap-1**}, and \eqref{eq-overlap-2**} fails with probability at most $n^{-3}$. The desired results then follow from the union bound.
\end{proof}

We next prove the existence of the stationary measure for the random walk on $G^*$.
 
\begin{lemma}\label{thm:stationary_exists}
    If $G^*\sim \mathcal G(n,p)$, then w.h.p. the random walk on $G^*$ is ergodic and thus possesses a unique stationary measure, which we denote by $\pi$.
\end{lemma}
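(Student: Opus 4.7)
The plan is to deduce ergodicity from two ingredients: (a) the graph $G^*$ is strongly connected w.h.p., which gives irreducibility of the walk; and (b) the walk is aperiodic w.h.p.\ (this is needed if one adopts the convention that ``ergodic'' means irreducible plus aperiodic; uniqueness of the stationary measure by itself only requires (a)). Once both hold, finiteness of the state space plus standard Markov-chain theory (Perron--Frobenius) delivers a unique stationary distribution $\pi$.

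For strong connectivity I would invoke Lemma~\ref{lem:edges-between-sets}, which shows that w.h.p.\ every in-degree and every out-degree of $G^*$ equals $(1+o(1))np\geq \tfrac{1}{2}n^\delta$. Fix a vertex $v$ and run BFS along out-edges from $v$: so long as the reached set $A$ has size $|A|\leq n/2$, each $i\in A$ has, conditionally on the exploration so far, roughly $np(1-|A|/n)\gtrsim np$ edges into $[n]\setminus A$, so the reached set grows by a multiplicative factor $\gtrsim np/2$ at each step (a routine concentration argument using Chernoff makes this precise). Within $O(\log n / \log(np))$ BFS rounds the reachable set saturates at $[n]$. The identical argument for the reverse graph (still a $\mathcal{G}(n,p)$ by symmetry) shows that $v$ is reachable \emph{from} every vertex. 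A union bound over $v\in[n]$ yields strong connectivity w.h.p.

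For aperiodicity, in a strongly connected directed graph the period at every vertex equals $\gcd\{\ell : G^* \text{ contains a directed cycle of length }\ell\}$. So it is enough to exhibit two cycles whose lengths are coprime. I would show that $G^*$ contains both a directed $2$-cycle and a directed $3$-cycle w.h.p.\ by a first/second-moment argument: the expected number of $2$-cycles is $\binom{n}{2}p^2 \geq \tfrac14 n^{2\delta}\to\infty$ and similarly the expected number of directed triangles grows polynomially in $n$; the variances are of the same order as the squares of the expectations (the indicators involved are nearly pairwise independent), so Paley--Zygmund produces existence w.h.p. Since $\gcd(2,3)=1$, the period is $1$.

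The two items together give irreducibility and aperiodicity of the walk w.h.p., hence ergodicity and a unique stationary measure $\pi$. None of the steps are conceptually hard; the only place one has to be a little careful is making the BFS-expansion argument rigorous (keeping track of the edges revealed so far so that independence is preserved when applying Chernoff), but this is entirely standard given that $np$ grows polynomially in $n$.
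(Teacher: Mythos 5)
Your proof is correct and follows the same overall architecture as the paper's: establish strong connectivity (irreducibility) and aperiodicity, then invoke the Perron--Frobenius/standard Markov-chain theory. The two proofs differ only in the sub-arguments. For strong connectivity, the paper uses the cleaner ``cut-based'' union bound: if $G^*$ were not strongly connected there would be a nontrivial partition $[n]=A\sqcup B$ with no edges from $A$ to $B$, and $\sum_{k\le n/2}\binom{n}{k}(1-p)^{k(n-k)} = o(1)$. Your BFS-expansion argument reaches the same conclusion but is somewhat longer; as written it glosses over which vertices' out-edges are actually being revealed (the frontier, not all of $A$) and what happens once $|A|>n/2$ (one standard fix: show $v$ reaches and is reached by more than $n/2$ vertices, then argue pairwise via intersections), though both gaps are routine to close for $np\ge n^\delta$. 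For aperiodicity, the paper exhibits directed cycles of lengths $3$ and $4$; you use lengths $2$ and $3$; either pair is coprime and the first/second-moment computations are essentially identical (for $2$-cycles the indicators over disjoint pairs are even mutually independent, so Chebyshev suffices without Paley--Zygmund). Net: your route is valid; the paper's strong-connectivity argument is shorter and self-contained, while your expansion argument is more robust and arguably more instructive.
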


\begin{proof}
First, we claim that w.h.p. $G^*$ is strongly connected. If this is not the case, then there exists a non-trivial partition of $[n]$ into sets $A$ and $B$ such that there are no edges from $A$ to $B$. For any partition of $[n]$ into parts $A$ and $B$, the probability that there is no edge from $A$ to $B$ is $(1-p)^{|A|\cdot|B|}$. By taking the union bound, we see the probability that $G^*$ is not strongly connected is at most
\[
\sum_{k=1}^{\lfloor n/2\rfloor }\binom{n}{k}(1-p)^{k(n-k)}\le \sum_{k=1}^{\lfloor n/2\rfloor }\exp(k\log n-pkn/2)=o(1)\,,
\]
verifying the claim. Additionally, it is straightforward to see that w.h.p. $G^*$ contains a directed cycle of length $3$ as well as a directed cycle of length $4$. The result follows by combining these two facts. 
\end{proof}

For a positive integer $\ell$ and $i,j\in [n]$, let $\mathscr P_{i,j;\ell}$ denote the set of directed (not necessarily self-avoiding) paths $\mathcal P=(i_0,i_1,\cdots,i_\ell)$ in $G^*$ from $i_0=i$ to $i_\ell=j$ of length $\ell$. In order to bound the mixing time of random walks on $G^*$, we will make use of a concentration result for $\mathscr P_{i,j;\ell}$. This result, which was first obtained in~\cite[Section~6.1]{cooper2011stationary} in order to study stationary distributions of random directed graphs, is a simple consequence of a concentration inequality from~\cite{kim2000concentration}.

\begin{lemma}\label{lem-path-count}
    Let $\ell\geq\lceil5/\delta\rceil$ be a fixed positive integer and suppose that $G^*\sim\mathcal G(n,p)$ (recall that $np\geq n^{\delta}$). Then, for any two vertices $i,j\in[n]$ we have that \[\Pb\left[|\mathscr P_{i,j,\ell}-n^{\ell-1}p^\ell|\geq\widetilde{O}_\ell(n^{\ell-1}p^\ell/n^{\delta/2})\right]\leq O(n^{-3})\,.\] (The subscript next to the big-$O$ indicates that the hidden constant depends on $\ell$.) 
\end{lemma}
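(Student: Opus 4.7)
The plan is to view $\mathscr P_{i,j;\ell}$ as a multilinear polynomial of degree $\ell$ in the $N = n(n-1)$ independent Bernoulli variables $X_e := \mathbf{1}\{e \in \vec{E}(G^*)\}$, indexed by directed potential edges, and apply the Kim--Vu polynomial concentration inequality from \cite{kim2000concentration}. Writing $\mathscr P_{i,j;\ell} = \sum_\pi \prod_{k=1}^\ell X_{(i_{k-1},i_k)}$ where $\pi = (i_0,\ldots,i_\ell)$ ranges over the non-self-avoiding paths with $i_0=i$ and $i_\ell=j$, direct counting yields $\mathbb{E}[\mathscr P_{i,j;\ell}] = (1+O_\ell(1/n))n^{\ell-1}p^\ell$, with the $O_\ell(1/n)$ correction coming from paths with a vertex or edge coincidence.

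The next step is to compute the Kim--Vu quantities $E_r := \max_{|A|=r}\mathbb{E}[\partial_A \mathscr P_{i,j;\ell}]$ for $1\leq r\leq\ell$. Interpreting the partial derivative combinatorially, $\mathbb{E}[\partial_A \mathscr P_{i,j;\ell}]$ equals $p^{\ell-r}$ times the number of length-$\ell$ walks from $i$ to $j$ that contain every edge in $A$. A case analysis on how $A$ can sit inside such a walk shows that the maximum over $r$-edge sets $A$ is achieved when $A$ forms a contiguous subpath of length $r$ anchored at $i$ or $j$; disjoint or tree-like configurations of $A$ pick up an extra factor of at least $n^{-1}$ per additional component and are therefore dominated. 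This yields
\[
E_r \leq C_\ell\, n^{\ell-1-r}p^{\ell-r}.
\]
Since $np \geq n^\delta$, the ratio $E_r/E_{r-1}$ is at most $(np)^{-1}$, so $E' := \max_{r\geq 1}E_r = E_1 \leq C_\ell n^{\ell-2}p^{\ell-1}$ and $E := \max(\mathbb{E}[\mathscr P_{i,j;\ell}],E') = (1+o(1))n^{\ell-1}p^\ell$.

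I would then invoke Kim--Vu with $\lambda = (2\ell+4)\log n$, obtaining
\[
\Pb\bigl[\,|\mathscr P_{i,j;\ell} - \mathbb{E}[\mathscr P_{i,j;\ell}]| \geq a_\ell (EE')^{1/2}\lambda^\ell\,\bigr] \leq b_\ell N^{\ell-1}e^{-\lambda} = O_\ell(n^{-3}),
\]
where $a_\ell,b_\ell$ are the constants provided by the inequality. The deviation threshold on the left-hand side satisfies
\[
a_\ell\lambda^\ell (EE')^{1/2} = O_\ell\bigl((\log n)^\ell\bigr)\cdot n^{\ell-1}p^\ell\cdot(np)^{-1/2} \leq \widetilde O_\ell\bigl(n^{\ell-1}p^\ell/n^{\delta/2}\bigr)
\]
by using $np \geq n^\delta$ once more. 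Absorbing the small bias $|\mathbb{E}[\mathscr P_{i,j;\ell}] - n^{\ell-1}p^\ell| = O_\ell(n^{\ell-2}p^\ell)$ into the same $\widetilde O_\ell$ slack via the triangle inequality closes the bound.

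The main obstacle I anticipate is the combinatorial estimation of the $E_r$, and specifically checking that no ``exotic'' configuration of $A$ (e.g.\ two far-apart edges, or a small cycle touching one of the endpoints) beats the contiguous-subpath configuration by more than an $O_\ell(1)$ factor. The hypothesis $\ell \geq \lceil 5/\delta \rceil$ enters only mildly: it guarantees $\mathbb{E}[\mathscr P_{i,j;\ell}] = (np)^\ell/n \geq n^{4}$, so that the concentration estimate above is substantively stronger than the trivial nonnegativity bound and the $\widetilde O_\ell$ constants remain harmless.
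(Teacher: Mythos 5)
Your approach is the same as the one the paper references: the paper explicitly attributes the lemma to~\cite[Section~6.1]{cooper2011stationary} as ``a simple consequence of a concentration inequality from~\cite{kim2000concentration},'' which is precisely the Kim--Vu polynomial concentration argument you reconstruct. The identification of the relevant quantities, the choice $\lambda=\Theta_\ell(\log n)$, the bound $(EE')^{1/2}\le n^{\ell-1}p^{\ell}(np)^{-1/2}\le n^{\ell-1}p^{\ell}\cdot n^{-\delta/2}$, and the observation that the bias $|\mathbb{E}[\mathscr P_{i,j;\ell}]-n^{\ell-1}p^\ell|$ is absorbed by the slack are all correct and match what is needed.

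One intermediate claim is not correct as stated, although it does not affect the conclusion. You assert $E_r\le C_\ell n^{\ell-1-r}p^{\ell-r}$ for all $1\le r\le\ell$ and then deduce $E_r/E_{r-1}\le(np)^{-1}$. Both fail for $r$ near $\ell$: for instance, if $|A|=\ell$ and $A$ itself is a directed path from $i$ to $j$, then $\mathbb{E}[\partial_A\mathscr P_{i,j;\ell}]$ is the number of length-$\ell$ walks from $i$ to $j$ using exactly the edges of $A$, which is $\Theta_\ell(1)$, not $O_\ell(n^{-1})$ as your formula would predict. The correct shape of the bound is $E_r\le C_\ell\max\{n^{\ell-1-r}p^{\ell-r},1\}$, with the second term entering once $\ell-r<1/\delta$, and the ratio $E_r/E_{r-1}$ is $\Theta(1)$ in that regime rather than $(np)^{-1}$. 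Fortunately this does not change the value of $E'=\max_{r\ge 1}E_r$: since $\ell\ge\lceil 5/\delta\rceil$ forces $E_1\ge C_\ell n^{\ell-2}p^{\ell-1}\ge n^{3}\gg 1$, the constant contributions from large $r$ are dominated and $E'=E_1$ as you conclude. You should either state the corrected upper bound on $E_r$ (with the $\max\{\cdot,1\}$) or simply argue directly that $E'\le C_\ell\max\{n^{\ell-2}p^{\ell-1},1\}=C_\ell n^{\ell-2}p^{\ell-1}$, dropping the monotonicity claim, which is the clean way to close the gap.
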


Recall that for $i\in[n]$, we let $\{X_{t,i}\}_{t\ge 0}$ denote the random walk on $G^*$ starting at $i$, and for each $t\ge 1$, we let $\pi_{t,i}$ denote the law of $X_{t,i}$.
 
\begin{theorem}\label{thm:mixing}
     Given any two positive parameters $\delta$ and $k$, there exists a constant $c=c(k,\delta)$ such that if $G^*\sim\mathcal G(n,p)$ and $np\ge n^{\delta}$, then w.h.p.
     \begin{equation}\label{eq-mixing}
     \operatorname{TV}(\pi_{t,i},\pi)\leq n^{-k}\,,\quad \forall\ i\in [n]\,,t\geq c_{k,\delta}\,,
    \end{equation}  
    where $\pi$ denotes the stationary measure provided by the previous lemma.
\end{theorem}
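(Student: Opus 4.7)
The plan is to combine the near-regularity of out-degrees from Lemma~\ref{lem:edges-between-sets} with the path-count concentration from Lemma~\ref{lem-path-count} to show that, after a constant number of steps depending only on $\delta$, the transition kernel is uniformly close to $1/n$ pointwise. A Doeblin-type minorization / coupling argument then bootstraps this one-shot estimate into geometric decay in total variation, and choosing the number of blocks appropriately yields the quantitative bound $n^{-k}$.

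Concretely, I would fix $\ell = \lceil 5/\delta\rceil$ (an integer depending only on $\delta$). By Lemma~\ref{lem:edges-between-sets} together with the assumption $np \geq n^{\delta}$, w.h.p.\ every out-degree satisfies $d_i^{\operatorname{out}} = np\bigl(1 + \widetilde O(n^{-\delta/2})\bigr)$. By Lemma~\ref{lem-path-count} and a union bound over the $n^2$ ordered pairs $(i,j)$, w.h.p.\ $|\mathscr P_{i,j;\ell}| = n^{\ell-1}p^{\ell}\bigl(1 + \widetilde O(n^{-\delta/2})\bigr)$ uniformly in $(i,j)$. Since the random walk of length $\ell$ starting at $i$ traverses each path $(i_0,\ldots,i_\ell) \in \mathscr P_{i,j;\ell}$ with probability $\prod_{k=0}^{\ell-1}(d_{i_k}^{\operatorname{out}})^{-1} = (np)^{-\ell}\bigl(1 + \widetilde O(n^{-\delta/2})\bigr)$ (using that $\ell$ is a constant), the two estimates combine to give
\[
\pi_{\ell,i}(j) = \frac{1}{n}\bigl(1 + \widetilde O(n^{-\delta/2})\bigr), \qquad \forall\, i,j \in [n].
\]

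With this pointwise near-uniformity in hand, the Doeblin minorization $\pi_{\ell,i}(j) \geq \alpha/n$ holds with $\alpha = 1 - \widetilde O(n^{-\delta/2})$. The associated block-coupling construction---at each block of $\ell$ steps, with probability $\alpha$ both copies jump to a common sample from the uniform measure, and with probability $1-\alpha$ they evolve according to the residual kernels---yields, for any two starting distributions $\mu_1,\mu_2$ on $[n]$,
\[
\operatorname{TV}\bigl(\mu_1 P^{r\ell},\, \mu_2 P^{r\ell}\bigr) \leq (1-\alpha)^r = \widetilde O(n^{-r\delta/2}).
\]
Specializing to $\mu_1 = \delta_i$, $\mu_2 = \pi$ and using that TV to a stationary measure is non-increasing along the chain, I would set $r = \lceil 3k/\delta \rceil$ and $c_{k,\delta} = r\ell$, so that for every $t \geq c_{k,\delta}$ one gets $\operatorname{TV}(\pi_{t,i},\pi) \leq n^{-k}$ once $n$ is large enough. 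As a byproduct, specializing to the starting distribution $\pi$ in the display above also gives $\pi(j) = \bigl(1+\widetilde O(n^{-\delta/2})\bigr)/n$, which verifies the near-uniformity assertion in Item-(ii) of admissibility.

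The main conceptual input is outsourced to Lemma~\ref{lem-path-count}, so the remaining work is mostly bookkeeping: propagating the $\widetilde O(n^{-\delta/2})$ error through a product of $\ell = O_\delta(1)$ reciprocal degrees, and verifying that Doeblin coupling applies in our non-reversible setting (it does, as the coupling argument makes no use of reversibility). The only mildly delicate point is that the minorization is with respect to the uniform measure rather than $\pi$ itself, but this is harmless because the coupling bounds TV between \emph{any} two starting distributions simultaneously; in particular it bounds $\operatorname{TV}(\delta_i P^{r\ell}, \pi P^{r\ell}) = \operatorname{TV}(\pi_{r\ell,i}, \pi)$ directly, without needing a priori control over $\pi$.
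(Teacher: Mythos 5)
Your proposal is correct and follows essentially the same route as the paper: pick a constant $\ell \geq \lceil 5/\delta\rceil$, combine the path-count concentration of Lemma~\ref{lem-path-count} (via a union bound over the $n^2$ ordered pairs) with the out-degree concentration of Lemma~\ref{lem:edges-between-sets} to get $\pi_{\ell,i}(j) = \frac{1}{n}\bigl(1 + \widetilde O(n^{-\delta/2})\bigr)$ pointwise, and then upgrade to geometric decay by a block-coupling argument with $r = \lceil 3k/\delta\rceil$ blocks. The only cosmetic difference is that you phrase the last step explicitly as a Doeblin minorization against the uniform measure, whereas the paper first bounds $\operatorname{TV}(\pi_{\ell,i},\pi_{\ell,j})$ uniformly over $i,j$ and invokes a ``standard coupling argument''; these are the same contraction estimate, and your version has the small advantage of directly yielding the convergence to $\pi$ (using $\pi P = \pi$ and monotonicity of TV along the chain) without needing to pass through a pairwise bound first.
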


\begin{proof}
    Pick an integer $\ell=O(1)$ such that $\ell\geq\lceil5/\delta\rceil$. By Lemma~\ref{lem-path-count} and the union bound, w.h.p. $G^*$ satisfies
    \begin{equation}\label{eq-path-concentration}
    |\mathscr P_{i,j;\ell}-n^{\ell-1}p^\ell|\le \widetilde{O}_\ell(n^{\ell-1}p^\ell/n^{\delta/2})\,
    \end{equation}
    for all $i,j\in[n]$.
    For each path $\mathcal P=(i_0,\cdots,i_\ell)\in \mathscr P_{i,j;\ell}$ define $\Xi(\mathcal P):=\prod_{s=0}^{\ell-1}d_{i_s}^{-1}$, and let $\Xi(\mathscr P_{i,j;\ell}):=\sum_{\mathcal P\in \mathscr P_{i,j;\ell}}\Xi(\mathcal P)$.
    Recall that, w.h.p., all vertices of $G$ have out-degree $d_i$ lying in the interval $[np-\sqrt {10np\log n},np+\sqrt {10np\log n}]$. This implies that for all $\mathcal P\in\mathscr P_{i,j;\ell}$,
    \begin{equation}\label{eq-Xi(P)-concentration}
    \Xi(\mathcal P)\in[(np+\sqrt {10np\log n})^{-\ell},(np-\sqrt {10np\log n})^{-\ell}]\,.
    \end{equation}
    Combining \eqref{eq-Xi(P)-concentration} with \eqref{eq-path-concentration} yields that
    \[
    \Xi(\mathscr P_{i,j;\ell})=\frac{1}{n}+\widetilde{O}_\ell\left(\frac{1}{n^{1+\delta/2}}\right)\,.
    \]
    A moment of thought reveals that $\pi_{i,\ell}(j)$ is precisely $\Xi(\mathcal P_{i,j;\ell})$. Hence, for large enough $n$,
    \[
    \operatorname{TV}(\pi_{i,\ell},\operatorname{Uni})\le \widetilde{O}_\ell(n^{-\delta/2})\le \frac{1}{2}n^{-\delta/3}\,,
    \]
    where $\operatorname{Uni}$ is the uniform distribution on $[n]$. By the triangle inequality, we have
    \[
    \operatorname{TV}(\pi_{i,\ell},\pi_{j,\ell})\le n^{-\delta/3}\,,\quad\forall i,j\in [n]\,.
    \]
    Therefore, by a standard coupling argument, for any $k\in \mathbb{N}$ and $c=c(k,\delta)=\ell\cdot \lceil 3k/\delta\rceil$, it holds that for any $t\ge c$, 
    \[
    \operatorname{TV}(\pi_{i,c},\pi_{j,c})\le n^{-\delta/3 \cdot \lfloor t/\ell\rfloor}\le n^{-\delta/3\cdot 3k/\delta}=n^{-k}\,,\quad \forall i,j\in [n]\,.
    \]
    This implies \eqref{eq-mixing} and completes the proof.
\end{proof}

As a byproduct of the above proof, we get the following lemma.

\begin{lemma}\label{thm:stationary}
    If $G^*\sim\mathcal G(n,p)$, then w.h.p. the stationary measure $\pi$ satisfies $\pi(i)=1/n+o(1/n)$. In particular, $\operatorname{TV}(\operatorname{Uni},\pi)=o(1),$ where $\operatorname{Uni}$ denotes the uniform distribution on $[n]$.
\end{lemma}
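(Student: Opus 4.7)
The plan is to piggyback on the computation already carried out in the proof of Theorem~\ref{thm:mixing} (or Lemma~\ref{lem-path-count}), which gives a very strong pointwise estimate on $\pi_{i,\ell}(j)$ for fixed constant $\ell$. Concretely, from the path-counting identity $\pi_{i,\ell}(j) = \Xi(\mathscr P_{i,j;\ell})$ together with the concentration of $|\mathscr P_{i,j;\ell}|$ around $n^{\ell-1}p^\ell$ (Lemma~\ref{lem-path-count}) and the concentration of out-degrees (Lemma~\ref{lem:edges-between-sets}), one reads off that for any fixed $\ell \geq \lceil 5/\delta\rceil$, w.h.p.
\[
\pi_{i,\ell}(j) = \frac{1}{n} + \widetilde{O}_\ell\!\left(\frac{1}{n^{1+\delta/2}}\right) \qquad \text{uniformly in } i,j\in[n].
\]
This is actually a \emph{pointwise} statement, not merely a TV bound, which is exactly what we need.

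The key idea is then to leverage stationarity. Since $\pi$ is the stationary distribution of the random walk, we have $\pi = \pi P^\ell$, so for every $j\in[n]$,
\[
\pi(j) \;=\; \sum_{i\in[n]} \pi(i)\, \pi_{i,\ell}(j) \;=\; \sum_{i\in[n]} \pi(i)\left(\frac{1}{n} + \widetilde{O}_\ell(n^{-1-\delta/2})\right) \;=\; \frac{1}{n} + \widetilde{O}_\ell(n^{-1-\delta/2}),
\]
using $\sum_i \pi(i) = 1$. This immediately yields the pointwise bound $\pi(j) = 1/n + o(1/n)$. The TV statement $\operatorname{TV}(\operatorname{Uni},\pi) = \frac{1}{2}\sum_{j}|\pi(j) - 1/n| = o(1)$ then follows by summing these pointwise errors across the $n$ vertices.

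There is no serious obstacle, since the heavy lifting (the uniformity of the path-counting estimate and the degree concentration) was already completed in the proofs of Lemma~\ref{lem-path-count} and Theorem~\ref{thm:mixing}. The only minor point worth being careful about is that the pointwise error $\widetilde{O}_\ell(n^{-1-\delta/2})$ is uniform in $i,j$, so that pulling it out of the convex combination $\sum_i \pi(i)(\cdots)$ is legitimate; this is automatic from the statement of Lemma~\ref{lem-path-count} together with a union bound over the $n^2$ pairs $(i,j)$. One should also verify that the high-probability event used is the same as the one underlying Theorem~\ref{thm:mixing} (ergodicity of the walk, degree concentration, path-count concentration), so that both conclusions hold w.h.p. simultaneously.
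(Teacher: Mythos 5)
Your proposal is correct and fills in exactly the gap the paper leaves implicit — the paper merely states the lemma as ``a byproduct of the above proof,'' and your stationarity identity $\pi(j) = \sum_i \pi(i)\,\pi_{i,\ell}(j)$, combined with the uniform pointwise estimate $\pi_{i,\ell}(j) = 1/n + \widetilde{O}_\ell(n^{-1-\delta/2})$ established in the proof of Theorem~\ref{thm:mixing}, is the natural way to extract it. The bookkeeping you flag (union bound over the $n^2$ pairs for Lemma~\ref{lem-path-count}, intersecting with the degree-concentration and ergodicity events) is exactly right.
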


Combining Lemma~\ref{thm:stationary_exists}, Theorem~\ref{thm:mixing}, and Lemma~\ref{thm:stationary}, we conclude that Property-(ii) of admissibility holds w.h.p. as well.

Additionally, as a consequence of Theorem~\ref{thm:mixing} and Lemma~\ref{thm:stationary}, we derive a lower bound on $p_{i,j}^{(t)}$ (defined as in \eqref{eq-p_ij^t}) for any $i,j\in [n]$ and $t\ge 1$. Let $t_*\triangleq c(2,\delta)$ and $t_s\triangleq st_*$ for $s=1,2,\cdots$. For two independent random walks $\{X_{t}\}_{t\ge 0}$ and $\{Y_{t}\}_{t\ge 0}$ starting at $i$ and $j$, respectively, we have
\begin{align*}
    p_{i,j}^{(t)}=\mathbb{P}[\exists t'\le t:X_{t'}=Y_{t'}]\ge \mathbb{P}[\exists s\le \lfloor t/t_*\rfloor,X_{t_s}=Y_{t_s}]\ge 1-\left(1-\frac{1}{2n}\right)^{\lfloor t/t_*\rfloor}\,.
\end{align*}
Here, the last inequality follows from the fact that for any $s\ge 1$, the event $X_{t_s}=Y_{t_s}$ holds with conditional probability at least $1-\frac{1}{2n}$, given any realization of $X_{t_{s-1}}$ and $Y_{t_{s-1}}$.
Thus, for any $t\ge n$, $\min_{i,j}p_{i,j}^{(t)}\ge 1-\exp(-\frac{t}{2t_*n})$. From this tail estimate, we conclude that $m(G^*)$, the expected meeting time of two independent random walks starting at two uniformly and independently chosen vertices of $G^*$, is upper-bounded by a constant multiple of $n$. Specifically, let $C^*=C^*(\delta)>0$ be a constant such that $m(G^*)\le (C^*-1)n/4$ holds w.h.p. for $G^*\sim \mathcal G(n,p)$. 
    
For Property-(iii), we utilize a result about the expected coalescence times of coalescing random walks on fast-mixing graphs. Recall the backward random walk paths $\mathcal P_i^t$ for $i\in [n]$ and $t\ge 1$. We define
\begin{equation}\label{eq-T-coal}
T_{\operatorname{coal}}:=\min\{t\ge 1:\mathcal P_i^t\text{ coalesces with }\mathcal P_j^t,\forall i,j\in [n]\}\,.
\end{equation}
The following result essentially comes from~\cite{Oli13}. Recall the definition of $m(G^*)$ as above. 

\begin{theorem}\label{thm:coal}
For any graph $G^*$ on $n$ vertices with mixing time $O(1)$ (i.e., there exists some $t=O(1)$ with $\operatorname{TV}(\pi_{t,i},\pi)\le \frac14,\forall i\in [n]$), 
    the coalescence time $T_{\operatorname{coal}}$ defined in \eqref{eq-T-coal*} satisfies 
    \[
    \mathbb{E}[T_{\operatorname{coal}}]\le n+4m(G^*)\le C^*n\,.
    \]
\end{theorem}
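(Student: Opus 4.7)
The plan is to leverage the duality to coalescing random walks from Section~\ref{sec:backtracking}, under which $T_{\operatorname{coal}}$ is the coalescence time of a system of $n$ coalescing random walkers placed initially one at each vertex of $G^*$. Equivalently, $T_{\operatorname{coal}} = \max_{u,v \in [n]} M^{\operatorname{coal}}_{u,v}$, where $M^{\operatorname{coal}}_{u,v}$ denotes the meeting time of the walkers originating at $u$ and $v$ within the coalescing system; this random variable is stochastically dominated by the independent meeting time $M_{u,v}$ of two independent random walks starting at $u$ and $v$.

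First I would use the $O(1)$-mixing hypothesis to show that $\mathbb{E}[M_{u,v}] \le m(G^*) + O(1)$ uniformly in $u,v$: a short coupling argument gives that after $O(1)$ steps each walker's marginal position is within total variation distance $\tfrac{1}{4}$ of the stationary distribution $\pi$, so conditionally on reaching an approximately stationary configuration the expected further meeting time is at most $m(G^*)$ by the very definition of $m(G^*)$. Second, I would invoke Oliveira's coalescence bound from~\cite{Oli13}, which asserts that $\mathbb{E}[T_{\operatorname{coal}}] \le n + 4 m(G^*)$ for any graph whose random walk mixes in $O(1)$ steps. Combined with the hypothesis $m(G^*) \le (C^* - 1)n/4$ established just before the theorem statement, this yields $\mathbb{E}[T_{\operatorname{coal}}] \le n + (C^* - 1)n = C^* n$, finishing the proof.

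The main obstacle is Oliveira's bound itself, since a naive union bound using $\max_{u,v}\mathbb{E}[M_{u,v}] \lesssim m(G^*)$ yields only the weaker estimate $\mathbb{E}[T_{\operatorname{coal}}] \le e\log n \cdot \max_{u,v}\mathbb{E}[M_{u,v}]$, which loses a $\log n$ factor. To remove this factor, Oliveira's argument proceeds through a density estimate: the expected number of ordered pairs $(u,v)$ with $M^{\operatorname{coal}}_{u,v} > t$ decays like $O\bigl(n^2 m(G^*)/t\bigr)$, proved via a second-moment computation that exploits the covariance structure of meeting-time indicators for distinct pairs of walkers. Integrating this tail bound, combined with a short burn-in phase of length $n$ to handle the regime $t \lesssim n$ (where the density bound is vacuous), produces the sharp $n + 4 m(G^*)$ estimate. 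Reproducing this analysis while tracking constants carefully is the delicate part, and I would import it essentially verbatim from~\cite{Oli13}.
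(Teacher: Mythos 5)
Your high-level plan — reduce to coalescing random walks and invoke Oliveira's coalescence estimate from \cite{Oli13} — is indeed what the paper does, and your first observation that $O(1)$-mixing gives $\mathbb{E}[M_{u,v}]\le m(G^*)+O(1)$ uniformly in $u,v$ is correct and used implicitly. However, there are two substantive problems with the way you propose to close the argument.

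First, you propose to ``import it essentially verbatim from \cite{Oli13},'' but \cite[Theorem~1.2]{Oli13} is stated and proved for \emph{continuous-time} chains, and the additive $n$ in the bound $\mathbb{E}[T_{\operatorname{coal}}]\le n+4m(G^*)$ is precisely the price of discretization: each coalescence stage costs at least one unit of time. This is exactly why the paper says it must ``sketch the proof here for completeness'' rather than cite the theorem directly, and any proof must address this adaptation.

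Second, your description of the crux as a ``density estimate'' — that the expected number of uncoalesced pairs at time $t$ decays like $O(n^2 m(G^*)/t)$, to be integrated together with a burn-in of length $n$ — is not the argument in \cite{Oli13} or in the paper, and as stated it does not work: a $1/t$ tail is not integrable, so $\int_n^\infty \min\{1, n^2 m(G^*)/t\}\,dt$ diverges. The actual route (paper and \cite[Lemma~3.2]{Oli13}) is the stage-wise increment bound. Let $T_k$ be the first time at most $k$ uncoalesced walkers remain; one shows
\[
\mathbb{E}[T_{k-1}-T_k]\le 1+\frac{2m(G^*)}{\binom{k}{2}},\qquad 2\le k\le n,
\]
using the $O(1)$-mixing and $\pi\approx\operatorname{Uni}$ to compare meeting probabilities at near-stationarity to $m(G^*)$. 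Telescoping, $\mathbb{E}[T_{\operatorname{coal}}]=\sum_{k=2}^n\mathbb{E}[T_{k-1}-T_k]\le (n-1)+4m(G^*)(1-\tfrac1n)\le n+4m(G^*)$, which gives exactly the claimed constants with no tail integration at all. You should replace the density/tail-integration heuristic with this stage decomposition.
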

\begin{proof}
The result follows in the same spirit as \cite[Theorem 1.2]{Oli13}. However, since that result applies only to continuous-time Markov chains, we sketch the proof here for completeness.

Consider coalescing random walks starting from every vertex of $G^*$. For $k \le n$, let $T_k$ denote the minimal time $t$ such that at time $t$, there are at most $k$ non-coalesced random walks. The crux of the argument is to show that for any $2 \le k \le n$,
\begin{equation}\label{eq-crux}
\mathbb{E}[T_{k-1} - T_k] \le 1 + \frac{2m(G)}{\binom{k}{2}}\,.
\end{equation}
Using Theorem~\ref{thm:mixing} and Lemma~\ref{thm:stationary}, \eqref{eq-crux} can be established by arguments similar to those in \cite[Lemma 3.2]{Oli13}. Given \eqref{eq-crux}, it follows that
\[
\mathbb{E}[T_{\operatorname{coal}}] = \mathbb{E}\Bigg[\sum_{k=2}^n (T_{k-1} - T_k)\Bigg] \le n + 4m(G^*) \le C^* n,
\]
thus completing the proof.
\end{proof}


By the duality to coalescing random walks, we observe that $T_{\operatorname{cons}}\le T_{\operatorname{coal}}$. 
Combining Theorems~\ref{thm:mixing} and \ref{thm:coal}, we obtain that Property-(iii) of admissibility holds w.h.p. as well. 

Finally we address Property-(iv) of admissibility. For a subset $X\subseteq[n]$ and a positive integer $k$, let $\operatorname{N}_{\operatorname{out},k}(X)\subseteq[n]$ denote the set of vertices that can be reached from at least $k$ different vertices of $X$ by traversing a single directed edge. Similarly, define $\operatorname{N}_{\operatorname{in},k}(X)\subseteq[n]$ as the set of vertices from which at least $k$ elements of $X$ can be reached by traversing a single directed edge. For notational simplicity, we write  $\operatorname{N}_{\operatorname{out}}(X)$ and $\operatorname{N}_{\operatorname{in}}(X)$ in place of $\operatorname{N}_{\operatorname{out},1}(X)$ and $\operatorname{N}_{\operatorname{in},1}(X)$, respectively. To show that Property-(iv) holds w.h.p., we require the following lemma, which essentially states that every set of vertices expands very well in $G^*$. 

\begin{lemma}\label{lem-expansion}
    Let $G^*\sim\mathcal G(n,p)$. Then, w.h.p., for every set $X\subseteq [n]$ it holds that \[|\operatorname{N}_{\operatorname{out}}(X)|,|\operatorname{N}_{\operatorname{in}}(X)|\geq \min\{np\cdot|X|/20,n/20\}\,\] and \[|\operatorname{N}_{\operatorname{out},k}(X)|,|\operatorname{N}_{\operatorname{in},k}(X)|\geq 2n/3\,,\] where $k=\lfloor p\min\{|X|,n/1000\}/100\rfloor$.
\end{lemma}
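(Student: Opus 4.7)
The plan is to establish both statements via Chernoff bounds followed by a union bound over subsets $X\subseteq[n]$. Since reversing all edge directions of $G^*\sim\mathcal G(n,p)$ yields another graph with the same distribution, the in-neighborhood versions reduce to the out-neighborhood ones, so I focus on $\operatorname{N}_{\operatorname{out}}$ and $\operatorname{N}_{\operatorname{out},k}$. The key observation used throughout is that for any fixed $X$, the events $\{v\in\operatorname{N}_{\operatorname{out},k}(X)\}$ for $v\in[n]$ are mutually independent (they depend on disjoint sets of potential edges), so $|\operatorname{N}_{\operatorname{out},k}(X)|$ is a sum of $n$ independent Bernoulli variables with common parameter $\mathbb{P}[\operatorname{Bin}(|X|,p)\geq k]$, to which standard Chernoff bounds apply.

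For the first claim I split by $s:=|X|$. When $s\leq 1/p$ the mean satisfies $\mathbb{E}|\operatorname{N}_{\operatorname{out}}(X)|=n(1-(1-p)^s)\geq nsp/2$, and a multiplicative Chernoff bound gives $\mathbb{P}[|\operatorname{N}_{\operatorname{out}}(X)|<nsp/20]\leq\exp(-c_1 nsp)$ for some universal $c_1>0$; since $\log\binom{n}{s}\leq s\log(en/s)\lesssim s\log n$ and $np\geq n^\delta$ dominates $\log n$ by a polynomial factor, the union bound over such $X$ succeeds with room to spare. When $1/p\leq s\leq 2/p$ the mean exceeds $n/2$ (as $(1-p)^{1/p}\leq e^{-1}$), Chernoff yields $\mathbb{P}[|\operatorname{N}_{\operatorname{out}}(X)|<n/20]\leq\exp(-c_2 n)$, while $\binom{n}{s}\leq\exp(\widetilde O(1/p))=\exp(\widetilde O(n^{1-\delta}))$ is overwhelmed. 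Finally, for $s\geq 2/p$ I avoid the deteriorating direct Chernoff and bound instead the probability that there exist $X$ of size $s$ and a candidate complement $Z\subseteq[n]$ of size $\lceil 19n/20\rceil$ with no edges from $X$ into $Z$: using $\binom{n}{s}\binom{n}{\lceil 19n/20\rceil}(1-p)^{s\cdot 19n/20}\leq\exp\bigl(n\log 2+n/5-\tfrac{19}{20}nsp\bigr)$ together with $sp\geq 2$ shows this is at most $e^{-n}$, and summing over $s$ adds only a harmless polynomial factor.

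For the second claim, the definition of $k$ immediately gives $k\leq sp/100$, and when $s<100/p$ we have $k=0$ so that $\operatorname{N}_{\operatorname{out},0}(X)=[n]$ trivially. For $s\geq 100/p$ we have $sp\geq 100$, and the lower-tail Chernoff inequality yields $q_s:=\mathbb{P}[\operatorname{Bin}(s,p)<k]\leq\exp(-c_3 sp)$; the complementary count $|[n]\setminus\operatorname{N}_{\operatorname{out},k}(X)|$ is $\operatorname{Bin}(n,q_s)$, and the standard binomial tail estimate gives
\[
\mathbb{P}\bigl[|\operatorname{N}_{\operatorname{out},k}(X)|<2n/3\bigr]\leq\binom{n}{\lceil n/3\rceil}q_s^{n/3}\leq(3e\,q_s)^{n/3}\leq\exp\!\bigl(\tfrac{n}{3}\log(3e)-\tfrac{c_3\,nsp}{3}\bigr).
\]
Because $sp\geq 100$, taking $c_3$ to be a modest positive constant makes the right-hand side beat $\binom{n}{s}\leq 2^n$ by a wide exponential margin, so the union bound over all $X$, and then over $s$, succeeds.

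The main technical obstacle I anticipate is arranging the numerical constants so that the Chernoff exponent beats $\log\binom{n}{s}$ in every regime---the tightest case being moderate $s$ in the first claim, where neither pure Chernoff nor the no-edge argument is overwhelmingly favorable. A careful case split at $s\sim 1/p$ and $s\sim 2/p$, combined with the sharpened KL-divergence form of Chernoff when needed, resolves this cleanly; beyond that, the argument is routine Chernoff bookkeeping, and the in-neighborhood versions follow by transpose symmetry.
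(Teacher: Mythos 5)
Your proof is correct and follows the same overall strategy as the paper: for each fixed $X$ the indicator events $\{v\in\operatorname{N}_{\operatorname{out},k}(X)\}$ depend on disjoint edge sets and are therefore independent, so Chernoff bounds apply, and a union bound over subsets $X$ closes the argument; the in-degree statements follow by edge-reversal symmetry. The one genuine difference lies in how large $|X|$ is handled. The paper simply invokes monotonicity of $X\mapsto\operatorname{N}_{\operatorname{out}}(X)$ to reduce to $|X|\le p^{-1}$ for the first claim (resp.\ $|X|\le n/100$ for the second), which caps the union-bound cost at $n^{O(p^{-1})}$ (resp.\ roughly $1.007^n$) so that a single Chernoff estimate suffices. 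You instead carry out an explicit case split across $s=|X|$, and in the regime $s\ge 2/p$ you replace the direct Chernoff bound with a ``no edges from $X$ into a fixed large $Z$'' counting argument --- which, incidentally, is the same trick the paper uses in Lemma~\ref{thm:stationary_exists} to show strong connectedness. Both routes are sound; the paper's monotonicity reduction is a bit cleaner, while yours is more explicit. One minor point worth flagging: when you sum the indicators over all $v\in[n]$ rather than over $v\in[n]\setminus X$, a vertex $v\in X$ sees $\operatorname{Bin}(|X|-1,p)$ rather than $\operatorname{Bin}(|X|,p)$ incoming edges from $X$ (no self-loops), so the summands are not \emph{identically} distributed; this perturbs the Bernoulli parameter negligibly, and the paper avoids the issue entirely by summing only over $[n]\setminus X$. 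Your numerical bookkeeping in each regime checks out, and the constants leave ample room.
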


\begin{proof}
    Consider a set $X\subseteq [n]$. Without loss of generality, we may assume that $|X|\leq p^{-1}$. For each vertex $i\in[n]$, let $X_i$ denote the indicator random variable which takes the value $1$ if $i\in\operatorname{N}_{\operatorname{out}}(X)$ and $0$ otherwise. Observe that $|\operatorname{N}_{\operatorname{out}}(X)|=\sum_{i\in[n]\backslash X}X_i$, where the sum consists of i.i.d. Bernoulli random variables. Furthermore, one can verify that the mean of each of these Bernoulli variables is at least $p|X|/10$, implying that the expected value of the sum is no less than $p|X|(n-|X|)/10\geq p|X|n/15$ (for sufficiently large $n$). The claim about $|\operatorname{N}_{\operatorname{out}}(X)|$ now follows by using Chernoff's bound (see Lemma~\ref{lem-Chernoff-bound}) and then union bounding over all sets $X$ of size at most $p^{-1}$ (it suffices to use the fact that for every positive integer $m$ there are at most $n^m$ subsets of $[n]$ of size $m$). We can repeat the same argument to lower-bound $|\operatorname{N}_{\operatorname{in}}(X)|$.

    The proof of the second part of the statement is very similar. Let $X\subseteq [n]$ and set $k=\lfloor p\min\{|X|,n/1000\}/100\rfloor$. We can restrict our attention to the case where $|X|\leq n/100$. For $i\in[n]$, let $X_{i,k}$ be the random variable that takes the value $1$ if $i\in\operatorname{N}_{\operatorname{out},k}$ and $0$ otherwise. Notice that $|\operatorname{N}_{\operatorname{out},k}(X)|=\sum_{i\in[n]\backslash X}X_{i,k}$ where the summands are i.i.d. Bernoulli random variables. By the choice of $k$, each of these Bernoulli random variables has mean at least $9/10$, leading to $\mathbb{E}|\operatorname{N}_{\operatorname{out},k}(X)| \geq9n/10-|X|\geq89n/100$. Applying Chernoff's bound again and taking a union bound over all sets $X$ of size at most $n/100$, we obtain the desired result. For this last step, we use the estimate \[\sum_{i=1}^{n/1000}\binom{n}{i}\leq \frac{n}{1000}\binom{n}{n/1000}<\frac{n}{1000}\frac{n^n}{(n/1000)^{n/1000}(999n/1000)^{(999n/1000)}}<1.007^n\,,\] which holds for all sufficiently large $n$. The argument for $\operatorname{N}_{\operatorname{in},k}$ is identical.

    We conclude that, w.h.p., every subset $X\subseteq[n]$ satisfies both properties, as desired.
\end{proof}

We are now ready to show that Property-(iv) occurs w.h.p. as well.

\begin{theorem}
    Let $G^*\sim\mathcal G(n,p)$. Then, w.h.p., for every partition $[n]=X\sqcup Y$, the number of triples $i,j,j'$ of elements of $[n]$ such that $j, j'\in\operatorname N_i$ and $j\in X,j'\in Y$ is at least $np^2|X|\cdot|Y|/10^{12}.$ 
\end{theorem}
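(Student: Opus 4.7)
The plan is to combine the expansion properties guaranteed by Lemma~\ref{lem-expansion} with the degree concentration in Lemma~\ref{lem:edges-between-sets}; both hold w.h.p.\ for $G^*\sim\mathcal G(n,p)$, so I may assume $G^*$ satisfies both conclusions. Fix a partition $[n]=X\sqcup Y$ and, without loss of generality, assume $|Y|\geq |X|$, so that $|Y|\geq n/2$. The quantity to lower-bound can be written compactly as $N(X,Y):=\sum_{i\in[n]}|\operatorname{N}_i\cap X|\cdot|\operatorname{N}_i\cap Y|$. I would distinguish two cases according to whether $|X|\geq 100/p$ or $|X|<100/p$.

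For the bulk case $|X|\geq 100/p$, set $k_X:=\lfloor p\min\{|X|,n/1000\}/100\rfloor$ and $k_Y:=\lfloor pn/100000\rfloor$. Using $np\geq n^\delta$ for large $n$, a routine check gives $k_X\geq p|X|/(2\cdot 10^5)$ and $k_Y\geq p|Y|/(2\cdot 10^5)$. Lemma~\ref{lem-expansion} then guarantees $|\operatorname{N}_{\operatorname{in},k_X}(X)|,|\operatorname{N}_{\operatorname{in},k_Y}(Y)|\geq 2n/3$, so by inclusion-exclusion at least $n/3$ vertices $i$ simultaneously satisfy $|\operatorname{N}_i\cap X|\geq k_X$ and $|\operatorname{N}_i\cap Y|\geq k_Y$. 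This directly yields
\[
N(X,Y)\;\geq\;(n/3)\cdot k_X k_Y\;\geq\;np^2|X||Y|/(12\cdot 10^{10})\;\geq\;np^2|X||Y|/10^{12}.
\]

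For the small case $|X|<100/p$, the quantity $k_X$ may collapse to zero and one must instead rely on the weaker bound $|\operatorname{N}_{\operatorname{in}}(X)|\geq \min\{np|X|/20,n/20\}$ from Lemma~\ref{lem-expansion}. Since every $i\in\operatorname{N}_{\operatorname{in}}(X)$ trivially satisfies $|\operatorname{N}_i\cap X|\geq 1$, one writes
\[
N(X,Y)\;\geq\;\sum_{i\in\operatorname{N}_{\operatorname{in}}(X)}|\operatorname{N}_i\cap Y|\;=\;\sum_{i\in\operatorname{N}_{\operatorname{in}}(X)}\bigl(|\operatorname{N}_i|-|\operatorname{N}_i\cap X|\bigr)\;\geq\;|\operatorname{N}_{\operatorname{in}}(X)|\cdot(np/2)-2np|X|,
\]
using $|\operatorname{N}_i|\geq np/2$ from Lemma~\ref{lem:edges-between-sets} and the total-in-degree bound $\sum_i|\operatorname{N}_i\cap X|=\sum_{v\in X}|\operatorname{N}^{-1}(v)|\leq 2np|X|$ (applied to in-neighborhoods, which satisfy the analogue of \eqref{eq-degree**}). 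Splitting further into the subcases $|X|\leq 1/p$ (where $|\operatorname{N}_{\operatorname{in}}(X)|\geq np|X|/20$) and $1/p<|X|<100/p$ (where $|\operatorname{N}_{\operatorname{in}}(X)|\geq n/20$), one verifies in each that the first term dominates the correction by at least a factor of two for $np$ sufficiently large (which follows from $np\geq n^\delta$), and a direct comparison against $np^2|X||Y|/10^{12}\leq 100n^2p/10^{12}$ then closes the case.

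The hard part is the small-$|X|$ case, particularly in the sparsest regime $np^2=o(1)$ (which can occur when $\delta<1/2$). In that regime the common-in-neighborhood bound \eqref{eq-overlap-2**} is too weak to control $|\operatorname{N}_i\cap Y|$ pointwise, since individual vertices $i\in\operatorname{N}_{\operatorname{in}}(X)$ may conceivably have most of their out-neighbors in $X$. The key observation that sidesteps this is that the total edge-mass sent into $X$ is only $O(np|X|)$, so spreading it over the $\Omega(np|X|)$ (respectively $\Omega(n)$) vertices of $\operatorname{N}_{\operatorname{in}}(X)$ produces an average ``loss'' per vertex that is negligible compared to $np/2$; this is exactly what the displayed inequality captures in aggregate, obviating the need to handle exceptional vertices one by one.
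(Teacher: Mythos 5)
Your proof is correct and takes a genuinely different route from the paper's in both regimes. The paper splits into three cases (depending on whether $|X|\leq 100p^{-1}$ and whether $np^2|X|\leq 2000$) and, in every case, iterates Lemma~\ref{lem-expansion} \emph{sequentially}: it first grows $X$ to $\operatorname{N}_{\operatorname{in}}(X)$ or $\operatorname{N}_{\operatorname{in},k}(X)$, then expands \emph{that} set again via $\operatorname{N}_{\operatorname{out}}$ or $\operatorname{N}_{\operatorname{out},k'}$, and finally counts triples through the two-step chain. Your bulk case instead applies the lemma once to $X$ and once to $Y$ and intersects, producing $\geq n/3$ vertices $i$ that simultaneously satisfy $|\operatorname{N}_i\cap X|\geq k_X$ and $|\operatorname{N}_i\cap Y|\geq k_Y$; this exploits the symmetry of the target quantity $N(X,Y)=\sum_i |\operatorname{N}_i\cap X|\,|\operatorname{N}_i\cap Y|$ and is shorter than the paper's nested construction. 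Your small case replaces the second expansion step entirely with an averaging argument: bounding $\sum_{i\in\operatorname{N}_{\operatorname{in}}(X)}|\operatorname{N}_i\cap Y|$ from below by $|\operatorname{N}_{\operatorname{in}}(X)|\cdot\min_i|\operatorname{N}_i| - \sum_i |\operatorname{N}_i\cap X|$, and controlling the subtracted total via the aggregate in-degree into $X$. This aggregate accounting is more elementary than the paper's double-expansion and, as you correctly point out, cleanly sidesteps the issue that individual vertices of $\operatorname{N}_{\operatorname{in}}(X)$ could in principle send most of their out-edges into $X$ in the sparse regime $np^2=o(1)$ — the total edge-mass into $X$ being $O(np|X|)$ makes the per-vertex loss negligible on average, which is all the displayed inequality needs. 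Both arguments ultimately rest on the same two ingredients (Lemma~\ref{lem-expansion} and the degree concentration in Lemma~\ref{lem:edges-between-sets}), and the constants close with comparable slack.
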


\begin{proof}
    Suppose $G^*$ satisfies the property stated in Lemma~\ref{lem-expansion} for every $X\subseteq [n]$. Consider a partition $[n]=X\sqcup Y$ and assume without loss of generality that $X$ is non-empty and $|X|\leq n/2$. At a high level, the proof consists of applying the above lemma twice: first to $X$, and then to either $\operatorname{N}_{\operatorname{in}}(X)$ or $\operatorname{N}_{\operatorname{in},k}(X)$, depending on the size of $X$. We provide the details below, assuming $n$ is sufficiently large. The argument is split into three cases.
    
    First, suppose that $|X|\leq 100p^{-1}$ and $np^2|X|\leq 2000$. By our assumption about $G^*$, we have $|\operatorname{N}_{\operatorname{in}}(X)|\geq np|X|/20$. Applying to $\operatorname{N}_{\operatorname{in}}(X)$ the bound given by the first part of the lemma, we get \[|\operatorname{N}_{\operatorname{out}}(\operatorname{N}_{\operatorname{in}}(X))|\geq (np)^2|X|/40000\,\] (note that the RHS is at most $n/20$ by our hypothesis). Note now that the number of triples $i,j,j'$ satisfying the requirements in the statement is at least \[|\operatorname N_{\operatorname{out}}(\operatorname{N}_{\operatorname{in}}(X))\backslash X|\geq(np)^2|X|/50000\geq np^2|X|\cdot|Y|/50000\,,\] as desired.

    Next, assume that that $|X|\leq 100p^{-1}$ and $np^2|X|> 2000$. As in the first case, $|\operatorname{N}_{\operatorname{in}}(X)|\geq np|X|/20$. Note that $np|X|/20\geq100p^{-1}$, and so $\lfloor p|\operatorname{N}_{\operatorname{in}}(X)|/100\rfloor\geq p\operatorname{N}_{\operatorname{in}}(X)/200$. Write $k'=\lfloor p\min\{|\operatorname{N}_{\operatorname{in}}(X)|,n/1000\}/100\rfloor$ and observe that $k'\geq p|\operatorname{N}_{\operatorname{in}}(X)|/(2\cdot 10^5)$. Now, applying the second part of the statement of Lemma~\ref{lem-expansion} to $\operatorname{N}_{\operatorname{in}}(X)$, we deduce that \[|\operatorname{N}_{\operatorname{out},k'}(\operatorname{N}_{\operatorname{in}}(X))|\geq 2n/3\implies|\operatorname{N}_{\operatorname{out},np/(2\cdot10^5)}(\operatorname{N}_{\operatorname{in}}(X))\backslash X|\geq n/6\,.\] In other word, there is a subset of $Y$ of size at least $n/6$ such that each of its elements can be reached from at least $np/(2\cdot 10^5)$ distinct elements of $\operatorname{N}_{\operatorname{in}}(X)$ by traversing a single directed edge. By definition, each element of $\operatorname{N}_{\operatorname{in}}(X)$ has an edge pointing towards $X$, so the number of triples $i,j,j'$ with the desired properties is at least \[\frac{n}{6}\cdot\frac{np}{2\cdot 10^5}\geq \frac{n^2p}{10^7}\geq \frac{np^2|X|\cdot|Y|}{10^9}\,.\]

    Lastly, suppose that $|X|> 100p^{-1}$. This implies that $\lfloor p|X|/100\rfloor\geq p|X|/200$. Write $k=\lfloor p\min\{|X|,n/1000\}/100\rfloor$ and note that $k\geq p|X|/(2\cdot10^5)$. By the assumption on $G^*$, we know that $|\operatorname{N}_{\operatorname{in},k}(X)|\geq 2n/3$. Again by the bound from Lemma~\ref{lem-expansion}, $|\operatorname{N}_{\operatorname{out},np/(2\cdot 10^5)}(\operatorname{N}_{\operatorname{in},k}(X))|\geq2n/3$ and thus \[|\operatorname{N}_{\operatorname{out},np/(2\cdot 10^5)}(\operatorname{N}_{\operatorname{in},k}(X))\backslash X|\geq n/6\,.\] This can be restated as saying that there is a subset of $Y$ of size at least $n/6$ each of whose elements can be reached from at least $np/(2\cdot 10^5)$ distinct elements of $\operatorname{N}_{\operatorname{in},k}(X)$ by traversing a single directed edge. Each element of $\operatorname{N}_{\operatorname{in},k}(X)$ has at least $k$ edges pointing towards $X$, so the number of triples with the required properties is at least \[\frac{n}{6}\cdot\frac{np}{2\cdot10^5}\cdot k\geq\frac{1}{12\cdot 10^5}\cdot n^2p\cdot \frac{p|X|}{2\cdot10^5}\geq \frac{1}{10^{12}}np^2|X|\cdot|Y|\,.\]
    
    We have shown that every graph $G^*$ for which every $X\subseteq [n]$ has the properties mentioned in the statement of Lemma~\ref{lem-expansion} satisfies the desired conclusion. Since a graph drawn from $\mathcal G(n,p)$ will be of this kind w.h.p., this finishes the proof.
\end{proof}

This completes the proof that $G^*\sim \mathcal G(n,p)$ is w.h.p. admissible.



\section{Deferred proof from Section~\ref{sec-positive}}\label{appendix-positive}
Here we provide the proof of Lemmas~\ref{lem-martingale-concentration}-\ref{lem-effective-observations-lower-bound}.

\begin{proof}[Proof of Lemma~\ref{lem-martingale-concentration}]
	Fix $i,j\in [n]$. It is clear that $|\mathcal M_{m,i\rightarrow j}^{(t_s)}|\le 2,\forall 1\le m\le M, 1\le s\le T_*$. By Azuma's inequality, we have
    \[
	\mathbb{P}\left[|\mathcal M_{i\rightarrow j}|\ge \frac{50c_0MT_*}{np}\right]\le 2\exp\left(-\frac{(50c_0MT_*/np)^2}{8M T_*}\right)< 2\exp\left(-\frac{300c_0^2MT_*}{(np)^2}\right)\,.
	\] 
    Since $$MT_*\ge \frac{M\cdot\min\{T,n\}}{2t^*}\ge \frac{Cn^2p^2\log n}{2t^*}\,,$$
and $t_*=O(1)$ depends only on $\delta$, we have for $C=C(\delta)$ large enough the above probability is at most $2\exp(-10\log n)=2n^{-10}$. The result then follows from the union bound.
\end{proof}

\begin{proof}[Proof of Lemma~\ref{lem-TV-distance}]
Let $\mathcal B$ be the event that two independent random walks starting at $u$ and $j$ meet before or on time $t_*$. We claim that $\mathbb{P}[\mathcal B]\le \frac{20c_0}{np}$. Define 
\begin{equation}\label{eq-p(G^*)}
 p(G^*):=\max_{u\neq j}\frac{|\operatorname{N}_u\cap \operatorname{N}_j|}{d_ud_j}
\end{equation}
 as the maximum probability that two random walks starting at different locations meet at time $1$. From the union bound, we get $\mathbb P[\mathcal B]\le t_*\cdot p(G^*)$. When $p=o(1)$, we obtain from Property-(i) of admissibility that $$p(G^*)=
        \widetilde{O}\left(\frac{1}{(np)^2}\right)=o\left(\frac 1{np}\right)\,\quad\text{if\, }p\le n^{-1/2}(\log n)^2\,,$$
        and
        $$
        p(G^*)\le \frac{2np^2}{(np)^2}\le \frac{2}{n}\,\quad\text{if\, }p\ge n^{-1/2}(\log n)^2\,.
$$  
Hence, we have $p(G^*)=o(\frac{1}{np})$ for any $p=o(1)$. Since $t_*=O(1)$, $t_*\cdot p(G^*)=o\left(\frac 1 {np}\right)\le \frac{4c_0}{np}$. For $p=\Omega(1)$ with $p\le c_0$, we can pick $\delta=0.99$ in assumption \eqref{eq-p-assumption}, and thus $t_*=c(2,\delta)$ can be picked to be no more than $10$ (see the proof of Theorem~\ref{thm:mixing}). This yields $t_*\cdot p(G^*)\le \frac{20}{n}\le \frac{20c_0}{np}$. Hence, in either case, we have $\mathbb P[\mathcal B]\le \frac{20c_0}{np}$, verifying the claim.  

Since $\mu_{u,j}=\pi_{t_*,u}\otimes\pi_{t_*,j}\mid \mathcal B^c$ by definition, we have 
\[
\operatorname{TV}(\mu_{u,j},\pi_{t_*,u}\otimes\pi_{t_*,j})\le 2\mathbb{P}[\mathcal B]\le \frac{40c_0}{np}\,.
\]
Additionally, by Property-(ii) of admissibility and our choice of $t_*$, 
\[
\operatorname{TV}(\pi_{t_*,u},\pi)\le n^{-2},\  \operatorname{TV}(\pi_{t_*,j},\pi)\le n^{-2} \ \Rightarrow \ \operatorname{TV}(\pi_{t_*,u}\otimes \pi_{t_*,j},\pi^{\otimes 2})\le 2n^{-2}\,.
\]
The result now follows from the triangle inequality.
\end{proof}

\begin{proof}[Proof of Lemma~\ref{lem-concentration-p}]
Write $\delta_t=\max_{(u,v),(u',v'),u\neq v,u'\neq v'}|p_{u,v}^{(t)}-p_{u',v'}^{(t)}|$. We first claim that $\delta_t\le t\delta_1$. For any two pairs $(u,v)$ and $(u',v')$, we have
	\[
	p_{u,v}^{(t)}=\frac{1}{d_ud_v}\sum_{i\sim u,j\sim v}p_{i,j}^{(t-1)},\quad p_{u',v'}^{(t)}=\frac{1}{d_{u'}d_{v'}}\sum_{i'\sim u',j'\sim v'}p_{i',j'}^{(t-1)}\,.
	\]
	Note that $p_{i,j}^{(t-1)}=1$ for $i=j$, so $|p_{u,v}^{(t)}-p_{u',v'}^{(t)}|$ is bounded by $\delta_{t-1}$ plus
	\[
	\Bigg|\frac{1}{d_ud_v}\sum_{i\sim u,i\sim v}1-\frac{1}{d_{u'}d_{v'}}\sum_{i'\sim u',i'\sim v'}1\Bigg|\le \delta_1\, ,
	\]
	and the claim follows by induction. 

    Therefore, since $t_*=O(1)$, it remains to show that $\delta_1=o\left(\frac{1}{np}\right)$. Recall the definition of $p(G^*)$ in \eqref{eq-p(G^*)}. Clearly, we have $\delta_1\le p(G^*)$, which we have shown to be $o\left(\frac{1}{np}\right)$ provided that $p=o(1)$. When $p=\Omega(1)$, by \eqref{eq-degree} and \eqref{eq-overlap-1} in Property-(i) of admissibility, we get $\delta_1=\widetilde{O}(n^{-3/2})=o(n^{-1})$. Hence, the result always holds. 
\end{proof}

\begin{proof}[Proof of Lemma~\ref{lem-effective-observations-lower-bound}]
We first show that for each $1\le m\le M$, $\widetilde{T}_m$ stochastically dominates a uniform distribution on $\{0,1,\cdots,L\}$, where $L:=\min\{T,\frac{n}{32}-1\}$.

Fix $1\le m\le M$ and we omit the subscript $m$. Consider the filtration $$\widetilde{\mathcal F}_t:=\sigma\big(x_{i}^{(t')}:i\in [n],t'\leq t\big)\,.$$
Recall that $W^{(t)}=\mathbb{E}_{p\sim \pi}[x_{p}^{(t)}]$. Since $\pi$ is the stationary distribution of the random walk on $G^*$, we have that $\{W^{(t)}\}_{t\ge 0}$ is a martingale with respect to $\widetilde{\mathcal F}_t$. For any $1\le t\le \min\{T,n\}$, we have by definition that
\[
\mathbb{P}[\widetilde{T}\le t]=\mathbb{P}\left[\max_{1\le r\le t}|W^{(r)}|\ge \frac12\right]\,.
\]
By Chebyshev's inequality and Doob's maximal inequality for $p=2$, 
\[
\mathbb{P}\left[\max_{1\le r\le t}|W^{(t)}|\ge \frac12\right]\le 16\mathbb{E}\big[(W^{(t)})^2\big]=16\left(\mathbb{E}\big[(W^{(0)})^2\big]+\sum_{r=0}^{t-1}\mathbb{E}\big[(W^{(r+1)}-W^{(r)})^2\big]\right)\,,
\]
where the equality follows from the martingale property. Observe now that
\[
\mathbb{E}\big[(W^{(0)})^2\big]=\sum_{i\in [n]}\pi(i)^2\le \frac{2}{n}\,,
\]
where the inequality is a consequence of Property-(iii) of admissibility. 
We claim that for every $r\ge 1$, it is the case that
\[
\mathbb{E}\big[(W^{(r+1)}-W^{(r)})^2\big]\le \frac{2}n\,.
\]
To see this, note that after conditioning on $\widetilde{\mathcal F}_{r}$, $W^{(r+1)}=\sum_{i\in [n]}\pi(i)x_{i}^{(r)}$, where $x_i^{(r)}$ ($i\in [n]$) are conditionally independent. Therefore, conditioned on any realization of $\widetilde{\mathcal F}_{r-1}$.
\[
\mathbb{E}\big[(W^{(r+1)}-W^{(r)})^2\mid\widetilde{\mathcal F}_{r-1}\big]=\sum_{i\in [n]}\pi(i)^2\operatorname{Var}[x_i^{(r+1)}\mid \widetilde{\mathcal F}_{r}]\le \sum_{i\in [n]}\pi(i)^2\le \frac{2}{n}\,,
\]
This verifies our claim, and thus we obtain
\[
\mathbb{P}[\widetilde{T}\le t]\le \frac{32(t+1)}{n}\,.
\]

Since $\widetilde{T}\le \min\{T,n\}$ holds deterministically, this proves that $\widetilde{T}$ stochastically dominates a random variable $V$ whose distribution is given by
\[
\mathbb{P}[V=t]=\frac{32}{n}\,,\forall\ 0\le t\le \min\left\{T-1,\frac n{32}-1\right\}\,,
\]
and $\mathbb{P}[V=T]=1-\frac{32T}{n}$ for $T<\frac{n}{32}$. 
The random variable $V$ further dominates the uniform distribution on $\{0,1,\cdots,L\}$, which gives us the desired domination relation. 

Using this domination relation, it suffices to show that there exists a universal constant $\zeta>0$ such that for any $M\ge 1$ and $U_1,\cdots,U_M\stackrel{\text{i.i.d.}}{\sim}\operatorname{Uni}(\{0,1,\cdots,L\})$, the following holds with probability at least $0.95$:
\[
U_1+\cdots+U_M\ge \zeta\cdot M\cdot \min\{T,n\}\,.
\]
Note that $\mathbb{E}[U_m]=\frac{L}{2}\ge \frac{1}{100}\min\{T,n\}$ and $\operatorname{Var}({U}_m)\le \min\{T,n\}^2$. If $M\ge 10^6$, Chebyshev's inequality yields that
\[
\mathbb{P}\left[U_1+\cdots+U_M\le \frac{M\cdot \min\{T,n\}}{200}\right]\le \frac{M\cdot \min\{T,n\}^2}{\left({M\cdot \min\{T,n\}}/{200}\right)^2}=\frac{40000}{M}\le 0.04\,.
\]
Hence, with probability at least $0.96$, 
\[
U_1+\cdots+U_M\ge \frac{M\cdot \min\{T,n\}}{200}\,.
\]
If $M\le 10^6$, then $L\to \infty$ as $n\to \infty$. Hence, we have $U_1\ge \frac{L}{100}$ with probability at least $0.96$. This implies
\[
U_1+\cdots+U_M\ge U_1\ge \frac{L}{100}\ge \frac{1}{10^{10}}\cdot M\cdot\min\{T,n\}\,.
\]
Therefore, if we pick $\zeta=10^{-10}$, we get that for any $M\ge 1$ the inequality
\[
U_1+\cdots+U_M\ge \zeta\cdot M\cdot\min\{T,n\}\,
\]
holds with probability at least $0.96$. This completes the proof.
\end{proof}

\section{Deferred proof from Section~\ref{sec-negative}}\label{appendix-negative}

In this section, we provide the proofs that were deferred in Section~\ref{sec-negative}. We start by stating two lemmas on the concentration of the sum of independent variables, which will be frequently used.

The first result is a well-known Chernoff bound for the binomial distribution. 
\begin{lemma}\label{lem-Chernoff-bound}
    For any $N\in \mathbb{N},p\in (0,1)$, and $\delta>0$, denote $\mu=Np$. Then, for $X\sim \mathbf{B}(N,p)$, 
    \[
    \mathbb{P}[X\ge (1+\delta)\mu]\le \exp\big(-[(1+\delta)\log (1+\delta)-\delta]\mu\big)\,,
    \]
    and
    \[
    \mathbb{P}[X\le (1-\delta)\mu]\le \exp\left(-\frac{\delta^2}{2}\mu\right)\,.
    \]
\end{lemma}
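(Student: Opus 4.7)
The plan is to invoke the standard Chernoff method via the moment generating function. Writing $X=\sum_{i=1}^N Y_i$ with $Y_i\stackrel{\text{i.i.d.}}{\sim}\mathbf{B}(1,p)$, the joint MGF factorizes as
\[
\mathbb{E}[e^{\lambda X}]=(1-p+pe^{\lambda})^N\le \exp\!\big(Np(e^{\lambda}-1)\big)=\exp\!\big(\mu(e^{\lambda}-1)\big)\,,
\]
where the inequality uses $1+x\le e^x$. This single estimate will drive both tail bounds.

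For the upper tail, I would apply Markov's inequality to $e^{\lambda X}$ with $\lambda>0$ to obtain $\mathbb{P}[X\ge (1+\delta)\mu]\le \exp\!\big(\mu(e^{\lambda}-1)-\lambda(1+\delta)\mu\big)$, and then optimize by taking $\lambda=\log(1+\delta)>0$. This substitution makes $e^\lambda-1=\delta$ and yields exactly
\[
\mathbb{P}[X\ge (1+\delta)\mu]\le \exp\!\big(-[(1+\delta)\log(1+\delta)-\delta]\mu\big)\,,
\]
as claimed.

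For the lower tail, if $\delta\ge 1$ there is nothing to prove since $X\ge 0$, so assume $0<\delta<1$. I would apply Markov to $e^{-\lambda X}$ for $\lambda>0$, giving $\mathbb{P}[X\le (1-\delta)\mu]\le \exp\!\big(\mu(e^{-\lambda}-1)+\lambda(1-\delta)\mu\big)$, and optimize with $\lambda=-\log(1-\delta)>0$. This leads to the sharper bound
\[
\mathbb{P}[X\le(1-\delta)\mu]\le \exp\!\big(-[(1-\delta)\log(1-\delta)+\delta]\mu\big)\,.
\]
To conclude, I need the elementary inequality $(1-\delta)\log(1-\delta)+\delta\ge \delta^2/2$ on $[0,1)$.

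The only real obstacle, and it is minor, is the calculus inequality $(1-\delta)\log(1-\delta)+\delta\ge \delta^2/2$ on $[0,1)$. I would verify it by setting $f(\delta)=(1-\delta)\log(1-\delta)+\delta-\delta^2/2$, noting $f(0)=0$, and differentiating twice: $f'(\delta)=-\log(1-\delta)-\delta$ with $f'(0)=0$, and $f''(\delta)=\frac{1}{1-\delta}-1=\frac{\delta}{1-\delta}\ge 0$ on $[0,1)$, so $f'\ge 0$ and hence $f\ge 0$ on $[0,1)$. Combining with the displayed exponential bound gives the stated estimate $\mathbb{P}[X\le(1-\delta)\mu]\le \exp(-\delta^2\mu/2)$, completing the proof.
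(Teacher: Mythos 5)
Your proof is correct. The paper does not prove this lemma itself but cites~\cite[Theorems 4.4 and 4.5]{MU05}, and your argument is exactly the standard Chernoff bound proof those theorems use: bound the moment generating function via $1+x\le e^x$, apply Markov, optimize $\lambda$, and reduce the lower-tail exponent $(1-\delta)\log(1-\delta)+\delta$ to $\delta^2/2$ by a calculus argument. (A tiny point: at $\delta=1$ the event $\{X\le 0\}$ has probability $(1-p)^N$, not zero, but since $(1-p)^N\le e^{-\mu}\le e^{-\mu/2}$ the bound still holds; this does not affect the validity of your proof.)
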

The proof of Lemma~\ref{lem-Chernoff-bound} can be found in \cite[Theorem 4.4 and Theorem 4.5]{MU05}. Beyond the binomial case, we will also need to deal with weighted sum of independent Bernoulli variables in the later proofs. The following lemma gives a concentration bound for such weighted sums. 

\begin{lemma}\label{lem-weighted-Bernoulli-concentration}
    Let $\lambda_1,\cdots,\lambda_k\ge 0$ with $\max_{1\le i\le k}\lambda_i=\lambda^*$, and $p_1,\cdots,p_k\in [0,1]$. Consider the random variable
    \[
    X:=\lambda_1X_1+\cdots+\lambda_kX_k,\quad X_i\sim \mathbf{B}(1,p_i)\text{ are independent}, 1\le i\le k\,.
    \]
    For any $\theta$ such that $0<\theta\le (10\lambda^*)^{-1}$ and $M>0$, it holds that
    \[
    \mathbb{P}\big[|X-\mathbb{E}[X]|\ge M\big]\le 2\exp(\theta^2\lambda^*\mathbb{E}[X]-\theta M)\,.
    \]
\end{lemma}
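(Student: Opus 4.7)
The plan is to apply the Chernoff method, i.e., control the moment generating function (MGF) of $X$ and then apply Markov's inequality to $e^{\pm\theta X}$.

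First I would write, using independence,
\[
\mathbb{E}[e^{\theta X}] \;=\; \prod_{i=1}^{k}\bigl(1 - p_i + p_i e^{\theta\lambda_i}\bigr) \;=\; \prod_{i=1}^{k}\bigl(1 + p_i(e^{\theta\lambda_i}-1)\bigr) \;\leq\; \exp\!\left(\sum_{i=1}^k p_i\bigl(e^{\theta\lambda_i}-1\bigr)\right),
\]
where the last inequality uses $1+x \leq e^x$. Next, since $0 \leq \theta\lambda_i \leq \theta\lambda^* \leq 1/10$ by hypothesis, the elementary inequality $e^y - 1 \leq y + y^2$ valid for $0 \leq y \leq 1/10$ gives $e^{\theta\lambda_i}-1 \leq \theta\lambda_i + \theta^2\lambda_i^2$. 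Plugging in and using $\lambda_i^2 \leq \lambda^*\lambda_i$ yields
\[
\mathbb{E}[e^{\theta X}] \;\leq\; \exp\!\left(\theta\sum_{i=1}^k p_i\lambda_i + \theta^2\lambda^*\sum_{i=1}^k p_i\lambda_i\right) \;=\; \exp\!\bigl(\theta\,\mathbb{E}[X] + \theta^2\lambda^*\,\mathbb{E}[X]\bigr).
\]
By Markov's inequality applied to $e^{\theta X}$, the upper tail obeys
\[
\mathbb{P}\bigl[X \geq \mathbb{E}[X] + M\bigr] \;\leq\; e^{-\theta(\mathbb{E}[X]+M)}\,\mathbb{E}[e^{\theta X}] \;\leq\; \exp\!\bigl(\theta^2\lambda^*\,\mathbb{E}[X] - \theta M\bigr).
\]

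For the lower tail I would repeat the argument with $-\theta$ in place of $\theta$, using instead the inequality $e^{-y}-1 \leq -y + y^2$ valid on $0 \leq y \leq 1/10$ to obtain $\mathbb{E}[e^{-\theta X}] \leq \exp(-\theta\,\mathbb{E}[X] + \theta^2\lambda^*\,\mathbb{E}[X])$, and then Markov on $e^{-\theta X}$ yields $\mathbb{P}[X \leq \mathbb{E}[X]-M] \leq \exp(\theta^2\lambda^*\,\mathbb{E}[X] - \theta M)$. A union bound over the two tails gives the claimed factor of $2$.

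This is essentially a routine Chernoff-style calculation and there is no real obstacle; the only subtlety is making sure the elementary bounds $e^{\pm y}-1 \leq \pm y + y^2$ are applied in the correct range, which is precisely why the hypothesis $\theta \leq 1/(10\lambda^*)$ is imposed (so that every $\pm\theta\lambda_i$ lies in $[-1/10, 1/10]$).
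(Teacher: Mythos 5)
Your argument is correct and coincides with the paper's proof: both bound the MGF via $1+x\le e^x$ together with the elementary estimate $e^{\pm y}-1\le \pm y+y^2$ on $[0,1/10]$, and then apply the Chernoff/Markov method to each tail separately. The only cosmetic difference is that the paper refers to the final step as Chebyshev's inequality, while you more precisely call it Markov's inequality applied to $e^{\pm\theta X}$.
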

\begin{proof}
    Note that
    \[
    \mathbb{E}[e^{\theta X}]=\prod_{i=1}^k(p_ie^{\theta\lambda_i}+1-p_i)=\prod_{i=1}^k\big(1+(e^{\theta\lambda_i}-1)p_i\big)\le \exp\left(\sum_{i=1}^k(e^{\theta\lambda_i}-1)p_i\right)\,.
    \]
    Using the fact that $e^x-1\le x+x^2$ for any $0\le x\le 1/10$ and our assumption on $\theta$, we see $\mathbb{E}[e^{\theta X}]$ is further upper-bounded by 
    \[
\exp\left(\theta\sum_{i=1}^k\lambda_ip_i+\theta^2\sum_{i=1}^k\lambda_i^2p_i\right)\le \exp\left(\theta\mathbb{E}[X]+\theta^2\lambda^*\mathbb{E}[X]\right)\,.
    \]
    Therefore, by Chebyshev's inequality, we have
    \[
    \mathbb{P}[X\ge \mathbb{E}[X]+M]\le \exp(-\theta\mathbb{E}[X]-\theta M)\cdot \mathbb{E}[e^{\theta X}]\le \exp(\theta^2\lambda^*\mathbb{E}[X]-\theta M)\,.
    \]
    Similarly, using the fact that $e^{-x}-1\le -x+x^2$ for any $0\le x\le 1/10$ and another Chebyshev's inequality, we have the same estimation for the lower tail, and thus the result follows.
\end{proof}

\addtocontents{toc}{\protect\setcounter{tocdepth}{1}} 
\subsection{Proof of key lemmas}\label{appendix-proof-of-key-lemmas}

This subsection provides the proof for key Lemmas~\ref{lem-MI}, \ref{lem-FGTL}, and \ref{lem-RW}, which address important properties related to the evolution of voter dynamics.

\subsubsection{Proof of Lemma~\ref{lem-MI}}\label{appendix-lem-MI}
\begin{proof}
Since $S_m^{(0)}\sim 2\mathbf{B}(n,\frac{1}{2})-n$ for each $1\le m\le M$, the first condition $|S_m^{(0)}|\le n/\log n,\ \forall\ 1\le m\le M$ holds w.o.p.. In what follows, we focus on \eqref{eq-moderate-increment}.

First, we show that w.o.p. $T_m\le n(\log n)^2$ for all $1\le m\le M$. Recall that $T_m\le T_{m,\operatorname{cons}}\le T_{m,\operatorname{coal}}$ by duality to coalescing random walks (see \eqref{eq-T-coal}). An application of the union bound yields that for each $1\le m\le M$,
$\mathbb{P}[T_{m,\operatorname{cons}}\ge n(\log n)^2]$ is upper-bounded by 
\begin{equation*}
\sum_{i\neq j}\mathbb{P}[\text{two independent random walks on $G^*$ starting at }i,j\text{ do not meet in }n(\log n)^2\text{ steps}]\,.
\end{equation*}
However, as argued in Section~\ref{appendix-admissible}, we have that each probability term in the above sum is upper-bounded by $(1-{(2n)}^{-1})^{\Theta(n(\log n)^2)}$, which is super-polynomially small. This shows that $T_m\le n(\log n)^2$ happens w.o.p. for each $m$, thus verifying the claim. 

In light of this bound on $T_m$, it suffices to show that \eqref{eq-moderate-increment} happens w.o.p. for any $1\le m\le M$ and $0\le t\le n(\log n)^2$. To see this, we fix $m$ and $t$, omitting $m$ from the subscript for simplicity. We condition on any realization of $x_{i}^{(t)},i\in [n]$, and write $\Delta=n-|S^{(t)}|$. We first assume $\Delta\ge (\log n)^2$, and prove that with overwhelming conditional probability, we have $|S^{(t+1)}-S^{(t)}|\le 2(\log n)^2\Delta^{1/2}$. Let $\operatorname{-1}^{(t)}$ denote the set of vertices with labels $-1$ at time $t$. Without loss of generality, we assume that $S^{(t)}=n-\Delta$, so that $|\operatorname{-1}^{(t)}|=\Delta/2$. Let
\[
p_i^{(t+1)}:=\frac{|N_i\cap \operatorname{-1}^{(t)}|}{d_i}\,,\ \forall i\in [n]\,.
\]
Then, conditioned on $x_i^{(t)},i\in [n]$, the size of $\operatorname{-1}^{(t+1)}$ is distributed as the sum of $n$ independent Bernoulli variables with parameters $p_1^{(t+1)},\cdots,p_n^{(t+1)}$, respectively. We observe that
\[
\sum_{i=1}^{n}p_i^{(t+1)}=\frac{1+o(1)}{np}\cdot \sum_{i=1}^n |N_i\cap \operatorname{-1}^{(t-1)}|=\frac{1+o(1)}{np}\cdot \sum_{i\in \operatorname{-1}^{(t)}}d_i^{\operatorname{in}}=(1+o(1))|\operatorname{-1}^{(t-1)}|\,,
\]
where we have used the degree concentration in property-(i) of admissibility.
Hence, the expected value of $|\operatorname{-1}^{(t)}|$ is $(1/2+o(1))\Delta$. Applying Lemma~\ref{lem-weighted-Bernoulli-concentration} with $\lambda^*=1,\theta=\Delta^{-1/2}$, and $M=(\log n)^2\Delta^{1/2}$, we get that the probability that this sum deviates from its expectation by more than $(\log n)^2\Delta^{1/2}$ is at most $2\exp(-\Omega((\log n)^{2}))$. This deviation probability is super-polynomially small. This shows that w.o.p. $|S^{(t+1)}-S^{(t)}|=o(\Delta)$ provided that $\Delta\ge (\log n)^2$. 

We are left with the case where $\Delta\le (\log n)^2$. We still have that $|\operatorname{-1}^{(t+1)}|$ is a sum of Bernoulli variables with mean no more than $(1+o(1))(\log n)^2$. Again, the Chernoff bound tells us that the probability that this exceeds $2(\log n)^2$ is super-polynomially small. The proof is now completed.
\end{proof}

\subsubsection{Proof of Lemma~\ref{lem-FGTL}}\label{appendix-lem-FGTL}

\begin{proof}
The proof proceeds in the same spirit as that of Lemma~\ref{lem-MI}, but requires more nuanced arguments.
We show that for each fixed $m,t$, \eqref{eq-from-local-to-global-event} holds w.o.p.. By the same reasoning as before, this implies the desired result. In what follows, we fix $m$ and $t$, and omit the subscript of $m$.

Recall Property-(iii) of admissibility. We pick $L=c(\delta,2)$ as in Theorem~\ref{thm:mixing} (i.e., the random walk on $G^*$ achieves $n^{-2}$-mixing after $L$ steps). We first consider the case $t>L$. We condition on the realizations of $x_{i}^{(t-L)}$ for $i\in [n]$ and let $\Delta:=n-|S^{(t-L)}|$. We begin by further assuming that $\Delta\ge \frac{1}{2}p^{-1}(\log n)^2$. We will prove that with overwhelming conditional probability, 
\begin{equation}\label{eq-di-Sit}
\frac{p\Delta}{2}\le d_i-|S_i^{(t)}|\le \frac{3p\Delta}{2}\,,\quad\forall i\in [n]\,.
\end{equation}

Without loss of generality, we assume that $\Delta=n-S^{(t-L)}$ so that $|\operatorname{-1}^{(t-L)}|=\frac12 \Delta$. As before, denote by $\pi_{i,l},1\le l\le L$ the distribution of the location after $l$ steps of a random walk on $G^*$ starting at $i$. For any fixed $i\in [n]$, we claim that for each $1\le l\le L+1$, it holds with overwhelming conditional probability that
\begin{equation}\label{eq-L-step}
\left(\frac12-\frac{L+2-l}{4(L+2)}\right)\frac{\Delta}{n}\le\mathbb{E}_{j\sim \pi_{i,l}}[\mathbf{1}\{x_j^{(t+1-l)}=-1\}]\le \left(\frac{1}{2}+\frac{L+2-l}{4(L+2)}\right)\frac{\Delta}{n}\,.
\end{equation}

We prove this claim inductively, starting from $l=L+1$ and going down to $l=1$. For the case $l=L+1$, since $\operatorname{TV}(\pi_{i,L},\pi)\le n^{-2}$ and $\pi(i)=1/n+o(1/n)$, the result follows from our assumption that $|\operatorname{-1}^{(t-L)}|=\frac12 \Delta$. Now, for any $1\le l\le L$, suppose that \eqref{eq-L-step} occurs w.o.p. for $l+1$. We condition on a realization of all $x_{i}^{(t-l)},i\in [n]$ and assume that \eqref{eq-L-step} holds for $l+1$. Then the conditional distribution of $\mathbb{E}_{j\sim \pi_{i,l}}[\mathbf{1}\{x_j^{(t-l+1)}=-1\}]$ is a weighted sum of independent Bernoulli variables, and the mean of this sum equals $\mathbb{E}_{j\sim \pi_{i,l+1}}[\mathbf{1}\{x_j^{(t-l)}=-1\}]$, which lies in the interval
\[
\left[\left(\frac12-\frac{L+1-l}{4(L+2)}\right)\frac{\Delta}{n},\left(\frac12+\frac{L+1-l}{4(L+2)}\right)\frac{\Delta}{n}\right]
\]
by our assumption. It is easy to see that the maximal weight in this sum, $\max_{j\in [n]}\pi_{i,l}[j]$, is at most $O((np)^{-1})$. Therefore, using Lemma~\ref{lem-weighted-Bernoulli-concentration} with parameters $\lambda^*=\Theta((np)^{-1})$, $\theta=\Theta(np)$, and $M=\frac{\Delta}{4(L+2)n}$, we conclude that the probability that $\mathbb{E}_{j\sim \pi_{i,l}}[\mathbf{1}\{x_j^{(t-l+1)}=-1\}]$ deviates from its expectation by at least $\frac{\Delta}{4(L+2)n}$ is upper-bounded by
$\exp(-\Theta(p\Delta))$. This probability is super-polynomially small by our assumption on the size of $\Delta$. Therefore, we conclude that \eqref{eq-L-step} holds w.o.p. for $l$. This completes the induction and verifies the claim.

Taking $l=1$ and noting that $\pi_{i,1}$ is the uniform distribution on $\operatorname{N}_i$, we get that for each $i\in [n]$, w.o.p.
\[
\left(\frac14+\frac{1}{4(L+1)}\right)\frac{\Delta}{n}\le \frac{|\operatorname{N}_i\cap \operatorname{-1}^{(t)}|}{d_i}\le \left(\frac34-\frac{1}{4(L+2)}\right)\frac{\Delta}{n}\,.
\]
Using the union bound, we see that w.o.p. the above holds for all $i\in [n]$.
Since $d_i=(1+o(1))np$ for all $i\in [n]$, we conclude that \eqref{eq-di-Sit} holds w.o.p.. Furthermore, conditioned on the event that $\mathcal G_{\operatorname{MI}}$ occurs, this implies that $\frac{\Delta}{2}\le n-|S^{(t)}|\le 2\Delta$. Thus, under \eqref{eq-di-Sit} and $\mathcal G_{\operatorname{MI}}$, \eqref{eq-from-local-to-global-event} holds. Hence, \eqref{eq-from-local-to-global-event} holds w.o.p..

We now assume that $\Delta\le \frac12 p^{-1}(\log n)^2$. Then, under $\mathcal G_{\operatorname{MI}}$, we have $n-|S^{(t)}|<p^{-1}(\log n)^2$, and thus the first two inequalities in \eqref{eq-from-local-to-global-event} are trivial. For the last inequality, we can once again show inductively that for each $1\le l\le L+1$, w.o.p. $\mathbb{E}_{j\sim \pi_{i,l}}[\mathbf{1}\{x_j^{(t-l+1)}=-1\}]\le \left(\frac{1}{2}+\frac{L+1-l}{2L}\right)\frac{(\log n)^2}{n}$. Taking $l=1$, this reduces to the last inequality in \eqref{eq-from-local-to-global-event}, thus completing the analysis of the case where $t>L$.

Finally, for the case $t\le L$, by assuming $\mathcal G_{\operatorname{MI}}$ holds, we get that w.o.p. $n-|S^{(t)}|=(1-o(1))n$. Moreover, using ideas similar to the ones above, it is readily seen that w.o.p. $d_i-|S_i^{(t)}|=(1-o(1))np$. Thus, \eqref{eq-from-local-to-global-event} holds w.o.p. too. This concludes the proof.
\end{proof}

\subsubsection{Proof of Lemma~\ref{lem-RW}}\label{appendix-lem-RW}

\begin{proof}
We fix $1\le m\le M$ and prove that all the properties happen w.o.p. for $m$. In the following, we omit the subscript $m$.
Recall that $\pi$ denotes the stationary measure of $G^*$. We consider the stochastic process $\{W^{(t)}\}_{t\ge 0}$ that closely follows $\{S^{(t)}\}_{t\ge 0}$ given by \[W^{(t)}=\mathbb E_{\pi}\big[x_{i}^{(t)}\big]=\sum_{i\in[n]}\pi(i)x_{i}^{(t)}.\] 
As argued in Section~\ref{sec-positive}, $W^{(t)}$ is a martingale with respect to the filtration \[\widetilde{\mathcal F}_t=\sigma\big(x_{i}^{(t')}:i\in [n],t'\leq t\big)\,.\] 

Next, we define 
\begin{align*}
    \operatorname{1}^{(t)}:=\{i\in[n]\ \mid\ x_{i}^{(t)}=1\}\,,\quad
    \operatorname{-1}^{(t)}:=\{i\in[n]\ \mid\ x_{i}^{(t)}=-1\}\,.
\end{align*} 
Since $G^*$ is admissible, we have that \[W^{(t)}=\pi(\operatorname{1}^{(t)})-\pi(\operatorname{-1}^{(t)})=1-2\pi(\operatorname{-1}^{(t)})=1-(2+o(1))\frac{|\operatorname{-1}^{(t)}|}{n}\ .\]  Fix $\Delta\in[0,n]$ and set $\Delta'=\Delta/n$. If $n-\Delta\leq S^{(t)}$, then $|\operatorname{-1}^{(t)}|\leq \Delta/2$ and $W^{(t)}\geq1-(1-o(1))\Delta'$. Analogously, if $n-\Delta\leq -S^{(t)}$, then $W^{(t)}\leq -1+(1-o(1))\Delta'$. Hence, for part (i), it suffices to show that there are no pairs $(m,t)$ with $t\leq \min\{T,n/(\log n)^{10}\}$ and $|W^{(t)}|\leq 1/3$, while for part (ii) we shall upper-bound the number of $t\le T_{\operatorname{cons}}$ with $1-|W^{(t)}|\leq 3\Delta'/2$ (for $\Delta'\ge 1/(\log n)^{10}$).

Let us first deal with part (i). Let $B$ denote the event that there exists $t$ such that $|S^{(t)}|\leq n/2$ and the inequality $|S^{(t+1)}-S^{(t)}|\leq 2(\log n)^2\sqrt{n}$ does not hold. From Lemma~\ref{lem-MI}, we see that $\Pb [B]$ is super-polynomially small. By the arguments above, if $B$ does not occur, then $|W^{(t+1)}-W^{(t)}|\leq O((\log n)^2n^{-1/2})$ whenever $|W^{(t)}|\leq 1/3$. We now introduce an auxiliary martingale $\{Z^{(t)}\}_{t\ge 0}$ defined by setting $Z^{(0)}=W^{(0)}$, and for all $t\ge 0$,
$$
Z^{(t+1)}=\begin{cases}
  W^{(t+1)}, & \mbox{if } |Z^{(t)}|< 1/3 \\
  Z^{(t)}, & \mbox{if }  |Z^{(t)}|\geq 1/3.
\end{cases}
$$
 In other words, the new martingale $Z$ is a copy of $W$ up to the point where $|Z^{(t)}|$ exceeds $1/3$; afterwards, it becomes constant. 

A standard martingale concentration inequality (see Theorem 8.3 in~\cite{chung2006concentration}) yields that for $ t=\lfloor n/(\log n)^{10}\rfloor$, 
\begin{align*}
\Pb[|Z^{(t)}|\geq1/3]\leq&\ 2\exp\left(-\frac{1/9}{2\sum_{i=1}^{t}{O}((\log n)^4/n)}\right)+\Pb[B]\\
\leq&\ 2\exp(-\Omega((\log n)^6))+\Pb[B]\,,
\end{align*}
which is super-polynomially small. Moreover, $|Z^{(t)}|<1/3$ implies that $|W^{(t')}|\le 1/3$ for all $t'\leq t$. Thus, the result follows by applying the union bound over all $m$ with $1\leq m\leq M$.

Now we move on to the proof of (ii). Fix $\Delta\leq n/(\log n)^{10}$. We will bound the number of times $t$ for which $W^{(t)}\geq 1-3\Delta'/2$. The other case $W^{(t)}\le -1+3\Delta'/2$ can be handled symmetrically.
As hinted earlier, our strategy will consist of exploiting the fact that once $W^{(t)}$ is relatively close to $1$, there is a good chance that the process $W^{(t+1)}, W^{(t+2)},\cdots$ will reach $1$ without ever going too far below $|W^{(t)}|$. To make this precise, we first need a lower bound on the expected squared change of the martingale at step $t+1$. More precisely, we claim that if we denote $1-|W^{(t)}|$ by $q$, then 
\begin{equation}\label{eq-change-lower-bound}
    \mathbb E\left[(W^{(t+1)}-W^{(t)})^2\mid \widetilde{\mathcal F}_t\right]\geq\Omega({q/n}) \, . 
\end{equation}

As we observed in the proof of Lemma~\ref{lem-effective-observations-lower-bound}, \[\mathbb{E}\big[(W^{(t+1)}-W^{(t)})^2\mid\widetilde{\mathcal F}_{t}\big]=\sum_{i\in [n]}\pi(i)^2\operatorname{Var}[x_{i}^{(t+1)}\mid \widetilde{\mathcal F}_{t}]\,.\] We now carefully analyze the terms $\operatorname{Var}[x_{i}^{(t+1)}\mid \widetilde{\mathcal F}_{t}]$. Recall that $S_{i}^{(t)}=\sum_{j\in\operatorname{N_i}}x_{j}^{(t)}$. A straightforward computation yields \[\operatorname{Var}[x_{i}^{(t)}\mid \widetilde{\mathcal F}_{t}]= \frac{(d_i-S_{i}^{(t)})(d_i+S_{i}^{(t)})}{4d_i^2}=\frac{|P_i^{(t)}|}{d_i^2}\, ,\] where $P_i^{(t)}$ is the set of pairs of indices $(j,j')$ with $j,j'\in [n]$ such that $j,j'\in \operatorname{N}_i$, $x_{j}^{(t)}=-1$, and $x_{j'}^{(t)}=1$. Since $G^*$ is admissible, we have $d_i\leq 2np$. We aim to lower-bound the quantity $(2np)^{-2}\sum_{i\in[n]}|P_i^{(t)}|$. Note that $\sum_{i\in[n]}|P_i^{(t)}|$ counts the number of triples $(i,j,j')\in[n]^3$ with $j,j'\in \operatorname{N}_i$, $x_{j}^{(t)}=-1$, and $x_{j'}^{(t)}=1$. By Property-(iv) of admissibility, this quantity is at least $p^2n\cdot |\operatorname{1}^{(t)}|\cdot|\operatorname{-1}^{(t)}|/10^{12}$. Hence, we can assume that $\sum_{i\in[n]}|P_i^{(t)}|(2np)^{-2}\geq \min\{|\operatorname{1}^{(t)}|,|\operatorname{-1}^{(t)}\}|/(4\cdot 10^{12}).$ The claim follows from a simple calculation, using again the fact that $\pi(i)=1/n+o(1/n)$ for all $i\in[n]$.

We now set the ground for a multi-scale analysis of the evolution of the martingale $\{W^{(t)}\}$. Consider the real numbers $k_s=1-2^s(\log n)^5/n$ for $0\leq s\leq \log_2\lfloor(n/(\log n)^5)\rfloor=s_*$. For convenience, write $k_{-1}=1$ and $k_{s^*+1}=-1$. Partition $[-1,1]$ into subintervals $I_0=[k_0,1]$ and $I_s=[k_{s},k_{s-1})$ for $s=1,\cdots, s_*+1$. 

We say that $t$ is a \textit{crossing time} if $W^{(t-1)}$ and $W^{(t)}$ lie in different subintervals of the partition defined above. Denote by $\eta_1<\eta_2<\cdots<\eta_r$ all the crossing times (note that there are finitely many almost surely). For each crossing time $\eta_i$, define $s(\eta_i)$ as follows: If $W^{(\eta_i-1)}<W^{(\eta_i)}$, then $s(\eta_i)$ is the index of the subinterval containing $W^{(\eta_i)}$; otherwise, let $s(\eta_i)$ be the index of the subinterval containing $W^{(\eta_i-1)}$. The advantage of defining $s(\eta_i)$ in this way is that, as we will see later, the $\mathcal G_{\operatorname{MI}}$ property implies that $W^{(\eta_i)}$ must be very close to $k_{s(\eta_i)}$. 

Note that the martingale $W_{t}$ could spend several steps fluctuating around some neighborhood of $k_{s}$, thus producing multiple contiguous crossing times where the martingale alternates between between the intervals $I_{s-1}$ and $I_s$. This motivates the following definition. A crossing time $\eta_i$ is called \textit{novel} if $s(\eta_i)\neq s(\eta_{i-1})$. Let $\tau_1<\tau_2<\cdots<\tau_{r'}$ denote the novel crossing times.  Finally, let $\tau_{r'+1}$ denote the first time the martingale hits either $1$ or $-1$ and set $s(\tau_{r'+1})=-1$ or $s(\tau_{r'+1})=s^*+1$, accordingly. See Figure~\ref{fig:random_Walk} for an illustration.

\begin{figure}
    \centering
    \includegraphics[width=0.9\linewidth]{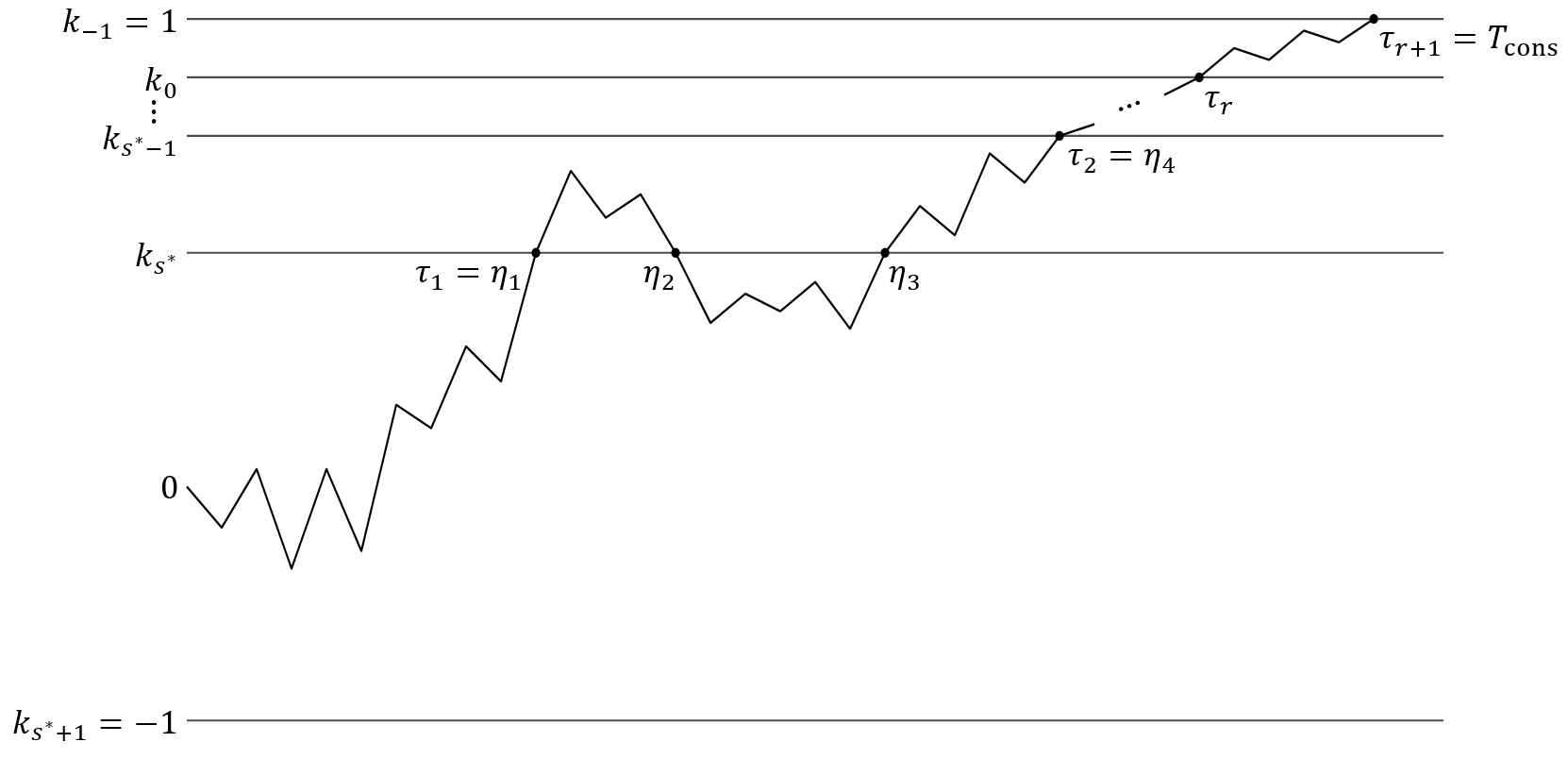}
    \caption{$W^{(t)}$ viewed as a random walk}
    \label{fig:random_Walk}
\end{figure}

By Lemma~\ref{lem-MI} and the fact that $|W^{(t)}|$ is uniformly bounded by $1$, we can condition on the event that $\mathcal G_{\operatorname{MI}}$ holds throughout the rest of the proof (strictly speaking, this might add some ``noise" that causes the martingale property to break; however, the effect is negligible and can be ignored for the remainder of our argument). Given that $\mathcal G_{\operatorname{MI}}$ holds, we have that $s(\tau_i)$ and $s(\tau_{i+1})$ differ by at most one for every $1\leq i\leq r'$. Furthermore, the fact that $\pi(i)=1/n+o(1/n)$, combined with property $\mathcal G_{\operatorname{MI}}$, implies that \[\left|W^{(\tau_i)}-k_{s(\tau_i)}\right|\leq 2(\log n)^2(n-|S^{(\tau_i-1)}|)^{1/2}\] holds for all $1\leq i\leq r'+1$. In particular, this means that $W^{\tau_i}=k_{s(\tau_i)}+o(1-k_{s(\tau_i)})$. This allows us to think of the sequence $\{W^{(\tau_i)}\}_{i=1}^{r'+1}$ as a random walk which (approximately) takes values in the set $\{k_{-1},k_0,\cdots,k_{s^*+1}\}$. Our next goal is to better understand the transition probabilities of this random walk.

For $i\leq r'$, a direct application of the Martingale Stopping Theorem yields \[\mathbb E\left[W^{(\tau_{i+1})}\mid \widetilde{\mathcal F}_{\tau_i}\right]=W^{(\tau_i)}\, .\] Combining this with the above observation, we arrive at
\begin{align}\label{eq-MI-implication}
    \mathbb E\left[k_{s(\tau_{i+1})}\mid \widetilde{\mathcal F}_{t}\right]=k_{s({\tau_i})}+o(1-k_{s({\tau_i})})\, .
\end{align}  
It is convenient to rewrite this as 
\begin{align}
    \mathbb E\left[1-k_{s(\tau_{i+1})}\mid \widetilde{\mathcal F}_{t}\right]=(1+o(1))(1-k_{s({\tau_i})})\, .
\end{align}
Observe now that, so long as $1\leq s(\tau_i)\leq s^*-1$, we have $1-k_s=2(1-k_{s-1})=(1-k_{s+1})/2$. Thus, after conditioning on the event that $1\leq s(\tau_i)\leq s^*-1$, a straightforward computation yields that
\begin{align*}
    \Pb\left[s(\tau_{i+1})=s(\tau_{i})+1\right]={2}/{3}+o(1)\,,\quad
    \Pb\left[s(\tau_{i+1})=s(\tau_{i})-1\right]={1}/{3}+o(1)\,.
\end{align*}
If $s(\tau_i)=0,$ then we have that $k_0=(1-k_{1})/2$ and $k_{-1}=0$, so the same argument yields
\begin{align*}
\Pb\left[s(\tau_{i+1})=-1\right]=1/2+o(1)\,,\quad
\Pb\left[s(\tau_{i+1})=1\right]=1/2+o(1)\,.
\end{align*}
Also note that in the latter case, if $s(\tau_{i+1})=-1$, then $r'=i$ and the process ends.

For $0\leq s\leq s^*-1$, let $\vartheta (s)$ denote the number of indices $i$ such that $s(\tau_i)=s$. By a standard argument using the Green function of a biased random walk on $\mathbb Z$, each $\vartheta (s)$ is stochastically dominated by a geometric random variable with mean $5$. In particular, this implies that $\Pb\left[\vartheta(s)\ge (\log n)^2\right]$ is super-polynomially small. Let $s(\Delta'):=\lceil\log_2((3/2)\Delta'\cdot n/(\log n)^2)\rceil$. In this way, $k_{s(\Delta')}$ is the largest $k_s$ which is at most $1-3\Delta'/2$ (since $\Delta\leq n/(\log n)^{10}$, we may assume that $s(\Delta')< s^*$), and a direct application of the union bound yields that
\begin{equation}\label{eq-transient}
    \Pb\left[\sum_{s=0}^{s(\Delta')}\vartheta(s)\geq \left(s(\Delta')+1\right)(\log n)^2\right]\,
\end{equation} 
is again super-polynomially small.

Observe now that for any $t$ with $W^{(t)}\geq 1-3\Delta'/2$, it must be the case that $\tau_i\leq t\leq \tau_{i+1}$ for some $i$ such that $\tau_{i}$ satisfies $0\leq s(\tau_i)\leq s(\Delta')$. By~\eqref{eq-transient}, w.o.p. the total number of novel crossing times $\tau_i$  satisfying $0\leq s(\tau_i)\leq s(\Delta')$ is less than $\left(s(\Delta')+1\right)(\log n)^2$. Thus, all that remains is to control the distances $\tau_{i+1}-\tau_i$ between contiguous novel crossing times satisfying  $0\leq s(\tau_i)\leq s(\Delta')$. 

We claim that
\begin{align}\label{eq-probability-large-gap}
    \Pb\left[\tau_{i+1}-\tau_i\geq \max\{\Delta(\log n)^2,(\log n)^7\}\mid \widetilde{\mathcal F}_{\tau_i},0\leq s(\tau_i)\leq s(\Delta')\right]\, 
\end{align}
is super-polynomially small. Towards the goal of proving this statement, we will first show that
\begin{equation}\label{eq-expected-time}
    \mathbb E\left[\tau_{i+1}-t\mid \widetilde{\mathcal F}_{t},\tau_{i}\leq t<\tau_{i+1},0\leq s(\tau_i)\leq s
    (\Delta')\right]\leq O\left(\max\{\Delta,(\log n)^5\}\right)\, .
\end{equation}
Whenever $\tau_{i}\leq t'<\tau_{i+1}$, we have that $k_{s(\tau_i)-1}\leq W^{(t')}\leq k_{s(\tau_{i})+1}$, and thus $1-W^{(t')}=\Theta(1-k_{s(\tau_i)})$. Hence, after conditioning on $\widetilde{\mathcal F}_{t}$ for $\tau_{i}\leq t<\tau_{i+1}$ and $0\leq s(\tau_i)\leq s(\Delta')$, repeated applications of~\eqref{eq-change-lower-bound} yield
\begin{align*}
\mathbb E\left[(W^{(\tau_{i+1})})^2\right]= & \ \mathbb E[(W^{(t)})^2]+\sum_{t'=\tau_i}^{\tau_{i+1}-1}\mathbb E[(W^{(t'+1)}-W^{(t')})^2]\\
\geq & \ \mathbb E[(W^{(t)})^2]+\mathbb E[t-\tau_i]\Omega(1-k_{s(\tau_i)})/n,
\end{align*}
where the martingale property is used in the first line. On the other hand, a standard computation using the Martingale Stopping Theorem and~\eqref{eq-MI-implication} gives \[\mathbb E[(W^{(\tau_{i+1})})^2]-\mathbb E[(W^{(t)})^2]=\Theta\left((1-k_{s(\tau_i)})^2\right).\] Putting both inequalities together, we arrive at \[ \mathbb E\left[\tau_{i+1}-t\mid \widetilde{\mathcal F}_{t},\tau_{i}\leq t<\tau_{i+1},0\leq s(\tau_i)\leq s(\Delta')\right]\cdot\Omega(1-k_{s(\tau_i)})/n\leq\Theta\left((1-k_{s(\tau_i)})^2\right)\, ,\] which implies~\eqref{eq-expected-time} after noting that $(1-k_{s(\tau_i)})n=O(\max\{\Delta,(\log n)^5\})$ whenever $0\leq s(\tau_i)\leq s(\Delta')$ holds.

Using Markov's inequality, we deduce from~\eqref{eq-expected-time} that there exists a constant $C'$ with \[\Pb\left[\tau_{i+1}-t\geq C'\max\{\Delta,(\log n)^5\}\mid \widetilde{\mathcal F}_{t},\tau_{i}\leq t<\tau_{i+1},0\leq s(\tau_i)\leq s^*-1\right]\leq 1/2\,.\] 
Successively applying this bound to $t=\tau_{i}+\ell C'\max\{\Delta,(\log n)^5\},\ell=0,1,2,\cdots$ for as long as $t<\tau_{i+1},$ we get 
\begin{align*}
    \Pb\left[\tau_{i+1}-\tau_i\geq K\cdot C'\max\{\Delta,(\log n)^5\}\mid \widetilde{\mathcal F}_{\tau_i},0\leq s(\tau_i)\leq s(\Delta')\right]\leq 2^{-K}\, ,
\end{align*}
and~\eqref{eq-probability-large-gap} follows.

Via the union bound,~\eqref{eq-transient} and~\eqref{eq-probability-large-gap} imply that
\[\Pb\left[\#\{0\le t\le T_{\operatorname{cons}}:W^{(t)}\geq1-3\Delta'/2\}\geq(s(\Delta')+1)(\log n)^2\cdot \max\{\Delta(\log n)^2,(\log n)^7\}\right]\,\] is super-polynomially small. Since $s(\Delta')+1\leq \log n$, using the union bound again, along with the fact that w.o.p. $M\leq \widetilde O(n^2)$, we conclude that $\#\{t\le T_{\operatorname{cons}}:W^{(t)}\geq1-3\Delta'/2\}$ is upper-bounded by $\max\{\Delta(\log n)^6,(\log n)^{10}\}$ w.o.p.. Similarly, we can upper-bound w.o.p. the number of times $t$ for which $W^{(t)}\leq -1+3\Delta'/2$ in exact same way. This concludes the proof.
\end{proof}

\subsection{Proof of technical lemmas}\label{appendix-technical-lemmas}

This subsection provides the proof of technical lemmas~\ref{lem-stochastic-domination}, \ref{lem-Sgood-Sbad}, and \ref{lem-quantitative-estimate}.

\subsubsection{Proof of Lemma~\ref{lem-stochastic-domination}}\label{appendix-lem-stochastic-domination}

\begin{proof}
As we hinted at earlier, it suffices to show that for any $1 \le m \le M$ and $0 \le t \le T - 1$, conditioned on any realization of $\{x_{m,i}^{(t')}: i \in [n],\, t' < t\} \cup \{x_{m,a}^{(t+1)}\}$ that does not violate $\mathcal{G}_{\operatorname{MI}} \cap \mathcal{G}_{\operatorname{FGTL}}$, both $\mathcal G_{\operatorname{MI}}\cap \mathcal G_{\operatorname{FGTL}}$ and the second case in \eqref{eq-3-cases} happen simultaneously with conditional probability at most $p_{m,t}$. The details can be found in Appendix~\ref{appendix-lem-stochastic-domination}. 

The case $t = 0$ is straightforward, as $p_{m,0} = 1$ under $\mathcal{G}_{\operatorname{MI}}$. To prove the general case $t > 0$, we note that, given $x_{m,i}^{(t-1)}$ for $i \in [n]$, it holds that  
\[
\mathbb{P}[x_{i,b}^{(t)} = \pm 1] = \frac{d_b \pm S_{m,b}^{(t-1)}}{2d_b}\,, \quad \mathbb{P}[x_{i,c}^{(t)} = \pm 1] = \frac{d_c \pm S_{m,c}^{(t-1)}}{2d_c}\,,
\]
and that $x_{m,b}^{(t)}$ and $x_{m,c}^{(t)}$ are conditionally independent. Therefore, under this conditioning, we obtain  
\[
\mathbb{P}[x_{m,b}^{(t)} \neq x_{m,c}^{(t)}] \le \frac{d_b - |S_{m,b}^{(t-1)}|}{2d_b} + \frac{d_c - |S_{m,c}^{(t-1)}|}{2d_c}\,.
\]
Since $\{x_{m,i}^{(t-1)} : i \in [n]\}$ does not violate $\mathcal{G}_{\operatorname{FGTL}}$, we get that  
\[
\max\{d_b - |S_{m,b}^{(t-1)}|,\, d_c - |S_{m,c}^{(t-1)}|\} \le 10\max\{(\log n)^2,\, p(n - |S_m^{(t-1)}|)\}\,,
\]
and thus the above probability is upper-bounded by (since $d_b, d_c \ge \frac{1}{2}np$)  
\[
\frac{40\max\{p(n - |S_m^{(t-1)}|),\, (\log n)^2\}}{np}\,.
\]

Now, we further reveal all the random variables $\{x_{m,i}^{(t)} : i \in [n]\}$. If $\mathcal{G}_{\operatorname{MI}} \cap \mathcal{G}_{\operatorname{FGTL}}$ is not violated, we have  
\[
\max\{p(n - |S_n^{(t-1)}|),\, (\log n)^2\} = (1 + o(1)) \max\{p(n - |S_n^{(t)}|),\, (\log n)^2\}
\]
and  
\[
d_a - |S_{m,a}^{(t)}| \le 10 \max\{p(n - |S_m^{(t)}|),\, (\log n)^2\} \le 11 \max\{p(n - |S_m^{(t-1)}|),\, (\log n)^2\}\,.
\]
In this case, the conditional probability of the event $\{x_{m,a}^{(t+1)} S_{m,a}^{(t)} < 0\}$ equals  
\[
\frac{d_a - |S_{m,a}^{(t)}|}{2d_a} \le \frac{11 \max\{p(n - |S_m^{(t-1)}|),\, (\log n)^2\}}{np}\,.
\]
Putting things together, we see that conditioned on $\{x_{m,i}^{(t')} : i \in [n],\, t' < t\} \cup \{x_{m,a}^{(t)}\}$, the event $\mathcal{G}_{\operatorname{MI}} \cap \mathcal{G}_{\operatorname{FGTL}}$ holds while the second case in~\eqref{eq-3-cases} happens with conditional probability at most  
\[
\max\left\{1,\ \frac{40 \max\{p(n - |S_m^{(t-1)}|),\, (\log n)^2\}}{np} \times \frac{11 \max\{p(n - |S_m^{(t-1)}|),\, (\log n)^2\}}{np} \right\} \le p_{m,t}\,.
\]
This concludes the proof.
\end{proof}

\subsubsection{Proof of Lemma~\ref{lem-Sgood-Sbad}}\label{appendix-lem-Sgood-Sbad}

\begin{proof}
For the first probability, recall that we have already shown that $\mathbb{E}[\mathcal{S}_{\operatorname{good}}] \le 10^5 c \log n$. Additionally, $\mathcal{S}_{\operatorname{good}}$ is a weighted sum of independent Bernoulli variables with maximum weight no more than $(\log n)^{-4}$ (by the definition of good pairs). Therefore, using Lemma~\ref{lem-weighted-Bernoulli-concentration} with $\lambda^* = (\log n)^{-4}$, $\theta = 1/(10\lambda^*)$, and $M = 10^5 c \log n$, we obtain that the event $\mathcal{S}_{\operatorname{good}} - \mathbb{E}[\mathcal{S}_{\operatorname{good}}] \ge 10^5 c \log n$ happens with probability at most  
\[
\exp\left(10^{-2} (\log n)^4 \mathbb{E}[\mathcal{S}_{\operatorname{good}}] - 10^4 c (\log n)^5\right) \le \exp\left(10^3 c (\log n)^5 - 10^4 c (\log n)^5\right)\,,
\]
which is super-polynomially small, as desired.
        
For the second probability, we only need to consider the case $T \ge n / (\log n)^{10}$. We have argued that $\mathcal{S}_{\operatorname{bad}}$ is stochastically dominated by $\mathbf{B}\big(\widetilde{O}(np),\, \widetilde{O}((np)^{-2})\big)$, which has mean $\widetilde{O}((np)^{-1}) \le n^{-\delta/2}$. A straightforward application of the Chernoff bound~\eqref{lem-Chernoff-bound} yields that $\mathbb{P}[\mathcal{S}_{\operatorname{bad}} \ge c \log n]$ is also super-polynomially small. This completes the proof.
\end{proof}

\subsubsection{Proof of Lemma~\ref{lem-quantitative-estimate}}\label{appendix-lem-quantitative-estimate} 

To prove Lemma~\ref{lem-quantitative-estimate}, we need the following quantitative central limit theorem (usually known as the Berry--Esseen bound) for sums of independent variables. The modern proof of this result mostly relies on Stein's method; see, e.g., \cite[Chapter 3]{CGS11}.

\begin{lemma}\label{lemma-quantitative-CLT}
   Let $X_1,\cdots,X_n$ be independent random variables with $\mathbb{E}[X_i]=0$ and $\mathbb{E}[|X_i|^3]<\infty$ for any $1\le i\le n$. Then it holds that for any $x\in \mathbb R$,  
   \[
   \left|\mathbb{P}[X_1+\cdots+X_n\ge x]-\Phi\Bigg(\frac{x}{\sqrt{\operatorname{Var}(X_1)+\cdots+\operatorname{Var}(X_n)}}\Bigg)\right|\le \frac{\sum_{i=1}^n\mathbb{E}[|X_i|^3]}{(\sum_{i=1}^n\operatorname{Var}(X_i))^{3/2}}\,,
   \]
   where $\Phi(t):=\frac{1}{\sqrt{2\pi}}\int_t^{\infty}e^{-u^2/2}\operatorname{d}\! u$ is the Gaussian tail. 
\end{lemma}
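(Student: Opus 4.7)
My plan is to prove the Berry--Esseen bound via Stein's method, following the now-classical approach in \cite[Chapter~3]{CGS11}. The first step is a normalization: set $\sigma^2 := \sum_i \operatorname{Var}(X_i)$ and $W_i := X_i/\sigma$, so that $W := \sum_i W_i$ has mean zero and variance one, and the target reduces (modulo an absolute multiplicative constant, which the stated bound implicitly absorbs) to
\[
\sup_{x \in \mathbb{R}} \big|\mathbb{P}[W \ge x] - \Phi(x)\big| \le C \sum_{i=1}^n \mathbb{E}[|W_i|^3]\,.
\]
The key input is the characterization of the standard Gaussian via the Stein operator $\mathcal{A} f(w) := f'(w) - w f(w)$: for any bounded measurable $h$, the Stein equation $\mathcal{A} f(w) = h(w) - \mathbb{E}[h(Z)]$, with $Z \sim \mathcal{N}(0,1)$, admits an explicit bounded solution whose regularity is controlled by $h$. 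Applied to $h_x(w) := \mathbf{1}\{w \ge x\}$, the resulting $f_x$ is uniformly bounded and $1$-Lipschitz in both $w$ and $x$, and $f_x'$ is smooth except for a single jump of size $1$ at $w = x$.

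The second step is to bound $\mathbb{E}[\mathcal{A} f_x(W)] = \mathbb{P}[W \ge x] - \Phi(x)$ using independence, via the leave-one-out decomposition $W = W^{(i)} + W_i$, where $W^{(i)} := W - W_i$ is independent of $W_i$. Since $\mathbb{E}[W_i] = 0$, we have $\mathbb{E}[W_i f_x(W^{(i)})] = 0$, so
\[
\mathbb{E}[W f_x(W)] = \sum_{i=1}^n \mathbb{E}\bigg[W_i \int_0^{W_i} f_x'(W^{(i)} + t)\, dt\bigg]\,.
\]
Combining this with the identity $\mathbb{E}[f_x'(W)] = \sum_i \mathbb{E}[W_i^2 f_x'(W)]$ (which follows from $\sum_i \mathbb{E}[W_i^2] = 1$ and the independence of $W_i$ from $W^{(i)}$), a short rearrangement expresses $\mathbb{E}[\mathcal{A} f_x(W)]$ as a sum over $i$ of error terms comparing $f_x'(W)$ with $f_x'(W^{(i)} + \theta W_i)$ for some random $\theta \in [0,1]$, weighted by quadratic factors in $W_i$.

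The main obstacle, and the crux of the argument, is bounding these error terms despite $f_x'$ being discontinuous at $w = x$. The smooth portion of $f_x'$ is uniformly Lipschitz, so its contribution is controlled by the mean-value theorem and yields a term of order $\sum_i \mathbb{E}[|W_i|^3]$. The jump portion of $f_x'$, however, produces terms of the form
\[
\sum_{i=1}^n \mathbb{E}\Big[W_i^2 \cdot \mathbf{1}\{W^{(i)} \in [x - |W_i|,\, x + |W_i|]\}\Big]\,,
\]
which amounts to a concentration bound for $W^{(i)}$ over short intervals. This step does not follow from Stein's method alone and is the most delicate part of the proof; it can be handled either by a smoothing argument (replacing $h_x$ by a mollification at scale $\varepsilon$, bounding the smoothed version using the uniformly Lipschitz $f_x'$, and then optimizing in $\varepsilon$) or, more elegantly, via an induction on $n$ that establishes the concentration bound with the correct rate $\sum_i \mathbb{E}[|W_i|^3]$. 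Combining the two contributions and taking the supremum over $x$ completes the proof; the full technical details are carried out in \cite[Chapter~3]{CGS11}.
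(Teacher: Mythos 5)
Your sketch is a correct outline of the Stein's-method proof and matches the reference \cite[Chapter~3]{CGS11} that the paper cites for this standard lemma; the paper itself provides no proof beyond the citation. You also correctly flag that the basic Stein argument does not immediately yield the numerical constant $1$ appearing in the paper's statement---this is harmless here, since the lemma is only invoked up to $\widetilde{O}(\cdot)$ factors in the proof of Lemma~\ref{lem-quantitative-estimate}.
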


\begin{proof}[Proof of Lemma~\ref{lem-quantitative-estimate}]

Fix a good realization $\omega$ of $\mathcal{F}_{-\mathcal{I}}$, and let $\widetilde{\omega}$ be a further realization of $\mathcal{F}_{-\{a_i\}}$ (compatible with $\omega$). Recall that we denote $\widetilde{\mathbb{P}}=\mathbb{P}[\cdot \mid \widetilde{\omega}]$. Then, under $\widetilde{\mathbb{P}}$,
\[
S(i) = \sum_{I_m^{(t)} = 1} X_{m,a_i,b_i,c_i}^{(t)}
\]
is a sum of independent random variables with mean $0$ and variance $\widetilde{\sigma}_i^2 = \sigma_i^2 + o(1)$ (recall~\eqref{eq-deterministic-relation}).

Moreover, since $n - |S_m^{(t)}| \ge n^{1 - \delta/20}$ for $I_{m}^{(t)} = 1$, under $\mathcal{G}_{\operatorname{FGTL}}$ this implies that $d_{a_i} - |S_{m,a_i}^{(t)}| \ge \frac{np}{10} \cdot n^{-\delta/20}$. Thus,  
\[
\mathbb{E}\left[\left|(X_{m,a_i,b_i,c_i}^{(t)})^3\right|\right] \le \widetilde{O}\left((np)^{-3} \cdot n^{3\delta/20}\right)\,.
\]
Therefore, from Lemma~\ref{lemma-quantitative-CLT} we conclude that, for any $\widetilde{\omega}$,
\[
\left|\widetilde{\mathbb{P}}[S(i) \ge \chi] - \Phi\left({\chi}/{\widetilde{\sigma}_i}\right)\right| \le {\widetilde{\sigma}_i^{-3}} \sum_{m,t : I_{m}^{(t)} = 1} \widetilde{\mathbb{E}}\left[\left|(X_{m,a_i,b_i,c_i}^{(t)})^3\right|\right] \le \widetilde{O}\left((np)^{-1} \cdot n^{3\delta/20}\right) \le n^{-\delta/2}\,.
\]

Note that for $\sigma^2 \ge 10^{-3} \zeta c \log n - o(1)$, $\widetilde{\sigma}^2 = \sigma^2 - o(1)$, $\chi = 2\gamma \sqrt{c} \log n$, and any $x \in [\chi - 1, \chi + 1]$, using the asymptotics $\Phi(x)\sim x^{-1}e^{-x^2/2}/\sqrt{2\pi}$, we have
\[
|\Phi(x / \sigma) - \Phi(x / \widetilde{\sigma})| \le |x/\sigma-x/\widetilde{\sigma}|(e^{-x^2/2\sigma^2}+e^{-x^2/2\widetilde{\sigma}^2})= o(\Phi(x / \sigma))\,.
\]
This proves that $\widetilde{\mathbb P}[S(i)\ge x]=(1+o(1))\Phi(x/\sigma_i)$ for any $x\in [\chi-1,\chi+1]$, verifying the first statement of Lemma~\ref{lem-quantitative-estimate}. The second statement then follows from the iterated expectation theorem, and the proof is completed.
\end{proof}

\subsection{Proof of Proposition~\ref{prop-good-realization-typical}}\label{appendix-prop-good-realization-typical}

In this subsection, we show Proposition~\ref{prop-good-realization-typical}, that a realization of $\mathcal F_{-\mathcal I}$ is good with probability at least $0.95-o(1)$. 

\subsubsection{Property-(a)}

We first prove that \eqref{eq-good-realization-1} happens w.h.p.. To do so, we show that 
\begin{equation}\label{eq-expectation}
\mathbb{E}\left[\sum_{m=1}^M\sum_{t=0}^{T_m-1}\overline{I}_m^{(t)}U_m^{(t)}(i)\right]\le \widetilde{O}(n^{-\delta/4})\,.
\end{equation}
Provided this is true, since $\gamma<\delta/10$, it follows from Markov's inequality and the union bound that \eqref{eq-good-realization-1} happens w.h.p.. 

To prove~\eqref{eq-expectation}, we claim that for any $1 \le m \le M$ and $1 \le t \le T$,  
\begin{equation}\label{eq-claim}
\mathbb{E}\left[\overline{I}_m^{(t)} U_m^{(t)}(i)\right] \le n^{-10} + \widetilde{O}((np)^{-2}) \mathbb{P}\left[n - |S_m^{(t-1)}| \le 2n^{1 - \delta/20}\right]\,.
\end{equation}
Given this claim, we conclude that the left-hand side of~\eqref{eq-expectation} is upper-bounded by  
\[
n^{-10} \mathbb{E}\left[\sum_{m=1}^M T_m\right] + \widetilde{O}((np)^{-2}) \mathbb{E}\left[\sum_{m=1}^M \sum_{t=1}^{T_m - 1} \mathbf{1}\left\{n - |S_m^{(t)}| \le 2n^{1 - \delta/20}\right\}\right]\,.
\]
Using the tail bound of $T_m$ developed in the proof of Lemma~\ref{lem-MI} (see Appendix~\ref{appendix-lem-MI}), we find that the first part is $\widetilde{O}(n^{-6})$. For the second part, note that if $T \le n / (\log n)^{10}$, then under $\mathcal{G}_{\operatorname{RW}}$ the sum inside the expectation is void (and thus equals $0$). Therefore, the entire expression is upper-bounded by $\widetilde{O}(n^{-6})$ (since $\mathcal{G}_{\operatorname{RW}}$ happens with overwhelming probability). If $T \ge n / (\log n)^{10}$, then under $\mathcal{G}_{\operatorname{RW}}$ the sum is upper-bounded by $\widetilde{O}(M \cdot n^{1 - \delta/20}) \le \widetilde{O}((np)^2\cdot n^{-\delta/20})$. Hence, the second part is at most $\widetilde{O}(n^{-\delta/4})$. This concludes the proof of~\eqref{eq-expectation}.

To verify the claim, recall that for each $1\le m\le M$ and $0\le t\le T_m-1$, 
\begin{align} \label{U_m,t_def}
U_m^{(t)}{(i)}=\begin{cases}
    \frac{4}{d_{a_i}^2-|S_{m,a_i}^{(t)}|^2}\,,\quad&\text{ if }|S_{m,a_i}^{(t)}|\neq d_{a_i}\text{ and }x_{m,b_i}^{(t)}\neq x_{m,c_i}^{(t)}\,,\\
    \frac{2}{d_{a_i}}\,,\quad&\text{ if $|S_{m,a_i}^{(t)}|= d_{a_i}$ and $x_{m,b_i}^{(t)}\neq x_{m,c_i}^{(t)}$}\,,\\
    0\,,\quad&\text{ otherwise}\,.
\end{cases}
\end{align}
Hence, $U_m^{(t)}{(i)}$ is uniformly upper-bounded by $\frac{4}{np}$ and, under $\mathcal G_{\operatorname{MI}}\cap \mathcal G_{\operatorname{FTGL}}$, we have that
\[
U_m^{(t)}{(i)}\le \frac{4\cdot \mathbf{1}\{x_{m,b}^{(t)}\neq x_{m,c}^{(t)}\}}{np\max\{1,p(n-|S_m^{(t-1)}|)\mathbf{1}\{p(n-|S_m^{(t-1)}|)\ge 10(\log n)^2\}}\,.
\]
Since $\mathcal G_{\operatorname{MI}}\cap \mathcal G_{\operatorname{FGTL}}$ happens w.o.p. and $\{I_{m}^{(t)}=1\}\cap \mathcal G_{\operatorname{MI}}$ implies $n-|S_m^{(t-1)}|\le 2n^{1-\delta/20}$, we have that $\mathbb{E}[I_m^{(t)}U_m^{(t)}(i)]$ is at most
\[ n^{-10}+\mathbb{E}\left[\mathbf{1}\{\mathcal G_{\operatorname{FGTL}}\}\mathbf{1}\{(n-|S_m^{(t-1)}|\le 2n^{1-\delta/20}\}\cdot \frac{4\cdot \mathbf{1}\{x_{m,b}^{(t)}\neq x_{m,c}^{(t)}\}}{np[1+p(n-|S_m^{(t-1)}|)/(\log n)^{10}]}\right]\,.
\]
Conditioned on any realization of $x_{m,i}^{(t-1)}$ that does not violate $\mathcal{G}_{\operatorname{FGTL}} \cap \{n - |S_m^{(t-1)}| \le 2n^{1 - \delta/20}\}$, we have shown—as in the proof of Lemma~\ref{lem-stochastic-domination} (see Appendix~\ref{appendix-lem-stochastic-domination})—that the event $x_{m,b}^{(t)} \neq x_{m,c}^{(t)}$ happens with conditional probability at most  
\[
\frac{20 \max\{p(n - |S_m^{(t-1)}|),\, (\log n)^2\}}{np}\,.
\]
This implies that the above expectation is upper-bounded by $\widetilde{O}((np)^{-2}) \mathbb{P}\left[n - |S_m^{(t-1)}| \le 2n^{1 - \delta/20}\right]$. Thus, the claim follows, and the proof is completed.

\subsubsection{Property-(b)} 
We next show that Property-(b) holds with probability at least $0.95 - o(1)$; that is, at least half of the indices $i \in [n^\gamma]$ satisfy~\eqref{eq-good-realization-2}. Recall the definition of $\widetilde{T}_m$ for $1 \le m \le M$. Lemma~\ref{lem-effective-observations-lower-bound} states that the event  
\[
\widetilde{\mathcal{G}} := \left\{ \widetilde{T}_1 + \cdots + \widetilde{T}_M \ge \zeta \cdot M \cdot \min\{T, n\} \right\}
\]  
happens with probability at least $0.95$. We claim that for any $1 \le i \le n^\gamma$,
\begin{equation}\label{eq-sum}
\mathbb{P}\left[\widetilde{\mathcal{G}} \cap \mathcal{G}_{\operatorname{FGTL}},\ \sum_{m=1}^M \sum_{t=0}^{T_m - 1} I_m^{(t)} U_m^{(t)}(i) \le 10^{-3} \zeta c \log n \right] = o(1)\,.
\end{equation}
Given this, the desired result follows from Markov's inequality.

To show the claim, we condition on any realization of  
\[
\mathcal{F}_{-\{b,c\}} := \left\{x_{m,i}^{(t)} : 1 \le m \le M,\ 0 \le t \le T,\ i \in [n] \setminus \{b, c\} \right\}
\]  
that does not violate $\widetilde{\mathcal{G}} \cap \mathcal{G}_{\operatorname{FGTL}}$. Note that for each $1 \le m \le M$ and $0 \le t \le \widetilde{T}_m$, we have $n - |S_m^{(t)}| \ge \frac{n}{2}$, and thus $\mathcal{G}_{\operatorname{FGTL}}$ yields that  
\[
\frac{np}{20} \le d_i - |S_{i,m}^{(t)}| \le 2np\,.
\]  
Hence, the probability that $x_{m,b_i}^{(t)} \neq x_{m,c_i}^{(t)}$ is at least $1/50$. Moreover, it holds deterministically that  
\[
U_m^{(t)} \ge \frac{\mathbf{1}\{x_{m,b}^{(t)} \neq x_{m,c}^{(t)}\}}{4(np)^2}\,.
\]  
Using these observations, we get that under such a conditioning, the sum in~\eqref{eq-sum} stochastically dominates a binomial variable $X \sim \mathbf{B}(\zeta c \log n, 10^{-2})$. By Chernoff’s bound (Lemma~\ref{lem-Chernoff-bound}), we obtain that  
\[
\mathbb{P}[X \le 10^{-3} \zeta c \log n] = o(1)\,,
\]  
and hence the result follows. This completes the proof.

\subsubsection{Property-(c)}
Now we show that Property-(c) holds with high probability.  
Our goal is to prove the following:
\begin{align*}
&\qquad\qquad\quad\mathbb{E}\Bigg[\Bigg(\sum_{m,t} I_{m}^{(t)} \frac{x_{m,a_j}^{(t)} (x_{m,b_i}^{(t)} - x_{m,c_i}^{(t)})}{(d_{a_i} + x_{m,a_i}^{(t+1)} S_{m,a_i,-\mathcal{I}}^{(t)})^2} \Bigg)^2 \Bigg] \\
= \sum_{\substack{m_1,t_1 \\ m_2,t_2}} & \mathbb{E}\Bigg[ I_{m_1}^{(t_1)} I_{m_2}^{(t_2)} \cdot \frac{x_{m_1,a_j}^{(t_1)} (x_{m_1,b_i}^{(t_1)} - x_{m_1,c_i}^{(t_1)})}{(d_{a_i} + x_{m_1,a_i}^{(t_1+1)} S_{m_1,a_i,-\mathcal{I}}^{(t_1)})^2} 
\cdot \frac{x_{m_2,a_j}^{(t_2)} (x_{m_2,b_i}^{(t_2)} - x_{m_2,c_i}^{(t_2)})}{(d_{a_i} + x_{m_2,a_i}^{(t_2+1)} S_{m_2,a_i,-\mathcal{I}}^{(t_2)})^2} \Bigg] \le \widetilde{O}(n^{-\delta/4})\,,
\end{align*}
The result follows from Markov's inequality.

We partition the sum into two parts. The first part consists of those terms $(m_1, t_1)$ and $ (m_2, t_2)$ such that $m_1 = m_2$ and $|t_1 - t_2| \le \log n$, and the second part consists of the remaining terms. We will bound the two parts separately. The intuition behind this is that the first part contains relatively few terms, so their total contribution is small, while each term in the second part exhibits certain cancellations, making it small as well.

We begin by noting that for any $m, t$, due to the presence of $I_{m_1}^{(t_1)}$ and $I_{m_2}^{(t_2)}$, it holds deterministically that  
\[
\left| I_{m}^{(t)} \cdot \frac{x_{m,a_j}^{(t)} (x_{m,b_i}^{(t)} - x_{m,c_i}^{(t)})}{(d_{a_i} + x_{m,a_i}^{(t+1)} S_{m,a_i,-\mathcal{I}}^{(t)})^2} \right| \le \widetilde{O}((np)^{-2} \cdot n^{\delta/10})\,.
\]
Thus, the first part of the sum is upper-bounded by  
\[
\widetilde{O}((np)^{-4} \cdot n^{\delta/10}) \times 2 \log n \times 
\begin{cases}
M \cdot T\,, & \text{if } T \le n\,, \\
\sum_{m=1}^M T_m\,, & \text{if } T > n\,.
\end{cases}
\]
For the case $T \le n$, we have $M \cdot T = c n^2 p^2 \log n = \widetilde{O}((np)^2)$; and for $T > n$, $M = \widetilde{O}(np^2)$, so $\mathbb{E}\left[\sum_{m=1}^M T_m\right] = \widetilde{O}((np)^2)$.  
Consequently, the expected value of the first part of the sum is at most $\widetilde{O}((np)^{-2} \cdot n^{\delta/5}) \le \widetilde{O}(n^{-\delta/4})$.

Next, We handle the second part. Fix $(m_1, t_1)$ and $ (m_2, t_2)$ such that $I_{m_1}^{(t_1)} = I_{m_2}^{(t_2)} = 1$ and either $m_1 \neq m_2$ or $m_1 = m_2$ but $|t_1 - t_2| > \log n$. We claim that  
\begin{equation}\label{eq-crossing-term-upper-bound}
\left| \mathbb{E}\left[ I_{m_1}^{(t_1)} I_{m_2}^{(t_2)} \cdot \frac{x_{m_1,a_j}^{(t_1)} (x_{m_1,b_i}^{(t_1)} - x_{m_1,c_i}^{(t_1)})}{(d_{a_i} + x_{m_1,a_i}^{(t_1+1)} S_{m_1,a_i,-\mathcal{I}}^{(t_1)})^2} \cdot \frac{x_{m_2,a_j}^{(t_2)} (x_{m_2,b_i}^{(t_2)} - x_{m_2,c_i}^{(t_2)})}{(d_{a_i} + x_{m_2,a_i}^{(t_2+1)} S_{m_2,a_i,-\mathcal{I}}^{(t_2)})^2} \right] \right| \le \widetilde{O}((np)^{-4} \cdot n^{-\delta/4})\,.
\end{equation}

Provided this claim holds, the desired result follows. For $T \le n$, there are at most $\widetilde{O}((np)^4)$ terms in the sum, so the expected value of the sum is $\widetilde{O}(n^{-\delta/4})$. For $T \ge n$, we have $M = \widetilde{O}(np^2)$. Then for each $1 \le m \le M$ and $t \le n (\log n)^2$, we use the bound from~\eqref{eq-crossing-term-upper-bound}. For $t \ge n (\log n)^2$, we use the tail bound on $T_m$ developed in the proof of Lemma~\ref{lem-MI} (see Appendix~\ref{appendix-lem-MI}). Altogether, we conclude that the expected value is $\widetilde{O}(n^{-\delta/4})$.

It remains to prove~\eqref{eq-crossing-term-upper-bound}. Fix any $1 \le m \le M$, and for any $0 \le t \le T_m$, let $\mathcal{F}_t$ be the $\sigma$-field generated by the random variables $\{x_{m,i}^{(t')} : t' \le t,\ i \in [n]\}$. For any $0 \le t_0 \le T$, define $\tau_0 := \max\{-1, t_0 - \log n\}$. We will prove the following stronger result: Conditioned on any realization of $\mathcal{F}_{\tau_0}$, it holds that  
\begin{equation}\label{eq-coupling-goal}
\mathbb{E}\left[\left| \mathbb{E}\left[ I_{m}^{(t_0)} \cdot \frac{x_{m,a_j}^{(t_0)} (x_{m,b_i}^{(t_0)} - x_{m,c_i}^{(t_0)})}{\left(d_{a_i} + x_{m,a_i}^{(t_0+1)} S_{m,a_i,-\mathcal{I}}^{(t_0)}\right)^2} \,\middle|\, \mathcal{F}_{\tau_0} \right] \right| \right] = \widetilde{O}((np)^{-2} \cdot n^{-9\delta/20})\,.
\end{equation}

To see why~\eqref{eq-coupling-goal} implies~\eqref{eq-crossing-term-upper-bound}, we first consider the case where $m_1 \ne m_2$. By independence, the left-hand side of~\eqref{eq-crossing-term-upper-bound} decomposes as  
\[
\left| \mathbb{E}\left[ I_{m_1}^{(t_1)} \cdot \frac{x_{m_1,a_j}^{(t_1)} (x_{m_1,b_i}^{(t_1)} - x_{m_1,c_i}^{(t_1)})}{\left(d_{a_i} + x_{m_1,a_i}^{(t_1+1)} S_{m_1,a_i,-\mathcal{I}}^{(t_1)}\right)^2} \right] \cdot \mathbb{E}\left[ I_{m_2}^{(t_2)} \cdot \frac{x_{m_2,a_j}^{(t_2)} (x_{m_2,b_i}^{(t_2)} - x_{m_2,c_i}^{(t_2)})}{\left(d_{a_i} + x_{m_2,a_i}^{(t_2+1)} S_{m_2,a_i,-\mathcal{I}}^{(t_2)}\right)^2} \right] \right|\,.
\]
By~\eqref{eq-coupling-goal}, both terms are of order $\widetilde{O}((np)^{-2} \cdot n^{-9\delta/20})$, and hence the result follows. For the case where $m_1 = m_2$ and $|t_1 - t_2| > \log n$, without loss of generality we assume $t_1 > t_2$, so $t_1 > \tau_1 := t_1 - \log n \ge t_2 + 1$. Since  
\[
\left| I_{m_2}^{(t_2)} \cdot \frac{x_{m_2,a_j}^{(t_2)} (x_{m_2,b_i}^{(t_2)} - x_{m_2,c_i}^{(t_2)})}{\left(d_{a_i} + x_{m_2,a_i}^{(t_2+1)} S_{m_2,a_i,-\mathcal{I}}^{(t_2)}\right)^2} \right| \le \widetilde{O}((np)^{-2} \cdot n^{\delta/10})
\]
holds deterministically, by applying the law of iterated expectations with respect to $\mathcal{F}_{\tau_1}$ and using the triangle inequality, the bound in~\eqref{eq-crossing-term-upper-bound} also follows from \eqref{eq-coupling-goal} in this case.

Now we focus on the proof of \eqref{eq-coupling-goal}. Henceforth, we fix the choices of $1\le m\le M,0\le t_0\le T,a_i,b_i,c_i,a_j\in [n]$, and the set $\mathcal I\subseteq [n]$. For simplicity, we omit the parameter $m$ in the subscript. Recall the duality between the dynamics and coalescing random walks: For each $1\le t\le T$ and each $i\in [n]$, we independently select an arrow $a_{i}^t$ that points from $i$ to a uniformly random vertex in the out-neighborhood of $i$ in $G^*$. Then, for any $t\le t'$, let $i=i_0,i_1,\cdots,i_{t}$ be the backward random walk path starting at $i$ following the arrows. We then have $x_i^{(t')}=x_{i_1}^{(t'-1)}=\cdots=x_{i_{t}}^{(t'-t)}$. 

We first deal with the simple case where $t_0<\log n$, $\mathcal F_{\tau_0}=\varnothing$, so there is no conditioning in \eqref{eq-coupling-goal}. 
Using similar concentration arguments as in the proof of Lemma~\ref{lem-FGTL} (see Appendix~\ref{appendix-lem-FGTL}), it can be shown that w.o.p. $|S_{a_i,-\mathcal I}^{(t_0)}|\le \widetilde{O}\big((np)^{1/2}\big)$. Thus, under this event, we have
\[
\left|\frac{1}{(d_{a_i}+x_{a_i}^{(t_0+1)}S_{a_i,-\mathcal I}^{(t_0)})^2}-\frac{1}{d_{a_i}^2}\right|\le \widetilde{O}\big((np)^{-5/2}\big)\le\widetilde{O}\big((np)^{-2}\cdot n^{-\delta/2}\big)\,.
\]
Hence, it remains to show that
\[
\frac{1}{d_{a_i}^2}\mathbb{E}[x_{a_j}^{(t_0)}(x_{b_i}^{(t_0)}-x_{c_i}^{(t_0)})]=\widetilde{O}\big((np)^{-2}\cdot n^{-9\delta/20}\big)\,.
\]
By the duality to coalescing random walks, we have $\mathbb{E}[x_{a_j}^{(t_0)}x_{b_i}^{(t_0)}]$, which represents the probability that two independent random walks starting at $a_j$ and $b_i$ intersect before time $t_0<\log n$, which is of order $\widetilde{O}((np)^{-1})$. Similarly, we have $\mathbb{E}[x_{a_j}^{(t_0)}x_{c_i}^{(t_0)}]=\widetilde{O}((np)^{-1})$. Since $d_{a_i}=\Theta(np)$, the above is $\widetilde{O}((np)^{-3})\le \widetilde{O}((np)^{-2}\cdot n^{-\delta})$, and the result follows.

Next, We consider the case where $t_0\ge \log n$. At this point, we fix a realization of $\mathcal F_{\tau_0}$. Let $\mathbb{P}_{\tau_0,t_0}$ denote the probability distribution of the random arrows $a_{i}^t,i\in [n],\tau_0<t\le t_0+1$. We note that $\mathbb{P}_{\tau_0,t_0}$ is a product distribution of uniformly random arrows that are independent of $\mathcal F_{\tau_0}$. Moreover, the two terms
\[
I^{(t_0)}\cdot\frac{x_{a_j}^{(t_0)}x_{b_i}^{(t_0)}}{(d_{a_i}+x_{a_i}^{(t_2+1)}S_{a_i,-\mathcal I}^{(t_0)})^2}\,,\quad I^{(t_0)}\cdot\frac{x_{a_j}^{(t_0)}x_{c_i}^{(t_0)}}{(d_{a_i}+x_{a_i}^{(t_0+1)}S_{a_i,-\mathcal I}^{(t_0)})^2}
\]
are both measurable with respect to the $\sigma$-field of $\mathbb{P}_{\tau_0,t_0}$ given the realization of $\mathcal F_{\tau_0}$. 

Recall Theorem~\ref{thm:mixing} and denote $L=c(1,\delta)$. This means that a random walk on $G^*$ mixes up to TV-error $O(n^{-1})$ after $L$ steps. We construct a coupling $\Gamma=(\{a_{i}^t\},\{\widetilde{a}_i^t\})$ of $\mathbb{P}_{\tau_0,t_0}$ with itself. By the mixing property, there exists a coupling $\gamma$ of the backward random walk paths $\{B_t\}_{\tau_0\le t\le t_0}$ and $\{C_t\}_{\tau_0\le t\le t_0}$ starting at $B_{t_0}=b_i$ and $C_{t_0}=c_i$ such that $\gamma[B_t=C_t,\forall\tau_0\le t\le t_0-L]\ge 1-O(n^{-1})$. We define $(\{a_i^t\},\{\widetilde{a}_i^t\})\sim \Gamma$ as follows: First, sample $\{a_i^t\}\sim \mathbb{P}_{\tau_0,t_0}$; let $\{B_t\}_{\tau_0\le t\le t_0}$ be the backward random walk starting at $B_{t_0}=b_i$, following the arrows $a_i^t$. Then, we define $\{C_t\}_{\tau_0\le t\le t_0}$ as a backward random walk starting at $C_{t_0}=c_i$, conditionally sampled from $\gamma$ given $\{B_t\}_{\tau_0\le t\le t_0}$. We then define $\{\widetilde{a}_i^t\}$ as follows: We set $\widetilde{a}_i^t=a_i^t$ for all $i\in [n]$ and $\tau_0<t\le t_0$. For each $\tau_0<t\le t_0$, we modify the arrows $a_{C_t}^t$ to point to $C_{t-1}$. See Figure~\ref{fig:coupling} for an illustration. It is straightforward to check that $\{\widetilde{a}_i^t\}$ has the same distribution as $\mathbb{P}_{\tau_0,t_0}$.

\begin{figure}
    \centering
    \includegraphics[width=0.8\linewidth]{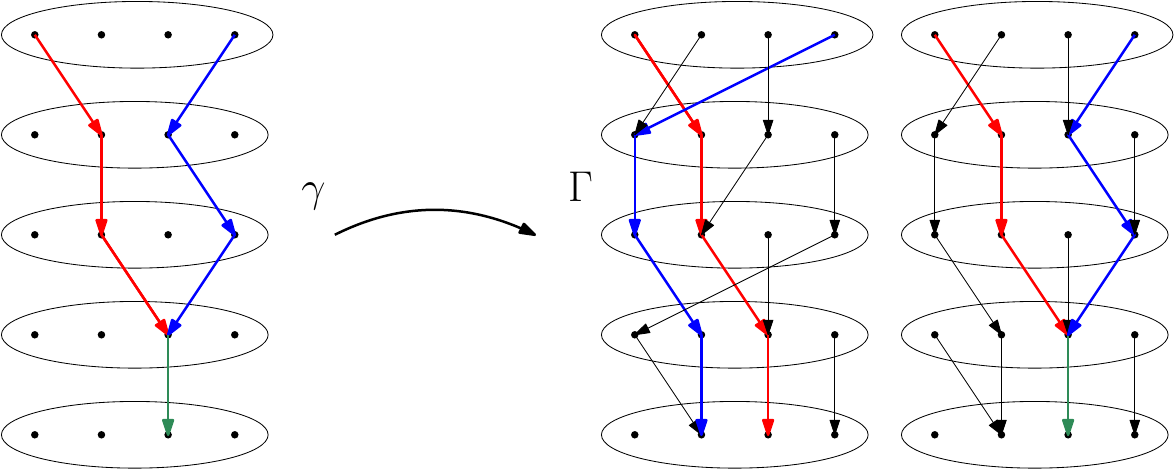}
    \caption{Obtaining the coupling $\Gamma$ from the coupling $\gamma$}
    \label{fig:coupling}
\end{figure}

For $i\in [n]$ and $\tau_0<t\le t_0$, let $x_i^{(t)}$ and $\widetilde{x}_i^{(t)}$ be the labels corresponding to the arrows $\{a_i^t\}$ and $\{\widetilde{a}_i^t\}$, respectively. Define $I^{(t)},\widetilde{I}^{(t)},S_{a_i,-\mathcal I}^{(t)},$ and $\widetilde{S}_{a_i,-\mathcal I}^{(t)}$ accordingly. We have that
\begin{align}
&\ \nonumber\qquad\qquad\qquad\Bigg|\mathbb{E}\Bigg[I_{m}^{(t_0)}\cdot\frac{x_{m,a_j}^{(t_0)}(x_{m,b_i}^{(t_0)}-x_{m,c_i}^{(t_0)})}{(d_{a_i}+x_{m,a_i}^{(t_0+1)}S_{m,a_i,-\mathcal I}^{(t_0)})^2}\mid \mathcal F_{\tau_0}\Bigg]\Bigg|\\
=&\ \Bigg|\mathbb{E}_{(\{a_i^t\},\{\widetilde{a}_i^t\})\sim \Gamma}\Bigg[I^{(t_0)}\cdot\frac{x_{a_j}^{(t_0)}x_{b_i}^{(t_0)}}{(d_{a_i}+x_{a_i}^{(t_0+1)}S_{a_i,-\mathcal I}^{(t_0)})^2}-\widetilde{I}^{(t_0)}\cdot\frac{\widetilde{x}_{a_j}^{(t_0)}\widetilde{x}_{b_i}^{(t_0)}}{(d_{a_i}+\widetilde{x}_{a_i}^{(t_0+1)}\widetilde{S}_{a_i,-\mathcal I}^{(t_0)})^2}\Bigg]\Bigg|\,.\label{eq-Gamma}
\end{align}

To bound \eqref{eq-Gamma}, we define three good events $\mathcal E_1,\mathcal E_2$, and $\mathcal E_3$, and show that all of them hold with probability $1-\widetilde{O}(n^{-\delta})$. Let $\mathcal E_1$ be the event that for any $i\in [n]$ and $t_0-L\le t\le t_0$, the number of arrows $a_j^t$ pointing to $i$ is at most $\log n$. It is straightforward to see that for any $i\in [n]$ and $t_0-L\le t\le t_0$, the number of arrows $a_j^t$ pointing to $i$ is stochastically dominated by $\mathbf{B}(2np,2/np)$. Thus, from Chernoff's bound (Lemma~\ref{lem-Chernoff-bound}) and the union bound, we conclude that $\Gamma[\mathcal E_1]\ge1-O(n^{-1})$. Let $\mathcal E_2$ be the event that $B_t=C_t,\forall \tau_0\le t\le t_0-L$, which happens with probability at least $1-O(n^{-1})$ by our coupling. Finally, let $\mathcal E_3:=\{x_{a_i}^{(t_0+1)}= \widetilde{x}_{a_j}^{(t_0+1)},x_{a_j}^{(t_0)}=\widetilde{x}_{a_j}^{(t_0)}\}$. We observe that under $\mathcal E_2$, for $t^*\in \{t_0,t_0+1\}$, $x_i^{(t^*)}\neq \widetilde{x}_i^{(t^*)}$ only happens when the backward random walk starting at $i$ at time $t^*$ following the arrows $a_i^t$ intersects with $C_t$ at some time $t_0-L\le t\le t_0$. It follows that $\Gamma[\mathcal E_2\cap \mathcal E_3]=1-O\big((np)^{-1}\big)=1-\widetilde{O}(n^{-\delta})$.

Now, we assume that $\mathcal E:=\mathcal E_1\cap \mathcal E_2\cap \mathcal E_3$ happens. Using the above observation, we conclude that under $\mathcal E$, there are at most $O((\log n)^L)=\widetilde{O}(1)$ indices $i\in [n]$ such that $x_i^{(t_0)}\neq \widetilde{x}_i^{(t_0)}$, and thus $|S_{a_i,-\mathcal I}^{(t_0)}-\widetilde{S}_{a_i,-\mathcal I}^{(t_0)}|\le \widetilde{O}(1)$. As a result, under $\mathcal E$, if $I^{(t_0)}=\widetilde{I}^{(t_0)}=1$, we have that
\begin{align*}
&\ \qquad\qquad\qquad\Bigg|\frac{x_{a_j}^{(t_0)}x_{b_i}^{(t_0)}}{(d_{a_i}+x_{a_i}^{(t_0+1)}S_{a_i,-\mathcal I}^{(t_0)})^2}-\frac{\widetilde{x}_{a_j}^{(t_0)}\widetilde{x}_{b_i}^{(t_0)}}{(d_{a_i}+\widetilde{x}_{a_i}^{(t_0+1)}\widetilde{S}_{a_i,-\mathcal I}^{(t_0)})^2}\Bigg|\\
=&\ \Bigg|x_{a_j}^{(t_0)}x_{b_i}^{(t_0)}\cdot \frac{x_{a_i}^{(t_0+1)}\big(S_{a_i,-\mathcal I}^{(t_0)}-\widetilde{S}_{a_i,-\mathcal I}^{(t_0)}\big)\big(2d_{a_i}+x_{a_i}^{(t_0+1)}(S_{a_i,-\mathcal I}^{(t_0)}+\widetilde{S}_{a_i,-\mathcal I}^{(t_0)})\big)}{(d_{a_i}+x_{a_i}^{(t_0+1)}S_{a_i,-\mathcal I}^{(t_0)})^2(d_{a_i}+x_{a_i}^{(t_0+1)}\widetilde{S}_{a_i,-\mathcal I}^{(t_0)})^2}\Bigg|=\widetilde{O}\big((np)^{-3}\cdot n^{\frac{\delta}{5}}\big)\,.
\end{align*}
Additionally, if $I^{(t_0)}\neq \widetilde{I}^{(t_0)}$, then (recalling that $I^{(t_0)}$ is the indicator of $\{n-|S_{-\mathcal I}^{(t_0)}|\ge n^{1-\delta/20}\}$ and similarly for $\widetilde{I}^{(t_0)}$) 
$$\big|n-|\mathcal S_{-\mathcal I}^{(t_0)}|-n^{1-\delta/20}\big|=\widetilde{O}(1)\,.$$  

In conclusion, we obtain that the right hand side of \eqref{eq-Gamma} is upper-bounded by 
\begin{align*}
&\widetilde{O}\big((np)^{-2}\big)\Gamma[\mathcal E^c]+\widetilde{O}\big((np)^{-3} n^{\frac{\delta}{5}}\big)+\widetilde{O}\big((np)^{-2}\big)\mathbb{E}\Big[\Gamma\Big[\big\{\big|n-|\mathcal S_{-\mathcal I}^{(t_0)}|-n^{1-\frac{\delta}{20}}\big|=\widetilde{O}(1)\big\}\mid \mathcal F_{\tau_0}\Big]\Big]\,.
\end{align*} 
It remains to show that the last expectation term, which equals (using the iterative expectation theorem) $$\mathbb{P}\Big[\big\{\big|n-|\mathcal S_{-\mathcal I}^{(t_0)}|-n^{1-\delta/20}\big|=\widetilde{O}(1)\big\}\Big]\,,$$ is of order $\widetilde{O}(n^{-9\delta/20})$. 

By losing a super-polynomially small probability, we may assume that $\mathcal G_{\operatorname{MI}}\cap \mathcal G_{\operatorname{FGTL}}$ holds. We further condition on the labels $x_i^{(t_0-1)},i\in [n]$. According to $\mathcal G_{\operatorname{MI}}$, $n-|S_{a_i,-\mathcal I}^{(t_0)}|$ cannot be $\widetilde{O}(1)$-close to $n^{1-\delta/20}$ unless $n-|S_{a_i}^{(t_0-1)}|\in [\frac{1}{2}n^{1-\delta/20} n,2n^{1-\delta/20}]$. If this is the case, from $\mathcal G_{\operatorname{FGTL}}$ we know that for any $i\in [n]$,
\[
p_i^{(t_0)}=\frac{N_i\cap \operatorname{-1}^{(t_0-1)}}{d_i}\text{ satisfies }\min\{p_i^{(t_0)},1-p_i^{(t_0)}\}\ge \frac{1}{100n^{\delta/20}}\,.
\]
Note that conditioned on $x_i^{(t_0-1)},i\in [n]$, $\mathcal S_{-\mathcal I}^{(t_0)}$ is an independent sum given by $$
S_{-\mathcal I}^{(t_0)}=\sum_{i\in [n]\setminus \mathcal I}x_i^{(t_0)}\,,\quad
x_{i}^{(t_0)}\sim 2\mathbf{B}(p_i^{(t_0)})-1,i\in [n]\setminus\mathcal I\,.$$
By the local central limit theorem for sums of independent random variables (see, e.g. \cite{MALLER1978101}), since the variance of $\mathcal S_{-\mathcal I}^{(t_0)}$ is of order at least $\Theta(n^{1-\delta/20})$, it follows that
\[
\sup_{k\in \mathbb{N}}\mathbb{P}[S_{-\mathcal I}^{(t_0)}=k]\le \widetilde{O}(n^{-1/2+\delta/40})\,.
\]
Consequently, we obtain that $\mathbb{P}\big[\big|n-|\mathcal S_{-\mathcal I}^{(t_0)}|-n^{1-\delta/20}\big|=\widetilde{O}(1)\big]=\widetilde{O}(n^{-1/2+\delta/40})\le n^{-9\delta/20}$. This proves \eqref{eq-coupling-goal} for the case $t\ge L$ and concludes the proof. 

\subsection{Proof of Proposition~\ref{prop-ovelrine-S(i)}}\label{appendix-overline-S(i)-small}
Finally we prove Proposition~\ref{prop-ovelrine-S(i)}.

\begin{proof}
Fix a good realization $\omega$ of $\mathcal F_{-\mathcal I}$ and an index $i \in [n^\gamma]$. Our goal is to upper-bound
\[
\overline{S}(i) = \sum_{m=1}^{M}\sum^{T_m-1}_{t=0}\overline{I}_{m}^{(t)}X_{m,a_i,b_i,c_i}^{(t)} = \sum_{\overline{I}_m^{(t)}=1} X_{m,a_i, b_i, c_i}^{(t)} 
\]
by showing that $\mathbb{P}[\overline{S}(i)\ge \gamma\sqrt{c}\log n]\le \widetilde{O}(n^{-\delta/10})$. Since $\gamma<\delta/10$, the result then follows from the union bound.
Roughly speaking, the tail bound is obtained by applying martingale concentration inequalities, but we need to handle several technicalities. 

We will keep using the observation that if we further condition on a realization $\widetilde{\omega}$ of $\mathcal F_{-\{a_i\}}$ that is compatible with $\omega$, then all random variables $X_{m,a_i,b_i,c_i}$ become conditionally independent, where randomness comes only from $x_{m,a_i}^{(t+1)}\sim 2\mathbf{B}\big(1,\frac{d_{a_i}+S_{m,a_i}^{(t)}}{2d_{a_i}}\big)-1,0\le t\le T_m-1$. Moreover, recall that (denoting by $\widetilde{\mathbb{P}}$ the conditional measure given $\widetilde{\omega}$)
$$
\widetilde{\mathbb{E}}[X_{m,t}]=0\,, \,\quad
\operatorname{Var}_{\widetilde{P}}[X_{m,a_i,b_i,c_i}^{(t)}]=U_{m}^{(t)}(i)\,.
$$


Define $\mathcal{Q}_{\text{good}}$ as the set of pairs $(m,t)$ with $\overline{I}_m^{(t)}=1$ such that $d_{a_i}-|S_{m,a_i}^{(t)}|\geq \log n$. Let $\mathcal{Q}_{\text{bad}}$ be the remaining pairs. We write
\[
\overline{S}(i)=\sum_{(m,t)\in \mathcal Q_{\operatorname{good}}}X_{m,a_i,b_i,c_i}^{(t)}+\sum_{(m,t)\in \mathcal Q_{\operatorname{bad}}}X_{m,a_i,b_i,c_i}^{(t)}\triangleq\overline{\mathcal S}_{\operatorname{good}}+\overline{\mathcal S}_{\operatorname{bad}}\,.
\]
In what follows, we show separately that
\[
\mathbb{P}\left[\overline{\mathcal S}_{\operatorname{good}}\ge \frac{\gamma\sqrt c\log n}{2}\right]\le \widetilde{O}(n^{-\delta/10})\,,\quad \mathbb{P}\left[\overline{\mathcal S}_{\operatorname{bad}}\ge \frac{\gamma\sqrt c\log n}{2}\right]\le \widetilde{O}(n^{-\delta/10})\,.
\]

We first handle $\overline{\mathcal S}_{\operatorname{good}}$. We call a realization $\widetilde{\omega}$ \emph{nice} if 
\[
\sum_{m=1}^M\sum_{t=0}^{T_m-1}\overline{I}_m^{(t)}U_m^{(t)}(i)\le \gamma^2c\log n\,.
\]
It follows from Property-(a) of good realizations (see \eqref{eq-good-realization-1}) that $\widetilde{\omega}$ is nice with probability at least $1-n^{-\delta/10}$. Additionally, for any nice realization $\widetilde{\omega}$, we have
\[
\operatorname{Var}_{\widetilde{\mathbb P}}[\overline{\mathcal S}_{\operatorname{good}}]=\sum_{(m,t)\in \mathcal Q_{\operatorname{good}}}U_m^{(t)}\le \gamma^2c\log n\,.
\]
Under $\widetilde{\mathbb P}$, since $\overline{\mathcal S}_{\operatorname{good}}$ is a sum of independent variables of mean zero, it is automatically a martingale. Moreover, by the definition of $\mathcal Q_{\operatorname{good}}$, we have that for every $(m,t)\in \mathcal Q_{\operatorname{good}}$,
$$
|X_{m,a_i,b_i,c_i}^{(t)}| \leq \frac{ |x_{m,b_i}^{(t)}-x_{m,c_i}^{(t)}|}{d_{a_i}-|S_{m,a_i}^{(t)}|} \leq \frac{2}{\log n}.
$$
Therefore, applying the concentration inequality for martingales with bounded difference (see, e.g. \cite[Theorem 6]{chung2006concentration}), we get
\begin{align*}
\widetilde{\mathbb{P}}\left[\overline{\mathcal S}_{\operatorname{good}}\geq \frac{\gamma\sqrt{c}}{2}\log n\right] &\leq  \exp\left(-\frac{\gamma^2c(\log n)^2/4}{2\sum_{(m,t)\in\mathcal{Q}_{\text{good}}}\text{var}(X_{m,a_i,b_i,c_i})+3\gamma\sqrt{c}}\right)\\
&\leq \exp\left(-\frac{\gamma^2c (\log n)^2}{8\gamma^2c\log n+12\gamma\sqrt{c}}\right)\le n^{-0.11}\le n^{-\delta/10}\,.
\end{align*}

Now we deal with $\overline{\mathcal S}_{\operatorname{bad}}$. We note that for each $(m,t)\in \mathcal Q_{\operatorname{bad}}$, $|X_{m,a_i,b_i,c_i}^{(t)}|$ equals
\[
\left|\frac{x_{m,a_i}^{(t+1)} (x_{m,b_i}^{(t)}-x_{m,c_i}^{(t)})}{d_{a_i}+x_{m,a_i}^{(t+1)}S_{m,a_i}^{(t)}}\right|=\begin{cases}
    \frac{2}{d_{a_i}+|S_{m,a_i}^{(t)}|}\leq \frac{2}{d_{a_i}}\,,&\text{ if }x_{m,a_i}^{(t+1)}S_{m,a_i}^{(t)}\ge 0\text{ and }x_{m,b_i}^{(t)}\neq x_{m,c}^{(t)}\,,\\
    \frac{2}{d_{a_i}-|S_{m,a_i}^{(t)}|} \leq 1\,,\quad&\text{ if $x_{m,a}^{(t+1)}S_{m,a_i}^{(t)}<0$ and $x_{m,b_i}^{(t)}\neq x_{m,c_i}^{(t)}$}\,,\\ 
    0\,,\quad&\text{ otherwise}\,.
\end{cases}
\]
We first show that, with probability at least $1-O(n^{-\delta})$, the second case never happens. To establish this, we employ the union bound, which requires us to first upper-bound $|\mathcal{Q}_{\text{bad}}|$. By Lemmas~\ref{lem-FGTL} and \ref{lem-RW}, we may assume that $\mathcal G_{\operatorname{FGTL}}\cap \mathcal G_{\operatorname{RW}}$ holds, losing only a super-polynomially small probability. If $(m,t)\in \mathcal{Q}_{\text{bad}}$, then for some $i\in[n^\gamma]$ we have $d_{a_i}-|S_{m,a_i}^{(t)}|\leq \log n$. By $\mathcal G_{\operatorname{FGTL}}$, this implies that $n-|S_m^{(t)}|\leq \widetilde{O}(p^{-1})$. Furthermore, by Item-(i) of $\mathcal G_{\operatorname{RW}}$, this only happens when $T\ge n/(\log n)^{10}$, in which case we have $M=\widetilde{O}(np^2)$. Then, by Item-(ii) of $\mathcal G_{\operatorname{RW}}$, the number of such pairs is at most $M\times \widetilde{O}(p^{-1})=\widetilde{O}(np)$. Thus, we conclude that whenever $\mathcal G_{\operatorname{FTGL}}\cap \mathcal G_{\operatorname{RW}}$ holds, it must follow that $|\mathcal Q_{\operatorname{bad}}|\le \widetilde{O}(np)$.

Now, let us upper-bound the probability that the second case happens. The argument in the proof of Lemma~\ref{lem-stochastic-domination}  (see Appendix~\ref{appendix-lem-stochastic-domination}) yields that the event $x_{m,b_i}^{(t)}\neq x_{m,c_i}^{(t)}$ occurs with probability at most 
$$
\frac{40}{np}\cdot \max\{p(n-|S_{m}^{(t-1)}|),(\log n)^2\} = \widetilde{O}\left(\frac{1}{np}\right).
$$
Moreover, if we further condition on $S_{m,a_i}^{(t)}$, then $x_{m,a_i}^{(t+1)}S_{m,a_i}^{(t)}<0$ occurs with conditional probability 
$$
\frac{d_{a_i}-|S_{m,a_i}^{(t)}|}{2d_{a_i}}\le \frac{2(d_{a_i}-|S_{m,a_i}^{(t)}|)}{np} =  \widetilde{O}\left(\frac{1}{np}\right).
$$
Combining these bounds, the expected number of pairs $(m,t)\in \mathcal Q_{\operatorname{bad}}$ that satisfy the second case is at most
$$
\widetilde{O}(np) \times  \widetilde{O}\left(\frac{1}{np}\right)\times  \widetilde{O}\left(\frac{1}{np}\right) =  \widetilde{O}\left(\frac{1}{np}\right)= \widetilde{O}\left(n^{-\delta}\right)\,,
$$
as claimed.

Given that $\mathcal G_{\operatorname{FTGL}}\cap \mathcal G_{\operatorname{RW}}$ holds and the second case never happens, since $d_{a_i}\ge np/2$, we conclude that $\overline{\mathcal S}_{\operatorname{bad}}$ is stochastically dominated by $$\frac{4}{np}\mathbf{B}(\widetilde{O}(np),\widetilde{O}({(np)^{-2}}))\,.$$
Applying Chernoff's bound, we obtain that the probability that the inequality $\overline{\mathcal S}_{\operatorname{bad}}\ge \frac{\gamma\sqrt{c}}{2}\log n$ holds is super-polynomially small. This yields the desired tail estimate for $\overline{\mathcal S}_{\operatorname{bad}}$, thereby concluding the proof.
\end{proof}

\bibliographystyle{alpha}
\small
\bibliography{ref}

\end{document}